\DeclareMathAlphabet{\mathpzc}{OT1}{pzc}{m}{it}
\tikzset{ext/.style={circle, draw,inner sep=1pt},int/.style={circle,draw,fill,inner sep=1pt},nil/.style={inner sep=1pt}}
\tikzset{exte/.style={circle, draw,inner sep=3pt},inte/.style={circle,draw,fill,inner sep=3pt}}
\tikzset{diagram/.style={matrix of math nodes, row sep=3em, column sep=2.5em, text height=1.5ex, text depth=0.25ex}}
\tikzset{diagram2/.style={matrix of math nodes, row sep=0.5em, column sep=0.5em, text height=1.5ex, text depth=0.25ex}}
\tikzset{every picture/.append style={baseline=-.65ex}}
\newcommand{\nc}{\newcommand}
\newtheorem{theoremA}{Theorem}
\numberwithin{equation}{section}
\newtheorem*{conj*}{Conjecture}
\newtheorem{thm}[equation]{Theorem}
\newtheorem{prop}[equation]{Proposition}
\newtheorem{lem}[equation]{Lemma}
\newtheorem{cor}[equation]{Corollary}
\newtheorem*{cor*}{Corollary}
\newtheorem{rem}[equation]{Remark}
\newtheorem{prop::def}[equation]{Proposition-Definition}
\newtheorem{dfn}[equation]{Definition}
\newcommand{\hT}{\mathpzc{T}}
\newcommand{\hX}{\mathpzc{X}}
\newcommand{\hY}{\mathpzc{Y}}
\newcommand{\hH}{\mathpzc{H}}
\nc{\fB}{\mathfrak{B}}
\nc{\gl}{\mathfrak{gl}}
\nc{\GL}{\mathfrak{GL}}
\nc{\g}{\mathfrak{g}}
\nc{\gh}{\widehat\g}
\nc{\h}{\mathfrak{h}}
\nc{\wfh}{\widehat{\mathfrak{h}}}
\nc{\la}{\lambda}
\nc{\al}{\alpha }
\nc{\be}{\beta }
\nc{\ve}{\varepsilon }
\nc{\om}{\omega }
\nc{\lr}{\text{-}}
\nc{\ta}{\theta}
\nc{\ch}{{\mathop {\rm ch}}}
\nc{\Tr}{{\mathop {\rm Tr}\,}}
\nc{\tr}{{\mathrm tr}}
\nc{\Id}{{\mathop {\rm Id}}}
\nc{\ad}{{\mathop {\rm ad}}}
\nc{\End}{{\mathop {\rm End}}}
\nc{\bra}{\langle}
\nc{\ket}{\rangle}
\nc{\bi}{{\bf i}}
\nc{\pa}{\partial}
\nc{\ld}{\ldots}
\nc{\cd}{\cdots}
\nc{\hk}{\hookrightarrow}
\nc{\T}{\otimes}
\nc{\gr}{\mathrm{gr}}
\nc{\ov}{\overline}
\nc{\msl}{\mathfrak{sl}}
\nc{\mgl}{\mathfrak{gl}}
\nc{\U}{\mathrm U}
\nc{\V}{\EuScript V}
\nc{\cO}{\mathcal{O}}
\nc{\cL}{\mathcal{L}}
\nc{\Res}{{\mathbf{Res}}}
\nc{\Ind}{{\mathbf{Ind}}}
\nc{\Coind}{{\mathbf{CoInd}}}
\nc{\LInd}{{\mathbf{LInd}}}
\nc{\RCoind}{{\mathbf{RCoInd}}}
\nc{\sL}{\mathbf{L}}
\nc{\sR}{\mathbf{R}}
\nc{\sZ}{\mathbf{Z}}
\newcommand{\DAHA}{\mathcal{H}\kern -.4em\mathcal{H}}
\newcommand{\cI}{{\mathcal{I}}}
\newcommand{\cP}{{\mathcal{P}}}
\newcommand{\cD}{{\mathcal D}}
\newcommand{\bD}{{\mathbb D}}  
\newcommand{\bZ}{{\mathbb Z}}
\newcommand{\bI}{{\mathbb I}}
\newcommand{\bP}{{\mathbb P}}
\newcommand{\fp}{{\mathfrak p}}
\newcommand{\fh}{{\mathfrak h}}
\newcommand{\fg}{{\mathfrak g}}
\newcommand{\fa}{{\mathfrak a}}
\newcommand{\fb}{{\mathfrak b}}
\newcommand{\fl}{{\mathfrak l}}
\newcommand{\Hom}{\mathrm{Hom}}
\newcommand{\RHom}{\mathrm{RHom}}
\nc{\bfI}{\mathbf I}
\newcommand{\fC}{{\mathfrak C}}
\nc{\Q}{\mathfrak Q}
\nc{\fr}{\mathfrak r}
\nc{\W}{\mathbb W}
\nc{\bU}{\mathbb U}
\nc{\Gm}{\mathbb{G}_{m}}
\nc{\bA}{\mathbb A}
\newcommand{\cA}{\mathcal{A}}
\DeclareMathOperator{\Ext}{Ext}
\newcommand{\udot}{{\:\raisebox{4pt}{\selectfont\text{\circle*{1.5}}}}}
\newcommand\ttt{\text{-}}
\begin{document}
	
	\title[]
	{Parahoric Lie algebras and \\ parasymmetric Macdonald polynomials}

	\author[Feigin]{Evgeny Feigin}
	\address{Evgeny Feigin:\newline
		School of Mathematical Sciences, Tel Aviv University, Tel Aviv
		69978, Israel
	}
	\email{evgfeig@gmail.com}

	\author[Khoroshkin]{Anton Khoroshkin}
	\address{Anton Khoroshkin: \newline
		Department of Mathematics, University of Haifa, Mount Carmel, 3498838, Haifa, Israel
	}
	
	\email{khoroshkin@gmail.com}
	
	\author[Makedonskyi]{Ievgen Makedonskyi}
	\address{Ievgen Makedonskyi:\newline
		Yanqi Lake Beijing Institute of Mathematical Sciences And Applications (BIMSA), 
		No. 544, Hefangkou Village, Huaibei Town, Huairou District, Beijing 101408.}
	\email{makedonskii\_e@mail.ru}

	\begin{abstract}
		The main goal of this paper is to categorify the specialized parasymmetric (intermediate) Macdonald polynomials. 
		These polynomials depend on a parabolic subalgebra of a simple Lie algebra and generalize the symmetric and nonsymmetric Macdonald polynomials. 
		
		To achieve this we introduce cyclic modules of the parahoric subalgebras of the affine Kac-Moody Lie algebras such that their characters coincide with the specializations of the parasymmetric polynomials at zero and infinity. 
		These cyclic modules are proved to coincide with standard and costandard objects in certain categories of representations of parahoric algebras. We show that
		the categories in question are stratified, i.e. they are graded highest weight categories. As a consequence, we derive
		an analog of the Peter-Weyl theorem describing the bi-module of functions on the parahoric and parabolic Lie groups
		via the mentioned above standard and costandard modules. 
	\end{abstract}
	
	\maketitle

	\setcounter{tocdepth}{1}
	\tableofcontents

	\setcounter{section}{-1}
	\section{Introduction}
	Let $\fg$ be a simple Lie algebra with a finite root system $\Phi$ and a weight lattice $P$. 
	Symmetric Macdonald polynomials $P_\lambda(X,q,t)$ \cite{M1} form a family of $W$-invariant polynomials, where $W$ is a Weyl group of $\fg$, $X$ is a collection of $n={\rm rk}(\fg)$ variables $x_i$ and $\la\in P$ is a dominant weight. 
	These polynomials have several equivalent definitions and enjoy many nice properties.
	In particular, Macdonald polynomials form an orthogonal 
	basis of the ring of symmetric functions 
	for the constant term pairing and 
	constitute the eigenbasis for the action of a collection of $n$ pairwise commutative self-adjoint Macdonald operators.
	Certain specializations of Macdonald polynomials are very meaningful: for example, $q=t$ case corresponds to the Schur functions, $q=0$ leads to the Hall-Littlewood functions, 
	$q=t^\alpha$, $t\rightarrow 0$ are the Jack polynomials and $t=0$ substitution produces the $q$-Whittaker functions or $q$-Hermite polynomials (see \cite{M1, M2}). 
	All the specializations above are important from the point of view of representation theory. In particular, the 
	$q$-Whittaker functions have to do with characters of representations of current Lie algebras (see \cite{BBCKhL, CG, CI, CO1, CO2, I, Sa}).
	
	The nonsymmetric Macdonald polynomials $E_\lambda(X,q,t)$ (see \cite{M2}) are labeled by all weights $\la\in P$.
	This is another family of polynomials orthogonal for the (nonsymmetric)
	Cherednik pairing. The polynomials $E_\la(X,q,t)$  naturally show up in the
	theory of DAHA (double affine Hecke algebra) \cite{Ch1, Ch2, H}; in particular, they are eigenfunctions for the $Y$-generators of DAHA.
	The appropriate symmetrization maps the nonsymmetric pairing to the Macdonald pairing, the $Y$-generators of DAHA to Macdonald operators, and the nonsymmetric Macdonald polynomials to the symmetric ones.
	One of the most well-known outcomes from the nonsymmetric theory is the Cherednik computation of the norms of Macdonald polynomials.  
	Let us also mention that the $t=0$ and $t=\infty$ specializations of  $E_\la(X,q,t)$ have to do with representation theory of the Iwahori subalgebra
	of the affine Kac-Moody Lie algebra attached to $\fg$ (see \cite{FMO1,FMO2,FKM,FKhMO,LNSSS,NNS1,NNS2,NS}).
	
	In this paper, we deal with the parasymmetric (intermediate) Macdonald polynomials 
	(see \cite{Bar1,Bar2,G,Lap} for the $\gl_n$ case and a recent preprint \cite{Sch} for the general settings).
	To a finite subset $J$ of the set of simple roots $\Phi$ one assigns a parabolic Lie subalgebra $\fp_J\subset\fg$ and its Weyl subgroup $W^J$.
	The elements of the ring of $W^J$-symmetric polynomials are called parasymmetric polynomials. 
	The parasymmetric Macdonald polynomials $E_\lambda^{J}(X,q,t)$ labeled by $J$-antidominant weights $\la$ 
	are partial symmetrizations of the nonsymmetric Macdonald polynomials.
	These polynomials constitute an orthogonal basis for a certain (parasymmetric) pairing $\langle \cdot, \cdot\rangle^J$ 
	given by constant term type formulas (see Section \ref{Macdonald} for details) and are eigenfunctions of the partially symmetric Macdonald operators. 
	Our main goal is to categorify the specializations at $t=0$ and $t=\infty$ of the 
	parasymmetric Macdonald polynomials $E_\lambda^{J}(X,q,t)$ via representations of the parabolic subalgebras $\cP_J$ of the affine Kac-Moody Lie algebra $\widehat\fg$ (note that $\cP_J\supset \fp_J$). 
	In what follows we refer to $\cP_J$ as parahoric Lie (sub) algebras; they are parabolic subalgebras containing the Iwahori Lie algebra $\cI$.
	
	Let $w_0^J$ be the longest element in the Weyl group $W^J\subset W$.
	For a $W^J$-antidominant weight $\lambda$ we define certain graded cyclic $\cP_J$-modules $D_\lambda$ and $U_\lambda$ (see Definitions ~\ref{Ddefrel} and \ref{Udefrel}) and prove the following Theorem. 
	
	\begin{theoremA}
		\label{thm::intro::characters}
		(Proposition~\ref{prp::specialization}, Corollary~\ref{cor::DU::characters})
		The specialized parasymmetric Macdonald polynomials $E_{\lambda}^{J}(X,q,0)$ and $E_{\lambda}^{J}(X,q,\infty)$ are well defined, $E_{\lambda}^{J}(X,q,0)$
		is equal to the graded character of $D_\lambda$  and $E_{\lambda}^{J}(X^{-1},q^{-1},\infty)$ is equal to the graded character of $U_{-w_0^J\lambda}$. 
	\end{theoremA}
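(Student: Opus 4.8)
By construction (Section~\ref{Macdonald}) the parasymmetric polynomial is the image of the nonsymmetric one under the partial Hecke symmetrizer attached to $J$,
\[
E_\lambda^J(X,q,t)=\mathbf{1}_J\,E_\lambda(X,q,t)=\sum_{w\in W^J}c_w(t)\,E_{w\lambda}(X,q,t),
\]
where the $c_w(t)$ are the coordinates of $\mathbf{1}_J$ in the standard $T_w$-basis of the parabolic Hecke algebra, rational in $t$. The first task (Proposition~\ref{prp::specialization}) is to prove that the $t\to0$ and $t\to\infty$ limits of the right-hand side exist. The plan is to run the DAHA intertwiner recursion \cite{Ch1,Ch2,H}: the Demazure--Lusztig operators $T_i$ for $i\in J$ specialize, at $t=0$ and at $t=\infty$, to isobaric divided-difference operators and to their opposites, all of which preserve the lattice $\bZ[q^{\pm1},X^{\pm1}]$; together with the known good behaviour of the nonsymmetric polynomials themselves (i.e.\ $E_\mu(X,q,0)$ and $E_\mu(X^{-1},q^{-1},\infty)$ are well-defined polynomials, see \cite{FMO1,FMO2,FKM,FKhMO,LNSSS,NNS1,NNS2,NS,Sa}), this forces the limits to exist and produces operator formulas of the form
\[
E_\lambda^J(X,q,0)=\pi_J\bigl(E_\lambda(X,q,0)\bigr),\qquad E_\lambda^J(X^{-1},q^{-1},\infty)=\widetilde\pi_J\bigl(E_{-w_0^J\lambda}(X^{-1},q^{-1},\infty)\bigr),
\]
with $\pi_J$ a partial Demazure operator attached to a reduced word for $w_0^J$ and $\widetilde\pi_J$ its $t=\infty$ (opposite) counterpart; well-definedness of these operators follows from the braid relations for the $T_i$.

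Next I would translate the two operator identities into assertions about $\cP_J$-modules. By the quoted references $E_\lambda(X,q,0)$ is the graded character of a cyclic module over the Iwahori algebra $\cI$ --- a level-one affine Demazure module, equivalently a generalized Weyl module --- generated by an extremal weight vector $v_\lambda$, and $E_{-w_0^J\lambda}(X^{-1},q^{-1},\infty)$ is the graded character of the corresponding costandard (opposite Demazure) $\cI$-module. By the iterated Demazure character formula (Joseph, Kumar, Mathieu; geometrically a push-forward along a Bott--Samelson resolution), applying $\pi_J$ to the character of an $\cI$-module equals the character of the cyclic $\cP_J$-module it generates, and dually for $\widetilde\pi_J$. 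Hence $E_\lambda^J(X,q,0)$ is the graded character of $\widetilde D_\lambda:=\U(\cP_J)\cdot v_\lambda$, and $E_\lambda^J(X^{-1},q^{-1},\infty)$ is the graded character of the $\cP_J$-module $\widetilde U_{-w_0^J\lambda}$ cogenerated from the costandard $\cI$-module.

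It remains (Corollary~\ref{cor::DU::characters}) to identify $\widetilde D_\lambda$ with $D_\lambda$ of Definition~\ref{Ddefrel} and $\widetilde U_\lambda$ with $U_\lambda$ of Definition~\ref{Udefrel}. The plan is a two-sided character estimate. For the lower bound one verifies that the cyclic generator of $\widetilde D_\lambda$ satisfies all defining relations of $D_\lambda$, which yields a surjection $D_\lambda\twoheadrightarrow\widetilde D_\lambda$ and hence $\ch D_\lambda\succeq E_\lambda^J(X,q,0)$ coefficientwise. For the upper bound one runs a PBW/straightening argument directly on the presentation of $D_\lambda$: the relations should suffice to span $D_\lambda$ over a monomial set whose weights and internal degrees account for at most $E_\lambda^J(X,q,0)$, giving $\ch D_\lambda\preceq E_\lambda^J(X,q,0)$. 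The two bounds give $\ch D_\lambda=E_\lambda^J(X,q,0)$ together with an isomorphism $D_\lambda\cong\widetilde D_\lambda$. The $U_\lambda$ statement follows either in the same way from the costandard presentation, or --- more economically --- from the standard/costandard duality the paper proves later, together with the $q\leftrightarrow q^{-1}$, $X\leftrightarrow X^{-1}$, $t\leftrightarrow t^{-1}$ symmetry of the nonsymmetric Macdonald polynomials and the $-w_0^J$ twist.

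The step I expect to be the main obstacle is the upper bound in the last paragraph: proving that the relations of Definition~\ref{Ddefrel}, which intertwine the affine ``current-algebra'' relations along the Iwahori directions with the finite parabolic directions indexed by $J$, cut the module down to exactly the Demazure character with no slack. This is the kind of estimate that typically needs a fusion-product degeneration, an excellent-filtration / Demazure-flag argument, or --- presumably the route taken here --- the graded highest weight (stratified) structure on the ambient representation category that is established in the sequel; dually, for $U_\lambda$ the difficulty is exhibiting enough invariant ``sections'' to bound its weight multiplicities from above.
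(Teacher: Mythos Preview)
Your overall shape for Proposition~\ref{prp::specialization} is close to the paper's, but two details differ. First, the paper's symmetrization formula (Theorem~\ref{PSMc}, Corollary~\ref{ShortSymmetrizer}) is
\[
E_\lambda^J=\sum_{\sigma\in (W^J/\mathrm{Stab}_{W^J}(w_0^J\lambda))^{\min}}\hT_\sigma E_{w_0^J\lambda},
\]
i.e.\ one symmetrizes from the $J$-\emph{dominant} point $w_0^J\lambda$, not from $\lambda$, and over minimal coset representatives rather than all of $W^J$. Second, the paper does not argue via ``$T_i$ specialize to isobaric operators'' in bulk; instead it invokes the precise statement from \cite{FMO1} that $(t^{1/2})^{-l(\sigma\sigma_{\lambda_+^J})+l(\sigma_{\lambda_+^J})-l(\sigma)}\hT_\sigma E_{\lambda_+^J}$ has limits at $t=0,\infty$, and then proves by a Weyl-group length computation (Lemmas~\ref{lem::Weyl::length}, \ref{lem::subgroup}) that this exponent vanishes for each $\sigma$ in the index set above. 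So the well-definedness step is a combinatorial identity about lengths, not an operator-theoretic one.

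For the character identification your route diverges from the paper's and, as you yourself suspect, the upper-bound step is where it would get stuck. The paper does \emph{not} attempt any PBW/straightening estimate on $D_\lambda$ or $U_\lambda$. Instead it proceeds categorically: (i) it identifies $D_\lambda^J$ and $(U_{-w_0^J\lambda}^J)^\vee$ with the proper standard and proper costandard objects in $\fC^{J}_{\preceq\lambda}$ (Theorem~\ref{StandardCostandardCh}), essentially by Lemma~\ref{Categories} plus cyclicity; (ii) it proves $\fC^J$ is stratified (Theorem~\ref{thm::fCJ::stratified}) by reducing the Ext-vanishing $\Ext^\bullet(\overline\Delta_\lambda^J,\nabla_\mu^J)=0$ to the known Iwahori case via the adjunction $\LInd_J\dashv\Res_J$ and the structural results $\Res_J\overline\Delta_\lambda^J\simeq\overline\Delta_\lambda^\varnothing$, $\Res_J\nabla_\mu^J$ filtered by $\nabla_\bullet^\varnothing$; (iii) stratification forces $\langle[\overline\Delta_\lambda^J],[\overline\nabla_\mu^J]\rangle^J_{t=0}=0$ for $\lambda\ne\mu$ (Proposition~\ref{prop::pairing::Ext}), and together with the triangularity $[\overline\Delta_\lambda^J]=m_\lambda^J+\sum_{\mu\prec\lambda}c_\lambda^\mu m_\mu^J$ this is exactly the Gram--Schmidt characterization of the specialized $E_\lambda^J$. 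Thus the ``upper bound'' you flag as the main obstacle is never needed: orthogonality plus triangularity replace it entirely. Your Demazure-operator lower bound is in fact vacuous here, since the paper shows $\Res_J D_\lambda^J\simeq D_\lambda^\varnothing$ (Corollary~\ref{StandardRestriction}), so at $t=0$ one has $E_\lambda^J(X,q,0)=E_\lambda(X,q,0)$ and the $\cP_J$-action does not enlarge the module.
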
	
	
	The $t=0$ specialization of the (parabolic) constant term pairing $\langle\cdot,\cdot\rangle^J$ and the modules $D_\lambda$ and $U_\lambda$ admit a transparent categorical description.
	We introduce the abelian category $\fC^{J}$ of graded finitely cogenerated $\cP_J$-modules  with finite-dimensional weight spaces and a "dual" category ${\fC^{J}}^{\vee}$ of graded finitely generated ${\cP^{J}}$-modules (see Section \ref{ParaCat} for the complete definition). 
	We show that the Euler characteristic of Ext's between modules in ${\fC^{J}}$ and ${\fC^{J}}^{\vee}$  categorify the parasymmetric pairing $\langle\cdot,\cdot\rangle^J_{t=0}$ (see Proposition \ref{prop::pairing::Ext}).
	In order to realize the modules $D_\la$ and $U_\la$ in the categorical terms we filter the abelian category $\fC^J$ (resp., ${\fC^{J}}^{\vee}$) by Serre subcategories $\fC^{J}_{\preceq\lambda}$ (resp., ${\fC^{J}}_{\preceq\lambda}^{\vee}$), where  
	$\preceq$ is the Cherednik partial order on weights.
	The categories $\fC^{J}_{\preceq\lambda}$ and ${\fC^{J}}_{\preceq\lambda}^{\vee}$
	consist of modules whose $\fh$-weights are less than or equal to $\lambda$. We prove the following Theorem.
	
	\begin{theoremA}
		\label{thm::intro::stratified}
		(Theorem \ref{catarestrat})
		The categories $\fC^J$ and ${\fC^{J}}^{\vee}$ are stratified. In particular, 
		for any modules $M_1,N_1\in \fC^{J}_{\preceq\lambda}$, $M_2,N_2\in {\fC^{J}}_{\preceq\lambda}^{\vee}$ one has
		$$
		\Ext_{\fC^{J}_{\preceq\lambda}}(M_1,N_1) = \Ext_{\fC^{J}}(M_1,N_1),\
		\Ext_{{\fC^{J}_{\preceq\lambda}}^{\vee}}(M_2,N_2) = \Ext_{{\fC^{J}}^{\vee}}(M_2,N_2).
		$$
	\end{theoremA}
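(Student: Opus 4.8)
The plan is to verify that $\fC^J$ and ${\fC^J}^\vee$ are graded highest weight categories with respect to the poset $\Lambda^J$ of $W^J$-antidominant weights ordered by the Cherednik order $\preceq$ (together with the internal grading), and then to obtain the $\Ext$-comparison as the standard truncation property for poset ideals. \textbf{Structural data.} First I would identify the simple objects of $\fC^J$ (resp.\ ${\fC^J}^\vee$) as the grading shifts of the irreducible highest weight modules $L(\lambda)$, $\lambda\in\Lambda^J$, so that the indexing poset is $\Lambda^J\times\ZZ$. Next, using Theorem~\ref{thm::intro::characters}, whose graded characters identify $\ch D_\lambda$ with $E^J_\lambda(X,q,0)$ and $\ch U_\lambda$ with $E^J_{-w_0^J\lambda}(X^{-1},q^{-1},\infty)$, I would read off that $D_\lambda$ has simple head $L(\lambda)$ with $[D_\lambda:L(\lambda)]=1$ and all other composition factors of the form $L(\mu)$ with $\mu\prec\lambda$ — this exhibits $D_\lambda$ as a candidate standard object — and dually that $U_\lambda$ has simple socle $L(\lambda)$ and is the corresponding costandard object; here one invokes that the $\preceq$-leading monomial of $E^J_\lambda$ is $X^\lambda$ while the remaining ones are strictly $\prec\lambda$. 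The perfect pairing between $\fC^J$ and ${\fC^J}^\vee$ furnished by Proposition~\ref{prop::pairing::Ext} matches $D_\lambda$ with $U_\lambda$ and transports statements between the two categories, so it suffices to argue in $\fC^J$.

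\textbf{Homological axioms.} The substantive part is to establish: (i) $\fC^J$ has enough injectives and ${\fC^J}^\vee$ enough projectives — this follows from finiteness of the weight spaces together with the (co)generation hypotheses, via the usual limit construction of injective/projective hulls of the $L(\lambda)$, as in the Iwahori case $J=\varnothing$ treated in the authors' earlier work; (ii) the graded $\Ext$-orthogonality $\Ext^{k}_{\fC^J}(D_\lambda,U_\mu)=\delta_{k,0}\,\delta_{\lambda,\mu}\,\bC$; and (iii) the injective hull of $L(\mu)$ in $\fC^J$ carries a locally finite filtration by costandard modules $U_\nu$ ($\nu\succeq\mu$) with $U_\mu$ occurring once, and dually for projectives and $D$-flags. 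Step (ii) is the point where the explicit presentations of $D_\lambda$ and $U_\lambda$ from Definitions~\ref{Ddefrel} and \ref{Udefrel} enter: one constructs a resolution of $D_\lambda$ adapted to those defining relations — a Demazure/BGG-type flag, controlled by the order $\preceq$ — and pairs it against $U_\mu$; step (iii) then follows by dévissage from (ii) together with the character identity (graded BGG reciprocity). \emph{I expect (ii)--(iii) to be the main obstacle}, since this is exactly where genuine representation theory of the parahoric algebras $\cP_J$, rather than formal category theory, is needed; the natural route is to bootstrap from the Iwahori case and exploit the parabolic induction relating $\fC^J$ to the $J=\varnothing$ category.

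\textbf{The $\Ext$-comparison.} Granting the above, $\fC^J$ and ${\fC^J}^\vee$ are graded highest weight, and for fixed $\lambda$ the set $\Gamma_\lambda=\{\mu\in\Lambda^J:\mu\preceq\lambda\}$ is a (finite) poset ideal; since the weights of $L(\mu)$ are $\preceq\mu$, the subcategory $\fC^{J}_{\preceq\lambda}$ coincides with the Serre subcategory of objects whose composition factors all lie over $\Gamma_\lambda$. The inclusion $\iota\colon\fC^{J}_{\preceq\lambda}\hookrightarrow\fC^J$ is exact and admits the right adjoint $R(M)=M_{\preceq\lambda}$, the largest submodule supported on weights $\preceq\lambda$ (well-defined as an object of $\fC^{J}_{\preceq\lambda}$ because that subcategory is Serre, closed under directed unions of subobjects, and the ambient category has finite weight multiplicities); hence $R$ preserves injectives and $\fC^{J}_{\preceq\lambda}$ has enough injectives, all of the form $R(I)$ with $I$ injective in $\fC^J$. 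For $N_1\in\fC^{J}_{\preceq\lambda}$ with injective resolution $N_1\to I^\bullet$ in $\fC^J$, a short diagram chase using only the Serre property (any submodule of $I^0$ containing $N_1$ whose quotient lies in $\fC^{J}_{\preceq\lambda}$ already lies in $\fC^{J}_{\preceq\lambda}$, and similarly further along) shows that $R(I^\bullet)$ is still a resolution of $N_1$, i.e.\ $\mathbf{R}^{>0}R\circ\iota=0$. The derived adjunction then gives, for $M_1,N_1\in\fC^{J}_{\preceq\lambda}$,
$$
\RHom_{\fC^{J}}(M_1,N_1)\;\cong\;\RHom_{\fC^{J}_{\preceq\lambda}}\!\bigl(M_1,\mathbf{R}R(N_1)\bigr)\;=\;\RHom_{\fC^{J}_{\preceq\lambda}}(M_1,N_1),
$$
and passing to cohomology yields $\Ext^k_{\fC^{J}}(M_1,N_1)=\Ext^k_{\fC^{J}_{\preceq\lambda}}(M_1,N_1)$. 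The case of ${\fC^J}^\vee$ follows by the same argument with projectives and the left adjoint "largest quotient supported on weights $\preceq\lambda$", or more quickly by applying the duality of Proposition~\ref{prop::pairing::Ext}. Being graded highest weight together with this truncation behaviour is precisely the assertion that $\fC^J$ and ${\fC^J}^\vee$ are stratified.
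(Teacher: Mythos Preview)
Your overall architecture is right in spirit---reduce the parahoric $\Ext$-vanishing to the known Iwahori case via the adjunction between $\cI$- and $\cP_J$-modules---but the execution diverges from the paper in two places, one of which is a genuine gap.

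\textbf{The substantive step.} For the orthogonality (your (ii)) you propose to build a BGG/Demazure-type resolution of $D_\lambda$ and pair it against $U_\mu$. The paper does something more economical and avoids any such resolution. It first proves two restriction statements: $\Res_J\overline{\Delta}^J_\lambda\simeq\overline{\Delta}^{\varnothing}_\lambda$, and $\Res_J\nabla^J_\mu$ carries an $\cI$-filtration whose subquotients are $\nabla^{\varnothing}_{\tau(\mu)}$ for $\tau\in W^J$. These are the technical heart (proved via the decomposition procedure for generalized Weyl modules with characteristics), and you do not mention them. Granting these, the paper simply writes
\[
\Ext_{\fC^J}(\overline{\Delta}^J_\lambda,\nabla^J_\mu)
=\Ext_{\fC^J}(\LInd_J\Res_J\overline{\Delta}^J_\lambda,\nabla^J_\mu)
=\Ext_{\fC}(\Res_J\overline{\Delta}^J_\lambda,\Res_J\nabla^J_\mu)=0,
\]
using that $\LInd_J\circ\Res_J$ is the identity on $\fp_J$-integrable modules, the $(\LInd_J,\Res_J)$ adjunction, and the known Iwahori vanishing. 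Note also that the criterion used is $\Ext(\overline{\Delta}_\lambda,\nabla_\mu)=0$ (proper standard against costandard, condition~(s3)), not $\Ext(D_\lambda,U_\mu)$ as you write; be careful that $D^J_\lambda=\overline{\Delta}^J_\lambda$ while $(U^J_{-w_0^J\lambda})^\vee=\overline{\nabla}^J_\lambda$, so $U_\lambda$ is not itself a costandard object. Your step~(iii) is then unnecessary, since (s3) already implies stratification.

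\textbf{The gap in the $\Ext$-comparison.} Your final paragraph claims that for any Serre subcategory the right adjoint $R$ satisfies $\mathbf{R}^{>0}R\circ\iota=0$ by a ``short diagram chase using only the Serre property''. This is false in general, and your chase breaks at the second syzygy: to show that $\im\bigl(R(I^{k-1})\to R(I^k)\bigr)$ equals $R(C^{k-1})$ you would need every lift of an element of $R(C^{k-1})$ to $I^{k-1}$ to generate a subobject lying in $\fC^J_{\preceq\lambda}$, but such a lift sits in an extension by $C^{k-2}=\ker(I^{k-1}\to I^k)$, which has no reason to lie in the subcategory for $k\ge 2$. In fact the vanishing of $\mathbf{R}^{>0}R\circ\iota$ is \emph{equivalent} to the full faithfulness of $\cD(\fC^J_{\preceq\lambda})\to\cD(\fC^J)$, which is condition~(s1) in the paper's definition of stratified; it is a consequence of (s3), not something one proves independently from the Serre property. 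So the correct logic is: establish (s3) as above, then invoke the equivalence (s3)$\Leftrightarrow$(s1) to obtain the $\Ext$-comparison.
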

	In other words, the derived functors of the fully faithful embeddings of the abelian categories $\fC^{J}_{\preceq\lambda}\hookrightarrow \fC^{J}$ and
	${\fC^{J}}^\vee_{\preceq\lambda}\hookrightarrow {\fC^{J}}^\vee$
	are fully faithful embeddings of corresponding derived categories
	(recall that in~\cite{FKhMO} we proved Theorem~\ref{thm::intro::stratified} for $J=\emptyset$,  $\cP^J=\cI$; the maximal case $\cP^{J}=\fg[z]$ is worked out in~\cite{Kh})
	
	The ${\rm Ext}$ property formulated in Theorem~\ref{thm::intro::stratified} naturally shows up in the theory of quasi-hereditary algebras ~\cite{Dlab}. The corresponding categories are called the highest weight categories ~\cite{CPS,Krause}. 
	The stratified categories generalize the notion of the highest weight categories ~\cite{Br,BS,Kh,Kl,FKhMO}
	(see Definition~\ref{StandardCostandard} for the precise definition). 
	Let $\overline{\Delta}_\la^J$ and  $\overline{\nabla}_\lambda^J$ be the proper standard and proper costandard modules in the categories ${\fC^{J^{\vee}}_{\preceq\lambda}}$ and $\fC^{J}_{\preceq\lambda}$. 
	We show that $\overline{\nabla}_\lambda^{J}$ is isomorphic to the graded dual of $U_{w_0^J\lambda}^{J}$ and $\overline{\Delta}_{\lambda}^{J}$ is isomorphic to $D_\lambda^{J}$.
	We also prove that the characters of $\overline{\Delta}_{\lambda}^{J}$ and $\overline{\nabla}_\lambda^{J}$ are obtained via 
	the Gram-Shmidt algorithm applied to the characters of irreducible modules.

	As an application of theorem~\ref{thm::intro::stratified} we are able to prove the analogue of the 
	Peter-Weyl-van der Kallen theorem (\cite{PW,vdK,TY}) for the parabolic and parahoric groups. Recall that the classical Peter-Weyl theorem describes the bi-module of
	functions on a simple Lie group as a direct sum of tensor products of the irreducible modules with their duals. One has a similar theorem due to van der Kallen for the space of functions on the Borel
	subgroup. The main difference is that the direct sum decomposition is no longer available and one has to consider an associated graded space for
	a natural filtration; the graded pieces of this filtration are the tensor products of the Demazure modules and the van der Kallen modules. 
	In  \cite{FKM,FMO1,FKhMO} we generalized the Peter-Weyl-van der Kallen theorem to the case of the current groups and the Iwahori groups. 
	The main new ingredient is that the tensor products as above have to be considered 
	over certain commutative (polynomial) algebras, the so-called highest-weight algebras. 
	
	Let $\mathcal{A}_\lambda$ be the highest weight 
	algebra (see \cite{FKhM,FKhMO,FMO1}) for the costandard module $\nabla^J_\la$ over $\cP_J$. Using Theorem \ref{thm::intro::stratified} 
	we derive the Peter-Weyl-van der Kallen type theorem for the parabolic and parahoric groups.
	
	\begin{theoremA} 
		\label{thm::intro::PeterWeyl} (Theorem \ref{PWvdKparahoric}, Corollary \ref{PWvdKparabolic}) 
		There exists a filtration  on the bi-module of algebraic functions on the parahoric Lie  group of 
		$\cP_J$  such that the associated graded
		space is isomorphic to the direct sum over $J$ antidominant weights $\lambda$ of the bi-modules
		$\nabla^J_\lambda \otimes_{\mathcal{A}_\lambda}((\Delta_\lambda^J)^\vee)^o$. 
	\end{theoremA}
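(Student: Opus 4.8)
\emph{Proof strategy.} The plan is to transplant the argument used for the current group in \cite{Kh} and for the Iwahori group in \cite{FKhMO} to the parahoric setting, with the stratification of $\fC^{J}$ from Theorem~\ref{thm::intro::stratified} as the engine. Let $\bC[\cP_J]$ denote the coordinate ring of the parahoric Lie group, equipped with its commuting left and right translation actions of $\cP_J$. As a left $\cP_J$-module it is locally finite with finite-dimensional weight spaces, so the filtration of $\fC^{J}$ by the Serre subcategories $\fC^{J}_{\preceq\la}$ pulls back: for a $J$-antidominant weight $\la$ I would set $F_{\preceq\la}\subseteq\bC[\cP_J]$ to be the largest $\cP_J$-sub-bi-module whose underlying left module is an ind-object of $\fC^{J}_{\preceq\la}$, i.e.\ is spanned by matrix coefficients of left modules all of whose $\fh$-weights are $\preceq\la$. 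Because $\bC[\cP_J]$ is locally finite and each of its graded weight spaces meets only finitely many of the strata, the chain $\{F_{\preceq\la}\}$ is exhaustive and separated, so $\gr\bC[\cP_J]=\bigoplus_{\la}F_{\preceq\la}/F_{\prec\la}$ with the sum over $J$-antidominant $\la$.

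It then remains to identify the $\la$-th graded piece. By Theorem~\ref{thm::intro::stratified} the embedding $\fC^{J}_{\preceq\la}\hk\fC^{J}$ induces a fully faithful embedding of the associated derived categories, so one may work inside $\fC^{J}_{\preceq\la}$ and then reduce modulo $\fC^{J}_{\prec\la}$ to the quotient category (the $\la$-slice). By the stratified structure this slice is governed by the highest weight algebra $\cA_\la$: the proper costandard $\overline\nabla^{J}_\la\cong (U^{J}_{w_0^J\la})^{\vee}$ and the proper standard $\overline\Delta^{J}_\la\cong D^{J}_\la$ are its building blocks, while the thick costandard $\nabla^{J}_\la$ (resp.\ thick standard $\Delta_\la^{J}$) is free over $\cA_\la$ with cogenerator (resp.\ generator) layer $\overline\nabla^{J}_\la$ (resp.\ $\overline\Delta^{J}_\la$). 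As a left $\cP_J$-module $F_{\preceq\la}/F_{\prec\la}$ acquires a filtration by copies of $\overline\nabla^{J}_\la$; matching the multiplicities of the proper costandards against those of the proper standards read off from the right translation action — a BGG-type reciprocity internal to the slice, essentially the Gram-Schmidt characterisation of the characters of $\overline\Delta^{J}_\la$ and $\overline\nabla^{J}_\la$ — one obtains a canonical isomorphism of $\cP_J$-bimodules
\[
F_{\preceq\la}/F_{\prec\la}\;\cong\;\nabla^{J}_\la\otimes_{\cA_\la}\big((\Delta_\la^{J})^{\vee}\big)^{o}.
\]
Summing over $\la$ gives Theorem~\ref{PWvdKparahoric}; the parabolic statement of Corollary~\ref{PWvdKparabolic} then follows by specialising $z=0$, that is by passing to the finite-dimensional quotient $\cP_J\twoheadrightarrow\fp_J$, exactly as the Borel case is deduced from the Iwahori case in \cite{FKhMO}.

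The hard part will be the exact identification of the $\la$-th graded piece with the balanced tensor product over $\cA_\la$. This rests on two inputs. First, the structural results on the highest weight algebra from \cite{FKhM,FKhMO,FMO1}: that $\cA_\la$ is a non-negatively graded polynomial ring and that $\nabla^{J}_\la$ and $(\Delta_\la^{J})^{\vee}$ are \emph{free} $\cA_\la$-modules, so that $-\otimes_{\cA_\la}-$ is exact and commutes with taking weight spaces and graded duals, with the stated cogenerator and generator layers. Second, a careful tracking of the bimodule structure, for which the generator-relation presentations of $D_\la$ and $U_\la$ from Definitions~\ref{Ddefrel} and \ref{Udefrel}, the cyclicity of these modules, and Theorem~\ref{thm::intro::characters} identifying their characters with the specialized parasymmetric Macdonald polynomials are the main tools. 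By comparison, checking that $\{F_{\preceq\la}\}$ is exhaustive and separated is routine, needing only local finiteness of $\bC[\cP_J]$ and the fact that the Cherednik order is interval-finite on the weights that occur.
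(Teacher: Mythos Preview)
Your overall route is the same as the paper's: invoke the general Peter--Weyl machinery for stratified categories developed in \cite{FKhMO} (their Theorem~2.26), with Theorem~\ref{thm::intro::stratified} supplying the stratification. The paper's proof of Theorem~\ref{PWvdKparahoric} is in fact a two-line application of that black box, and your sketch is essentially a re-derivation of what that black box does.

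However, you gloss over the one genuinely new verification the paper must make. To feed $\nabla^{J}_\la\otimes_{\cA_\la}((\Delta_\la^{J})^{\vee})^{o}$ into the machine, one needs that $\End_{\cP_J}(\Delta_\la^{J})$ and $\End_{\cP_J}((\nabla_\la^{J})^{\vee})$ are isomorphic to the \emph{same} algebra $\cA_\la$. You cite this as a ``structural result on the highest weight algebra from \cite{FKhM,FKhMO,FMO1}'', but those references treat only $J=\varnothing$ and $J=\{1,\dots,n\}$; for general $J$ it is not a priori clear that the endomorphism algebras of $\mathbb D_\la^{J}$ and of $\mathbb U_{-w_0^{J}\la}^{J}$ coincide. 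The paper isolates this as Lemma~\ref{HWA}, proved via Lemma~\ref{EndRestriction} (reducing to endomorphisms of the $\cI$-restrictions, which act through $\U(\fh[z])$ on the cyclic vector) together with Proposition~\ref{URestriction} (the $\cI$-filtration of $\Res_J\mathbb U_{-w_0^{J}\la}^{J}$ by the $\mathbb U_{\tau(\la)}^{\varnothing}$, which forces the relevant weight component to live in the bottom piece $\mathbb U_{-\la}^{\varnothing}$). Without this step your balanced tensor product over $\cA_\la$ is not well posed, and the ``freeness'' you appeal to for $\nabla^{J}_\la$ is unjustified. Once you supply this identification, your argument and the paper's coincide.
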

	
	It is worth mentioning that  Theorems~\ref{thm::intro::stratified} and ~\ref{thm::intro::PeterWeyl} hold true for finite-dimensional parabolic Lie algebras $\fp\subset\fg$.
	Let $\fC^J_0$ (resp., ${\fC^{J}_0}^{\vee}$) be the category of finitely cogenerated (resp., finitely generated) $\fp_J$-modules  and let $\nabla^{J,fin}_\lambda$, $\Delta^{J,fin}_\lambda$ be the corresponding costandard and standard modules in these categories. 
	The following parabolic generalization of the van der Kallen theorem \cite{vdK} holds true.
	
	\begin{cor*} 
		(Theorem \ref{pararehwc}) The categories $\fC^{J}_0$ and ${\fC^{J}_0}^{\vee}$ are the highest weight categories.\\ 
		(Corollary \ref{PWvdKparabolic}) There exists a filtration of the bi-module of algebraic functions on the parabolic Lie  group of 
		$\fp_J$  such that the associated graded
		space is isomorphic to the direct sum over $J$ antidominant weights $\lambda$ of the bi-modules
		$\nabla^{J,fin}_\lambda \otimes ((\Delta_\lambda^{J,fin})^\vee)^o$.   
	\end{cor*}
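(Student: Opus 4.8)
The plan is to deduce the parabolic statement as a degenerate, ungraded shadow of the parahoric Theorems~\ref{thm::intro::stratified} and~\ref{thm::intro::PeterWeyl}, and to observe that in the finite-dimensional parabolic setting the stratification collapses to an honest highest weight structure. First I would set up the analogue, for $\fp_J$, of the filtration of the abelian category $\fC^J_0$ by Serre subcategories $\fC^{J}_{0,\preceq\lambda}$ indexed by the Cherednik order on $J$-antidominant weights, together with the dual filtration on ${\fC^{J}_0}^\vee$. The key point distinguishing the parabolic case from the parahoric one is finiteness: the weight spaces of the standard/costandard modules $\Delta^{J,fin}_\lambda$, $\nabla^{J,fin}_\lambda$ are finite-dimensional and, more importantly, each isotypic layer of the stratification contains only finitely many weights, so the "section" categories $\fC^{J}_{0,\preceq\lambda}/\fC^{J}_{0,\prec\lambda}$ are (finite) sums of copies of the category of finite-dimensional $\fp_J$-modules rather than module categories over a nontrivial polynomial highest weight algebra $\mathcal A_\lambda$. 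This is exactly the mechanism by which a stratified category becomes quasi-hereditary, so the highest weight axioms of Definition~\ref{StandardCostandard} are verified by specializing the parahoric proof (Theorem~\ref{catarestrat}) and noting $\mathcal A_\lambda = \Bbbk$.

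Concretely, the steps are: (1) identify the standard and costandard objects $\Delta^{J,fin}_\lambda$ and $\nabla^{J,fin}_\lambda$ — the former generated by a $J$-antidominant extremal weight vector with the evident relations (the finite-dimensional truncation of $D_\lambda$), the latter its graded dual — and check that $\Hom(\Delta^{J,fin}_\lambda,\nabla^{J,fin}_\mu)$ is one-dimensional for $\lambda=\mu$ and zero otherwise, with $\Ext^{>0}$ vanishing between them; (2) show every object of $\fC^J_0$ (resp.\ ${\fC^J_0}^\vee$) has a finite costandard (resp.\ standard) filtration, by induction on the support in the Cherednik order, using that the top layer is a finite-dimensional $\fp_J$-module and hence semisimple-filtered over the reductive part; (3) invoke the parahoric Ext-comparison of Theorem~\ref{thm::intro::stratified}, whose proof applies verbatim once one drops the grading obstruction, to get $\Ext_{\fC^J_{0,\preceq\lambda}} = \Ext_{\fC^J_0}$; (4) conclude the highest weight category axioms of~\cite{CPS,Krause} and hence Theorem~\ref{pararehwc}. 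For the Peter–Weyl part, I would filter $\Bbbk[P_J]$ (functions on the parabolic group) by the two-sided action, matching the filtration used in Theorem~\ref{PWvdKparahoric}, and identify the associated graded layers with $\nabla^{J,fin}_\lambda\otimes((\Delta^{J,fin}_\lambda)^\vee)^o$; the tensor product is now over $\Bbbk$ precisely because $\mathcal A_\lambda=\Bbbk$ in the finite case, which is why the $\otimes_{\mathcal A_\lambda}$ of Theorem~\ref{thm::intro::PeterWeyl} simplifies.

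The main obstacle is verifying that the Cherednik partial order on $J$-antidominant weights, restricted to the finite-dimensional setting, still defines an interval-finite stratifying order compatible with $\fp_J$-module structure — i.e.\ that for each $\lambda$ only finitely many $\mu$ satisfy $\mu\preceq\lambda$ among the weights occurring in $\fp_J$-modules with $\fh$-weights bounded by $\lambda$, so that the inductive arguments in steps (2)–(3) terminate and the quasi-hereditary (not merely stratified) conclusion is legitimate. Once this finiteness is in place, everything else is a specialization of the parahoric machinery already established: the parabolic case is "the same proof with $z$ set to a point", and the Peter–Weyl decomposition follows from the highest weight structure by the standard argument of~\cite{vdK,TY} adapted in~\cite{FKM,FMO1,FKhMO}.
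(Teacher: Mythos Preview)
Your overall strategy is sound and matches the paper's in spirit: the parabolic case is indeed the degree-zero truncation of the parahoric one, and the collapse $\mathcal{A}_\lambda=\Bbbk$ is precisely why stratified becomes highest weight. However, the paper's actual proof of Theorem~\ref{pararehwc} takes a shorter route than your steps (1)--(4). Rather than verifying the $\Ext$ orthogonality by hand or by transporting the parahoric $\Ext$-comparison, the paper simply observes that $\Delta^{J,fin}_\lambda$ and $\nabla^{J,fin}_\lambda$ are the degree-zero components of the parahoric $\Delta^J_\lambda$, $\nabla^J_\lambda$, so their characters are the $q=0$ (resp.\ $q=\infty$) specializations of the parasymmetric Macdonald polynomials already computed in Corollary~\ref{(co)stch}. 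These characters then form a pair of dual bases for the pairing~\eqref{eq::pairing::finite}, and the paper invokes the abstract Euler-characteristic criterion (Theorem~1.22 of~\cite{FKhMO}) to conclude that all higher $\Ext$'s vanish and the category is stratified. Your approach would also work, but your step~(3) --- ``the parahoric proof applies verbatim'' --- hides a nontrivial comparison between $\Ext$ in $\fC^J_0$ and $\Ext$ in $\fC^J$ that the paper avoids entirely by going through characters.

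Two further remarks. First, your ``main obstacle'' (interval-finiteness of the Cherednik order) is a non-issue in the paper's framework: Definition~\ref{dfn::stratified} is phrased in terms of arbitrary finite subsets $S\subset\Lambda$, so no global downward-finiteness of $\preceq$ on $P_J^-$ is ever needed. Second, your step~(2) (existence of standard/costandard filtrations on arbitrary objects) is a \emph{consequence} of the stratified/highest weight structure in this setup, not an ingredient in the proof; the paper does not verify it separately. For the Peter--Weyl part your reasoning is correct and matches the paper: once Theorem~\ref{pararehwc} is in place, Corollary~\ref{PWvdKparabolic} follows from the general machinery of~\cite{FKhMO}, Theorem~2.26, with the simplification $\otimes_{\mathcal{A}_\lambda}\rightsquigarrow\otimes_\Bbbk$ exactly as you say.
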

	
	Let us finish the introduction with a few remarks. 
	
	First, as we pointed out above, the categories we are working with admit an extra structure depending on the order of the
	set of weights. All the theorems above are formulated for the standard Cherednik order. In the main body of the paper, we also work out the case of the dual Cherednik order; this case turned out to be more tricky.

	Second, the Peter-Weyl type theorems have much in common with
	the classical Howe duality for the general Lie algebras.
	The analogs of the Cauchy identities in the nonsymmetric settings were studied in \cite{FMO2,CK,KL,MN}. We expect that similar constructions are available
	in the parahoric case as well (see \cite{AE,AGL,Las} for partial results in this direction). We plan to return to this problem elsewhere.
	
	Third, we expect that analogs of the parasymmetric  Macdonald polynomials
	show up in a much more general situation.
	With each graded Lie algebra $\fa$ satisfying the assumptions given in Section~\ref{sec::parabolic::setup}
	we associate the pairing $\langle \cdot,\cdot \rangle^{\fa}$ that depends on two parameters $q$ and $t$:
	$$\langle f, g\rangle^{\fa} := [f g^{*} \mu^{\fa}_{q,t}]_1, \text{ where } 
	\mu^{\fa}_{q,t}:=
	\prod_{\alpha\in\Phi}\frac{(1-X^{\alpha})}{(1-tX^{\alpha})}\prod_{m\geq 0}\prod_{\gamma} \left(\frac{1-q^m X^{\gamma}}{1-t q^m X^{\gamma}}\right)^{\dim\fr(m)_\gamma}.
	$$
	
	\begin{conj*}
		Suppose that the categories $\fC(\fa)$ and $\fC^\vee(\fa)$ are stratified
		for a partial order $\prec$.
		Then there exist the polynomials $E_{\lambda}^{\fa}(X,q,t)\in \mathbb{Z}(q,t)[P]$, $\lambda \in P^-$, 
		such that $E_{\lambda}^{\fa}(X,q,t)\in \mathbb{Z}(q,t){\rm span}\{X^\nu|\nu \preceq \lambda\}$ and
		$\langle E_{\lambda}^{\fa}(X,q,t), E_{\mu}^{\fa}(X,q,t) \rangle^\fa=0,~ \text{if}~\lambda \neq \mu.$
		The specialization $E_{\lambda}^{\fa}$ at $t=0$ and $t=\infty$ produce the characters of the proper standard and proper costandard modules.
	\end{conj*}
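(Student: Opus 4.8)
The plan is to deduce this conjecture, under the stratification hypothesis, from the Ext-theoretic reading of the pairing, treating separately the \emph{existence} of a monic $\preceq$-triangular orthogonal family $\{E_\lambda^\fa(X,q,t)\}_{\lambda\in P^-}$ and the \emph{identification} of its $t=0$ and $t=\infty$ specializations. First I would record that, for the orders $\prec$ in question, each down-set $\{\nu : \nu\preceq\lambda\}$ is finite (proved exactly as for the parahoric orders), so that $V_\lambda := \mathbb{Z}(q,t)\,\mathrm{span}\{X^\nu : \nu\preceq\lambda\}$ is finite-dimensional. The crux of the existence part is then that $\langle\cdot,\cdot\rangle^\fa$ restricts to a nondegenerate form on every $V_\lambda$: by the $\fa$-analogue of Proposition~\ref{prp::specialization} the pairing $\langle\cdot,\cdot\rangle^\fa_{t=0}$ is well defined on $V_\lambda$ and, by the $\fa$-analogue of Proposition~\ref{prop::pairing::Ext}, it equals the Euler form $\sum_i(-1)^i\dim\Ext^i(-,-)$ between $\fC^\vee(\fa)$ and $\fC(\fa)$; expanding the $X^\nu$ in classes of irreducibles, the stratification makes this Gram matrix unitriangular with respect to $\preceq$, hence of determinant $\pm1$. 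Since the determinant of the Gram matrix of $\langle\cdot,\cdot\rangle^\fa$ over $\mathbb{Z}(q,t)$ specializes to $\pm1$ at $t=0$, it is a nonzero element of $\mathbb{Z}(q,t)$, so the form is nondegenerate on each $V_\lambda$.

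Granting this, a Gram--Schmidt induction along $\preceq$ produces, for each $\lambda$, a unique $E_\lambda^\fa\in V_\lambda$ with coefficient $1$ at $X^\lambda$ and $\langle E_\lambda^\fa, X^\mu\rangle^\fa=0$ for all $\mu\prec\lambda$; equivalently $\langle E_\lambda^\fa, E_\mu^\fa\rangle^\fa=0$ whenever $\mu\prec\lambda$, and the norms $\langle E_\lambda^\fa, E_\lambda^\fa\rangle^\fa$ are nonzero. For the specializations one checks that this construction commutes with $t\to 0$: a $\preceq$-triangular, downward-orthogonal family for the limit pairing is unique, and $\{\ch\overline\Delta_\lambda\}$ is such a family. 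Indeed the characters $\ch\overline\Delta_\lambda$ are $\preceq$-triangular with leading term $X^\lambda$ by the definition of a stratified category, and the homological orthogonality of proper standard and proper costandard objects, $\sum_i(-1)^i\dim\Ext^i(\overline\Delta_\lambda,\overline\nabla_\mu)=\delta_{\lambda\mu}$, combined with the triangularity of the $\ch\overline\nabla_\mu$, gives $\langle\ch\overline\Delta_\lambda, X^\mu\rangle^\fa_{t=0}=0$ for $\mu\prec\lambda$. Hence $E_\lambda^\fa(X,q,0)=\ch\overline\Delta_\lambda$; in particular the $t=0$ specialization is pole-free. The $t=\infty$ case is entirely parallel after the substitution $X\mapsto X^{-1}$, $q\mapsto q^{-1}$ and the involution on the labelling set already appearing in Theorem~\ref{thm::intro::characters}, and yields the characters $\ch\overline\nabla_\lambda$.

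The genuinely difficult point — and the reason the statement is left as a conjecture — is to upgrade downward orthogonality to \emph{full} orthogonality $\langle E_\lambda^\fa, E_\mu^\fa\rangle^\fa=0$ for all $\lambda\neq\mu$, including incomparable pairs, for generic $t$. In the symmetric, nonsymmetric and parasymmetric settings this is supplied by a commuting family of $\langle\cdot,\cdot\rangle^\fa$-self-adjoint Macdonald/Cherednik-type operators on $\mathbb{Z}(q,t)[P]$ that preserve each $V_\lambda$, act $\preceq$-triangularly on the $X^\lambda$, and separate them by distinct eigenvalues; the $E_\lambda^\fa$ are then forced to be the joint eigenvectors, and orthogonality of eigenvectors of self-adjoint operators with distinct eigenvalues closes the argument. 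The plan for general $\fa$ is to build such operators directly from the grading data $\dim\fr(m)_\gamma$ entering $\mu^\fa_{q,t}$, with self-adjointness engineered from the product defining the measure, and to verify the eigenvalue separation using the hypotheses of Section~\ref{sec::parabolic::setup}. I expect this to be the main obstacle: absent an analogue of the DAHA/affine Hecke symmetry there is no a priori reason for $\mu^\fa_{q,t}$ to carry enough self-adjoint symmetries, and the alternative route — bootstrapping full orthogonality from an extra involutive symmetry of $\mu^\fa_{q,t}$ exchanging up-sets and down-sets — appears to require conditions on $\fa$ stronger than those currently imposed.
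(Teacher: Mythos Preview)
The statement you are addressing is a \emph{conjecture} in the paper, not a theorem: it appears in the introduction inside a \texttt{conj*} environment and the paper offers no proof. There is therefore no ``paper's own proof'' against which to compare your attempt. You seem to be aware of this, since you explicitly flag the full-orthogonality step as ``the reason the statement is left as a conjecture''; accordingly your proposal is a research sketch, not a proof, and should be read as such.

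As a sketch, the outline is reasonable but several steps appeal to results that do not exist in the generality claimed. You invoke an ``$\fa$-analogue of Proposition~\ref{prp::specialization}'', but that proposition is proved using the DAHA intertwiners and the Orr--Shimozono specialization results for nonsymmetric Macdonald polynomials; there is no such machinery for a general $\fa$ satisfying the hypotheses of Section~\ref{sec::parabolic::setup}, so the well-definedness of the $t\to 0$ limit of the Gram--Schmidt family cannot simply be asserted. Likewise, the claim that down-sets $\{\nu:\nu\preceq\lambda\}$ are finite is part of the data of the order $\prec$ and is assumed, not ``proved exactly as for the parahoric orders''. Your nondegeneracy argument (specialize the Gram determinant at $t=0$, get $\pm1$ from the unitriangularity of the Ext form, conclude the generic determinant is nonzero) is sound once one knows the determinant is a genuine rational function of $t$ that specializes cleanly, but that regularity is again not automatic for a general kernel $\mu^{\fa}_{q,t}$.

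Your diagnosis of the essential obstruction is correct and matches the spirit of the paper: in the parasymmetric case full orthogonality comes from the $W^J$-symmetric $Y$-operators of the DAHA (Theorem~\ref{PSMc} and its corollary), and without an analogue of that commuting self-adjoint family for general $\fa$ one only obtains the weaker ``downward'' orthogonality from Gram--Schmidt. That is precisely why the authors state this as a conjecture rather than a theorem.
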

	
	The paper is organized as follows.
	In Section \ref{sec::parabolic::setup} we collect basic definitions and notation on Lie algebras and describe the representations we are working with.
	In Section \ref{Macdonald} we define and study the parasymmetric Macdonald polynomials, show that they form an orthogonal basis of the ring of parasymmetric functions for the nonsymmetric and parasymmetric pairings, and focus on the specializations $t=0,\infty$.
	In Section \ref{ParaCat} we describe explicitly the categories of representations of $\cP_J$ we are working with and recall the general formalism of the stratified categories.
	In Section \ref{sec::standards} we introduce the cyclic modules $D_\lambda$ and $U_\lambda$, study their properties, and relate them to the standard and  costandard modules.
	Finally, in Section \ref{Main} we prove Theorems \ref{thm::intro::characters},~\ref{thm::intro::stratified} and \ref{thm::intro::PeterWeyl}.

	\section*{Acknowledgments}
	We are grateful to Daniel Orr and  Ivan Cherednik for useful discussions.

	\section{Lie algebras and their representations}
	\label{sec::parabolic::setup}
	In this section, we fix the notation and describe a class of Lie algebras containing parabolic and parahoric algebras. 
	\subsection{Parahoric Lie algebras}
	Let $\fg$ be a simple finite-dimensional Lie algebra of rank $n$, $\fb$ its Borel subalgebra, $\Phi=\Phi_+\cup \Phi_-$ its root system. 
	Let $\alpha_1, \dots, \alpha_n$ be the set of simple roots,
	and let $\omega_1,\dots,\omega_n$ be the fundamental weights.
	We denote by $P,P^\vee$ the weight lattices for $\fg$ and for the 
	Langlands dual Lie algebra $\fg^\vee$. Similarly, let  $Q$ and $Q^\vee$ be the corresponding root lattices.
	In particular,  $Q\subset P$; we set  $\Pi:=P/Q$.
	
	Let 
	$e_i=e_{\alpha_i}\in\fb$ be the root vectors of weight $\alpha_i$
	and let $f_j=f_{\alpha_j}$ be the root vectors of weight $-\alpha_j$. We denote by $W$ the Weyl group of $\fg$.

	With each subset $J\subset \{1, \dots, n\}$ we associate a parabolic subalgebra $\fp_{J}\subset \fg$ containing $\fb$, such that $f_i \in \fp_{J}$ if and only if $i \in J$. 
	We denote by $\Phi_{J}=\Phi_{J+}\cup \Phi_{J-}$ the root subsystem spanned by roots $\alpha_i$, $i \in J$ and let $\fl_J\subset\fp_J$ be the maximal reductive Levi subalgebra of $\fp_J$.
	
	Let
	\begin{equation}
		P^+_J:=\{\lambda \in P: \langle \lambda, \alpha_i^\vee \rangle\geq 0,~ i \in J\}, \quad
		P^-_J:=\{\lambda \in P: \langle \lambda, \alpha_i^\vee \rangle\leq 0,~ i \in J\}.    
	\end{equation}
	In other words, $P^+_J$ is the set of $\fp_J$ dominant weights, and $P^-_J$ is the set of $\fp_J$ antidominant weights. 
	If $J=\{1,\dots,n\}$, then we write simply $P^+$ and $P^-$ instead of $P^+_J$ and $P^-_J$.  
	One has
	\[
	P^+_J = \bigoplus_{i\in J} \bZ_{\ge 0} \omega_i \oplus \bigoplus_{i\notin J} \bZ \omega_i.
	\quad 
	P^-_J = \bigoplus_{i\in J} \bZ_{\le 0} \omega_i \oplus \bigoplus_{i\notin J} \bZ \omega_i.
	\] 
	We denote by $W^J$ the subgroup of $W$ generated by reflections in simple roots belonging to $J$, i.e.  $W^J:=\langle s_i, i \in J\rangle.$

	To a parabolic Lie subalgebra $\fp_J$ one assigns the corresponding parabolic Lie subalgebra $\cP_J$ in the affine Lie algebra $\gh$, the
	central extension of $\fg[z,z^{-1}]$ (see \cite{Kac}).
	The algebra $\cP_J$ corresponds to the same subset $J$ viewed as a subset of the nodes of the affine Dynkin diagram. One has
	\[
	\mathcal{P}_J = \fp_J \oplus z\fg[z].
	\]
	All the algebras $\cP_J$ contain the Iwahori subalgebra $\cI$ (the Borel subalgebra of $\gh$), this is why they are called parahoric subalgebras.
	One has two extreme cases: if $J=\varnothing$, then $\mathcal{P}_J$ is the Iwahori algebra $\cI$; if 
	$J=\{1,\dots,n\}$, then $\mathcal{P}_J$ coincides with the current algebra $\fg[z]$.
	In general, $\cI\subset \mathcal{P}_J\subset \fg[z]$ and 
	$\mathcal{P}_I\subset \mathcal{P}_J$ if and only if $I\subset J$.
	We denote by $\fr_J$ the radical of the parahoric Lie algebra $\cP_J$; in particular, we have the following (root) decompositions (recall $\fl_J\subset\fp_J\subset\fg$):
	\begin{gather*}
		\mathcal{P}_J = \fl_J + \cI = \fl_J \oplus \fr_J, \\ 
		\fl_J = \fh\oplus_{\alpha\in \Phi_J} \fg_{\alpha}, 
		\quad \fr_J = \oplus_{\alpha\in \Phi_{+}\setminus\Phi_{J+}}\fg_{\alpha}\bigoplus_{n>0}(\oplus_{\alpha\in\Phi} \gh_{\alpha+n\delta}\oplus \fh\otimes z^{n}).
	\end{gather*}
	Here $\delta$ denotes the basic imaginary root and $\fg_{\alpha+n\delta} = \fg_{\alpha}\otimes z^n$.
	
	In this paper, we mainly deal with representations of the parahoric algebras $\cP_J$.
	The main tool we use is the restriction of $\cP_J$ modules to the Iwahori algebra. Let us briefly recall the main classes of
	$\cI$ modules popping up in the following sections.
	
	For $\la\in P^-$ let $W_\la$ and $\W_\la$ be the corresponding local and global Weyl modules over the current algebra $\fg[t]$ (see \cite{BrFi,CFK,FL}. The global module $\W_\la$ admits a free action
	of  a certain
	polynomial algebra $\cA_\la$ (usually referred to as a highest weight algebra); the action commutes with the $\fg[t]$ action and
	the tensor product over $\cA_\la$ of $\W_\la$ with the trivial one-dimensional module is isomorphic to  $W_\la$. 
	The algebra $\cA_\la$ is identified with the endomorphism algebra ${\rm End}_{\fg[t]} \W_\la$.

	One can extend the family of Weyl modules to all $\la\in P$ (see \cite{FeMa,FMO1,Kat}. 
	The modules $W_\la$ and $\W_\la$ do not admit the action of $\fg[t]$ for $\la\notin P^-$, but rather are acted upon by the Iwahori algebra. The generalized Weyl modules are cyclic $\cI$-modules and are defined via the explicit set of relations.
	They share many nice properties with the classical Weyl modules. In particular, the global generalized Weyl modules admit a free 
	the action of the polynomial highest weight algebra which intertwines the $\cI$ action;  the quotient of the global modules by the augmentation ideal is identified with the local modules.
	
	In order to study the generalized Weyl modules one needs to consider
	a wider of cyclic Iwahori representations, the generalized Weyl 
	modules with characteristics $W_\la(r)$ and $\W_\la(r)$, $r\in\bZ_{\ge 0}$ (see \cite{FeMa,FMO2,FKM,FKhMO}). The modules
	$W_\la(r)$ and $\W_\la(r)$ are defined by the explicitly given
	set of relations; the relations depend on the reduced decomposition of an element from the 
	extended affine Weyl group and $r$ varies from $0$ to the length of this element. The generalized Weyl modules with characteristics are the main building blocks of the so-called 
	decomposition procedure: a filtration on the generalized Weyl modules which is governed by the combinatorics of the quantum alcove paths (see \cite{RY,OS,FeMa}). 
	More precisely, each space of the filtration 
	in the decomposition procedure, as well as each quotient, is identified with a generalized Weyl module with characteristics. 
	This construction allows one to establish a link to the theory of the nonsymmetric Macdonald polynomials.

	\subsection{General framework: Lie algebras, representations, and pairings}\label{GeneralFramework}
	In this subsection, following \cite{FKhMO} we describe a class of Lie algebras $\fa$ we are going to work with and certain categories 
	of their representations.
	Later, we will restrict to the case of parabolic and parahoric Lie algebras.
	
	Let $\fa$ be a $\bZ_{\ge 0}$ graded Lie algebra, $\fa=\bigoplus_{m\ge 0} \fa(m)$. We assume the following:
	\begin{itemize}
		\item $\fa=\fa_0\oplus \mathfrak{r}$, where $\fa_0\subset \fa(0)$ is a reductive Lie subalgebra with a Cartan subalgebra $\fh\subset\fa_0$ and the associated root system $\Phi$,
		\item $\mathfrak{r}=\bigoplus_{m\ge 0} \mathfrak{r}(m)$ is a graded ideal,
		\item each graded component $\mathfrak{r}(m)$, $m>0$, is a finite-dimensional $\fa_0$-module,
		\item $\dim \mathfrak{r}(0)<\infty$ and all the  $\fh$-weight spaces of $\U(\mathfrak{r}(0))$ are finite-dimensional,
		\item $\bigcap_{i=0}^\infty (ad\, \mathfrak{r})^i.\fa =0$,
		\item $\dim(\fa/(ad\, \mathfrak{r})^i.\fa)<\infty$ for any $i \geq 0$.
	\end{itemize}
	Let $\overline{\fa}\subset \fa$ be the $\fh$-weight zero subalgebra of $\fa$ (in particular, $\fa_0\supset\fh\subset\overline{\fa}$). 
	We assume that $\overline{\fa}$ is abelian.
	For example, if $\fa$ is a parahoric subalgebra, then $\overline{\fa}=\fh[t]$, which is clearly abelian.
	
	Let $\{\omega_i|\ i\in I\}$ be the set of fundamental weights in the weight lattice $P$ associated with the reductive Lie algebra $\fa_0$. Each graded component $\fr(m)$ is a finite-dimensional $\fa_0$-module, therefore, there exists a finite subset $S_m\subset P$ and the root decomposition:
	$$ \fr(m) = \sum_{\alpha\in S_m} \fr(m)_{\alpha}.$$
	
	To each algebra $\fa$ as above we assign two categories of $\fa$-modules $\fC(\fa)$ and $\fC^\vee(\fa)$:
	\begin{itemize}
		\item 
		$\fC(\fa)$ is the category of $\bZ$-graded $\fa$ modules $M=\bigoplus_{m\in\bZ} M(m)$ subject to the following conditions: 
		\begin{itemize}
			\item $M(m)=0$ for $m$ large enough,
			\item each $M(m)$ is a direct sum of finite-dimensional integrable irreducible $\fa_0$-modules with finite multiplicities,
			\item all the $\fh$-weight spaces of $\ker \mathfrak{r}^i$ are finite-dimensional for all $i> 0$,
			\item for any $v\in M$ there exists an $i>0$ such that $\mathfrak{r}^iv=0$, i.e. $\bigcup_{i=1}^\infty \ker \mathfrak{r}^i=M$.
		\end{itemize}
		\item
		The {"opposite"} category $\fC^\vee(\fa)$ consists of $\bZ$-graded $\fa$ modules $M=\bigoplus_{m\in\bZ} M(m)$ such that 
		\begin{itemize}
			\item $M(m)=0$ for $m$ small enough,
			\item each $M(m)$ is a direct sum of finite-dimensional integrable irreducible $\fa_0$ modules with finite multiplicities,
			\item $M$ is generated as an $\fr$ module by a subspace $\overline{M}$ admitting a decomposition into integrable irreducible $\fa_0$ modules with finite multiplicities,
			\item all the $\fh$-weight spaces of $M/\mathfrak{r}^i M$ are finite-dimensional for all $i>0$.
		\end{itemize}
	\end{itemize}
	The irreducible objects in categories $\fC(\fa)$ and $\fC^{\vee}(\fa)$ are irreducible $\fa_{0}$-modules placed in appropriate gradings, so they are indexed by $\Lambda:=P^-(\fa_0)\times \bZ$ and are denoted by $L_{\lambda,k}$ with $\lambda\in P^-(\fa_0)$, $k\in\bZ$.
	\begin{rem}
		We opt to parametrize the irreducible modules by the anti-dominant weights (as opposed to the more standard dominant weights)
		since it is more natural from the point of view of Macdonald polynomials.
	\end{rem}
	
	For each irreducible $\fa_0$-module $L_{\lambda}$ its character is a function on $\fh$ symmetric with respect to the Weyl group $W(\fa_0)$ of the Lie algebra $\fa_0$:
	$$[L_{\lambda}]:= \sum_{\alpha} \dim (L_{\lambda})_{(\alpha)}X^{\alpha} \in \bZ[P]^{W(\fa_0)}.$$
	For example, if $\fa_0$ is abelian, then the module $L_{\lambda}$ is one-dimensional and $[L_{\lambda}]=X^\lambda.$ 
	For each object $M\in\fC$ we can write down its character:
	\begin{equation}
		\label{eq::character}
		[M]:= \sum_{k\in\bZ}\sum_{\lambda\in P_{+}} [M:L_{\lambda,k}] q^k [L_{\lambda}] \in \bZ[q,q^{-1}]](P^+(\fa_0)).   
	\end{equation}
	Here by $[M:L_{\lambda,k}]$ we denote the multiplicity of the irreducible representation $L_{\lambda,k}$ in the module $M$.
	
	It is worth mentioning that for $M\in\fC(\fa)$ its character $[M]$ is 
	a polynomial in $q$ and a series in $q^{-1}$.
	For the opposite category, for each object $N\in\fC^{\vee}(\fa)$ its character $[N]$ written by the same formula~\eqref{eq::character} defines an element from $\bZ[[q,q^{-1}][P]$ which is a series in $q$ and a polynomial in $q^{-1}$.
	
	The category $\fC(\fa)$ has enough injectives, e.g. the coinduced modules 
	\[ \mathbb{I}_{\lambda,k}=\mathrm{coInd}_{\fa_0}^{\fa}L_{\lambda,k}=\Hom_{\U(\fa_0)}(\U(\fa),L_{\lambda,k}) \simeq L_{\lambda,k}\otimes \U(\fr)^\vee\]
	is an injective hull of an irreducible representation in $\fC(\fa)$.
	We assume that each module $M\in\fC(\fa)$ admits an injective resolution $M\hookrightarrow I^{\udot}$ such that for each weight $\lambda$ and integer $k$ the multiplicity of the irreducible $L_{\lambda,k}$ is finite in $I^{\udot}$.
	The corresponding derived category $\bD_{+}(\fC(\fa))$ consists of bounded from the left complexes of $\fa$-modules from $\fC(\fa)$ such that the total multiplicity of each irreducible $L_{\lambda,k}$ is finite. 
	The Euler characteristic of each object from $\bD_{+}(\fC(\fa))$ is well defined:
	$$
	\bD_{+}(\fC(\fa)) \rightarrow \bZ[q,q^{-1}]](P_{+}), \quad [M^{\udot}] \mapsto \sum_k (-1)^{k} [M^k],
	$$
	where $[M]$ stands for the character of the module $M$. 
	The analogous properties are assumed to be satisfied in $\fC^{\vee}(\fa)$ as well.
	First, we notice that the induced module 
	\[\mathbb{P}_{\lambda,k} = \mathrm{Ind}_{\fa_0}^{\fa}L_{\lambda,k}\simeq L_{\lambda,k}\otimes \U(\fr)
	\]
	belongs to the category $\fC^{\vee}(\fa)$ and is a projective cover of the irreducible representation $L_{\lambda,k}$.
	Second, we assume that each module $N\in\fC^{\vee}(\fa)$ admits a projective resolution $P^{\udot}\twoheadrightarrow N$ such that the multiplicity of any irreducible $L_{\lambda,k}$ in $P^{\udot}$ is finite.
	Third, we consider the derived category $\bD_{-}(\fC^{\vee}(\fa))$ generated by bounded from the right complexes such that the multiplicity of each irreducible is finite and, consequently, the Euler characteristic of an object from $\bD_{-}(\fC^{\vee}(\fa))$ is a well-defined series in $q$ and polynomial in $q^{-1}$.

	Let  $\star$ denote the involution $f(X,q)^{\star} = f(X^{-1},q^{-1})$ and let $[\varphi]_1$ be the constant term, i.e.
	the coefficient $\varphi_0$ in the expansion $\varphi = \sum_{\lambda\in P} \varphi_{\lambda} X^{\lambda}\in \bZ[P]$.
	Let 
	\begin{equation}
		\label{eq::kernel}
		\mu^{\fa}=\prod_{\alpha\in\Phi}(1-X^{\alpha})\prod_{m\geq 0}\prod_{\alpha} (1-q^m X^{\alpha})^{\dim\fr(m)_\alpha}.
	\end{equation}
	Finally, we denote by $M\{k\}$ the module $M$ with the grading shifted by $k$.
	
	\begin{thm}
		\label{thm::Ext::pairing}
		For all objects $M,N \in \fC$ one has an isomorphism of graded vector spaces:
		\begin{equation}
			\label{eq::Ext::Lie}
			\sum_{k\in\bZ} Ext_{\bD_{+}(\fC(\fa))}^{\udot}(M\{k\},N) \simeq H^{\udot}(\fa,\fa_{0};\Hom(M,N)).
		\end{equation}
		If, moreover, $M$ is finite-dimensional, isomorphism~\eqref{eq::Ext::Lie} leads to the following equality of Euler characteristic:
		\begin{multline}
			\label{eq::Ext::pairing}
			\sum_{i\geq 0} \sum_{k\in\bZ} (-1)^i q^{k} 
			\dim {\rm Ext}^i_{\bD_{+}(\fC(\fa))}(M\{k\},N) =
			\\
			= \sum_{i\geq 0}(-1)^{i}\dim_{q}H^{i}(\fa,\fa_{0};\Hom(M,N))
			= \left[[M]^{\phantom{\frac{1}{2}}} [N]^{\star} {\mu^{\fa}} \right]_{1}^{\star}
		\end{multline}
		Similarly, if $M,N\in\fC^{\vee}(\fa)$ and $N$ is finite-dimensional, then we have the following equality of Euler characteristics:
		\begin{multline}
			\label{eq::Ext::pairing::1}
			\sum_{i\geq 0} \sum_{k\in\bZ} (-1)^i q^{k} 
			\dim {\rm Ext}_{\bD_-(\fC^\vee(\fa))}^{i}(M\{k\},N) =
			\\
			= \sum_{i\geq 0}(-1)^{i}\dim_{q}H^{i}(\fa,\fa_{0};\Hom(M,N))
			= \left[[M]^{\phantom{\frac{1}{2}}} [N]^{\star} {\mu^{\fa}} \right]_{1}^{\star}.
		\end{multline}
	\end{thm}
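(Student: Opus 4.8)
The plan is to prove the two equalities of Euler characteristics by reducing everything to a single homological computation via Lie algebra (relative) cohomology. First I would recall that for the pair $(\fa,\fa_0)$ the relative Chevalley--Eilenberg complex computing $H^{\udot}(\fa,\fa_0;\Hom(M,N))$ is $\Hom_{\fa_0}(\Lambda^{\udot}(\fa/\fa_0),\Hom(M,N))$, and since $\fa_0$ is reductive and $\fa/\fa_0\cong\fr$ (with its $\fa_0$-module structure and induced grading) this is $\Hom_{\fa_0}(\Lambda^{\udot}\fr,\Hom(M,N))$. The first step is therefore to establish the isomorphism~\eqref{eq::Ext::Lie}: on the side of $\fC(\fa)$, since the coinduced modules $\mathbb{I}_{\lambda,k}$ are injective and $\Hom_{\fa}(-,\mathbb{I}_{\lambda,k})\cong\Hom_{\fa_0}(-,L_{\lambda,k})$ is exact, one has the usual Shapiro-type identification $\Ext^{\udot}_{\fC(\fa)}(M,N)\cong H^{\udot}(\fa,\fa_0;\Hom(M,N))$; summing over grading shifts $\{k\}$ absorbs the internal grading, which is exactly the left-hand side of~\eqref{eq::Ext::Lie}. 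The finiteness hypotheses imposed on objects of $\fC(\fa)$ (finite-dimensional $\fh$-weight spaces of $\ker\fr^i$, existence of injective resolutions with finite multiplicities) are precisely what is needed for all these Ext groups and the resulting Euler characteristic to be well defined in $\bZ[q,q^{-1}]](P_+)$.

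The second step is the character computation, i.e.\ the second equality in~\eqref{eq::Ext::pairing}. Here I would use the standard fact that the graded Euler characteristic of a complex equals the alternating sum of graded characters of its terms, applied to the complex $\Hom_{\fa_0}(\Lambda^{\udot}\fr,\Hom(M,N))$. When $M$ is finite-dimensional, $\Hom(M,N)\cong M^\vee\otimes N$ as a graded $\fa_0$-module, and taking $\fa_0$-invariants of $\Lambda^{\udot}(\fr)^\vee\otimes M^\vee\otimes N$ contributes, at the level of characters, the constant term (the $\fh$-weight-zero part, projected to $W(\fa_0)$-invariants) of $\ch\Lambda^{\udot}(\fr)^\vee\cdot[M]^\star\cdot[N]$. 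The alternating sum $\sum_i(-1)^i\ch\Lambda^i\fr^\vee$ is the finite-product Koszul sign expansion $\prod_{m\ge0}\prod_\alpha(1-q^mX^{-\alpha})^{\dim\fr(m)_\alpha}$, and similarly the Weyl denominator $\prod_{\alpha\in\Phi}(1-X^\alpha)$ enters through the $\fa_0$-invariance (the finite part $\fr(0)$ together with the $\fa_0$ root spaces reorganizes into the full $\mu^{\fa}$ after applying $\star$). Collecting these and comparing with~\eqref{eq::kernel} gives exactly $\bigl[[M]\,[N]^\star\,\mu^{\fa}\bigr]_1^\star$; one must be careful that the convergence is in the ring $\bZ[q,q^{-1}]](P_+)$ (polynomial in $q$, series in $q^{-1}$), which is where the grading bound $M(m)=0$ for $m\gg0$ is used.

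The third step treats the opposite category $\fC^\vee(\fa)$ and formula~\eqref{eq::Ext::pairing::1}. The argument is dual: now $\mathbb{P}_{\lambda,k}=\Ind_{\fa_0}^{\fa}L_{\lambda,k}$ are projective, $\Ext^{\udot}_{\fC^\vee(\fa)}(M,-)$ is computed by a projective resolution of $M$ by such induced modules, and Shapiro's lemma again identifies it with $H^{\udot}(\fa,\fa_0;\Hom(M,N))$. The hypothesis that now $N$ (rather than $M$) is finite-dimensional is what makes $\Hom(M,N)\cong M^\vee\otimes N$ lie in a category where the Euler characteristic makes sense (series in $q$, polynomial in $q^{-1}$), and the same Koszul-complex bookkeeping yields the identical closed form $\bigl[[M]\,[N]^\star\,\mu^{\fa}\bigr]_1^\star$.

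I expect the main obstacle to be the second step, specifically the careful bookkeeping that turns the $\fa_0$-invariant part of the Koszul complex on $\fr$ into the \emph{full} kernel $\mu^{\fa}$ including the finite Weyl-denominator factor $\prod_{\alpha\in\Phi}(1-X^\alpha)$ and the $\fr(0)$ contribution, all while tracking the $\star$-involution and the precise completed ring in which the identity holds. The subtlety is that $\fa_0$ is in general nonabelian, so ``taking $\fa_0$-invariants'' is not simply ``extracting the $\fh$-weight-zero component'': one needs the Weyl character formula / the fact that $\dim(V\otimes W)^{\fa_0}=\sum_\mu[V:L_\mu][W:L_{-w_0\mu}]$ to see that the invariant-dimension generating function is exactly the constant-term pairing of the two characters weighted by the denominator, and then to check this is compatible with the grading-by-grading finiteness that legitimizes the alternating sum. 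Once that combinatorial identity is pinned down, everything else is the formal homological algebra of Steps 1 and 3. The statement for $\fC^\vee(\fa)$ then follows with only the obvious modifications.
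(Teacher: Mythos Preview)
Your proposal is correct and follows essentially the same route as the paper: identify $\Ext^{\udot}$ with relative Lie algebra cohomology via the observation that both are derived functors of the same abelian functor (your Shapiro-type argument), then compute the Euler characteristic of the relative Chevalley--Eilenberg complex $\Hom_{\fa_0}(\Lambda^{\udot}\fr,\Hom(M,N))$, reading off the product form of $[\Lambda^{\udot}\fr]$ and using the Weyl-denominator identity $\dim V^{\fa_0}=[\,\prod_{\alpha\in\Phi}(1-X^\alpha)\,[V]\,]_1$ to extract the $\fa_0$-invariant part. The point you flag as the ``main obstacle'' (the nonabelian $\fa_0$ and the appearance of the finite Weyl denominator) is exactly what the paper dispatches in one sentence with that constant-term identity, so your caution is well placed but the resolution is precisely what you outline.
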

	\begin{proof}
		The proof for $\fa=\fg[t]$ can be found in \cite{Kh} and for $\fa=\cI$ 
		in \cite{KKhM}. The proof of the general case follows the same steps. For the reader's convenience, we recall the main arguments below.
		
		The isomorphism~\eqref{eq::Ext::Lie}    
		between the extension groups and relative Lie algebra cohomology follows from the fact that both functors are higher derived functors with the same abelian one.
		One should deal with relative cohomology because all modules in $\fC$ are direct sums of finite-dimensional $\fa_0$-modules.
		
		Recall that the Chevalley-Eilenberg complex 
		$$
		C^{\udot}_{CE}(\fa,\fa_{0};M) = 
		\Hom_{\fa_0}(\Lambda^{\udot} \fa/\fa_{0}; M) =
		\Hom_{\fa_0}(\Lambda^{\udot} \fr; M)
		$$
		is the complex computing of the relative Lie algebra cohomology.
		To explain equality~\eqref{eq::Ext::pairing} it remains to compute the Euler characteristic of the Chevalley-Eilenberg complex.
		The character of $\Hom(M,N)\simeq M^{\vee}\otimes N$ is equal to $[M]^*\cdot [N]$, the character of the exterior power is equal to the product 
		$$
		[\Lambda^{\udot}(\fr)] =[\otimes_{m\geq 0}\Lambda^{\udot}(\fr(m))] = \prod_{m\geq 0}\prod_{\alpha} (1-q^m X^{\alpha})^{\dim\fr(m)_\alpha}.
		$$
		The negative sign corresponds to the homological grading that comes with the exterior power.
		Finally, the dimension of $\fa_0$-invariants in a $\fa$-module $M$ coincides with the coefficient of $1=X^{0}$ in the expression 
		$\prod_{\alpha\in\Phi}(1-X^{\alpha}) [M]$.
	\end{proof}

	\begin{prop}
		The parahoric Lie algebras $\cP_J$ satisfy the assumptions for a Lie algebra $\fa$ as above.
	\end{prop}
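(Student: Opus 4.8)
The plan is to verify each of the bulleted conditions in Section~\ref{GeneralFramework} directly from the explicit root decomposition
\[
\cP_J = \fl_J \oplus \fr_J, \qquad
\fr_J = \bigoplus_{\alpha\in\Phi_+\setminus\Phi_{J+}}\fg_\alpha \;\oplus\; \bigoplus_{n>0}\Bigl(\bigoplus_{\alpha\in\Phi}\gh_{\alpha+n\delta}\oplus \fh\otimes z^n\Bigr),
\]
graded by the $z$-degree (so $\cP_J(0)=\fp_J$ and $\cP_J(m)=\fg\otimes z^m$ for $m>0$). First I would set $\fa_0:=\fl_J$, which is reductive with Cartan $\fh$ and finite root system $\Phi_J$; then $\fr_J$ is by construction an ideal (it is the sum of all root spaces for roots not in $\Phi_J$, plus everything in positive $z$-degree, and $[\fl_J,\fr_J]\subset\fr_J$ since $\fl_J$ preserves the grading and permutes the relevant root spaces), and it is graded. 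Each graded piece $\fr_J(m)$ for $m>0$ is a subquotient of $\fg\otimes z^m\cong\fg$ as an $\fl_J$-module (for $m>0$ it is all of $\fg$), hence finite-dimensional; and $\fr_J(0)=\bigoplus_{\alpha\in\Phi_+\setminus\Phi_{J+}}\fg_\alpha$ is finite-dimensional, so $\U(\fr_J(0))$ is a polynomial algebra on finitely many generators with positive $\fh$-weights lying in a strictly convex cone, giving finite-dimensional $\fh$-weight spaces. The subalgebra $\overline{\cP_J}$ of $\fh$-weight zero is $\fh\oplus\fh\otimes z\fg[z]\big|_{\text{wt }0}=\fh[z]$ (the weight-zero part in positive degree is exactly $\fh\otimes z^n$, and the $\fg\otimes z^n$ weight-zero vectors from $\gh_{n\delta}$ are the Cartan part), which is abelian, as required.

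Next I would check the two conditions involving the lower central series of the ideal. Since $\fr_J\supset z\fg[z]$ and $[z^a\fg,z^b\fg]\supset z^{a+b}\fg$ when $\fg$ is simple, one gets $(\ad\,\fr_J)^i.\cP_J \subset \bigoplus_{m\ge i}\cP_J(m) + (\text{terms in }\fr_J(0)^{\ge i})$; more precisely the degree-$0$ part $\fp_J$ is killed after finitely many brackets with $\fr_J(0)$ because $\fr_J(0)$ is a nilpotent ideal of $\fp_J$ (it is the nilradical of a parabolic), while the positive-degree part is pushed to arbitrarily high $z$-degree. Hence $\bigcap_i (\ad\,\fr_J)^i.\cP_J=0$, and $\cP_J/(\ad\,\fr_J)^i.\cP_J$ is supported in $z$-degrees $<i$ with a finite correction in degree $0$, hence finite-dimensional. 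I would also record that $\cP_J$ acts locally nilpotently on the modules in $\fC(\cP_J)$ essentially by the same grading/convexity argument, so that the categories are nonempty and the coinduced/induced modules $\mathbb{I}_{\lambda,k}$, $\mathbb{P}_{\lambda,k}$ behave as asserted; and that injective/projective resolutions with finite graded multiplicities exist because $\U(\fr_J)^\vee$ and $\U(\fr_J)$ have finite-dimensional $\fh$-weight spaces in each $z$-degree (again convexity of the weight cone of $\fr_J(0)$ together with finiteness of each $\fr_J(m)$).

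The one point requiring a little care — and the step I expect to be the main obstacle — is the finite-dimensionality of the $\fh$-weight spaces of $\U(\fr_J(0))$ and, more globally, of $\ker\fr_J^i$ and $M/\fr_J^iM$: this rests on the fact that the weights occurring in $\fr_J(0)=\bigoplus_{\alpha\in\Phi_+\setminus\Phi_{J+}}\fg_\alpha$ all lie in the open half-space cut out by $\sum_{i\notin J}\omega_i^\vee$ (equivalently, every $\alpha\in\Phi_+\setminus\Phi_{J+}$ has positive coefficient on at least one simple root outside $J$), so only finitely many monomials in the PBW basis can have a given $\fh$-weight. For the positive-degree part one uses in addition the $q$-grading: a fixed bidegree $(\lambda,k)$ pins down the total $z$-degree $k$, and within degrees $1,\dots,k$ each $\fg\otimes z^m$ is finite-dimensional, so again only finitely many PBW monomials contribute. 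Assembling these observations verifies every assumption, and the proof is complete by appeal to the definitions in Section~\ref{GeneralFramework}; in particular Theorem~\ref{thm::Ext::pairing} then applies to $\fa=\cP_J$.
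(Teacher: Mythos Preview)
Your verification of the Lie-algebraic bullet points (reductivity of $\fl_J$, gradedness and ideal property of $\fr_J$, finiteness of the graded pieces, abelianness of $\fh[z]$, the convex-cone argument for weight spaces of $\U(\fr_J(0))$, and the lower-central-series conditions) is correct and considerably more explicit than the paper's own proof, which dispatches all of this in one sentence by observing that every graded component sits inside the finite-dimensional $\fl_J$-module $\fg\otimes z^k$.

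The genuine difference is in how you handle the existence of injective/projective resolutions with \emph{finite total multiplicity of each irreducible}. The paper does not argue this from the PBW/convexity structure of $\U(\fr_J)$; instead it identifies $\bP_{\lambda,k}$ with a parabolic Verma module and invokes the parabolic BGG resolution (citing~\cite{Kum}) to produce, for each irreducible, an explicit resolution by such modules in which every $L_{\mu,m}$ occurs only finitely often; dualizing gives the injective side. Your argument---that finiteness of weight spaces in $\U(\fr_J)$ and $\U(\fr_J)^\vee$ suffices---establishes that each \emph{individual} injective $\bI_{\lambda,k}$ has finite multiplicities, but as written it does not immediately bound the \emph{sum over all terms} of a resolution; one would still need to control how many injectives of each type appear, e.g.\ by showing the relevant $\Ext$ groups are finite-dimensional via the same convexity bound applied to the Chevalley--Eilenberg complex. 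That can be made to work, so your route is a legitimate and more self-contained alternative, but the paper's BGG citation closes the gap in one stroke and also furnishes the explicit homological-dimension control used later (e.g.\ in the parabolic subsection~\S\ref{Main}).
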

	\begin{proof}
		The assumptions on the grading of the parahoric Lie algebra and its radical are satisfied because each graded component is a submodule of the finite-dimensional representation $\fg\otimes z^k$.
		
		The projective modules $\bP_{\lambda,k}$ are isomorphic to parabolic Verma modules $\Ind_{\cP_J^{-}}^{\gh} L_{\lambda}$.
		Thanks to the parabolic version of the BGG-resolution (see e.g.~\cite{Kum}) we know that each irreducible module $L_{\lambda,k}$ 
		admits a projective resolution that belongs to $\bD_-(\fC^\vee(\cP_J))$ and each module from $\fC^\vee(\cP_J)$ admits a projective resolution with finite multiplicity of each irreducible.
		By taking graded duals one gets the injective resolutions of modules.
	\end{proof}
	
	The kernel $\mu_{\cP_{J}}$ in the Ext-pairing~\eqref{eq::kernel} is given by
	\begin{equation}
		\label{eq::kernel::P_J}
		\mu_{\cP_{J}}= \prod_{\alpha\in\Phi_{J-}\sqcup\Phi_{+}}(1-X^{\alpha}) \prod_{k=1}^{\infty}\left(\prod_{\alpha\in\Phi}(1-q^{k}X^{\alpha})\right) \prod_{k=1}^{\infty}(1-q^k)^n.
	\end{equation}
	The last factor $\prod_{k=1}^{\infty}(1-q^k)^n$ does not depend on $X$ and hence does not play a serious role.

	\section{Parabolic pairings and orthogonal polynomials}\label{Macdonald}
	In this section, we study the parasymmetric Macdonald polynomials (a.k.a. intermideate Macdonald polynomials, see \cite{Bar1,Bar2,Sch}).  
	We use the notation from Section \ref{sec::parabolic::setup} and refer to \cite{BB} for the results about the Weyl groups.

	\subsection{Double affine Hecke algebra and nonsymmetric Macdonald pooynomials}

	Let $W^{af}=W\ltimes Q^\vee$ be the affine Weyl group and let $W^{e}=W\ltimes P^\vee$ be the extended affine Weyl group.
	Note that $\Pi=P/Q$ acts by the automorphisms on the affine
	Dynkin diagramm of $\fg$; it
	also naturally acts on the lattice $P$.
	Recall that the set of weights admits the standard partial \emph{dominance order} "$\leq$":
	$$
	\lambda \leq \mu \Longleftrightarrow \mu = \lambda +\sum_{i} k_i \alpha_i, \text{ with } k_i\geq 0.
	$$
	The Weyl group $W$ admits the standard Bruhat order "$\leq$" generated by inequalities:
	$$\sigma < s_\gamma \sigma\ \text{ iff } \ l(s_\gamma\sigma) = l(\sigma)+1, ~\gamma \in \Delta.$$
	For $\lambda \in P$ we denote by $\lambda_-$ the antidominant element in the $W$-orbit of $\lambda$ and let
	$\sigma_\la$ be the shortest element in $W$ such that $\lambda =\sigma_\la (\lambda_-)$.
	
	\begin{dfn}\label{def::Cherednik::ordering}
		The \emph{Cherednik partial order} "$\preceq$" and the \emph{dual Cherednik partial order} "$\preceq^{\vee}$" on the weight lattice $P$ are defined as follows 
		\[
		\begin{array}{r}
			\lambda\preceq \mu 
		\end{array}
		\Longleftrightarrow
		\left[
		\begin{array}{l}
			\lambda_- > \mu_-\\
			\lambda_- = \mu_- ~\text{and}~ \la \geq \mu
		\end{array}
		\right. 
		\Longleftrightarrow
		\left[
		\begin{array}{l}
			\lambda_- > \mu_-\\
			\lambda_- = \mu_- ~\text{and}~ \sigma_\la \geq \sigma_\mu 
		\end{array}
		\right. 
		\]
		\[
		\lambda\preceq^\vee \mu \Longleftrightarrow
		\left[
		\begin{array}{l}
			\lambda_- >  \mu_-\\
			\lambda_- = \mu_- ~\text{and}~ \la \leq \mu
		\end{array}
		\right. 
		\Longleftrightarrow
		\left[
		\begin{array}{l}
			\lambda_- > \mu_-\\
			\lambda_- = \mu_- ~\text{and}~ \sigma_\la \leq \sigma_\mu 
		\end{array}
		\right. 
		.
		\]
	\end{dfn}
	
	Let $e$ be the smallest number such that $\langle P, P^\vee \rangle\in \frac{1}{e}\bZ$.
	We consider the lattice $P \oplus \mathbb{Z}\frac{1}{e}\delta$, where $\langle \delta,P \rangle=\langle \delta, \delta \rangle=0$.
	There exists the natural action of the affine Weyl group $W^{af}$ on this lattice. 
	
	We consider the group ring $\mathbb{Z}(q^{\frac{1}{2e}},t)[P]$, where $q,t$ are formal variables. 
	Denote by $X^\lambda$ the formal exponential of the weight $\lambda$, $X_i:=X^{\omega_i}$, $X^\delta=q$. The action of $W^{af}$ is extended to
	this ring by $\tau X^{\mu}=X^{\tau(\mu)}.$
	The ring $\mathbb{Z}(q^{\frac{1}{2e}},t)[P]$ admits an action of the Double Affine Hecke Algebra $\DAHA$. 
	It is generated by elements  $\hX^\lambda$, $\lambda \in P$, $\pi \in \Pi$ and $\hT_i$, $i=0, 1, \dots, n$ satisfying the following relations (here $\hX_i=\hX^{\omega_i}$):
	
	\begin{gather}
		\hX^{\lambda} \hT_i = \hT_i \hX^{\lambda}, \text{ if } \langle\alpha_i^\vee,\lambda\rangle =0;\nonumber \\
		\hT_i\hX^{\lambda} \hT_i = t  \hX^{s_i\lambda}, \text{ if } \langle\alpha_i^\vee,\lambda\rangle =1; \nonumber\\
		\label{eq::Hecke::alg}
		\hT_i^{-1} = t^{-1} \hT_i + t^{-1} -1 \Leftrightarrow (\hT_i+1)(\hT_i-t) =0; \nonumber\\
		\label{eq::Braid::Relations}
		\underbrace{\hT_i \hT_j \hT_i \ldots }_{m_{{i,j}} \text{ factors}} \simeq \underbrace{\hT_j  \hT_i \hT_j \ldots }_{m_{{i,j}}\text{ factors}} \text{ with }m_{{i,j}} = ord_{W^{af}}(s_is_j);\\
		\pi \hT_i \pi^{-1}=\hT_j,\text{ if } \pi(\alpha_i)=\alpha_j;\nonumber\\
		\pi \hX^{\la} \pi^{-1}=\hX^{\pi(\la)}.\nonumber
	\end{gather}
	
	We denote by $\mathbb{Z}(q^{\frac{1}{2e}},t)[\hX]$ the ring generated by the elements $\hX_i^{\pm 1}$, $i=1,\dots, n$. Clearly, this ring admits an action of $W$ and it acts on $\mathbb{Z}(q^{\frac{1}{2e}},t)[P]$ as on the free module of rank one.
	
	The Double Affine Hecke Algebra was introduced by Cherednik, we use a slight modification of the generators compared to the initial definition.
	Namely, we rescale the generators $\hT_{s_i}$ by $\sqrt{t}$ and make the quadratic relations less symmetric but without half-integer powers (see e.g.~\cite{Ch2}).

	\begin{rem}
		Denote $\hT_{s_i}:=\hT_i$. 
		Relations \eqref{eq::Braid::Relations} imply the existence of elements $\hT_w$ for any $w$ from the affine Weyl group.  \end{rem}

	The action of  $\DAHA$ on $\mathbb{Z}(q^{\frac{1}{2e}},t)[P]$ is defined by the
	formulas
	\begin{gather*}
		\hX^\lambda X^\mu=X^{\lambda+\mu},\\
		\pi X^{\la}=X^{\pi(\la)},\\
		\hT_i X^\mu=\left(t s_i+\frac{t-1}{X^{\alpha_i}-1}(s_i-1)\right)X^\mu,
	\end{gather*}
	where $X^{\alpha_0}=qX^{-\theta}$ for the highest root $\theta$.
	
	For any $\nu \in P^\vee$ recall the element $\hY^{\nu}\in\DAHA$. These elements are defined in the following way. 
	Consider the translation element ${\mathbf t}_{\nu_+} \in W^{e}$, $\nu_+ \in P^{\vee +}$ and its reduced decomposition ${\mathbf t}_{\nu_+}=\pi s_{i_1}\dots s_{i_l}$. Then
	\begin{equation}
		\hY^{\nu_+}:= \pi \hT_{i_1}\dots \hT_{i_l}=\hT_{{\mathbf t}_{\nu_+}}.
	\end{equation}
	For any $\nu=\nu_+-\nu_- \in P^\vee$ let 
	$\hY^{\nu}:=\hY^{\nu_+}(\hY^{\nu_-})^{-1}$.
	These elements pairwise commute, $\hY^\eta \hY^\nu=\hY^{\eta+\nu}$, $\eta, \nu \in P^\vee$ (see \cite{Ch1}) and
	\begin{gather}\label{YTRelations}
		\hT_i \hY^\nu =  \hY^\nu \hT_i,~\text{if } \langle \alpha_i,\nu \rangle =0,\nonumber\\
		{\hT_i^{-1} \hY^\nu \hT_i^{-1}= t^{-1}\hY^{s_i\nu},~\text{if } \langle \alpha_i,\nu \rangle =1.}    
	\end{gather}
	
	Let $\mathbb{Z}(q^{\frac{1}{2e}},t)[\hY]$ be the ring generated by
	$\hY^{\omega_i^\vee}$, $i=1,\dots,n$. 
	The action of $W$ on this ring is defined by $\tau \hY^\nu=\hY^{\tau(\nu)}$.
	Let $\mu$ denote the nonsymmetric Cherednik kernel
	\[\mu=\prod_{k=1}^\infty \prod_{\alpha \in \Phi}\frac{1-q^k X^\alpha}{1-tq^k X^\alpha} \prod_{\alpha \in \Phi_+}\frac{1-X^\alpha}{1-tX^\alpha}.\] 
	For $f=f(X_1, \dots, X_n; q,t) \in\mathbb{Z}(q^{\frac{1}{2e}},t)[P]$ let
	\begin{equation}
		f^\star:= f(X_1^{-1}, \dots, X_n^{-1}; q^{-1},t^{-1}). 
	\end{equation}
	The map $\star$ is equivariant with respect to the antiautomorphism of $\DAHA$ defined on the generators in the following way:
	\[\hT_i^{\star}:=\hT_i^{-1},~(\hX^\lambda)^\star:=\hX^{-\lambda}, ~\pi^\star:=\pi^{-1},~q^{\star}:=q^{-1},~t^{\star}:=t^{-1}.\]
	For a series $f$ in variables $X_i$ we denote by $[ f]_1$ the constant term of $f$ and by $\mu_0$ the ratio $\mu/[ \mu]_1$.

	\begin{dfn}
		For $f,g \in \mathbb{Z}(q^{\frac{1}{2e}},t)[P]$ the Cherednik pairing is defined by
		$\langle f,g \rangle:=[f g^{*} \mu_0 ]_1.$
	\end{dfn}
	The following lemma is well known.
	\begin{lem}\label{Unitarity}
		\[\langle \hT_i^{-1}(f),g \rangle=\langle f,\hT_i(g) \rangle;~\langle \pi(f),g \rangle=\langle f,\pi^{-1}(g) \rangle.\]
		\end{lem}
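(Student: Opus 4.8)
\textbf{Proof proposal for Lemma \ref{Unitarity}.}

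The plan is to establish the two adjointness identities directly from the explicit action of $\hT_i$ and $\pi$ on $\mathbb{Z}(q^{\frac{1}{2e}},t)[P]$ together with the invariance properties of the kernel $\mu$. First I would record the key observation that the normalized kernel $\mu_0=\mu/[\mu]_1$ is (up to the constant $[\mu]_1$) $\star$-invariant in the sense $\mu_0^{\star}=\mu_0$: expanding $\mu$ as a series and using $(q^k X^\alpha)^{\star}=q^{-k}X^{-\alpha}$ together with $\prod_{\alpha\in\Phi_+}(1-X^\alpha)/(1-tX^\alpha)$ pairing with its negative-root counterpart, one sees each factor maps to the factor indexed by $-\alpha$ (or the same factor after clearing denominators), so the product is preserved. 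Equivalently, $\mu$ is annihilated by the appropriate symmetrized Demazure–Lusztig operators, which is the standard statement that $\mu$ is the weight of the Cherednik inner product.

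Second, for the $\pi$ identity I would write $\langle \pi(f),g\rangle = [\pi(f)\, g^{\star}\mu_0]_1$ and apply $\pi^{-1}$ inside the constant term. Since $\pi$ acts on $\mathbb{Z}[P]$ by a permutation of the lattice $P$ (coming from a diagram automorphism), it preserves the constant term: $[\pi^{-1}(h)]_1=[h]_1$ for any $h$. Applying $\pi^{-1}$ gives $[f\cdot \pi^{-1}(g^{\star})\cdot \pi^{-1}(\mu_0)]_1$. Because $\pi$ permutes $\Phi_+$ and commutes with the $q$-shift, $\pi^{-1}(\mu_0)=\mu_0$; and since $\star$ is equivariant for the antiautomorphism with $\pi^{\star}=\pi^{-1}$, we have $\pi^{-1}(g^{\star})=(\pi(g))^{\star}$. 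Hence $\langle\pi(f),g\rangle = [f\,(\pi(g))^{\star}\mu_0]_1$, but this is not quite the claim — I actually want $\langle f,\pi^{-1}(g)\rangle=[f\,(\pi^{-1}(g))^{\star}\mu_0]_1$, and $(\pi^{-1}(g))^{\star}=\pi(g^{\star})$; so instead I start from $\langle\pi(f),g\rangle$, substitute $h=\pi^{-1}$ under $[\cdot]_1$ to move everything, and match with $\langle f,\pi^{-1}(g)\rangle$ using $\pi^{-1}(\mu_0)=\mu_0$ and the $\star$-equivariance. The bookkeeping here is routine once the three ingredients (constant-term invariance, $\mu_0$-invariance, $\star$-equivariance) are in place.

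Third, for the $\hT_i$ identity the cleanest route is to use the divided-difference form $\hT_i = t s_i + \frac{t-1}{X^{\alpha_i}-1}(s_i-1)$ and compute its formal adjoint with respect to the bare pairing $(f,g)\mapsto [f g^{\star}\mu_0]_1$. Writing $\hT_i = a_i + b_i s_i$ with $a_i = \frac{1-t}{X^{\alpha_i}-1}$, $b_i = t + \frac{t-1}{X^{\alpha_i}-1}$, one checks that the $s_i$-part is self-adjoint for the symmetric measure and that multiplication operators have adjoint given by $\star$; assembling these and using that $s_i$ fixes the $i$-th part of $\mu_0$ while the ratio $b_i/a_i^{\star}$-type terms telescope, one finds that the adjoint of $\hT_i$ is exactly $\hT_i^{-1}$ (equivalently, $\langle \hT_i^{-1}(f),g\rangle = \langle f,\hT_i(g)\rangle$). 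This is the $W^{af}$-analog, for the rank-one affine reflection $s_i$, of the classical Macdonald computation, and it suffices to verify it for $i=0,\dots,n$ since each $\hT_i$ involves only the single affine root $\alpha_i$.

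\textbf{Main obstacle.} The genuinely delicate point is the $\star$-invariance of $\mu_0$ and the matching of the $i=0$ (affine) case, where $X^{\alpha_0}=qX^{-\theta}$ mixes the $q$-shift with the finite part; here one must be careful that the $q^k$-factors in $\mu$ shift consistently under $s_0$ and that no half-integer powers of $q$ intervene. Everything else — the constant-term invariance under $\pi$ and the adjoint computation for multiplication operators — is formal.
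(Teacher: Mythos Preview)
The paper does not prove this lemma; it is stated as well known, the relevant references being Cherednik~\cite{Ch1,Ch2} and Macdonald~\cite{M2}. Your outline is essentially the standard argument found there, and the $\pi$ part goes through once the bookkeeping is straightened out: the three ingredients you isolate ($[\pi(\cdot)]_1=[\cdot]_1$, $\pi(\mu_0)=\mu_0$, and $\star\circ\pi=\pi\circ\star$ on the polynomial representation) are correct and suffice. Two points, however, need correction.

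First, your justification of $\pi(\mu_0)=\mu_0$ via ``$\pi$ permutes $\Phi_+$'' is wrong: an element $\pi\in\Pi$ acts on the finite root system through a nontrivial Weyl group element and does \emph{not} preserve $\Phi_+$. The correct statement is that $\pi$, having length zero in $W^{e}$, permutes the set of positive \emph{affine} real roots, and $\mu$ is exactly the product of $(1-X^{\tilde\alpha})/(1-tX^{\tilde\alpha})$ over those roots; this is what gives $\pi(\mu)=\mu$.

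Second, the ``key observation'' $\mu_0^{\star}=\mu_0$ is a red herring. It is not used in either identity, and as a literal equality of formal series it is problematic (the naive factor-by-factor manipulation produces an infinite power of $t$). For the $\hT_i$ identity, what is actually needed --- and what your divided-difference sketch is implicitly reaching for --- is the rank-one functional equation
\[
\frac{s_i(\mu)}{\mu}=\frac{(1-X^{-\alpha_i})(1-tX^{\alpha_i})}{(1-X^{\alpha_i})(1-tX^{-\alpha_i})},
\]
valid uniformly for $i=0,1,\dots,n$ (with $X^{\alpha_0}=qX^{-\theta}$), together with $[s_i(\cdot)]_1=[\cdot]_1$. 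Combining this with the Demazure--Lusztig formula for $\hT_i$ and the equivariance $(\hT_i g)^{\star}=\hT_i^{-1}(g^{\star})$ yields the adjoint identity by a direct computation; the $i=0$ case is no harder than the others once this functional equation is in place, so the ``main obstacle'' you flag is not where the difficulty lies.
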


	Recall that the nonsymmetric Macdonald polynomials $E_\la(X,q,t)$, $\la\in P$ are defined by the following two properties:
	\begin{gather*}
		E_\la(X,q,t)= X^\la +\sum_{\nu\prec\la} c_{\la,\nu}(q,t) X^\nu,\\
		\langle E_\la(X,q,t), E_\nu(X,q,t)\rangle = \delta_{\la,\nu}.
	\end{gather*}
	We note that $E_\la(X,q,t)$ are joint eigenfunctions of the operators $\hY^\nu$:
	\begin{equation}\label{NonsymmetricEigenvalues}
		\hY^\nu E_\la(X,q,t)= q^{-(\la,\nu)} t^{(\sigma_\la^{-1}(\rho^\vee),\nu)} E_\la(X,q,t).
	\end{equation}
	The formula \eqref{NonsymmetricEigenvalues} implies the orthogonality. Indeed, by Lemma \ref{Unitarity} we have
	\begin{equation}
		\label{Hermitian}    
		\langle \hH(f),g \rangle=\langle f,\hH^\star(g) \rangle
	\end{equation}
	for any $\hH \in \DAHA$.
	In particular, $\langle \hY^\nu(f),g \rangle=\langle f,\hY^{-\nu}(g) \rangle.$
	Now the eigenvectors of operators $\hY^\nu$ with different eigenvalues are orthogonal with respect to the Cherednik pairing.

	\subsection{Parabolic Cherednik order and parasymmetric Macdonald polynomials}
	Recall the natural action of $W$ on the ring $\mathbb{Z}(q^{\frac{1}{2e}},t)[P]$. For a subset $J \subset \{1, \dots,n\}$ recall the subgroup $W^J \subset W$. 
	In this section we study the subring of $\mathbb{Z}(q^{\frac{1}{2e}},t)[P]$ consisting of   $W^J$ invariant functions $\mathbb{Z}(q^{\frac{1}{2e}},t)[P]^{W^J}$. 
	Clearly the elements 
	\begin{equation}
		m^J_\lambda:=\frac{1}{|{\rm Stab}_{W^J}(\lambda)|}\sum_{\sigma \in W^J}X^{\sigma(\lambda)}, ~\la \in P^-_J
	\end{equation}
	form a $\mathbb{Z}(q^{\frac{1}{2e}},t)$ basis of $(\mathbb{Z}(q^{\frac{1}{2e}},t)[P])^{W^J}$.
	
	\begin{rem}
		We note that the polynomials $m_\la^J$ are defined for $J$-antidominant weights $\la$.
		If $J=\varnothing$, then $m_\la^J$
		are monomials. If $J=\{1,\dots,n\}$ (the maximal case), then $m_\la^J$
		are the monomial symmetric functions.
	\end{rem}
	
	We now define a parasymmetric pairing on $\mathbb{Z}(q^{\frac{1}{2e}},t)[P]^{W^J}$. Consider 
	\[
	\mu^J=\prod_{k=1}^\infty \prod_{\alpha \in \Phi}\frac{1-q^k X^\alpha}{1-tq^k X^\alpha} \prod_{\alpha \in \Phi_+}\frac{1- X^\alpha}{1-t X^\alpha}\prod_{\alpha \in \Phi_{J-}}\frac{1- X^\alpha}{1-t X^\alpha}\]
	(we note that only the last factor depends on $J$).
	Recall the normalization 
	$\mu_0^J:=\mu^J/[\mu^J]_1$ of $\mu^J$.
	
	\begin{dfn}
		\label{def::pairing::J}
		For $f,g \in \mathbb{Z}(q^{\frac{1}{2e}},t)[P]^{W^J}$ let
		$\langle f,g \rangle^J:=[ f g^\star \mu_0^J ]_1.$
	\end{dfn}

	\begin{dfn}\label{Parasymmetric}
		Parasymmetric Macdonald polynomials $E_\lambda^J(X_1, \dots, X_n; q,t)\in \mathbb{Z}(q^{\frac{1}{2e}},t)[P]^{W^J}$ are defined by the following properties:
		\begin{itemize}
			\item $E_\lambda^J(X_1, \dots, X_n; q,t)=m^J_\lambda+\sum_{\mu \prec \lambda}c_\la^\mu m^J_\mu$,
			\item $\langle E_\lambda^J(X_1, \dots, X_n; q,t), E_\mu^{J}(X_1, \dots, X_n; q,t) \rangle^J =0$ for $\lambda \neq \mu$.
		\end{itemize}
	\end{dfn}
	
	It is clear from the definition that $E_\lambda^J(X_1, \dots, X_n; q,t)$ are defined uniquely if exist.
	We consider the $J$ symmetrizer
	\[{\bf P}^J:=\sum_{\sigma \in W^J}\hT_\sigma.\]
	
	\begin{lem}\label{PSymmetrizer}
		For any  $f \in \mathbb{Z}(q^{\frac{1}{2e}},t)[P]$ the polynomial 
		${\bf P}^Jf$ is $W^J$-symmetric.
	\end{lem}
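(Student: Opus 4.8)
The plan is to show that for each $i\in J$ the element ${\bf P}^Jf$ is fixed by the Hecke generator $\hT_i$ up to the scalar $t$, i.e. $\hT_i\,{\bf P}^Jf = t\,{\bf P}^Jf$, and then to deduce genuine $W^J$-invariance of the \emph{polynomial} ${\bf P}^Jf$ by comparing the Hecke action of $\hT_i$ with the geometric action of $s_i$. The first part is purely an identity in the affine Hecke algebra. Fix $i\in J$. Writing the left coset decomposition $W^J = \{e,s_i\}\cdot({}^{s_i}W^J)$ where ${}^{s_i}W^J$ denotes the minimal-length coset representatives for $\langle s_i\rangle\backslash W^J$, we have ${\bf P}^J = (1+\hT_i)\sum_{\sigma\in{}^{s_i}W^J}\hT_\sigma$, since $l(s_i\sigma)=l(\sigma)+1$ for such $\sigma$ so that $\hT_{s_i\sigma}=\hT_i\hT_\sigma$. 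Applying the quadratic relation $(\hT_i+1)(\hT_i-t)=0$ from \eqref{eq::Hecke::alg}, i.e. $\hT_i(1+\hT_i) = t(1+\hT_i)$, yields $\hT_i\,{\bf P}^J = t\,{\bf P}^J$ as operators, hence $\hT_i\,{\bf P}^Jf = t\,{\bf P}^Jf$ for all $f$.

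For the second step I would use the explicit formula for the $\DAHA$-action,
\[
\hT_i X^\mu = \left(t s_i + \frac{t-1}{X^{\alpha_i}-1}(s_i-1)\right)X^\mu ,
\]
which rearranges to
\[
(\hT_i - t s_i)X^\mu = \frac{t-1}{X^{\alpha_i}-1}(s_i - 1)X^\mu .
\]
Set $h := {\bf P}^Jf$. From $\hT_i h = t h$ we get $(t - t s_i)h = (\hT_i - t s_i)h = \dfrac{t-1}{X^{\alpha_i}-1}(s_i-1)h$, i.e.
\[
t(1 - s_i)h = \frac{t-1}{X^{\alpha_i}-1}(s_i-1)h = -\frac{t-1}{X^{\alpha_i}-1}(1-s_i)h .
\]
Writing $g := (1-s_i)h$, this reads $t\,g = -\dfrac{t-1}{X^{\alpha_i}-1}\,g$, hence $\bigl(t + \dfrac{t-1}{X^{\alpha_i}-1}\bigr)g = 0$, i.e. $\dfrac{tX^{\alpha_i}-1}{X^{\alpha_i}-1}\,g = 0$. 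Since $\dfrac{tX^{\alpha_i}-1}{X^{\alpha_i}-1}$ is a nonzero element of the fraction field of $\mathbb{Z}(q^{\frac{1}{2e}},t)[P]$ and the latter ring is a domain, we conclude $g=0$, that is $s_i h = h$. As this holds for every $i\in J$ and the $s_i$, $i\in J$, generate $W^J$, the polynomial $h={\bf P}^Jf$ is $W^J$-symmetric.

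I expect the only genuine subtlety to be the passage from the Hecke-theoretic identity $\hT_i\,{\bf P}^Jf=t\,{\bf P}^Jf$ to honest $s_i$-invariance: one must be careful that the operator identity $\hT_i-ts_i = \frac{t-1}{X^{\alpha_i}-1}(s_i-1)$ is valid on the whole ring $\mathbb{Z}(q^{\frac{1}{2e}},t)[P]$ (the potential pole at $X^{\alpha_i}=1$ is cancelled by the numerator $s_i-1$, which kills any $s_i$-invariant, and more generally maps into the ideal generated by $X^{\alpha_i}-1$), and that cancelling the scalar factor $\frac{tX^{\alpha_i}-1}{X^{\alpha_i}-1}$ is legitimate because we work in an integral domain. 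Everything else — the coset decomposition and the use of the quadratic relation — is routine. Note that the same argument shows, more precisely, that ${\bf P}^J$ is (up to the normalizing constant $\sum_{\sigma\in W^J}t^{l(\sigma)}$, which is the eigenvalue of ${\bf P}^J$ on the $t$-eigenspace) a projector onto $\mathbb{Z}(q^{\frac{1}{2e}},t)[P]^{W^J}$, a fact that will presumably be recorded and used subsequently.
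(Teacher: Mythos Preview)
Your proof is correct and shares its core with the paper's: both use the coset decomposition
\[
{\bf P}^J \;=\; (1+\hT_i)\sum_{\sigma\in{}^{s_i}W^J}\hT_\sigma
\]
for each $i\in J$. The difference lies only in the final step. The paper dispenses with the eigenvalue argument entirely and instead exhibits a direct factorization
\[
(1+\hT_i)\,g \;=\; (1+s_i)\!\left(\frac{(t-1)X^{\alpha_i}}{1-X^{\alpha_i}}+t\right)g,
\]
so that anything in the image of $1+\hT_i$ is visibly $s_i$-invariant. Your route is the contrapositive of the same fact: you first prove the purely Hecke-algebraic identity $\hT_i{\bf P}^J=t\,{\bf P}^J$ via the quadratic relation, and then show that the $t$-eigenspace of $\hT_i$ on the polynomial representation consists exactly of $s_i$-invariants. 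Both arguments are short; yours cleanly separates the Hecke-algebra computation from the passage to geometric invariance (and, as you note, yields the extra observation that ${\bf P}^J$ is an unnormalized projector onto $\mathbb{Z}(q^{\frac{1}{2e}},t)[P]^{W^J}$), while the paper's factorization is a one-line shortcut that avoids any discussion of cancelling $\frac{tX^{\alpha_i}-1}{X^{\alpha_i}-1}$ in a domain.
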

	\begin{proof}
		Let $\sigma \in W^J$ and assume $l(s_i\sigma)=l(\sigma)+1$ for some simple reflection $s_i \in W^J$. Then
		\[(\hT_\sigma+\hT_{s_i \sigma})f=(1+\hT_i)\hT_\sigma f.\]
		However
		\[(1+\hT_i)f=(1+s_i)\left( \frac{(t-1)X^{\alpha_i}}{1-X^{\alpha_i}} +t \right)f.\]
		Therefore $(\hT_\sigma+\hT_{s_i \sigma})$ is left divisible by $(1+s_i)$ and hence $(\hT_\sigma+\hT_{s_i \sigma})f$ is $W^{\{i\}}$
		symmetric (here $W^{\{i\}}=\{1,s_i\}$). However $W^J=\{\sigma \in W^J|\ l(s_i\sigma)=l(\sigma)+1\}\cup\{s_i\sigma \in W^J|\ l(s_i\sigma)=l(\sigma)+1\}$. Therefore  ${\bf P}^Jf$ is $W^{\{i\}}$
		symmetric for any $i \in J$, and hence  ${\bf P}^Jf$ is $W^J$-symmetric.
	\end{proof}
	
	\begin{lem}\label{PCommutes}
		For any  $\mathcal{G}_1 \in \mathbb{Z}(q^{\frac{1}{2e}},t)[\hX]^{W^J}\subset \DAHA$ and
		$\mathcal{G}_2 \in \mathbb{Z}(q^{\frac{1}{2e}},t)[\hY]^{W^J}\subset\DAHA$ one has: 
		\[{\bf P}^J\mathcal{G}_1=\mathcal{G}_1{\bf P}^J,\qquad 
		{\bf P}^J\mathcal{G}_2=\mathcal{G}_2{\bf P}^J.\]  
	\end{lem}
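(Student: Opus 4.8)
The plan is to prove the two commutation identities essentially independently, using in each case that ${\bf P}^J$ is, up to invertible scalars, a sum over a group and that the relevant generators can be moved past the individual Hecke operators $\hT_\sigma$. The cleanest route is to reduce everything to the rank-one situation handled inside the proof of Lemma~\ref{PSymmetrizer}, namely the identity $(1+\hT_i)\hT_\sigma = (1+s_i)\bigl(\tfrac{(t-1)X^{\alpha_i}}{1-X^{\alpha_i}}+t\bigr)\hT_\sigma$, and to the analogous fact on the $\hY$-side.

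\smallskip

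\textbf{First identity: ${\bf P}^J\mathcal{G}_1 = \mathcal{G}_1{\bf P}^J$ for $\mathcal{G}_1\in\mathbb{Z}(q^{\frac{1}{2e}},t)[\hX]^{W^J}$.} I would argue it suffices to treat a single $W^J$-invariant Laurent polynomial $\mathcal{G}_1 = m^J_\mu(\hX)$, $\mu\in P^-_J$, since these span the subring. For a fixed $i\in J$ I claim $(1+\hT_i)\,m^J_\mu(\hX) = m^J_\mu(\hX)\,(1+\hT_i)$; indeed, from the DAHA relations one gets $\hT_i \hX^\lambda = (ts_i + \tfrac{t-1}{\hX^{\alpha_i}-1}(s_i-1))\hX^\lambda$, so $(1+\hT_i)$ acts on $\mathbb{Z}(q^{\frac{1}{2e}},t)[\hX]$ as left-multiplication by a $W^{\{i\}}$-symmetrizing operator, and a $W^{\{i\}}$-symmetric Laurent polynomial in the $\hX$'s is central for that operator — this is exactly the rank-one computation already used in the proof of Lemma~\ref{PSymmetrizer}, namely $(1+\hT_i)h(\hX) = h(\hX)(1+\hT_i)$ whenever $s_i h(\hX) = h(\hX)$. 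Writing ${\bf P}^J = \sum_{\sigma\in W^J}\hT_\sigma$ and decomposing $W^J$ according to the parabolic quotient by $\langle s_i\rangle$ as in Lemma~\ref{PSymmetrizer}, every pair $\hT_\sigma + \hT_{s_i\sigma} = (1+\hT_i)\hT_\sigma$ absorbs $m^J_\mu(\hX)$ on the left. To push $m^J_\mu(\hX)$ all the way past each remaining $\hT_\sigma$, write $\sigma = s_{i_1}\cdots s_{i_r}$ as a reduced word in simple reflections from $J$ and move the invariant polynomial one reflection at a time; since $m^J_\mu(\hX)$ is $W^J$-invariant, at each step one is in the $W^{\{i_j\}}$-symmetric situation and the same rank-one identity applies. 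Hence ${\bf P}^J\mathcal{G}_1 = \mathcal{G}_1{\bf P}^J$.

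\smallskip

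\textbf{Second identity: ${\bf P}^J\mathcal{G}_2 = \mathcal{G}_2{\bf P}^J$ for $\mathcal{G}_2\in\mathbb{Z}(q^{\frac{1}{2e}},t)[\hY]^{W^J}$.} This is the formally dual statement, and I would deduce it from the duality antiautomorphism of $\DAHA$ together with relations \eqref{YTRelations}, which are the $\hY$-analogues of the $\hX$-relations: $\hT_i\hY^\nu = \hY^\nu\hT_i$ if $\langle\alpha_i,\nu\rangle=0$ and $\hT_i^{-1}\hY^\nu\hT_i^{-1} = t^{-1}\hY^{s_i\nu}$ if $\langle\alpha_i,\nu\rangle=1$. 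From these one obtains, exactly as on the $\hX$-side, that $(1+\hT_i)$ (or, more naturally on this side, a version involving $\hT_i^{-1}$) commutes with any $W^{\{i\}}$-invariant polynomial in the $\hY$'s; spanning $\mathbb{Z}(q^{\frac{1}{2e}},t)[\hY]^{W^J}$ by $\{m^J_\mu(\hY)\}$ and repeating the parabolic-quotient decomposition of ${\bf P}^J$ as above gives the result. Alternatively — and this may be the slicker write-up — apply the antiautomorphism $\star$ of $\DAHA$: it fixes each $m^J_\mu(\hX)$ up to replacing $\mu$ by $-\mu$ (still $W^J$-invariant), sends $\hY$-polynomials to $\hY$-polynomials, and sends ${\bf P}^J = \sum_\sigma \hT_\sigma$ to $\sum_\sigma \hT_{\sigma^{-1}}^{-1} = \sum_\sigma \hT_\sigma^{-1}$, so the $\hY$-statement is equivalent to an $\hX$-type statement for the opposite symmetrizer, which is handled by the same rank-one lemma.

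\smallskip

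\textbf{Main obstacle.} The one point that needs genuine care is the inductive "move the invariant polynomial past a reduced word $\hT_\sigma$" step: for a single simple reflection $s_i$ the operator $(1+\hT_i)$ factors as $(1+s_i)\cdot(\text{scalar operator})$ on the polynomial representation, but composing such factors along a reduced word is only legitimate because at each stage the polynomial one is commuting past is $W^J$-invariant (hence in particular $s_{i_j}$-invariant for the reflection currently in play). I would state this carefully as a small lemma: if $h$ is $W^{\{i\}}$-symmetric in the $\hX$'s then $h\,\hT_i = \hT_i\,h$ in $\DAHA$ acting on the polynomial representation, and since the polynomial representation is faithful this is an identity in $\DAHA$ itself. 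Granting that, the parabolic decomposition of $W^J$ used in Lemma~\ref{PSymmetrizer} does the rest with no further surprises.
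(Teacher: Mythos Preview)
Your proposal is correct and ultimately rests on the same key fact the paper uses: a $W^J$-invariant Laurent polynomial $\mathcal{G}_1$ in the $\hX$'s commutes with each individual $\hT_i$ for $i\in J$ (this is exactly your ``small lemma'' in the final paragraph), whence it commutes with every $\hT_\sigma$ for $\sigma\in W^J$ by writing out a reduced word, and hence with ${\bf P}^J$. For the $\hY$-side the paper does precisely what you offer as the ``slicker write-up'': invoke $\DAHA$ duality to reduce the second claim to the first.

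The difference is that the paper is considerably more direct. Your route through $(1+\hT_i)$ and the pairing $\hT_\sigma+\hT_{s_i\sigma}$ borrowed from Lemma~\ref{PSymmetrizer} is unnecessary baggage: once you have $\hT_i\mathcal{G}_1=\mathcal{G}_1\hT_i$, the commutation with ${\bf P}^J$ is immediate, and the parabolic decomposition plays no role. Moreover, the paper proves the rank-one commutation $\hT_i\mathcal{G}_1=\mathcal{G}_1\hT_i$ purely from the defining $\DAHA$ relations rather than via the polynomial representation and a faithfulness argument: for fixed $i\in J$, any $s_i$-invariant Laurent polynomial lies in the subring generated by $\hX_j^{\pm1}$ ($j\neq i$) together with $\hX^{\omega_i}+\hX^{\omega_i-\alpha_i}$, and each of these generators commutes with $\hT_i$ directly by the relations $\hX^\lambda\hT_i=\hT_i\hX^\lambda$ when $\langle\alpha_i^\vee,\lambda\rangle=0$ and $\hT_i\hX^\lambda\hT_i=t\hX^{s_i\lambda}$ when $\langle\alpha_i^\vee,\lambda\rangle=1$. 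This gives the identity in $\DAHA$ itself without appeal to the representation.
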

	\begin{proof}
		Two claims are equivalent because of $\DAHA$ duality (see \cite{Ch1}). We prove the first claim. 
		By assumption the polynomial $\mathcal{G}_1=\mathcal{G}_1(\hX_1^{\pm 1}, \dots, \hX_n^{\pm 1})$
		is symmetric with respect to $W^{\{i\}}$ for any $i\in J$. 
		Therefore
		\[\mathcal{G}_1\in\mathbb{Z}(q^{\frac{1}{2e}},t)[\hX_1^{\pm 1}, \dots, \hX_{i-1}^{\pm 1},\hX^{\omega_i}+\hX^{\omega_i-\alpha_i},\hX_{i+1}^{\pm 1}, \dots, \hX_{n}^{\pm 1}].\]  
		All the generators of this ring commute with $\hT_i$ and hence $\mathcal{G}_1\hT_i=\hT_i\mathcal{G}_1$ for $i \in J$.
		This implies the desired statement.
	\end{proof}
	
	\begin{lem}
		For any  $\mathcal{G} \in \mathbb{Z}(q^{\frac{1}{2e}},t)[\hY]^{W^J}$,
		$\la\in P_J^-$ one has: 
		\[\mathcal{G}m_\lambda^J=\sum_{P_J^-\ni \nu \preceq \lambda}c_{\mathcal{G},\lambda}^\nu m_\nu^J.\]
	\end{lem}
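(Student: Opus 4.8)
The plan is to deduce the statement from two claims about $h:=\mathcal{G}m_\lambda^J$: that $h$ is $W^J$-symmetric, and that every monomial $X^\nu$ occurring in $h$ satisfies $\nu\preceq\lambda$. Granting these, the lemma is immediate: the monomial supports of the basis elements $m_\nu^J$, $\nu\in P_J^-$, are pairwise disjoint (two $W^J$-antidominant weights in the same $W^J$-orbit coincide) and $m_\nu^J$ is monic in $X^\nu$, so the coefficient of $m_\nu^J$ in any $W^J$-symmetric Laurent polynomial equals the coefficient of its antidominant monomial $X^\nu$; by the second claim this coefficient vanishes unless $\nu\preceq\lambda$, and all the coefficients obviously lie in $\mathbb{Z}(q^{\frac{1}{2e}},t)$.

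For the first claim I would first record that $\hT_i$ acts on a $W^{\{i\}}$-symmetric function by multiplication by $t$; this is contained in the proof of Lemma~\ref{PSymmetrizer}, where $(1+\hT_i)f=(1+s_i)\bigl(\tfrac{(t-1)X^{\alpha_i}}{1-X^{\alpha_i}}+t\bigr)f$ is seen to equal $(t+1)f$ once $f=s_if$. Hence $\hT_\sigma f=t^{l(\sigma)}f$ for $\sigma\in W^J$ and $f$ $W^J$-symmetric, so $\mathbf{P}^J$ acts on $\mathbb{Z}(q^{\frac{1}{2e}},t)[P]^{W^J}$ by the scalar $\sum_{\sigma\in W^J}t^{l(\sigma)}$, which is invertible (its $t$-constant term is $1$). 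In particular $m_\lambda^J=\bigl(\sum_{\sigma\in W^J}t^{l(\sigma)}\bigr)^{-1}\mathbf{P}^Jm_\lambda^J$, and therefore, using Lemma~\ref{PCommutes} (which says $\mathbf{P}^J$ commutes with $\mathcal{G}\in\mathbb{Z}(q^{\frac{1}{2e}},t)[\hY]^{W^J}$),
\[
h=\mathcal{G}m_\lambda^J=\Bigl(\textstyle\sum_{\sigma\in W^J}t^{l(\sigma)}\Bigr)^{-1}\mathcal{G}\,\mathbf{P}^Jm_\lambda^J=\Bigl(\textstyle\sum_{\sigma\in W^J}t^{l(\sigma)}\Bigr)^{-1}\mathbf{P}^J\bigl(\mathcal{G}m_\lambda^J\bigr),
\]
which is $W^J$-symmetric by Lemma~\ref{PSymmetrizer}.

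For the second claim I would expand in the nonsymmetric Macdonald basis. Inverting $E_\mu=X^\mu+\sum_{\kappa\prec\mu}(\ast)X^\kappa$ gives $X^{\sigma\lambda}=E_{\sigma\lambda}+\sum_{\kappa\prec\sigma\lambda}(\ast)E_\kappa$ for each $\sigma\in W^J$. Now $\sigma\lambda\preceq\lambda$ for every $\sigma\in W^J$: indeed $\sigma\lambda$ and $\lambda$ lie in one $W$-orbit, so $(\sigma\lambda)_-=\lambda_-$, while $\sigma\lambda-\lambda\in\sum_{i\in J}\bZ_{\ge 0}\alpha_i$ because a $W^J$-antidominant weight is the minimum of its $W^J$-orbit in the root-cone order; hence $\sigma\lambda\ge\lambda$ in the dominance order and Definition~\ref{def::Cherednik::ordering} yields $\sigma\lambda\preceq\lambda$. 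Summing over $\sigma\in W^J$, the $E$-expansion of $m_\lambda^J$ is supported on $\{\mu:\mu\preceq\lambda\}$. Since each $E_\mu$ is a joint $\hY^\nu$-eigenvector by \eqref{NonsymmetricEigenvalues}, the polynomial $\mathcal{G}\in\mathbb{Z}(q^{\frac{1}{2e}},t)[\hY]$ acts diagonally on the $E$-basis, so the $E$-expansion of $h$ is still supported on $\{\mu:\mu\preceq\lambda\}$; expanding each such $E_\mu$ back into monomials and using transitivity ($\kappa\prec\mu\preceq\lambda\Rightarrow\kappa\preceq\lambda$) shows every monomial of $h$ is $\preceq\lambda$, as claimed.

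The only step that needs real care is keeping the two partial orders straight and establishing $\sigma\lambda\preceq\lambda$ for $\sigma\in W^J$; once that order comparison is in hand the rest is bookkeeping, and there are no convergence issues because $\mathcal{G}$ preserves $\mathbb{Z}(q^{\frac{1}{2e}},t)[P]$ and the normalizing factor $\sum_{\sigma\in W^J}t^{l(\sigma)}$ is manifestly invertible.
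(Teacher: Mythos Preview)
Your proof is correct and follows essentially the same two-step strategy as the paper: $W^J$-symmetry via $\mathbf{P}^J$ and Lemma~\ref{PCommutes}, then triangularity by reducing to the nonsymmetric ($J=\varnothing$) case. You are more explicit on two points the paper leaves implicit: you correctly track the scalar $\sum_{\sigma\in W^J}t^{l(\sigma)}$ (the paper's literal equality $\mathcal{G}m_\lambda^J=\mathcal{G}\mathbf{P}^Jm_\lambda^J$ is off by this factor, though harmless for the symmetry claim), and you spell out the order comparison $\sigma\lambda\preceq\lambda$ for $\sigma\in W^J$ together with the $E$-basis diagonalization that the paper bundles into the citation ``the $J=\varnothing$ case (see~\cite{Ch1})''.
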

	\begin{proof}
		Clearly 
		\[\mathcal{G}m_\lambda^J=\mathcal{G}{\bf P}^Jm_\lambda^J={\bf P}^J \mathcal{G}m_\lambda^J \in \mathbb{Z}(q^{\frac{1}{2e}},t)[X]^{W^J}\]
		so $\mathcal{G}m_\lambda^J=\sum_{\nu}c_{\mathcal{G},\lambda}^\nu m_\nu^J.$
		We need to check that $c_{\mathcal{G},\lambda}^\nu=0$ if $\nu \not \preceq \lambda$. Assume that $c_{\mathcal{G},\lambda}^\nu\neq 0$ for some $\nu\not \preceq \lambda$. Then
		\[\mathcal{G}m_\lambda^J=\sum_{\eta \in P} d_\eta X^\eta\]
		and $d_\nu=c_{\mathcal{G},\lambda}^\nu\neq 0$. However $\nu\not \preceq \lambda$, which contradicts the $J=\varnothing$ case (see \cite{Ch1}).
	\end{proof}

	We denote
	\[R_\lambda^J(t):=\sum_{\tau \in {\rm Stab}_{W^J}({w_0^J\lambda})}t^{l(\tau)},\]
	where $\rm{Stab}_{G}(\xi)$ is the subgroup of a group $G$ stabilizing an element $\xi$ and  $w_0^J$ is the longest element in $W^J$.
	
	\begin{thm}\label{PSMc}
		Parasymmetric Macdonald polynomials  $E_\lambda^J(X_1, \dots, X_n; q,t)$ exist for any $\lambda \in P^-_J$. One has an explicit formula
		\[E_\lambda^J(X_1, \dots, X_n; q,t)=\frac{1}{R_\lambda^J(t)}{\bf P}^J E_{w_0^J\lambda}(X_1, \dots, X_n; q,t).\]
	\end{thm}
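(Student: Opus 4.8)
The plan is to verify that the right-hand side $F_\lambda := \frac{1}{R_\lambda^J(t)}{\bf P}^J E_{w_0^J\lambda}$ satisfies the two defining properties of $E_\lambda^J$ from Definition~\ref{Parasymmetric}, since by the remark following that definition such a polynomial is unique. Write $\nu := w_0^J\lambda$; note $\nu$ is the $W^J$-dominant element of the $W^J$-orbit of $\lambda$, so $\lambda = w_0^J\nu = \nu_-^{W^J}$ in the obvious sense, and $\sigma_\nu$ (relative to $W^J$) is $w_0^J$ itself. First I would record that $F_\lambda$ is $W^J$-symmetric: this is immediate from Lemma~\ref{PSymmetrizer}.

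Next I would establish the triangularity $F_\lambda = m_\lambda^J + \sum_{\mu \prec \lambda} c_\lambda^\mu m_\mu^J$. The input is the nonsymmetric triangularity $E_\nu = X^\nu + \sum_{\eta \prec \nu} c_{\nu,\eta} X^\eta$. Applying ${\bf P}^J$ and using that ${\bf P}^J X^\eta$ lies in $\mathbb{Z}(q^{1/2e},t)[P]^{W^J}$, one expands ${\bf P}^J X^\eta$ in the monomial basis $\{m_\mu^J\}$; the key combinatorial point is that ${\bf P}^J$ preserves (a suitable version of) the Cherednik order, so the only $m_\mu^J$ appearing have $\mu \preceq \mu' \preceq \lambda$ where $\mu'$ is the $J$-antidominant representative of $\eta$'s $W^J$-orbit — this is exactly the content (for $\mathcal{G}$ a symmetrizer-type operator) of the order-preservation lemma proved just above the theorem, or can be extracted from the $J=\varnothing$ statements in \cite{Ch1}. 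The leading term: ${\bf P}^J X^\nu$ should, after normalization by $R_\lambda^J(t)$, produce $m_\lambda^J$ with coefficient exactly $1$ — here one computes $\hT_\sigma X^\nu$ for $\sigma \in W^J$ and observes that the "top" monomial $X^{w_0^J\nu} = X^\lambda$ is produced with total coefficient $\sum_{\tau \in {\rm Stab}_{W^J}(\nu)} t^{l(\tau)} = R_\lambda^J(t)$ (the stabilizer terms contribute powers of $t$ via the quadratic relation $\hT_i X^\mu = t X^\mu$ when $\langle \alpha_i^\vee,\mu\rangle = 0$, and all other $\hT_\sigma$ produce strictly lower monomials). Dividing by $R_\lambda^J(t)$ gives the normalization $m_\lambda^J$, and one checks no other $\sigma$-contribution reaches the $W^J$-orbit of $\lambda$ from above.

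For orthogonality, I would argue that $F_\lambda$ is a simultaneous eigenfunction of the operators $\mathcal{G} \in \mathbb{Z}(q^{1/2e},t)[\hY]^{W^J}$. Indeed by Lemma~\ref{PCommutes}, $\mathcal{G}{\bf P}^J = {\bf P}^J\mathcal{G}$, and by \eqref{NonsymmetricEigenvalues} $E_\nu$ is an eigenfunction of every $\hY^\eta$; the $W^J$-symmetric combinations $\mathcal{G}$ act on $E_\nu$ by the scalar obtained by $W^J$-symmetrizing the eigenvalue $q^{-(\nu,\cdot)}t^{(\sigma_\nu^{-1}(\rho^\vee),\cdot)}$, hence $\mathcal{G} F_\lambda = \frac{1}{R_\lambda^J(t)}{\bf P}^J \mathcal{G} E_\nu$ is the same scalar times $F_\lambda$. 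Then, just as in the nonsymmetric case, one uses that $\langle \mathcal{G} f, g\rangle^J = \langle f, \mathcal{G}^\star g\rangle^J$ for $\mathcal{G} \in \mathbb{Z}(q^{1/2e},t)[\hY]^{W^J}$ — which follows from Lemma~\ref{Unitarity}/equation \eqref{Hermitian} applied to the generators $\hT_i, \pi$ assembling $\hY^\eta$, restricted to the $W^J$-symmetric subring where the parasymmetric pairing $\langle\cdot,\cdot\rangle^J$ agrees (up to the harmless extra $J$-factor) with the Cherednik pairing — so eigenfunctions with distinct eigenvalues are $\langle\cdot,\cdot\rangle^J$-orthogonal. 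The remaining point is that the eigenvalue system separates the $J$-antidominant weights: if $\lambda \neq \mu$ in $P_J^-$ then $w_0^J\lambda \neq w_0^J\mu$, so either $\lambda_- \neq \mu_-$ (distinguished by the $q$-part of the symmetrized eigenvalue) or $\lambda_- = \mu_-$ with distinct $\sigma$'s, which the $W^J$-symmetrized $t$-part still distinguishes because the relevant minimal coset representatives differ.

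The main obstacle I anticipate is the bookkeeping in the two delicate spots: (i) proving the leading coefficient of ${\bf P}^J E_\nu$ in $m_\lambda^J$ is precisely $R_\lambda^J(t)$ — this requires carefully tracking which $\hT_\sigma X^\nu$ can produce a monomial in the top $W^J$-orbit and summing the $t$-powers, using the length-additive decomposition of $W^J$-cosets and the precise action formula for $\hT_i$; and (ii) checking that the symmetrized $Y$-eigenvalues genuinely separate all pairs $\lambda \neq \mu \in P_J^-$, which amounts to a statement about $W^J$-orbits of the Cherednik eigenvalue map. Both are essentially the $J$-equivariant refinements of standard DAHA facts, so I would lean on the cited \cite{Ch1,Ch2} for the $J = \varnothing$ prototypes and supply the $W^J$-symmetrization step in detail.
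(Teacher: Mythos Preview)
Your overall strategy matches the paper's almost exactly: $W^J$-symmetry via Lemma~\ref{PSymmetrizer}, the eigenfunction property via Lemma~\ref{PCommutes}, orthogonality from distinct eigenvalues, and the triangularity/leading-coefficient computation for $R_\lambda^J(t)$. The order is reversed (you do triangularity before orthogonality) but that is immaterial.

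There is, however, one step you have not adequately justified. You write that on the $W^J$-symmetric subring the parasymmetric pairing $\langle\cdot,\cdot\rangle^J$ ``agrees (up to the harmless extra $J$-factor) with the Cherednik pairing'', and then import self-adjointness of the $\hY$'s from \eqref{Hermitian}. But the extra factor $\prod_{\alpha\in\Phi_{J-}}\frac{1-X^\alpha}{1-tX^\alpha}$ relating $\mu^J$ to $\mu$ is \emph{not} a constant in $X$, so this is not automatic. The paper handles this differently: it first proves orthogonality of the ${\bf P}^J E_{w_0^J\lambda}$ for the \emph{nonsymmetric} Cherednik pairing $\langle\cdot,\cdot\rangle$ (where \eqref{Hermitian} applies directly), and then transfers to $\langle\cdot,\cdot\rangle^J$ via the $W^J$-averaging identity
\[
\sum_{w\in W^J} w(\mu_0) = \text{const}\cdot \mu_0^J
\]
(cited from \cite{M2}), using that $[fg^\star\mu_0]_1 = [fg^\star w(\mu_0)]_1$ for $W^J$-symmetric $f,g$. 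You should either supply this averaging argument or give an independent proof that the $W^J$-symmetric $\hY$-operators are self-adjoint for $\langle\cdot,\cdot\rangle^J$; as written, the transfer is asserted rather than proved.

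A smaller point: for eigenvalue separation the paper simply specializes $t=1$, where the symmetrized eigenvalue becomes $\sum_{w\in W^J} q^{-(w^{-1}(\lambda),\nu)}$, which manifestly depends only on and distinguishes the $W^J$-orbit. Your case split (``either $\lambda_-\neq\mu_-$ or $\lambda_-=\mu_-$ with distinct $\sigma$'s, which the $W^J$-symmetrized $t$-part still distinguishes'') is more delicate in the second case and you have not actually argued it; the $t=1$ trick is both easier and sufficient, since distinct $\lambda,\mu\in P_J^-$ lie in distinct $W^J$-orbits.
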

	\begin{proof}
		Using Lemma \ref{PSymmetrizer} we get 
		\[{\bf P}^J E_{w_0^J\lambda}(X_1, \dots, X_n; q,t)\in \bZ(q^{\frac{1}{2e}},t)[X]^{W^J}.\]
		For any  $\mathcal{G} \in \mathbb{Z}(q^{\frac{1}{2e}},t)[\hY]^{W^J}$:
		\[\mathcal{G} {\bf P}^J E_{w_0^J\lambda}(X_1, \dots, X_n; q,t)={\bf P}^J \mathcal{G} E_{w_0^J\lambda}(X_1, \dots, X_n; q,t)\]
		(see Lemma \eqref{PCommutes}).
		Hence ${\bf P}^J E_{w_0^J\lambda}(X_1, \dots, X_n; q,t)$ is an eigenvector for  $\mathcal{G}$. Let us compute the eigenvalue for $\mathcal{G}=\sum_{w \in W^J}\hY^{w \nu}$.
		Using equation \eqref{NonsymmetricEigenvalues} we get:
		\[
		\sum_{w \in W^J}\hY^{w(\nu)}{\bf P}^J E_{w_0^J\lambda}(X_1, \dots, X_n; q,t)=\Bigl(\sum_{w \in W^J}q^{-(w^{-1}(\la),\nu)} t^{(\sigma_\la^{-1}(\rho^\vee),w(\nu))}\Bigr){\bf P}^J E_{w_0^J\lambda}(X_1, \dots, X_n; q,t).\]
		Now let us consider the vectors of eigenvalues:
		\[
		S(\la)= S(\la)_\nu=
		\Bigl\{\sum_{w \in W^J}q^{-(w^{-1}(\la),\nu)} t^{(\sigma_\la^{-1}(\rho^\vee),w(\nu))}\Bigr\}_\nu.
		\]
		We note that  $S(\la)\ne S(\la')$ provided  $\la$ and $\la'$ are in different $W^J$ orbits (one sees this already for $t=1$). Then using equation \eqref{Hermitian} we have 
		\[\langle {\bf P}^J E_{w_0^J\lambda}(X_1, \dots, X_n; q,t), {\bf P}^J E_{w_0^J\nu}(X_1, \dots, X_n; q,t) \rangle =0, \]
		where $\langle \cdot, \cdot \rangle$ is the standard Cherednik pairing and $\la$ and $\nu$ are in different $W^J$ orbits.
		
		Note that $w( f g^\star \mu_0 )= f g^\star w(\mu_0) $ for $w\in W^J$, $f,g \in \mathbb{Z}(q^{\frac{1}{2e}},t)[P]^{W^J}$. Therefore $\langle f g^\star \mu_0 \rangle=\langle f g^\star w(\mu_0) \rangle$ and thus
		\begin{gather*}
			\Bigl[ {\bf P}^J E_{w_0^J\lambda}(X_1, \dots, X_n; q,t){\bf P}^J E_{w_0^J\nu}(X_1, \dots, X_n; q,t)w (\mu_0) \Bigr]_1 =0,~w\in W^J,\\
			\Bigl[ {\bf P}^J E_{w_0^J\lambda}(X_1, \dots, X_n; q,t){\bf P}^J E_{w_0^J\nu}(X_1, \dots, X_n; q,t)\sum_{w \in W^J}w (\mu_0) \Bigr]_1 =0.
		\end{gather*}
		However,
		$\sum_{w \in W^J}w (\mu_0)={\rm const}. \mu_0^J$ (see \cite{M2}). Hence the second property from Definition \ref{Parasymmetric} holds for the polynomials
		${\bf P}^J E_{w_0^J\lambda}(X_1, \dots, X_n; q,t)$.
		
		Now we are left to show that
		\begin{equation}\label{SymmetricTriangular}
			\frac{1}{R_\lambda^J(t)}
			{\bf P}^J E_{w_0^J\lambda}(X_1, \dots, X_n; q,t) = 
			m^J_{w_0^J\lambda}+\sum_{\nu \prec \lambda}c_\la^\nu m^J_\nu.
		\end{equation}
		Recall that ${\bf P}^J=\sum_{\sigma\in W^J} \hT_\sigma$. Let us apply $\hT_i$ to a monomial 
		$X^\nu$. Since $\hT_i X^\nu = (ts_i +\frac{t-1}{X^{\al_i}-1}(s_i-1))X^\nu$ we have $\hT_i X^\nu =\sum_{\eta \preceq \nu_-}d_{\eta} X^\eta$ for some constants $d_{\eta}$. Therefore
		\[
		{\bf P}^J E_{w_0^J\lambda}(X_1, \dots, X_n; q,t) = \sum_{\eta \preceq \lambda} d_{\eta}^\lambda X^\eta.
		\]
		Since ${\bf P}^J E_{w_0^J\lambda}(X_1, \dots, X_n; q,t)$ is $W^J$-symmetric we have
		\[
		{\bf P}^J E_{w_0^J\lambda}(X_1, \dots, X_n; q,t) = 
		c_\lambda^\lambda m^J_\lambda+\sum_{\nu \prec \lambda}c_\la^\nu m^J_\nu,
		\]
		so it suffices to show that $c_\la^\la=d_\la^{w_0^J\la}=R_\la^J(t)$.
		
		Let us consider a summand $\hT_w$ of ${\bf P}^J$. The $\DAHA$ element $\hT_w$ 
		is a product of the elements $\hT_i$ for $s_i\in {\rm Stab}_{W_J}(w_0^J\la)$ if and only if $w \in {\rm Stab}_{W_J}(w_0^J\la)$. If $\nu \preceq \lambda$, $\nu \neq w_0^J\la$, then for any $i \in J$
		the coefficient of $X^{w_0^J\la}$ in
		$\hT_i (X^\nu)$ is equal to $0$. 
		If $(\alpha_i,w_0^J\lambda)>0$ (i.e. $s_i\notin {\rm Stab}_{W^J}(w_0^J\la)$), then 
		$\hT_iX^{w_0^J\la}$ does not contain the term $X^{w_0^J\la}$. Therefore 
		if $w\notin {\rm Stab}_{W_J}(w_0^J\lambda)$, then $\hT_w X^{w_0^J\la} \in  \mathrm{span}\{X^\nu:\ \nu \preceq \la,\ \nu\neq w_0^J\la\}$.
		
		If $(\alpha_i,w_0^J\lambda)=0$ (i.e. $s_i\in {\rm Stab}_{W^J}(w_0^J\la)$), then 
		$\hT_iX^{w_0^J\la} \in tX^{w_0^J\la}+\mathrm{span}\{X^\nu:\ \nu \preceq \la,\ \nu\neq w_0^J\la\}$.
		Thus if $w\in {\rm Stab}_{W^J}(w_0^J\lambda)$, then $\hT_w X^{w_0^J\la} \in  t^{l(w)}X^{w_0^J\la} + \mathrm{span}\{X^\nu:\ \nu \preceq \la,\ \nu\neq w_0^J\la\}$. 
		This proves \eqref{SymmetricTriangular} and hence completes the proof of our Theorem.
	\end{proof}
	
	\begin{dfn}
		For a Weyl group $W^J$ and its parabolic subgroup $W^{K} \subset W^{J}$ we denote by $(W^{K}\backslash W^J)^{\min}$ the set of minimal length representatives of the right cosets,  $(W^{J}/W^{K})^{\min}$ the set of minimal length representatives of the left cosets.
	\end{dfn}
	
	The proof of Theorem \ref{PSMc} implies:
	\begin{cor}\label{ShortSymmetrizer}
		\begin{equation}
			\label{eq::sym::Macdonald}
			E_\lambda^J(X_1, \dots, X_n; q,t)=\sum_{\sigma \in (W^J/{\rm Stab}_{W^J} (w_0^J{\lambda}))^{\min}}\hT_\sigma E_{w_0^J\lambda}(X_1, \dots, X_n; q,t).
		\end{equation}
	\end{cor}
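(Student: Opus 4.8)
\emph{Plan of proof.} We deduce the corollary from Theorem~\ref{PSMc}, which already identifies $E_\lambda^J$ with $\tfrac{1}{R_\lambda^J(t)}{\bf P}^JE_{w_0^J\lambda}$, by splitting the symmetrizer ${\bf P}^J=\sum_{\sigma\in W^J}\hT_\sigma$ along the stabilizer of $w_0^J\lambda$. Since $\lambda\in P^-_J$, the weight $w_0^J\lambda$ is $J$-dominant, so its stabilizer in $W^J$ is the standard parabolic subgroup $W^K$ corresponding to $K:=\{i\in J:\langle\alpha_i^\vee,w_0^J\lambda\rangle=0\}$ (stabilizers of dominant weights are standard parabolic, see \cite{BB}). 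By the theory of parabolic cosets in Coxeter groups (\cite{BB}), every $w\in W^J$ has a unique factorization $w=\sigma\tau$ with $\sigma\in(W^J/W^K)^{\min}$ and $\tau\in W^K$, and this factorization is length-additive; hence $\hT_w=\hT_\sigma\hT_\tau$ and
\[
{\bf P}^J=\Bigl(\sum_{\sigma\in(W^J/W^K)^{\min}}\hT_\sigma\Bigr)\Bigl(\sum_{\tau\in W^K}\hT_\tau\Bigr).
\]

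The main step is to show that the inner factor $\sum_{\tau\in W^K}\hT_\tau$ acts on $E_{w_0^J\lambda}$ by the scalar $R_\lambda^J(t)$. For each generating reflection $s_i$ of $W^K$ one has $\langle\alpha_i^\vee,w_0^J\lambda\rangle=0$; I would then invoke the standard fact that $\hT_iE_\mu=t\,E_\mu$ (equivalently, $E_\mu$ is $s_i$-invariant) whenever $\langle\alpha_i^\vee,\mu\rangle=0$, which follows from the intertwining relations~\eqref{YTRelations} forcing $\hT_i$ to act as a scalar on $E_\mu$ when $s_i\mu=\mu$ (the scalar being $t$, as one sees from the leading term $X^\mu$; see also \cite{M2,Ch1}). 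Writing $\tau\in W^K$ as a reduced product of such generators and applying $\hT_i E_{w_0^J\lambda}=t E_{w_0^J\lambda}$ repeatedly gives $\hT_\tau E_{w_0^J\lambda}=t^{l(\tau)}E_{w_0^J\lambda}$, whence
\[
\Bigl(\sum_{\tau\in W^K}\hT_\tau\Bigr)E_{w_0^J\lambda}=\Bigl(\sum_{\tau\in W^K}t^{l(\tau)}\Bigr)E_{w_0^J\lambda}=R_\lambda^J(t)\,E_{w_0^J\lambda},
\]
the last equality being the definition of $R_\lambda^J(t)$ since $W^K={\rm Stab}_{W^J}(w_0^J\lambda)$.

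Combining the two displays with Theorem~\ref{PSMc} finishes the proof:
\[
E_\lambda^J=\frac{1}{R_\lambda^J(t)}{\bf P}^JE_{w_0^J\lambda}=\frac{1}{R_\lambda^J(t)}\Bigl(\sum_{\sigma\in(W^J/W^K)^{\min}}\hT_\sigma\Bigr)\bigl(R_\lambda^J(t)E_{w_0^J\lambda}\bigr)=\sum_{\sigma\in(W^J/W^K)^{\min}}\hT_\sigma E_{w_0^J\lambda}.
\]
The only nonformal ingredient is the $s_i$-invariance of $E_{w_0^J\lambda}$, which is where I expect the genuine content to sit; everything else is bookkeeping with the Hecke algebra and Coxeter cosets. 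One can package the same computation differently by applying Theorem~\ref{PSMc} with $J$ replaced by $K$: since $w_0^J\lambda$ is already $K$-antidominant one has $w_0^K(w_0^J\lambda)=w_0^J\lambda$ and $R^K_{w_0^J\lambda}(t)=R_\lambda^J(t)$, so $\tfrac{1}{R_\lambda^J(t)}(\sum_{\tau\in W^K}\hT_\tau)E_{w_0^J\lambda}=E^K_{w_0^J\lambda}$, and the corollary becomes the identity $E^K_{w_0^J\lambda}=E_{w_0^J\lambda}$ --- which again amounts to the $W^K$-symmetry of $E_{w_0^J\lambda}$.
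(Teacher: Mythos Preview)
Your argument is correct and is essentially the natural unpacking of the paper's one-line ``The proof of Theorem~\ref{PSMc} implies''. The paper gives no further details for this corollary, so your factorization ${\bf P}^J=(\sum_{\sigma\in(W^J/W^K)^{\min}}\hT_\sigma)(\sum_{\tau\in W^K}\hT_\tau)$ together with $\hT_\tau E_{w_0^J\lambda}=t^{l(\tau)}E_{w_0^J\lambda}$ for $\tau\in W^K$ is exactly what is needed, and your alternative reformulation via $E^K_{w_0^J\lambda}=E_{w_0^J\lambda}$ is an equivalent repackaging.

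One remark on where the content sits: the proof of Theorem~\ref{PSMc} in the paper only analyzes the action of $\hT_w$ on the leading \emph{monomial} $X^{w_0^J\lambda}$ (showing the $X^{w_0^J\lambda}$-coefficient of $\hT_w X^{w_0^J\lambda}$ is $t^{l(w)}$ for $w\in\mathrm{Stab}$ and $0$ otherwise). Your argument needs the stronger statement that $\hT_i$ acts by the scalar $t$ on the full polynomial $E_{w_0^J\lambda}$ when $s_i(w_0^J\lambda)=w_0^J\lambda$. This is indeed a standard DAHA fact (as you say, the intertwining relations force $\hT_iE_\mu\in\Bbbk E_\mu$ when $s_i\mu=\mu$, the quadratic relation gives eigenvalue $-1$ or $t$, and the leading-term computation already in the paper's proof pins it down as $t$), but strictly speaking it is not contained in the proof of Theorem~\ref{PSMc} as written. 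So your proof is slightly more explicit than the paper's, and correctly isolates the one genuine input beyond bookkeeping.
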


	\begin{cor}
		The polynomials $E_\lambda^J(X_1, \dots, X_n; q,t)$ are eigenfunctions for operators 
		$\mathcal{G} \in \mathbb{Z}(q^{\frac{1}{2e}},t)[\hY]^{W^J}$. Moreover
		\[
		\Bigl(\sum_{w \in W^J}\hY^{w \nu}\Bigr)E_\lambda^J(X_1, \dots, X_n; q,t)=\Bigl(\sum_{w \in W^J}q^{-(w^{-1}(\la),\nu)} t^{(\sigma_\la^{-1}(\rho^\vee),w(\nu))}\Bigr)E_\lambda^J(X_1, \dots, X_n; q,t).\]
	\end{cor}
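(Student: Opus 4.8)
\emph{Proof idea.} The plan is to deduce both assertions from the symmetrization formula $E_\lambda^J=\frac{1}{R_\lambda^J(t)}{\bf P}^J E_{w_0^J\lambda}$ of Theorem~\ref{PSMc}, thereby reducing everything to the already known spectral behaviour of the nonsymmetric Macdonald polynomial $E_{w_0^J\lambda}$.

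First I would treat an arbitrary $\mathcal{G}\in\mathbb{Z}(q^{\frac{1}{2e}},t)[\hY]^{W^J}$. The operators $\hY^{\omega_1^\vee},\dots,\hY^{\omega_n^\vee}$ commute pairwise, and by~\eqref{NonsymmetricEigenvalues} the polynomial $E_{w_0^J\lambda}$ is a common eigenfunction for all of them; hence $\mathcal{G}$, being built from these commuting operators, scales $E_{w_0^J\lambda}$ by some $c_{\mathcal{G}}(\lambda)\in\mathbb{Z}(q^{\frac{1}{2e}},t)$. By Lemma~\ref{PCommutes} the operator $\mathcal{G}$ commutes with ${\bf P}^J$, so $\mathcal{G}E_\lambda^J=\frac{1}{R_\lambda^J(t)}{\bf P}^J\mathcal{G}E_{w_0^J\lambda}=c_{\mathcal{G}}(\lambda)E_\lambda^J$, which establishes the eigenfunction property.

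To pin down the eigenvalue for $\mathcal{G}=\sum_{w\in W^J}\hY^{w\nu}$ I would apply~\eqref{NonsymmetricEigenvalues} termwise to $E_{w_0^J\lambda}$, obtaining $c_{\mathcal{G}}(\lambda)=\sum_{w\in W^J}q^{-(w_0^J\lambda,\,w\nu)}t^{(\sigma_{w_0^J\lambda}^{-1}(\rho^\vee),\,w\nu)}$, and then rewrite this into the displayed form. Since $w_0^J$ is an involution preserving $W^J$, the substitution $w\mapsto w_0^J w$ converts $(w_0^J\lambda,w\nu)$ into $(\lambda,w\nu)=(w^{-1}\lambda,\nu)$; the remaining work is to match the $t$-exponents, i.e.\ to show that the $W^J$-orbit sum of the spectral vector $\sigma_{w_0^J\lambda}^{-1}(\rho^\vee)$ of $E_{w_0^J\lambda}$ agrees with the one written in terms of $\sigma_\lambda^{-1}(\rho^\vee)$, using the length-additive factorization $\sigma_{w_0^J\lambda}=w_0^J\sigma_\lambda$ (which holds because $\lambda\in P^-_J$, so $\sigma_\lambda$ is the minimal-length element of its left $W^J$-coset).

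I expect this last spectral-vector identification to be the only genuine obstacle: it is the combinatorial computation already indicated inside the proof of Theorem~\ref{PSMc}, and once it is in place the Corollary is a formal consequence of Theorem~\ref{PSMc}, Lemma~\ref{PCommutes} and the eigenvalue equation~\eqref{NonsymmetricEigenvalues}. (The general eigenfunction statement for $\mathcal{G}\in\mathbb{Z}(q^{\frac{1}{2e}},t)[\hY]^{W^J}$ does not even need this bookkeeping.)
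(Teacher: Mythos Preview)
Your approach is essentially the paper's: the Corollary is stated without a separate proof because the eigenfunction property and the explicit eigenvalue were already established inside the proof of Theorem~\ref{PSMc}, using precisely the ingredients you name (the symmetrization formula, Lemma~\ref{PCommutes}, and equation~\eqref{NonsymmetricEigenvalues}). One small correction on the bookkeeping you flag: the length-additive factorization is $\sigma_{w_0^J\lambda}=w_0^{K}w_0^{J}\sigma_\lambda$ with $W^{K}=\mathrm{Stab}_{W^J}(w_0^J\lambda)$ (cf.\ the proof of Proposition~\ref{prp::specialization}), not simply $w_0^J\sigma_\lambda$; your version is correct only when $\lambda$ is regular in its $W^J$-orbit, but the extra $w_0^K$ disappears in the $W^J$-orbit sum so the displayed eigenvalue is unaffected.
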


	\subsection{Specializations}
	In this section we consider the specialized parasymmetric Macdonald polynomials $E_\lambda^J(X,q,0)$ and $E_\lambda^J(X,q,\infty)$
	for $\lambda \in P^-_J$.
	It was shown in \cite{OS} that for $J=\emptyset$ (i.e. in the case of nonsymmetric Macdonald polynomials) the specializations at $t=0,\infty$ are well defined. 
	
	\begin{prop}
		\label{prp::specialization}
		For any $\lambda\in P_{J}^{-}$ the specializations of $E_\lambda^J(X,q,t)$ at $t=0$ and $t=\infty$ are well defined.
	\end{prop}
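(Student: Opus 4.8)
The plan is to reduce the parasymmetric case to the already-known nonsymmetric case ($J=\varnothing$) by exploiting the explicit symmetrization formula of Theorem~\ref{PSMc}, namely
\[
E_\lambda^J(X_1,\dots,X_n;q,t)=\frac{1}{R_\lambda^J(t)}\,{\bf P}^J E_{w_0^J\lambda}(X_1,\dots,X_n;q,t),
\qquad {\bf P}^J=\sum_{\sigma\in W^J}\hT_\sigma.
\]
Since by \cite{OS} the limits $E_{w_0^J\lambda}(X,q,0)$ and $E_{w_0^J\lambda}(X,q,\infty)$ both exist, the only potential obstruction is that the operator $\frac{1}{R_\lambda^J(t)}{\bf P}^J$ might have a pole or a zero in $t$ at $t=0$ or $t=\infty$ when applied to the relevant polynomial. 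So the first step is to understand the $t\to 0$ and $t\to\infty$ behaviour of each $\hT_\sigma$ acting on a monomial $X^\nu$, and of the scalar $R_\lambda^J(t)=\sum_{\tau\in{\rm Stab}_{W^J}(w_0^J\lambda)}t^{l(\tau)}$.

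For the $t=0$ specialization I would argue as follows. From the formula $\hT_i X^\nu=\bigl(ts_i+\tfrac{t-1}{X^{\alpha_i}-1}(s_i-1)\bigr)X^\nu$ one sees that each $\hT_i$, and hence each $\hT_\sigma$, acts on $\mathbb{Z}[P]$ by a matrix whose entries are polynomials in $t$ with no negative powers; thus ${\bf P}^J$ is regular at $t=0$. Moreover $R_\lambda^J(0)=1$ (the identity element of ${\rm Stab}_{W^J}(w_0^J\lambda)$ contributes $t^0=1$, all other terms vanish at $t=0$), so division by $R_\lambda^J(t)$ is harmless. Therefore $E_\lambda^J(X,q,0)=\lim_{t\to 0}{\bf P}^J E_{w_0^J\lambda}(X,q,t)$ exists, being the value at $t=0$ of a regular family. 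The $t=\infty$ case is the one requiring more care: $R_\lambda^J(t)$ has degree $l(w_0^{{\rm Stab}})$ in $t$ where $w_0^{{\rm Stab}}$ is the longest element of ${\rm Stab}_{W^J}(w_0^J\lambda)$, so one must check that ${\bf P}^J E_{w_0^J\lambda}(X,q,t)$, viewed as a Laurent polynomial in $t$, has its top $t$-degree exactly matching $\deg R_\lambda^J$, with a nonzero leading coefficient. Here I would use Corollary~\ref{ShortSymmetrizer}, which rewrites $E_\lambda^J$ as $\sum_{\sigma\in(W^J/{\rm Stab}_{W^J}(w_0^J\lambda))^{\min}}\hT_\sigma E_{w_0^J\lambda}(X,q,t)$ \emph{without} the denominator; this already exhibits $E_\lambda^J$ as a finite sum of $\hT_\sigma$ applied to $E_{w_0^J\lambda}$, so it suffices to control the $t\to\infty$ limit of $\hT_\sigma E_{w_0^J\lambda}(X,q,t)$ for each minimal-length coset representative $\sigma$.

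The substantive step, and the one I expect to be the main obstacle, is therefore the $t\to\infty$ analysis via Corollary~\ref{ShortSymmetrizer}. The natural approach is to pass to the rescaled generators $\widetilde{\hT}_i:=t^{-1}\hT_i$, which satisfy $(\widetilde{\hT}_i+t^{-1})(\widetilde{\hT}_i-1)=0$, so that as $t\to\infty$ the operator $\widetilde{\hT}_i$ degenerates to an idempotent-like operator with finite limit; one then checks that $t^{-l(\sigma)}\hT_\sigma$ has a well-defined finite $t\to\infty$ limit on polynomials, and that the known limit $E_{w_0^J\lambda}(X,q,\infty)$ interacts with it so that $t^{-l(w_0^{{\rm Stab}})}{\bf P}^JE_{w_0^J\lambda}$, equivalently the sum over minimal coset representatives in Corollary~\ref{ShortSymmetrizer}, converges. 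One clean way to finish is to invoke the $\star$-symmetry (the antiautomorphism with $t^\star=t^{-1}$, $q^\star=q^{-1}$, $\hX^\lambda{}^\star=\hX^{-\lambda}$) together with the intertwiner/creation-operator description of the $E_\mu$'s, which exchanges the roles of $t$ and $t^{-1}$ and hence translates the $t\to\infty$ statement into a $t\to 0$ statement for a dual family; this is consistent with the appearance of $E_\lambda^J(X^{-1},q^{-1},\infty)$ in Theorem~\ref{thm::intro::characters}. With either route, the existence of both specializations follows, and I would record the $t=0$ limit explicitly as $\sum_{\sigma\in(W^J/{\rm Stab}_{W^J}(w_0^J\lambda))^{\min}}\hT_\sigma|_{t=0}\,E_{w_0^J\lambda}(X,q,0)$, which is the formula that will be matched against the character of $D_\lambda$ in the next subsection.
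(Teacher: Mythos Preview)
Your $t=0$ argument is correct and in fact cleaner than the paper's: since each $\hT_i$ acts on $\mathbb{Z}[P]$ by an operator with coefficients in $\mathbb{Z}[t]$, the same holds for every $\hT_\sigma$, and together with $R_\lambda^J(0)=1$ and the regularity of $E_{w_0^J\lambda}(X,q,t)$ at $t=0$ this settles the $t\to 0$ specialization immediately.

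The $t=\infty$ case, however, has a genuine gap. You correctly isolate the difficulty: each $\hT_\sigma$ has $t$-degree $l(\sigma)$, so a priori $\hT_\sigma E_{w_0^J\lambda}$ could blow up like $t^{l(\sigma)}$, and neither the rescaling observation that $t^{-l(\sigma)}\hT_\sigma$ is regular at $\infty$ nor the $\star$-duality remark is turned into an actual proof. The rescaling by itself is insufficient because in Corollary~\ref{ShortSymmetrizer} there is no compensating denominator, so you would need to show that $E_{w_0^J\lambda}(X,q,t)$ vanishes to order $l(\sigma)$ at $t=\infty$ in a suitable sense, which is false in general. The $\star$-duality route would require identifying $(\hT_\sigma E_{w_0^J\lambda})^\star$ with an expression of the same shape for some other weight and element, and this identification is not supplied; note that $\hT_i^\star=\hT_i^{-1}=t^{-1}\hT_i+t^{-1}-1$ is singular at $t=0$, so the hoped-for reduction to the $t=0$ case is not automatic.

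The paper closes this gap differently. It invokes \cite{FMO1}, Propositions~2.8 and~2.9, which show that for every $\sigma$ there is a specific half-integer
\[
c(\sigma)=\tfrac{1}{2}\bigl(-l(\sigma\sigma_{\lambda_+^J})+l(\sigma_{\lambda_+^J})-l(\sigma)\bigr),\qquad \lambda_+^J:=w_0^J\lambda,
\]
such that $t^{c(\sigma)}\hT_\sigma E_{\lambda_+^J}$ is regular at both $t=0$ and $t=\infty$. The remaining work is then purely Coxeter-combinatorial: one must show $c(\sigma)=0$ for every $\sigma\in (W^J/{\rm Stab}_{W^J}(\lambda_+^J))^{\min}$, i.e.\ that $l(\sigma\sigma_{\lambda_+^J})=l(\sigma_{\lambda_+^J})-l(\sigma)$. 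This is the content of Lemmas~\ref{lem::Weyl::length} and~\ref{lem::subgroup}, which establish the required length additivity via the factorization $\sigma_{\lambda_+^J}=w_0^K w_0^J\sigma_\lambda$ (where $W^K={\rm Stab}_{W^J}(\lambda_+^J)$) and an induction on $l(\sigma)$ inside $(W^J/W^K)^{\min}$. This length identity is the missing idea in your $t=\infty$ argument.
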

	\begin{proof}
		It suffices to show that for any $\sigma\in W^J$ the polynomials $\hT_\sigma E_{w_0^J\la}(X,q,t)$ admit well-defined specializations at $t=0$ and $t=\infty$.
		Let $\la^J_+=w_0^J\la\in P_J^+$.
		It was shown in \cite{FMO1}, Proposition 2.8 and Proposition 2.9 that the desired property holds true 
		for $(t^{\frac{1}{2}})^{-l(\sigma\sigma_{\la^J_+})+l(\sigma_{\la^J_+})-l(\sigma)}\hT_\sigma E_{\la^J_+}$
		(we note that the notation of \cite{FMO1} corresponds to the notation of this paper in the following way: 
		$v_i^2 \to t$, $v_iT_i \to \hT_i$, $\sigma_\la\to\sigma_\la^{-1}$). Hence due to Corollary \ref{ShortSymmetrizer} it suffices to show that 
		\begin{equation}\label{ll}
			-l(\sigma\sigma_{\la^J_+})+l(\sigma_{\la^J_+})-l(\sigma)=0
		\end{equation}
		for any $\la^J_+\in P_J^+$
		and $\sigma\in (W^J/{\rm Stab}_{W^J} ({\lambda_+}))^{\min}$ (we note that if $\la^J_+$ is regular, then the condition on $\sigma$ simply reads as $\sigma\in W^J$).

		Recall that $\lambda_{-}$ is an antidominant weight in the $W$-orbit of $\lambda$, $\sigma_\lambda$ is the shortest element in $W$ such that $\lambda=\sigma_{\lambda}(\lambda_{-})$.
		The weight $\lambda$ is $J$-antidominant, therefore $\lambda_+^J:=w_0^{J}\lambda$ is a $J$-dominant weight in the  $W^J$-orbit of $\lambda$. The subgroup of $W^J$ that stabilizes the $J$-dominant $\lambda_+^{J}$ is generated by a subset of simple reflections (which we denote by $K$). In other words, $W^K:={\rm Stab}_{W^J}\lambda^J_+$. 
		Therefore, the minimal representative in $W^J$ that maps $\lambda$ to $\lambda_+^J$ is equal to $w_0^{K}w_0^{J}$ and its length is equal to $l(w_0^J)-l(w_0^{K})$.
		Moreover, Lemma~\ref{lem::Weyl::length} with $K=J$ and $J=I$ implies  that 
		$$
		l(\sigma_{\lambda_+^{J}}) = l(w_0^{K} w_0^{J}) + l(\sigma_{\lambda})$$
		and hence
		$\sigma_{\lambda_+^{J}} = w_0^{K} w_0^{J}  \sigma_{\lambda}$
		(since $w_0^{K} w_0^{J}  \sigma_{\lambda} \la_-=\la_+^J$ and 
		$l(w_0^{K} w_0^{J}\sigma_{\lambda})\le l(w_0^{K} w_0^{J}) + l(\sigma_{\lambda})$).
		Substituting to \eqref{ll}, we obtain that the desired equality reads as 
		\begin{equation}\label{lll}
			l(w_0^K w_0^J\sigma_\lambda) = l(\sigma w_0^K w_0^J\sigma_\lambda)+l(\sigma). 
		\end{equation}
		Lemma~\ref{lem::Weyl::length} implies that 
		$l(w_0^K w_0^J\sigma_\lambda)=l(w_0^K w_0^J) +l(\sigma_\lambda)$ and 
		$l(\sigma w_0^K w_0^J\sigma_\lambda)=l(\sigma w_0^K w_0^J) +l(\sigma_\lambda)$.
		Therefore it remains to show that
		$$l(w_0^K w_0^J) = l(\sigma w_0^K w_0^J)+l(\sigma).$$
		This is done in  Lemma~\ref{lem::subgroup}.
	\end{proof}

	\begin{lem}
		\label{lem::Weyl::length}
		Let $K\subset J$, then
		\begin{itemize}
			\item 
			$\eta\in (W^K\backslash W^J)^{\min}$ if and only if $\Phi^{K}_{+}\subset \eta(\Phi^J_{+})$;
			\item
			if $\eta\in (W^K\backslash W^J)^{\min}$, then  for any $\tau\in W^K$
			\[l(\tau\eta)=l(\tau)+l(\eta).\]
		\end{itemize}
	\end{lem}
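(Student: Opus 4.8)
\emph{Proof proposal.} The plan is to deduce both assertions from the standard combinatorics of minimal-length coset representatives, carried out \emph{inside} the Coxeter group $W^J$. Recall that $W^J$ is a Coxeter group with simple reflections $\{s_i:i\in J\}$ and positive system $\Phi^J_+=\Phi_{J+}$, and that for $K\subset J$ the subgroup $W^K$ is its standard parabolic subgroup attached to $K$. All lengths $l(\cdot)$ below refer to $W^J$. The only elementary ingredient I need is the sign rule (see \cite{BB}): for $w\in W^J$ and $i\in J$ one has $l(s_iw)=l(w)-1$ if $w^{-1}(\alpha_i)\in\Phi^J_-$, and $l(s_iw)=l(w)+1$ otherwise.

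First I would record the easy half: if $\eta$ has minimal length in $W^K\eta$ then $l(s_i\eta)\ge l(\eta)$, hence $l(s_i\eta)=l(\eta)+1$, for every $i\in K$, because $s_i\eta\in W^K\eta$; equivalently $\eta^{-1}(\alpha_i)\in\Phi^J_+$ for all $i\in K$. The substance of the Lemma is the converse together with the length identity, and I would obtain both from a single induction establishing the claim: \emph{if $\eta^{-1}(\alpha_i)\in\Phi^J_+$ for all $i\in K$, then $l(u\eta)=l(u)+l(\eta)$ for every $u\in W^K$.} Indeed, induct on $l(u)$; writing a reduced word $u=s_{i_k}u'$ with $i_k\in K$ and $l(u)=l(u')+1$, the inductive hypothesis gives $l(u'\eta)=l(u')+l(\eta)$, so it suffices to show $l(s_{i_k}u'\eta)=l(u'\eta)+1$, i.e. that $(u'\eta)^{-1}(\alpha_{i_k})=\eta^{-1}\bigl((u')^{-1}(\alpha_{i_k})\bigr)\in\Phi^J_+$. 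Now $(u')^{-1}(\alpha_{i_k})$ lies in $\Phi^K_+$ (by the sign rule again, since $l(s_{i_k}u')>l(u')$), hence is a $\bZ_{\ge 0}$-combination of the roots $\alpha_i$, $i\in K$; applying $\eta^{-1}$ yields a $\bZ_{\ge 0}$-combination of the positive roots $\eta^{-1}(\alpha_i)$, which is itself a root of $\Phi_J$ and is therefore positive. This proves the claim, which is exactly the second bullet, and the claim also forces $\eta$ to be the (strictly) shortest element of $W^K\eta$, giving the remaining direction of the first bullet.

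It then remains to restate the condition ``$\eta^{-1}(\alpha_i)\in\Phi^J_+$ for all $i\in K$'' in the form appearing in the statement. The same observation --- a root of $\Phi_J$ that is a $\bZ_{\ge 0}$-combination of positive roots is positive --- shows it is equivalent to $\eta^{-1}(\Phi^K_+)\subset\Phi^J_+$; and since $\eta$ permutes $\Phi_J$ and preserves the splitting $\Phi_J=\Phi^J_+\sqcup\Phi^J_-$, this is equivalent to $\Phi^K_+\subset\eta(\Phi^J_+)$, which is the first bullet. I do not anticipate any real obstacle: the statement is classical Coxeter-group combinatorics and could also simply be cited from \cite{BB}. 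The only thing requiring attention is bookkeeping --- keeping $\eta$ and $\eta^{-1}$ apart, orienting the right cosets $W^K\backslash W^J$ consistently, and making sure every length computation takes place inside $W^J$ rather than the ambient group $W$.
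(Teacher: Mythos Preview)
Your proposal is correct. The argument is the standard Coxeter-theoretic one and matches the paper in spirit, though the two write-ups differ in mechanics. You prove $l(\tau\eta)=l(\tau)+l(\eta)$ by induction on $l(\tau)$, appending one simple reflection at a time and invoking the sign rule; the paper instead writes down a single set-theoretic identity for the inversion set of $\tau\eta$,
\[
\tau\eta(\Phi^J_+)\cap\Phi^J_-\;=\;\tau\bigl(\eta(\Phi^J_+)\cap\Phi^J_-\bigr)\ \sqcup\ \bigl(\tau(\Phi^K_+)\cap\Phi^K_-\bigr),
\]
and reads off the length additivity by counting. Both routes rest on the same input (namely $\Phi^K_+\subset\eta(\Phi^J_+)$, equivalently $\eta^{-1}(\alpha_i)\in\Phi^J_+$ for $i\in K$) and both handle the converse direction identically via $l(s_i\eta)>l(\eta)$ for simple $\alpha_i\in\Phi^K$. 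Your inductive version is slightly more pedestrian but perhaps easier to verify line by line; the paper's inversion-set identity is more compact. Your closing remarks about bookkeeping (left vs.\ right cosets, $\eta$ vs.\ $\eta^{-1}$, working inside $W^J$) are well taken and are the only places one could slip.
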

	\begin{proof}
		If  $\Phi^{K}_{+}\subset \eta(\Phi^J_{+})$, then for any $\tau \in W^K$ we have
		\begin{equation}\label{LengthAdditionMinimal}
			\big(\tau \eta(\Phi^J_{+})\big)\cap \Phi^J_{-}=\tau\big(\eta(\Phi^J_{+})\cap \Phi^J_{-}\big) \sqcup \big( \tau(\Phi^K_{+})\cap \Phi^K_{-} \big).
		\end{equation}
		Recall that for $\tau \in W^K$ we have $l(\tau)=|\tau(\Phi^K_{+})\cap \Phi^K_{-}|.$ Therefore if $\tau$ is not the identity, then
		\[l(\tau\eta)=l(\tau)+l(\eta)>l(\eta)\]
		i. e. $\eta=\min\{W^K \eta\}$.
		
		Conversely if  $\eta\in (W^K\backslash W^J)^{\min}$, then for any simple $\alpha \in \Phi^K_+$ we have $l(s_\alpha\eta)>l(\eta)$ and therefore $\alpha \in  \eta(\Phi^J_{+})$. Thus $\Phi^{K}_{+}\subset \eta(\Phi^J_{+})$.
	\end{proof}
	
	In the following lemma for $K\subset J$ we consider the root system $\Phi^K\subset \Phi^J$, the Weyl groups $W^K \subset W^J$ and the longest elements $w_0^K$, $w_0^J$.

	\begin{lem}
		\label{lem::subgroup}
		For any 
		$\sigma \in (W^J/W^K)^{\min}$ one has $l(w_0^K w_0^J) = l(\sigma w_0^K w_0^J)+l(\sigma).$
	\end{lem}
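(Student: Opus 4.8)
The plan is to reduce the claim to two elementary facts about longest elements of finite Coxeter groups together with the additivity statement already established in Lemma~\ref{lem::Weyl::length}. Write $\eta = w_0^K w_0^J$. First I would record the following standard facts, viewing $W^J$ as a finite Coxeter group in its own right: for any $u\in W^J$ one has $l(w_0^J u)=l(u w_0^J)=l(w_0^J)-l(u)$; in particular, taking $u=w_0^K$ gives $l(\eta)=l(w_0^K w_0^J)=l(w_0^J)-l(w_0^K)$. Second, since inversion is length-preserving and $(\sigma W^K)^{-1}=W^K\sigma^{-1}$, we have $\sigma\in (W^J/W^K)^{\min}$ if and only if $\sigma^{-1}\in (W^K\backslash W^J)^{\min}$.

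Now fix $\sigma\in (W^J/W^K)^{\min}$, so $\sigma^{-1}\in (W^K\backslash W^J)^{\min}$. Applying the second bullet of Lemma~\ref{lem::Weyl::length} with $\eta$ there equal to $\sigma^{-1}$ and $\tau=w_0^K\in W^K$ yields $l(w_0^K\sigma^{-1})=l(w_0^K)+l(\sigma^{-1})=l(w_0^K)+l(\sigma)$. Since $w_0^K\sigma^{-1}\in W^J$, the longest-element fact above applied to $u=w_0^K\sigma^{-1}$ gives $l(w_0^J w_0^K\sigma^{-1})=l(w_0^J)-l(w_0^K\sigma^{-1})=l(w_0^J)-l(w_0^K)-l(\sigma)=l(w_0^K w_0^J)-l(\sigma)$, where the last equality uses $l(w_0^K w_0^J)=l(w_0^J)-l(w_0^K)$. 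Finally, because $w_0^J$ and $w_0^K$ are involutions, $(\sigma w_0^K w_0^J)^{-1}=w_0^J w_0^K\sigma^{-1}$, and length is invariant under inversion, so $l(\sigma w_0^K w_0^J)=l(w_0^K w_0^J)-l(\sigma)$, i.e. $l(w_0^K w_0^J)=l(\sigma w_0^K w_0^J)+l(\sigma)$, as claimed.

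I do not expect a genuine obstacle: all ingredients are routine Coxeter-group combinatorics (one can alternatively phrase the argument via inversion sets, noting $N(\eta^{-1})=\Phi^J_+\setminus\Phi^K_+$ and $N(\sigma)\cap\Phi^K_+=\varnothing$). The only points requiring care are the bookkeeping of left versus right cosets — so that Lemma~\ref{lem::Weyl::length}, stated for $W^K\backslash W^J$, is applied to $\sigma^{-1}$ rather than to $\sigma$ — and the order of the factors in $w_0^K w_0^J$ when passing to the inverse.
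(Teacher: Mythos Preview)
Your proof is correct. The key steps—applying Lemma~\ref{lem::Weyl::length} to $\sigma^{-1}\in(W^K\backslash W^J)^{\min}$ with $\tau=w_0^K$, then using $l(w_0^J u)=l(w_0^J)-l(u)$ and taking inverses—are all sound, and the bookkeeping with left versus right cosets is handled carefully.

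Your route is genuinely different from the paper's. The paper also passes to $\tau=\sigma^{-1}$, but then argues by reverse induction on $l(\tau)$: it characterizes $w_0^K w_0^J$ via the identity $w_0^K w_0^J(\Phi^J_+)\cap\Phi^J_+=\Phi^K_+$, observes from Lemma~\ref{lem::Weyl::length} that any minimal coset representative $\tau$ satisfies $\tau(\Phi^J_+)\cap\Phi^J_+\supset\Phi^K_+$, and then shows that either $\tau=w_0^K w_0^J$ or one can find a simple root $\alpha$ with $\tau(\alpha)\in\Phi^J_+\setminus\Phi^K_+$, so that $\tau s_\alpha$ is still a minimal representative of larger length. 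This eventually produces a reduced factorization $w_0^K w_0^J=\tau\tau'$. Your argument bypasses this induction entirely by invoking the standard Coxeter-group fact that left or right multiplication by the longest element complements lengths; this is shorter and more transparent. The paper's approach, on the other hand, stays entirely within the language of inversion sets and yields as a byproduct the explicit description of which $\tau$ can occur (namely exactly those with $\Phi^K_+\subset\tau(\Phi^J_+)\cap\Phi^J_+$), though that is not needed here.
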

	\begin{proof}
		Let $\tau=\sigma^{-1}$, in particular, $\tau\in (W^K\backslash W^J)^{min}$. 
		It suffices to prove that 
		there exists a reduced factorization $w_0^Kw_0^J=\tau \tau'$, i.e. $l(\tau) +l(\tau') = l(w_0^K w_0^J)$. 
		
		By construction we have
		\begin{equation}
			w_0^Kw_0^J(\Phi^J_+) \cap \Phi^J_+=\Phi^K_+. 
		\end{equation}
		Note that each element $\tau\in W^J$ is uniquely defined by the set $\tau(\Phi^J_+)\cap \Phi^J_+$.
		Using Lemma \ref{lem::Weyl::length} we have 
		\[  \tau(\Phi^J_+) \cap \Phi^J_+\supset \Phi^K_+.\]
		
		We consider two cases depending on whether there exists a  simple root $\alpha\in \Phi^J$  such that $\tau(\alpha)\in \Phi^J_+\backslash \Phi^K_+$. 
		First, assume that there is no such $\alpha$.
		Let us show that in this case $\tau=w_0^Kw_0^J$. In fact, 
		$\tau^{-1} \Phi_+^K \subset \Phi_+^J$
		(by Lemma \ref{lem::Weyl::length}) and $\tau^{-1} (\Phi^J_+\backslash \Phi^K_+)$ contains no simple roots from $\Phi_+^J$. 
		Therefore, the set  $\tau^{-1} w_0^K \Phi_+^J$ also does not contain simple roots from $\Phi_+^J$. But then  
		$\tau^{-1} w_0^K \Phi_+^J$ contains all negative simple roots from $\Phi_-^J$. Therefore 
		$\tau^{-1} w_0^K=w_0^J$.
		
		Second,  assume that there is a simple root $\alpha$ such that $\tau(\alpha)\in \Phi^J_+\backslash \Phi^K_+$. Then $\tau s_\alpha(\Phi^J_+)=\tau (\Phi^J_+)\cup \{-\tau(\alpha)\}\backslash \{\tau(\alpha)\}$. Thus 
		\[ \tau s_\alpha(\Phi^J_+) \cap \Phi^J_+\supset \Phi^K_+\]
		and $l(\tau s_\alpha)=l(\tau)+1$. Since $\tau s_\alpha\in (W^K\backslash W^J)^{min}$ and the length of $\tau s_\alpha$ is larger than the length of $\tau$, we proceed by the inverse     
		induction on $l(\tau)$ (keeping in mind the first case worked out above).    
	\end{proof}
	
	Recall the involution on $\bZ(q^{\frac{1}{2e}})[X]^{W^J}$: $X_i^\star:=X_i^{-1}$, $q^\star:=q^{-1}.$
	Let us consider the specialization of the pairing $\langle\cdot,\cdot\rangle^J$ at $t=0$.
	One has
	\[
	\mu^J_{t=0}=\prod_{k=1}^\infty \prod_{\alpha \in \Phi}(1-q^k X^\alpha)
	\prod_{\alpha \in \Phi_+}(1- X^\alpha)
	\prod_{\alpha \in \Phi_{J-}}(1- X^\alpha).
	\] 
	Then for $a,b\in \bZ(q^{\frac{1}{2e}})[X]^{W^J}$ one has
	\begin{equation}
		\label{pairing::J}
		\langle  a,b\rangle^J_{t=0}=[a b^\star ({\mu_0^J})_{t=0}]_1.
	\end{equation}
	By definition, we have:
	\begin{multline*}
		\langle f(X_1, \dots, X_n,q,t),g(X_1, \dots, X_n,q,t) \rangle^J|_{t=0}\\
		=\langle f(X_1, \dots, X_n,q,t)|_{t=0},(g(X_1, \dots, X_n,q,t)|_{t=\infty} \rangle^J_{t=0}.
	\end{multline*}
	Thus for $\la,\nu\in P_J^-$
	\begin{equation}
		\langle E_\lambda^J(X_1, \dots, X_n; q,0),E_\nu^J(X_1, \dots, X_n; q,\infty)  \rangle^J_{t=0}=0,~\text{if} ~\lambda \neq \nu
	\end{equation}
	(thanks to Definition \ref{Parasymmetric}). 
	
	We close this section by explaining the homological meaning of the 
	parabolic Cherednik pairing. Let $\fC^{J}=\fC(\cP_J)$ and ${\fC^{J}}^\vee=\fC^\vee(\cP_J)$ (see section \ref{GeneralFramework}).  
	\begin{prop}
		\label{prop::pairing::Ext}
		The $\Ext$-pairing in categories $\fC^{J}$ and ${\fC^{J}}^{\vee}$ categorifies pairing~\ref{pairing::J} in the following sense: there exists a series $c(q)$ such that 
		for any modules $M_1,N_1\in \fC^{J}$ and $M_2,N_2\in {\fC^{J}}^{\vee}$ 
		\begin{gather*}
			{\sum_{k}\sum_{i=0}^{\infty} (-1)^i q^k \dim\Ext_{\fC^{J}}^{i}(M_1,N_1\{k\}) = c(q) (\langle [M_1],[N_1] \rangle^{J}_{t=0})^{\star}},\\
			{\sum_{k}\sum_{i=0}^{\infty} (-1)^i q^k \dim\Ext_{{\fC^{J}}^\vee}^{i}(M_2,N_2\{k\}) =
				c(q)(\langle [M_2],[N_2] \rangle^{J}_{t=0})^{\star}},
		\end{gather*}
		whenever the left-hand sides are well-defined (e.g. $M_1$ and $N_2$ are finite-dimensional).
	\end{prop}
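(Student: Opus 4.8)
The plan is to reduce Proposition~\ref{prop::pairing::Ext} to the Ext--cohomology computation of Theorem~\ref{thm::Ext::pairing}, which applies to any Lie algebra $\fa$ satisfying the axioms of Section~\ref{GeneralFramework}, in particular to $\fa=\cP_J$. By that theorem, for finite-dimensional $M_1$ one has
\[
\sum_k\sum_{i\ge 0}(-1)^i q^k\dim\Ext^i_{\fC^J}(M_1,N_1\{k\}) = \bigl[[M_1]\,[N_1]^\star\,\mu_{\cP_J}\bigr]_1^\star,
\]
with $\mu_{\cP_J}$ given by \eqref{eq::kernel::P_J}, and symmetrically for the opposite category via \eqref{eq::Ext::pairing::1}. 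So the whole statement amounts to comparing the two kernels $\mu_{\cP_J}$ and $(\mu_0^J)_{t=0}$ appearing in \eqref{pairing::J}. First I would write out both explicitly: from \eqref{eq::kernel::P_J},
\[
\mu_{\cP_J}= \prod_{\alpha\in\Phi_{J-}\sqcup\Phi_{+}}(1-X^{\alpha}) \prod_{k\ge 1}\prod_{\alpha\in\Phi}(1-q^{k}X^{\alpha})\cdot \prod_{k\ge 1}(1-q^k)^n,
\]
while $\mu^J_{t=0}=\prod_{\alpha\in\Phi_+}(1-X^\alpha)\prod_{\alpha\in\Phi_{J-}}(1-X^\alpha)\prod_{k\ge 1}\prod_{\alpha\in\Phi}(1-q^kX^\alpha)$. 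Hence $\mu_{\cP_J}=\mu^J_{t=0}\cdot\prod_{k\ge 1}(1-q^k)^n$, an equality of formal series in which the extra factor is independent of $X$.

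Next I would track how the normalizations and the $\star$ involution interact. The categorical side of Theorem~\ref{thm::Ext::pairing} uses the unnormalized $\mu_{\cP_J}$, whereas the pairing $\langle\cdot,\cdot\rangle^J_{t=0}$ of \eqref{pairing::J} uses the normalized kernel $(\mu_0^J)_{t=0}=\mu^J_{t=0}/[\mu^J_{t=0}]_1$. Taking constant terms, $\bigl[[M_1][N_1]^\star\mu_{\cP_J}\bigr]_1 = \prod_{k\ge1}(1-q^k)^n\cdot[\mu^J_{t=0}]_1\cdot\langle[M_1],[N_1]\rangle^J_{t=0}$, since the $X$-independent factors pull out of the constant-term functional. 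Therefore setting
\[
c(q):=\Bigl(\prod_{k\ge 1}(1-q^k)^n\cdot[\mu^J_{t=0}]_1\Bigr)^{\star}
\]
gives exactly the claimed identity after applying $\star$ to both sides, using that $\star$ is a ring involution so $(uv)^\star=u^\star v^\star$. The same $c(q)$ works for the ${\fC^J}^\vee$ statement because \eqref{eq::Ext::pairing::1} has literally the same right-hand side $\bigl[[M]\,[N]^\star\mu^{\fa}\bigr]_1^\star$; here one invokes the hypothesis that $N_2$ is finite-dimensional so that the left-hand side is defined and Theorem~\ref{thm::Ext::pairing} applies.

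The only genuinely nontrivial point — and the step I would be most careful about — is verifying that $\cP_J$ really does satisfy \emph{all} the axioms of Section~\ref{GeneralFramework}, so that Theorem~\ref{thm::Ext::pairing} is legitimately available; but this is precisely the content of the Proposition proved just above in the excerpt (using the parabolic BGG resolution to guarantee the existence of projective/injective resolutions with finite irreducible multiplicities), so I would simply cite it. A secondary subtlety is the role of $[\mu^J_{t=0}]_1$: one should check it is a nonzero series (it has constant term $1$ as a series in $q$, since setting $q=0$ leaves $\prod_{\alpha\in\Phi_+}(1-X^\alpha)\prod_{\alpha\in\Phi_{J-}}(1-X^\alpha)$ whose constant term is $1$), so that $c(q)$ is an honest invertible element of $\bZ[[q]]$ and the "categorification" statement is non-vacuous. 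Beyond that the proof is a bookkeeping of constant-term functionals and the involution $\star$, with no further obstacle.
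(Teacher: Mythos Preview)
Your proposal is correct and follows essentially the same route as the paper's own proof: invoke Theorem~\ref{thm::Ext::pairing} for $\fa=\cP_J$, then compare the kernel $\mu_{\cP_J}$ of \eqref{eq::kernel::P_J} with the specialized kernel $\mu^{J}_{t=0}$ and observe that they differ only by an $X$-independent factor, which becomes $c(q)$ after accounting for the normalization $\mu^{J}_{t=0}\to(\mu^{J}_{0})_{t=0}$. Your write-up is in fact more explicit than the paper's (which simply says the result is a ``direct corollary'' of Theorem~\ref{thm::Ext::pairing} and the kernel formula), and your additional checks---that $\cP_J$ satisfies the axioms of Section~\ref{GeneralFramework} and that $[\mu^J_{t=0}]_1$ is an invertible power series---are appropriate. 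The precise form you obtain for $c(q)$ differs cosmetically from the paper's stated $c(q)=[\mu^{J}_{t=0}]_1\prod_{k\ge1}(1-q^k)^{-n}$; this discrepancy traces to sign and $\star$ conventions (and arguably a typo in the exponent in the paper), not to any difference in method.
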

	\begin{proof}
		The proof is the direct corollary from  Theorem~\ref{thm::Ext::pairing} and formula ~\eqref{pairing::J}. Note that thanks to equation~\eqref{eq::kernel::P_J} the kernel $\mu_{\cP_J}$ is proportional to the kernel $\mu^{J}_{t=0}$.
		In particular, the constant $c(q)=[\mu_{t=0}^{J}]_{1} \prod_{k=1}^{\infty}(1-q^k)^{-n}.$
	\end{proof}

	\section{Parahoric algebras and stratified categories: definitions}
	\label{ParaCat}
	In this section we recall the formalism of the stratified categories and specialize the general framework of section \ref{GeneralFramework} to the case $\fa=\cP_J$.

	\subsection{Recollection on Stratified categories}
	\label{RSC}
	In this subsection, we recall the notion of a {stratified category} that generalizes the notion of a {highest weight category}; for more details, see our previous paper~\cite{FKhMO} and \cite{Br,BWW,BS,CPS,Kh}.
	
	Let $\Theta$ be a partially ordered set such that for any $\lambda\in\Theta$ there are finitely many elements smaller than $\lambda$. Let $\cO$ be an abelian category
	with simple objects labeled by a set $\Lambda$. 
	
	\begin{dfn}\label{Ordered:Category}
		We call an abelian category  $\cO$ $\Theta$-ordered if 
		\begin{itemize}
			\item $\cO$ enjoys the Krull-Schmidt property,
			\item the Grothendieck group $K_0(\cO)$ is well defined,
			\item $\cO$ is equipped with the map $\rho:\Lambda\to \Theta$.
		\end{itemize}
	\end{dfn}
	We call the partially ordered set $\Theta$ the set of {weights}. 
	
	For each $\lambda\in\Lambda$ we define the Karoubi envelope $\cO_{\leq\lambda}$ in $\cO$ of the set of irreducibles $L_\mu$ with $\rho(\mu)\leq\rho(\lambda)$.
	The fully faithful embedding $\imath_{\lambda}:\cO_{\leq\lambda} \rightarrow \cO$ admits a left adjoint functor $\imath_{\lambda}^*$ and the right adjoint functor $\imath_{\lambda}^!$:
	\begin{equation}\label{diagram}
		\begin{tikzcd}
			\cO_{\leq\lambda}
			\arrow[rrr,"\imath_{\lambda}"description,"\perp" near start,"\perp"' near start]
			&&&
			\cO
			\arrow[lll,shift left=2ex,"\imath_{\lambda}^!(M):=M_{\leq\lambda}"]
			\arrow[lll,shift right=2ex,"\imath_{\lambda}^*(M):=M/((M)_{\not\leq\lambda})"']
		\end{tikzcd},
	\end{equation}
	where $\imath_{\lambda}^{!}M$ is the maximal submodule of $M$ that belongs to $\cO_{\leq\lambda}$ and the submodule $(M)_{\not\leq\lambda}\subset M$ is generated by all weights that are not less or equal than $\rho(\lambda)$. 
	In particular, $\imath_{\lambda}^*(M)$ is the maximal quotient of $M$ that belongs to the subcategory $\cO_{\leq\lambda}$.
	
	Similarly, for any finite subset $S\subset\Lambda$ there exists a subcategory $\cO_{\leq S}$, a fully faithful functor $\imath_{\leq S}:\cO_{\leq S}\to \cO$, and its adjoints $\imath_{\leq S}^{*},\imath_{\leq S}^{!}$,
	such that for each simple objects $L_\mu$ showing up in the Krull-Schmidt decomposition of $M\in\cO_{\leq S}$ there exists $s\in S$ such that $\rho(\mu)\leq\rho(s)$.
	
	\begin{dfn}\label{StandardCostandard}
		Let $\la\in\Lambda$.
		\begin{itemize}
			\item 
			\emph{The standard module} $\Delta_{\lambda}\in \cO_{\leq\la}$ is the projective cover of the irreducible module $L_\lambda$.
			\item 
			\emph{The costandard module} $\nabla_{\lambda}\in \cO_{\leq\la}$ is the injective hull of $L_{\lambda}$;
			\item
			\emph{The proper standard module} $\overline{\Delta}_{\lambda}$ is an indecomposable object of $\cO_{\leq\la}$ that satisfies the universal properties of the projective covers in the exact subcategory 
			$$\overline{\cO}_{\leq\lambda}:=\{M\in\cO_{\leq\lambda} \colon [M:L_{\lambda}]\leq 1\} $$ 
			consisting of modules such that the multiplicity of $L_{\lambda}$ is at most $1$.
			\item 
			\emph{The proper costandard module} $\overline{\nabla}_{\lambda}$ is the injective hull of $L_{\lambda}$ in $\overline\cO_{\leq\lambda}$.
		\end{itemize}
	\end{dfn}
	
	We summarize several known equivalent definitions of the {stratified category} in  Definition~\ref{dfn::stratified}. For the proof of the equivalences between assumptions $(s1)$, $(s2)$ and $(s3)$ see e.g. our previous paper~\cite{FKhMO} \S 1.
	
	\begin{dfn}
		\label{dfn::stratified}
		Consider a $\Theta$-ordered category $\cO$ with enough projectives and its derived category $\cD(\cO)$
		generated by projectives. Assume that proper costandard objects $\overline\nabla_{\lambda}$ exist in $\cO$ and 
		the standard object $\Delta_{\lambda}$  is a projective cover of $L_\lambda$ in all categories $\cO_{\le S}$  such that $\la\in S$ and $\rho(\lambda)$ is the maximal element in $\rho(S)$. Then $\cO$ is called stratified if  
		one of the following equivalent conditions is satisfied:
		\begin{enumerate}
			\item[(s1$^\vee$)] 
			for any finite subset $S\subset \Lambda$ the derived functor $\imath_{\leq S}: \cD(\cO_{\leq S}) \rightarrow \cD(\cO)$ is a fully faithful embedding;
			\label{item::D} 
			\item[(s2$^\vee$)] $\forall \lambda\in\Lambda$ the kernel of the projection $\imath_{\lambda}^{*}:\bP_\lambda\rightarrow \Delta_{\lambda}$ admits a filtration by $\Delta(\mu)$ with $\rho(\mu)>\rho(\lambda)$;
			\label{item::P}
			\item[(s3$^\vee$)] $\RHom_{\cD(\cO)}(\Delta_{\lambda},\overline\nabla_{\mu}) = 0$ for all pairs of $\lambda,\mu$ such that $\rho(\lambda)\neq\rho(\mu)$, i.e. there are no higher derived homomorphisms.
			\label{item::Ext}
		\end{enumerate}.

		Similarly, suppose that $\Theta$-ordered category $\cO$ has enough injectives, $\cD(\cO)$ is generated by injectives. Assume further that proper standard modules $\overline\Delta_\lambda$ exist in $\cO$, and  $\nabla_{\lambda}$ remains being  injective hull of $L_\lambda$ in subcategories $\cO_{\leq S}$ such that 
		$\la\in S$ and $\rho(\lambda)$ is the maximal element of $\rho(S)$. Then we say that $\cO$ is stratified if one of the following equivalent assumptions is satisfied:
		\begin{enumerate}
			\item[(s1)] 
			for any finite subset $S\subset \Lambda$ the derived functor $\imath_{\leq S}: \cD(\cO_{\leq S}) \rightarrow \cD(\cO)$ is a fully faithful embedding;
			\item[(s2)] $\forall \lambda\in\Lambda$ the cokernel of the injection $\imath_{\lambda}:\nabla_{\lambda}\rightarrow \bI_{\lambda}$ admits a filtration by $\nabla_\mu$ with $\rho(\mu)>\rho(\lambda)$;
			\item[(s3)] $\forall \lambda,\mu$ with $\rho(\lambda)\neq\rho(\mu)$ we have  $ \Ext_{\cD(\cO)}^{\udot}(\overline\Delta_\lambda,\nabla_\mu) = 0$.
		\end{enumerate}    
	\end{dfn}

	\subsection{Degree shifts}\label{gradedshift}
	Simple objects in categories $\fC(\cP_J)$ and ${\fC^\vee(\cP_J)}$ are labeled by pairs $(\la,k)$, but the 
	Cherednik order and the dual Cherednik order have nothing to do with the 
	parameter $k$ (see below for the details). Let us describe this situation in more detail.

	Suppose that the abelian category $\cO$ we are dealing with admits a grading, meaning that 
	each object is graded and 
	there is an autoequivalence of shifting the grading (i.e. the $m$-th graded component of the module $M\{k\}$ is the $(n-k)$-th graded component of the module $M$).
	We also assume that all homomorphisms $\Hom_{\cO}(M,N)$, as well as all extension groups, preserve the grading. In this situation it is reasonable to deal with the collections of  graded shifts:
	$$
	q\ttt\Hom_{\cO}(M,N):= \oplus_{k\in\bZ} \Hom_{\cO}(M\{k\},N);
	\quad
	q\ttt\Ext_{\cO}^{p}(M,N):= \oplus_{k\in\bZ} \Ext_{\cO}^{p}(M\{k\},N).
	$$
	Suppose also that each simple object in $\cO$ has only one nontrivial homogeneous component,  the simple objects in $\cO$ are labeled by $\Lambda\times \bZ$ for appropriate set $\Lambda$ and 
	the graded shift maps simples to simples: $ L_{\lambda,k}\{k\} \simeq L_{\lambda,0}$.
	Then the projective cover (resp., the injective hull) of a shifted module $M\{k\}$ is isomorphic to the graded shift of the projective cover (resp., the injective hull) of $M$.
	
	Suppose that the projection $\rho:\Lambda\times\bZ\to \Theta$ factors through the projection $\Lambda\times\bZ\to \Lambda$. Then the shift autoequivalence maps a (proper) (co)standard module to a shifted (proper) (co)standard module:
	$$
	\Delta_{\lambda,k}\{k\} = \Delta_{\lambda,0}, \quad \overline{\Delta}_{\lambda,k}\{k\}  = \overline{\Delta}_{\lambda,0};
	\quad
	\nabla_{\lambda,k}\{k\} = \nabla_{\lambda,0}, 
	\quad
	\overline{\nabla}_{\lambda,k}\{k\}  = \overline{\nabla}_{\lambda,0}.
	$$
	In what follows we omit the second grading index in the notation of the (co)standard objects:
	$$
	\Delta_\lambda:= \Delta_{\lambda,0},  
	\quad
	\overline{\Delta}_{\lambda}:= \overline{\Delta}_{\lambda,0}, 
	\quad
	\nabla_\lambda:= \nabla_{\lambda,0}, 
	\quad 
	\overline{\nabla}_{\lambda}:= \overline{\nabla}_{\lambda,0}.
	$$
	We also combine all the extension groups together with all their shifts. In particular, the $\Theta$-ordered category $\cO$ is stratified if for all $\lambda,\mu\in\Lambda$ such that $\rho(\lambda)\neq\rho(\mu)$ we have:
	\begin{multline*}
		\Ext_{\cO}(\overline{\Delta}_\lambda,\nabla_{\mu})=0 \stackrel{def}{\quad \Leftrightarrow \quad} q\ttt\Ext_{\cO}(\overline{\Delta}_{\lambda,0},\nabla_{\mu,0}) = 0
		\quad \Leftrightarrow \quad
		\\
		\Leftrightarrow \forall k \ 
		\Ext_{\cO}(\overline{\Delta}_{\lambda,k},\nabla_{\mu,0}) = 0
		\quad \Leftrightarrow \quad
		\forall k,m \ 
		\Ext_{\cO}(\overline{\Delta}_{\lambda,k},\nabla_{\mu,m}) = 0.
	\end{multline*}

	\subsection{Category of modules over parahoric Lie algebra}
	\label{sec::parahoric}
	In this section, we consider the abelian categories $\fC^{J}$ and $\fC^{J^\vee}$ of graded $\cP_J$-modules whose simple objects are indexed by $J$-antidominant weights and the grading parameter.
	In Section~\ref{Main} we show that these categories are stratified in the sense of Section~\ref{RSC} for the Cherednik partial order and the dual partial order.
	
	Let us briefly recall the notation from Section \ref{sec::parabolic::setup}.
	We fix a subset $J\subset \{1,\dots,n\}$ and denote by  
	$\fl_J\subset \fp_J$ the corresponding Levi and parabolic subalgebras
	(in particular, $\fb\subset \fp_J\subset \fg$).
	Let $\cP_J\supset \fp_J$ be the standard parahoric Lie algebra (an affine analog of the parabolic subalgebra)  
	$\mathcal{P}_J = \fp_J \oplus z\fg[z].$
	Then $\mathcal{P}_J= \fl_J\oplus \fr_J$ for the radical $\fr_J$.
	One has two extreme cases: if $J=\varnothing$, then $\mathcal{P}_J$ is the Iwahori algebra $\cI$; if 
	$J=\{1,\dots,n\}$, then $\mathcal{P}_J$ coincides with the current algebra $\fg[z]$.
	In general, $\cI\subset \mathcal{P}_J\subset \fg[z]$ and 
	$\mathcal{P}_I\subset \mathcal{P}_J$ if and only if $I\subset J$.

	Recall the  categories $\fC^{J}$ and ${\fC^{J}}^{\vee}$ of $\cP_J$ modules.
	The simple objects in $\fC^J$ are 
	of the form  $L_{\la,k}$, $\la\in P_J^-$, $k\in \bZ$. Each $L_{\la,k}$ is isomorphic to an irreducible $\fl_J$ module $L_\la$, on which the  radical $\fr_J$  acts trivially; 
	the module $L_\la$ is placed in degree $k$.
	The objects of $\fC^J$ are the $\mathcal{P}_J$-modules $M$ subject to the following conditions: 
	\begin{itemize}
		\item $M=\bigoplus_{k\in\bZ} M_k$, $M_k=0$ for $k$  large enough and $xz^a M_k\subset M_{k+a}$ for any $xz^a\in \mathcal{P}_J$;
		\item each $M_k$ admits a  decomposition 
		$M_{k}=\bigoplus_{\nu\in P_J^-} L_{\nu,k}$, where each $L_{\nu,k}$ shows up finitely many times;
		\item $M$ is cogenerated by a subspace $\overline{M}$ admitting $\fh$ decomposition  $\overline{M}=\bigoplus_{\nu\in P} \overline{M}(\nu)$ such that $\dim \overline{M}(\nu)<\infty$ for all $\nu\in P$.
	\end{itemize}

	We also have a similar definition for the (dual) category ${\fC^J}^\vee$, where the last property above is replaced by 
	\begin{itemize}
		\item $M$ is generated by a subspace $\overline{M}$ admitting an $\fh$-weight decomposition $\overline{M}=\bigoplus_{\nu\in P} \overline{M}(\nu)$ such that $\dim \overline{M}(\nu)<\infty$ for all $\nu\in P$.
	\end{itemize}
	We denote simple objects of ${\fC^J}^\vee$ by the same symbols $L_{\la,k}$.
	
	For $J=\emptyset$ the categories $\fC^{\varnothing}$ and ${\fC^{\varnothing}}^{\vee}$ coincide with the categories $\fC$ and $\fC^{\vee}$ discussed in our previous paper~\cite{FKhMO}. 
	If it does not produce any ambiguity we omit the empty set index $\varnothing$ in 
	the notation $\fC^{\varnothing}$ and ${\fC^{\varnothing}}^{\vee}$. 
	
	\begin{prop::def}
		The category $\fC^{J}$ has enough injectives and each module $M\in\fC^J$ admits an injective resolution $Q^{\udot}$ such that for each irreducible $L_{\lambda,k}\in\fC^{J}$ the total multiplicity of $L_{\lambda,k}$ 
		in the Jordan-Holder series of $Q^k$ is finite. 
		
		Similarly, the category ${\fC^{J}}^{\vee}$ has enough projectives and each module $M\in {\fC^{J}}^{\vee}$ admits a projective resolution with finite multiplicities of each irreducible module.     
	\end{prop::def}
	\begin{proof}
		The injective hull of the irreducible module $L_{\lambda,k}$ in the category $\fC^{J}$ is given by
		\[ \mathbb{I}_\lambda=\mathrm{coInd}_{\fl_J}^{\cP_J}L_{\lambda,k}=L_{\lambda,k}\otimes \U(\fr_J)^\vee.\]
		The projective cover of the irreducible module $L_{\lambda,k}$ in the category ${\fC^{J}}^{\vee}$ is given by
		\[\mathbb{P}_\lambda = \mathrm{Ind}_{\fl_J}^{\cP_J}L_{\lambda,k}=L_{\lambda,k}\otimes \U(\fr_J).\]
		Note that each irreducible module $L_{\lambda,k}$ admits a projective/injective resolution thanks to the parabolic BGG resolution (see e.g.~\cite{Kum}).
	\end{proof}
	
	In what follows we consider bounded from below derived category $\bD_+(\fC^{J})$ and bounded from above derived category is denoted by $\bD_-({\fC^{J}}^{\vee})$.
	The graded embedding of the Iwahori Lie subalgebra $\cI\hookrightarrow \cP_{J}$ induces a pair of exact restriction functors that do nothing with the underlying vector spaces:
	\begin{equation}
		\label{eq::res::functors}
		\Res_{J}:\fC^{J} \to \fC^{\varnothing} = \fC, \quad 
		\Res_{J}: {\fC^{J}}^{\vee} \to {\fC^{\varnothing}}^{\vee} = \fC^{\vee}.  
	\end{equation}
	\begin{prop::def}
		\begin{itemize}
			\item
			The restriction functor $\Res_{J}:\fC^{J} \to \fC$ admits a right adjoint $\Coind_{J}:\fC \to \fC^{J}$ that maps a module $M\in\fC$ to the maximal $\fp_{J}$-integrable submodule of the coinduced module 
			$$\Hom_{U(\cI)}(U(\cP_J),M) \simeq \Hom_{U(\fb)}(U(\fp_{J}),M);$$  
			\item 
			the right adjoint functor $\Coind_{J}$ is left exact and admits a right derived functor 
			$$\sR\Coind_{J}: \bD_{+}(\fC) \rightarrow \bD_{+}(\fC^{J}).
			$$
			\item 
			The restriction functor $\Res_{J}:{\fC^{J}}^{\vee} \to \fC^{\vee}$ admits a left adjoint $\Ind_{J}:\fC^{\vee} \to {\fC^{I}}^{\vee}$ that maps an $\cI$-module $M\in\fC$ to the maximal $\fp_{J}$-integrable quotient of the induced module 
			$$
			\U(\cP_{J})\otimes_{\U(\cI)} M \simeq 
			\U(\fp_{J})\otimes_{\U(\fb)} M; 
			$$
			\item 
			the left adjoint functor $\Ind_{J}$ is right exact and admits a left derived functor 
			$$\sL\Ind_{J}: \bD_{-}(\fC^{\vee}) \rightarrow \bD_{-}({\fC^{J}}^{\vee}).$$
		\end{itemize}
	\end{prop::def}
	\begin{proof}
		It is well known that for a subalgebra $A\hookrightarrow B$ the restriction functor $\Res:B\ttt mod \rightarrow A\ttt mod$ is an exact functor whose left adjoint is the induction functor $\Ind: M \mapsto B\otimes_{A} M$ and right adjoint is the conduction functor $\Coind: M\mapsto \Hom_{A}(B,M)$ (see \cite{Weibel}). 
		For $B=\U(\cP_J)$ and $A=\U(\cI)$ we get the functors between categories $\fC^{\vee}$ and ${\fC^J}^{\vee}$, 
		modulo the assumption that a module $B\otimes_A M$ may not belong to ${\fC^J}^{\vee}$ and one has to take its maximal quotient that belongs to this category;
		we end up with the description of the left adjoint functor $\Ind_J$. 
		The same arguments explain the detailed description of the right adjoint functor $\Coind_J$. The existence of their derived functors follows from the general description of derived functors and the existence of enough projectives in ${\fC^J}^{\vee}$ and enough injectives in $\fC^J$ correspondingly.
	\end{proof}
	
	The set of simple objects $L_{\lambda,k}$ in categories $\fC^{J}$ and $\fC^{J^{\vee}}$ are indexed by pairs $(\lambda,k)$, $\lambda\in P_J^{-}$, $k\in\bZ$.
	The set of $J$-antidominant weight admits two natural partial orders: the Cherednik partial $\preceq$ order and the dual Cherednik partial order $\preceq^{\vee}$.
	Therefore, we can apply the formalism of $\Theta$-ordered categories from Definition~\ref{Ordered:Category} and define two different filtrations on the abelian categories $\fC^{J}$ and ${\fC^{J}}^{\vee}$ (considering a projection $P_J^{-}\times\bZ\twoheadrightarrow P_J^{-}$). 
	The Serre subcategories generated by irreducibles $L_{\mu,k}$ with $\mu\preceq\lambda$ or $\mu\preceq^\vee\lambda$ are denoted by $\fC^{J}_{\preceq\lambda}$,  $\fC^{J^{\vee}}_{\preceq\lambda}$ and $\fC^{J}_{\preceq^\vee\lambda}$,  $\fC^{J^{\vee}}_{\preceq^\vee\lambda}$. 
	We denote by $\Delta_{\lambda,k}$ and $\nabla_{\lambda,k}$ the standard module in $\fC^{J^\vee}$ and the costandard module in $\fC^{J}$. In what follows we usually omit the second index, since it does not affect the corresponding modules (see subsection \ref{gradedshift} for more details).

	\section{Standard modules for parahoric algebras}
	\label{sec::standards}
	In this section, we provide an explicit construction of standard and costandard modules over parahoric Lie algebras $\cP_J$. 
	First we define two families of {global} and {local} modules $\mathbb{D}_\lambda^{J}$ and $D_\lambda^J$, $\mathbb{U}_{\lambda}^{J}$ and $U_{\lambda}^{J}$ generalizing the analogous collection  of modules
	$\mathbb{D}_\lambda$, $D_\lambda$, $\mathbb{U}_{\lambda}$ and $U_{\lambda}$ over Iwahori algebra
	(see \cite{FKM,FKhMO}).
	Then we give a detailed description of these modules considered as modules over the Iwahori algebra. 
	The results of this Section are used in the next one
	for the proofs of the main results.  
	\subsection{Definitions}
	
	Recall that  
	any $\lambda \in P$ defines an antidominant weight $\la_-\in P^-$ and $\sigma_\lambda\in W$ such that
	$\sigma_\lambda(\lambda_-)=\la$ and  $\sigma_\lambda$ is the shortest element with this property. 
	
	\begin{dfn}
		\label{Ddefrel}
		The module $\mathbb D_\lambda^J$ is the cyclic $\mathcal{P}_J$-module generated by the element $v_{\lambda}$ of weight 
		$\lambda\in P_J^-$ 
		defined by the following relations:
		\begin{equation}
			\begin{cases}\label{StandardParahoricRelations}
				\widehat{\sigma_\lambda}(e_\alpha z^k)v_{\lambda}=0,~\text{if}~ \alpha \in \Phi_-, k=1, \dots\\
				e_\alpha v_{\lambda}=0,~\text{if}~ \alpha \in \Phi_{J-}\\
				\widehat{\sigma_\lambda}(e_\alpha)^{-\langle \alpha^\vee,\lambda_- \rangle+1}v_{\lambda}=0,~\text{if}~\alpha \in \Phi_+, \sigma_\lambda(\alpha) \in \Phi_+
				\\
				\widehat{\sigma_\lambda}(e_\alpha)^{-\langle \alpha^\vee,\lambda_- \rangle}v_{\lambda}=0,~\text{if}~\alpha \in \Phi_+, \sigma_\lambda(\alpha) \in \Phi_-.
			\end{cases}      
		\end{equation}
		The module $D_\lambda^J$ is defined by  relations \eqref{StandardParahoricRelations} and the relations
		\begin{equation}\label{localD}
			h z^k v_{\lambda}=0, h \in \fh, k=1,\dots.
		\end{equation}
	\end{dfn}
	
	In a similar way we define another family of modules.
	
	\begin{dfn}
		\label{Udefrel}
		The module $\mathbb U_\lambda^J$ is the cyclic $\mathcal{P}_J$-module generated by the element $v_{\lambda}$ of weight 
		$\lambda\in P_J^-$ defined by the relations:
		\begin{equation}
			\begin{cases}\label{CoStandardParahoricRelations}
				\widehat{\sigma_\lambda}(e_\alpha z^k)v_{\lambda}=0,~\text{if}~ \alpha \in \Phi_-, k=1, \dots\\
				e_\alpha v_{\lambda}=0,~\text{if}~ \alpha \in \Phi_{J-},\\
				\widehat{\sigma_\lambda}(e_\alpha)^{-\langle \alpha^\vee,\lambda_- \rangle+1}v_{\lambda}=0,~\text{if}~\alpha \in \Phi_+, \sigma_\lambda(\alpha) \in \Phi_-\cup \Phi_J\\
				\widehat{\sigma_\lambda}(e_\alpha)^{-\langle \alpha^\vee,\lambda_- \rangle}v_{\lambda}=0,~\text{if}~\alpha \in \Phi_+, \sigma_\lambda(\alpha) \in \Phi_+\cap (\Phi \backslash \Phi_J).
			\end{cases}      
		\end{equation}
		The module $U_\lambda^J$ is defined by relations \eqref{CoStandardParahoricRelations} and relations \eqref{localD}.
	\end{dfn}
	
	In what follows we identify the restrictions of the $\mathcal{P}_J$ modules  
	defined above to $\cI$  with  Weyl modules with characteristics $W_\la(r)$ and $\mathbb W_\la(r)$ (see \cite{FeMa,FMO1}).
	These cyclic $\cI$ modules depend on an element of the affine Weyl group, its reduced decomposition and a positive integer. The generalized Weyl modules with characteristics   are defined by explicit sets of relations. 
	In what follows we use the special cases of these modules, see the proofs of Lemma \ref{DRestriction} and of Proposition \ref{URestriction}.

	\subsection{Restriction to the Iwahori algebra: the modules D}
	In this subsection, we describe the restrictions to the Iwahori algebra of the modules 
	$\mathbb D_\lambda^J$ and $D_\lambda^J$. For a $\cP_J$ module $M$ we denote by
	$\Res_{J} M$ the restriction of $M$ to the Iwahori algebra $\cI\subset \cP_J$.

	\begin{lem}\label{DRestriction}
		The modules
		$\Res_{J}\mathbb D_\lambda^J, \Res_{J}D_\lambda^J$
		are cyclic $\cI$ modules with the defining relations coinciding with the defining relations of the initial modules
		$\mathbb D_\lambda^J, D_\lambda^J$
		with the second line  in \eqref{StandardParahoricRelations} omited.  
	\end{lem}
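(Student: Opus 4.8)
The plan is to establish the statement in two steps: (i) that $v_\lambda$ already generates $\mathbb D_\lambda^J$ (and likewise $D_\lambda^J$) as an $\cI$-module, and (ii) that the annihilator of $v_\lambda$ in $\U(\cI)$ is the left ideal generated by the relations in the first, third and fourth lines of \eqref{StandardParahoricRelations} (together with \eqref{localD} in the local case). Note first that all of those relations are, by construction and exactly as in the Iwahori case $J=\varnothing$ treated in \cite{FKhMO}, relations inside $\U(\cI)$ — the operators $\widehat{\sigma_\lambda}(e_\alpha z^k)$ with $k\ge 1$, the operators $\widehat{\sigma_\lambda}(e_\alpha)$ with $\alpha\in\Phi_+$, and the elements $hz^k$ all lie in $\cI$. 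The only relation in \eqref{StandardParahoricRelations} that genuinely uses the larger algebra is the second line, $e_\alpha v_\lambda=0$ for $\alpha\in\Phi_{J-}$: the vectors $e_\alpha$, $\alpha\in\Phi_{J-}$, span the negative nilradical $\fn_J^-:=\bigoplus_{\alpha\in\Phi_{J-}}\fg_\alpha$ of the Levi $\fl_J$, and lie in $\fl_J\smallsetminus\cI$. Thus the point of the lemma is precisely that this single relation is redundant in the $\cI$-presentation.

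For cyclicity over $\cI$ I would argue via PBW. Write $\fn_J^+:=\bigoplus_{\alpha\in\Phi_{J+}}\fg_\alpha\subset\cI$ and recall $\cP_J=\fl_J\oplus\fr_J$ with $\fr_J\subset\cI$, so that $\U(\cP_J)=\U(\fr_J)\U(\fl_J)$, while the triangular decomposition $\fl_J=\fn_J^+\oplus\fh\oplus\fn_J^-$ gives $\U(\fl_J)=\U(\fn_J^+)\U(\fh)\U(\fn_J^-)$. Since the second line of \eqref{StandardParahoricRelations} says $\fn_J^-v_\lambda=0$ and $v_\lambda$ is an $\fh$-eigenvector, we get $\U(\fl_J)v_\lambda=\U(\fn_J^+)v_\lambda\subset\U(\cI)v_\lambda$, hence
\[
\mathbb D_\lambda^J=\U(\cP_J)v_\lambda=\U(\fr_J)\U(\fl_J)v_\lambda=\U(\fr_J)\U(\fn_J^+)v_\lambda\subset\U(\cI)v_\lambda\subset \mathbb D_\lambda^J .
\]
So $\Res_J\mathbb D_\lambda^J=\U(\cI)v_\lambda$; the same computation applies verbatim to $D_\lambda^J$, since \eqref{localD} only adds relations already living in $\cI$.

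It remains to identify the $\cI$-annihilator of $v_\lambda$. Let $N$ be the cyclic $\cI$-module on a generator $\bar v$ subject only to the first, third and fourth lines of \eqref{StandardParahoricRelations} (and to \eqref{localD} in the local case). Because $v_\lambda$ satisfies these, the previous paragraph gives a canonical $\cI$-linear surjection $N\twoheadrightarrow\Res_J\mathbb D_\lambda^J$, and we must show it is injective. I would do this by recognising $N$ as a generalized Weyl module with characteristics: after applying $\widehat{\sigma_\lambda}^{-1}$, the relations defining $N$ turn into the standard ones, and they become the defining relations of $\mathbb W_{\lambda}(r)$ (respectively of $W_\lambda(r)$ in the local case) for the reduced decomposition and characteristic $r$ read off from $\sigma_\lambda$ and $J$, in the sense of \cite{FeMa,FMO1}. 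In particular $N$ has finite-dimensional graded weight spaces; moreover this module carries a $\cP_J$-action in which $\fl_J$ acts integrably and $\bar v$ is an $\fl_J$-lowest weight vector, so $\bar v$ also satisfies the second line of \eqref{StandardParahoricRelations} and there is a $\cP_J$-linear surjection $\mathbb D_\lambda^J\twoheadrightarrow N$. Since $N$ and $\mathbb D_\lambda^J$ are graded with finite-dimensional components, the two mutually inverse surjections are isomorphisms, which proves the lemma. One may instead bypass the generalized Weyl modules and directly upgrade the $\cI$-action on $N$ to a $\cP_J$-action with $\fn_J^-\bar v=0$, prescribing the $\fn_J^-$-action through $\fl_J$-integrability and verifying the bracket relations; this again yields the surjection $\mathbb D_\lambda^J\twoheadrightarrow N$.

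The routine part is the PBW bookkeeping giving cyclicity; the real obstacle is the last step — controlling the $\cI$-annihilator of $v_\lambda$ exactly. Commuting $\fn_J^-$ past $\U(\cI)$ could a priori enlarge the relation ideal, so one genuinely needs the explicit presentations and characters of the generalized Weyl modules with characteristics, together with the decomposition procedure recalled in Section~\ref{sec::parabolic::setup}, to pin it down.
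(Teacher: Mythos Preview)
Your outline is correct and follows the same route as the paper: reduce to showing that the cyclic $\cI$-module $N$ defined by the first, third and fourth lines of \eqref{StandardParahoricRelations} (plus \eqref{localD} in the local case) already carries a $\cP_J$-action with $\fn_J^-\bar v=0$, and identify $N$ with a generalized Weyl module with characteristics. Your PBW argument for cyclicity is cleaner than what the paper writes, but is equivalent to the paper's one-line observation that $\lambda\in P_J^-$ forces $\U(\cI)u_\lambda=\U(\cP_J)u_\lambda$.

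The one place where your sketch is genuinely incomplete is the sentence ``this module carries a $\cP_J$-action in which $\fl_J$ acts integrably''. Nothing in the abstract definition of $\mathbb W_\lambda(r)$ or $W_\lambda(r)$ hands you a $\cP_J$-action for free; it depends on the particular reduced decomposition and on the value of $r$. The paper supplies this by \emph{realizing} $N$ inside an ambient $\fg[z]$-module: one takes the extremal vector $u_\lambda$ of weight $\lambda$ in the local Weyl module $W_{\lambda_-}$, so that $\U(\cI)u_\lambda$ is automatically a $\cP_J$-submodule (since $\cP_J\subset\fg[z]$) and $u_\lambda$ is $\fl_J$-lowest because $\lambda\in P_J^-$. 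The remaining work is then to show that $\U(\cI)u_\lambda$ has exactly the relations of $N$. For this the paper constructs a specific reduced expression ${\mathbf t}_{\lambda_-}=\sigma_\lambda^{-1}\cdot r_\lambda$, runs the decomposition procedure along the first $r=\ell(\sigma_\lambda)$ steps, checks that the resulting path in the (quantum) Bruhat graph is the Bruhat chain $s_{i_j}\cdots s_{i_1}$ ending at $\sigma_\lambda$, and then invokes \cite[Proposition~4.14]{FKhMO} to read off the defining relations of the resulting submodule $W_\lambda(r)$. This is exactly the ``decomposition procedure'' you allude to in your last paragraph; the point is that it is doing double duty --- it both identifies the relations of $N$ and, via the embedding in $W_{\lambda_-}$, furnishes the $\cP_J$-action you need. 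Your alternative of ``directly upgrading the $\cI$-action and verifying the bracket relations'' would amount to redoing this computation by hand.
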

	\begin{proof}
		It suffices to show that the action of the Iwahori algebra on the  $\cI$ modules defined by relations 
		\eqref{StandardParahoricRelations} and \eqref{CoStandardParahoricRelations} with second lines omitted (and with the relations \eqref{localD} added in the case of local modules) can be extended to the action of the parahoric algebra $\cP_J$ (by adding the conditions from the second lines of \eqref{StandardParahoricRelations}).
		
		Let us first show that it suffices to work out either global or local case (i.e. the two cases are equivalent). 
		In fact, the statement we need to show can be restated as follows.  Let us take a defining relation $Rv_\la=0$ of one of the $\cI$ modules $M$ we are interested in. 
		Then for  any $\al\in\Phi_{J-}$ one has $[e_\alpha,R]v_\la=0$ assuming that $e_\beta v_\la=0$ for any $\beta\in\Phi_{J-}$. 
		Now the commutation relations 
		$[e_\al,h_\beta z^k]={\rm const}.e_\al z^k$ imply the equivalence between the global and local cases.
		
		Let us prove the desired statement for the (local) modules $\Res_{J}D_\lambda^J$. 
		We consider the local Weyl module $W_{\la_-}$ (recall $\la=\sigma_\la(\la_-))$. Let 
		$u_\la\in W_{\la_-}$ be a weight $\la$ extremal vector. Since $\la\in P_J^-$, one has 
		$\U(\cI)u_\la = \U(\cP_J)u_\la$. So it suffices to show that the $\cI$-module $\U(\cI)u_\la\subset W_{\la_-}$ is defined by relations \eqref{localD} and relations  \eqref{StandardParahoricRelations} with the second line omitted. 
		To this end, we construct the decomposition of the generalized Weyl module $W_{\la_-}$ (see (\cite{FeMa,FMO1})) such that
		\begin{itemize}
			\item     
			the module $\U(\cI)u_\la $ coincides with a generalized Weyl module with characteristics $M$, which shows up as a submodule in the decomposition as above;
			\item 
			the defining relations of $M$  are \eqref{localD} and  \eqref{StandardParahoricRelations} with the second line omitted.  
		\end{itemize}
		
		We consider the translation elements ${\mathbf t}_{\lambda_-}, {\mathbf t}_{\lambda_-}\in W^e$
		and the decomposition ${\mathbf t}_\lambda = r_\lambda \sigma_\lambda^{-1}$ for $\lambda\in P$. Conjugation, we get the reduced decomposition ${\mathbf t}_{\lambda_-}=\sigma_\la^{-1} r_\la$, i.e., $\ell({\mathbf t}_{\la_-})=\ell(\sigma_\la^{-1})+\ell(r_\la)$.
		Having hand-reduced expressions
		\[
		\sigma_\la^{-1}=s_{i_1}\dots s_{i_r},\ r_\la=\pi s_{i_{r+1}}\dots s_{i_M},
		\]
		we obtain a reduced expression
		\begin{equation}\label{reddecV}
			{\mathbf t}_{\la_-}=\pi s_{\pi^{-1}i_1}\dots s_{\pi^{-1} i_r} s_{i_{r+1}}\dots s_{i_M}.
		\end{equation}
		Hence, with these choices of reduced expressions, we have 
		for $j=1,\dots,r$
		\begin{multline*}
			\beta^\vee_{j}({\mathbf t}_{\la_-})= s_{i_M}\dots s_{i_{r+1}}s_{\pi^{-1} i_r}\dots s_{\pi^{-1} i_{j+1}}(\pi^{-1}\alpha_{i_j}^\vee)={\mathbf t}_{\la_-}^{-1}\pi
			s_{\pi^{-1}i_1}\dots s_{\pi^{-1}i_j}(\pi^{-1}\alpha_{i_j}^\vee).
		\end{multline*}
		Since the translation element ${\mathbf t}_{\la_-}^{-1}$ does not affect the finite parts of the roots $\beta_\bullet$, one gets
		\begin{equation}\label{tbegining}
			\overline\beta^\vee_{j}({\mathbf t}_{\la_-})=s_{i_1}\dots s_{i_{j-1}}(-\alpha_{i_j}^\vee).
		\end{equation}
		
		Let us consider the decomposition of $W_{\lambda_-}$ with respect to the sequence 
		\[\overline\beta^\vee_{1}({\mathbf t}_{\la_-}),\dots,\overline\beta^\vee_{r}({\mathbf t}_{\la_-}).\]
		Recall that in a nutshell a step of the decomposition procedure of \cite{FeMa} consists of taking a cyclic submodule of a cyclic $\cI$ module such that both the submodule and the corresponding quotient are identified with 
		generalized Weyl modules with characteristics. 
		The subquotients of the decomposition procedure correspond to the paths in the quantum Bruhat graph (see \cite{OS,FeMa}), enumerated by subsequences of $\overline\beta$'s. In particular, 
		the path corresponding  to the full sequence 
		$\overline\beta^\vee_1({\mathbf t}_{\la_-}), \dots,\overline\beta^\vee_r({\mathbf t}_{\la_-})$ 
		corresponds to a certain submodule as soon as one checks that the elements 
		$s_{\overline\beta^\vee_{1}({\mathbf t}_{\la_-})}\dots s_{\overline\beta^\vee_{j}({\mathbf t}_{\la_-})}$, $j=1,\dots,r$ indeed form a path in the quantum Bruhat graph.
		Let us check this statement.
		
		One has 
		\[s_{\overline\beta^\vee_{1}({\mathbf t}_{\la_-})}\dots s_{\overline\beta^\vee_{j}({\mathbf t}_{\la_-})}=s_{i_j}\dots s_{i_1}\in W\]
		and
		\begin{equation}\label{l+1}l(s_{\overline\beta^\vee_{1}({\mathbf t}_{\la_-})}\dots s_{\overline\beta^\vee_{j+1}({\mathbf t}_{\la_-})})=l(s_{\overline\beta^\vee_{1}({\mathbf t}_{\la_-})}\dots s_{\overline\beta^\vee_{j}({\mathbf t}_{\la_-})})+1.
		\end{equation}
		One also has
		\[s_{\overline\beta^\vee_{1}({\mathbf t}_{\la_-})}\dots s_{\overline\beta^\vee_{r}({\mathbf t}_{\la_-})}=\sigma_\la.\]
		So we do have a path in the Bruhat graph (and hence in the quantum Bruhat graph as well). The corresponding path ends at $\sigma_\la$ and hence one gets a cyclic submodule of $W_{\la_-}$ generated from the weight $\la$ extremal vector of $W_{\la_-}$ as a submodule in our decomposition procedure. 
		Indeed, the Weyl module $W_{\la_-}$ contains an irreducible $\fg$ subrepresentation $V_{\la_-}$. 
		Hence (due to \eqref{l+1}) all the cyclic vectors of the submodules of $W_{\la_-}$ in the decomposition procedure are generated (as $\cI$-modules) by the extremal vectors of $V_{\la_-}\subset W_{\la_-}$.   
		
		By definition, the submodule in the decomposition as above is 
		$W_\la(r)$. The defining relations of this module are given by (see Proposition 4.14, \cite{FKhMO})
		\begin{align*} \nonumber
			\widehat{\sigma_\lambda}(e_\alpha z^k)v_{\lambda}=0,~\text{if}~ \alpha \in \Phi_-, k=1, \dots\\
			\widehat{\sigma_\lambda}(e_\alpha)^{-\langle \alpha^\vee,\lambda_- \rangle+1}v_{\lambda}=0,~\text{if}~\alpha \in \Phi_+, \sigma_\lambda(\alpha) \in \Phi_+,
			\\ \nonumber \widehat{\sigma_\lambda}(e_\alpha)^{-\langle \alpha^\vee,\lambda_- \rangle}v_{\lambda}=0,~\text{if}~\alpha \in \Phi_+, \sigma_\lambda(\alpha) \in \Phi_-,\\ \nonumber
			h z^k v_{\lambda}=0, h \in \fh, k=1,\dots.
		\end{align*}
		These are exactly the expected relations for  $\Res_{J}D_\lambda^J$.
	\end{proof}

	\begin{cor}\label{StandardRestriction}
		\[\Res_{J}\mathbb D_\lambda^J\simeq \mathbb D_\lambda^\varnothing,\quad \Res_{J}D_\lambda^J\simeq D_\lambda^\varnothing.\]  
	\end{cor}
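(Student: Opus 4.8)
The statement is an immediate consequence of Lemma~\ref{DRestriction} together with Definition~\ref{Ddefrel} specialized to $J=\varnothing$, and the plan is simply to compare defining relations on the two sides. First I would observe that $\cP_\varnothing=\cI$, so $\mathbb{D}_\lambda^\varnothing$ and $D_\lambda^\varnothing$ are from the outset cyclic $\cI$-modules; moreover $\Phi_{\varnothing-}=\varnothing$, so the second line of \eqref{StandardParahoricRelations} is vacuous when $J=\varnothing$ and imposes no condition on the cyclic vector. Hence $\mathbb{D}_\lambda^\varnothing$ (resp.\ $D_\lambda^\varnothing$) is precisely the cyclic $\cI$-module generated by $v_\lambda$ subject to the first, third, and fourth lines of \eqref{StandardParahoricRelations} (resp.\ together with \eqref{localD}).

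Next I would invoke Lemma~\ref{DRestriction}, which asserts exactly that $\Res_J\mathbb{D}_\lambda^J$ (resp.\ $\Res_J D_\lambda^J$) is the cyclic $\cI$-module generated by $v_\lambda$ with the same list of relations, namely the defining relations of $\mathbb{D}_\lambda^J$ (resp.\ $D_\lambda^J$) with the second line of \eqref{StandardParahoricRelations} deleted. Since the element $\sigma_\lambda\in W$ and the antidominant weight $\lambda_-$ depend only on $\lambda$ and not on the choice of $J$, the surviving relations — including the dichotomy $\sigma_\lambda(\alpha)\in\Phi_+$ versus $\sigma_\lambda(\alpha)\in\Phi_-$ appearing in the third and fourth lines — coincide verbatim with those defining $\mathbb{D}_\lambda^\varnothing$ (resp.\ $D_\lambda^\varnothing$). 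Two cyclic modules over the same Lie algebra with cyclic vectors of the same $\fh$-weight and the same defining relations are canonically isomorphic via the map sending cyclic vector to cyclic vector, and this isomorphism is automatically compatible with the $\bZ$-grading; this yields the claim.

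There is essentially no obstacle here: the genuinely nontrivial content has been absorbed into Lemma~\ref{DRestriction}, whose proof handles the real point, that the $\cI$-action defined by the truncated relations extends to a $\cP_J$-action. The only item requiring a moment of care is the bookkeeping that \emph{deleting} the second line of \eqref{StandardParahoricRelations} does not secretly alter the module on the $J=\varnothing$ side — but this is automatic, since for $J=\varnothing$ that line was empty to begin with, so truncation acts as the identity on the relation set.
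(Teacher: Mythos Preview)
Your proposal is correct and follows essentially the same approach as the paper. The paper's one-line proof reads ``Follows from Lemma~\ref{DRestriction} and Proposition~4.14, \cite{FKhMO}''; the extra citation is there because the symbols $\mathbb D_\lambda^\varnothing$, $D_\lambda^\varnothing$ are meant to match the Iwahori modules introduced in \cite{FKhMO}, and Proposition~4.14 of that reference records their defining relations. You reach the same conclusion by instead reading $\mathbb D_\lambda^\varnothing$, $D_\lambda^\varnothing$ directly as the $J=\varnothing$ case of Definition~\ref{Ddefrel} and observing that the second line of \eqref{StandardParahoricRelations} is vacuous there, which is equally valid and arguably cleaner.
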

	\begin{proof}
		Follows from  Lemma~\ref{DRestriction} and Proposition 4.14, \cite{FKhMO}.
	\end{proof}
	
	\begin{cor}
		The $\cI$ submodule of $W_{\lambda_-}$, $\lambda_-\in P^-$ generated by a vector of the extremal weight $\lambda=\kappa(\lambda_-)$, $\kappa \in W$, is isomorphic to $D_{\lambda}^{\varnothing}$.
	\end{cor}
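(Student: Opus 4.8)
The plan is to reduce the statement to what has already been established inside the proof of Lemma~\ref{DRestriction}. Recall that $W_{\lambda_-}$ is a $\bZ_{\ge 0}$-graded cyclic $\cI$-module whose degree-zero component is $\U(\fg)v_{\lambda_-}\cong V_{\lambda_-}$, with $v_{\lambda_-}$ the extremal (lowest, since $\lambda_-\in P^-$) weight vector. Since $\lambda=\kappa(\lambda_-)$ lies in the $W$-orbit of the antidominant weight $\lambda_-$, it is an extremal weight of $W_{\lambda_-}$, so by the standard theory of local Weyl modules the weight space $(W_{\lambda_-})_{\lambda}$ is one-dimensional; hence ``a vector of weight $\lambda$'' is unique up to a scalar, and we may take it to be $u_\lambda:=\widehat{\kappa}(v_{\lambda_-})$, where $\widehat\kappa$ denotes the standard lift of $\kappa$ to an automorphism of the $\fg$-integrable module $W_{\lambda_-}$.

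First I would remove the dependence on the choice of $\kappa$. Write $\kappa=\sigma_\lambda w$ with $w=\sigma_\lambda^{-1}\kappa\in\mathrm{Stab}_W(\lambda_-)$; the latter group is generated by the simple reflections $s_i$ with $\langle\lambda_-,\alpha_i^\vee\rangle=0$. For such an $i$ one has $f_i v_{\lambda_-}=0$ (as $v_{\lambda_-}$ is the lowest weight vector of $V_{\lambda_-}$) and, the $\msl_2^{(i)}$-weight of $v_{\lambda_-}$ being zero, also $e_i v_{\lambda_-}=0$; hence $\widehat{s_i}(v_{\lambda_-})=v_{\lambda_-}$. Therefore $\widehat w(v_{\lambda_-})=v_{\lambda_-}$, so $u_\lambda=\widehat\kappa(v_{\lambda_-})$ spans the same line as $\widehat{\sigma_\lambda}(v_{\lambda_-})$, and we may assume $\kappa=\sigma_\lambda$.

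It then remains to identify $\U(\cI)\,\widehat{\sigma_\lambda}(v_{\lambda_-})\subset W_{\lambda_-}$ with $D_\lambda^\varnothing$, and this is exactly the computation carried out in the proof of Lemma~\ref{DRestriction} in the case $J=\varnothing$ (so that $\Phi_{J-}=\varnothing$ and the second line of \eqref{StandardParahoricRelations} is vacuous): there the cyclic submodule $\U(\cI)u_\lambda$ is realized, through the decomposition procedure of~\cite{FeMa,FMO1} applied to the sequence $\overline\beta^\vee_1({\mathbf t}_{\lambda_-}),\dots,\overline\beta^\vee_r({\mathbf t}_{\lambda_-})$ with $r=\ell(\sigma_\lambda)$, as the generalized Weyl module with characteristics $W_\lambda(r)$, whose defining relations (Proposition~4.14 of~\cite{FKhMO}) are precisely \eqref{StandardParahoricRelations} and \eqref{localD}, i.e.\ those of $D_\lambda^\varnothing$. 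Together with Corollary~\ref{StandardRestriction} this yields $\U(\cI)u_\lambda\cong D_\lambda^\varnothing$. The only point beyond Lemma~\ref{DRestriction} is the passage from $\sigma_\lambda$ to an arbitrary $\kappa$, handled by the stabilizer argument above, together with matching the conventions for the lifts $\widehat\kappa$, $\widehat{\sigma_\lambda}$; since everything happens inside the one-dimensional space $(W_{\lambda_-})_\lambda$ I do not expect any real obstacle here, the substantive work having already been done in that lemma.
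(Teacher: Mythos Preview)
Your proposal is correct and matches the paper's implicit argument: the corollary is stated without proof because it is exactly what was established inside the proof of Lemma~\ref{DRestriction}, namely that $\U(\cI)u_\lambda\simeq W_\lambda(r)$ and that the defining relations of $W_\lambda(r)$ are those of $D_\lambda^{\varnothing}$. Your stabilizer argument reducing an arbitrary $\kappa$ to $\sigma_\lambda$ is harmless but unnecessary: the statement concerns the $\cI$-submodule generated by \emph{a vector of weight $\lambda$}, which depends only on the one-dimensional line $(W_{\lambda_-})_\lambda$ and not on the particular $\kappa$ used to write $\lambda=\kappa(\lambda_-)$; the invocation of Corollary~\ref{StandardRestriction} at the end is likewise superfluous, since for $J=\varnothing$ the displayed relations in the proof of Lemma~\ref{DRestriction} are literally the defining relations of $D_\lambda^{\varnothing}$.
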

	
	\begin{lem}\label{GlobalWeyl}
		Let $\la_-\in P^-$ and $\la=\sigma_\la(\la_-)$. Then one has a natural embedding of generalized global Weyl modules $\W_\la\subset \W_{\la_-}$.
		If $\la\in P_J^-$, then 
		the natural $\cI$ action on $\W_\la$ 
		admits a natural extension to the action of the parahoric algebra $\cP_J$. 
	\end{lem}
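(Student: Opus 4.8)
The plan is to realise $\W_\la$ as the $\cI$-submodule of $\W_{\la_-}$ generated by an extremal weight vector, and then to observe that for $\la\in P_J^-$ this submodule is automatically preserved by the larger algebra $\cP_J$.

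First I would set $u_\la:=\widehat{\sigma_\la}(v_{\la_-})\in\W_{\la_-}$. Since the global Weyl module $\W_{\la_-}$ is $\fg$-integrable, the braid group action makes sense on it and $u_\la$ is a well-defined extremal vector of weight $\la=\sigma_\la(\la_-)$, lying in the irreducible $\fg$-submodule $V_{\la_-}\subset\W_{\la_-}$ generated by $v_{\la_-}$. Comparing weights and applying $\widehat{\sigma_\la}$ exactly as in the proof of Lemma~\ref{DRestriction}, one checks that $u_\la$ satisfies the defining relations of $\W_\la$ (see \cite{FeMa,FMO1}) — the computation is the same as the one identifying the local submodule $\U(\cI)u_\la\subset W_{\la_-}$ with $D_\la^\varnothing$, only with the relation $hz^kv_\la=0$ dropped — so there is a surjection $\W_\la\twoheadrightarrow\U(\cI)u_\la$. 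To see that it is an isomorphism I would invoke the decomposition procedure of \cite{FeMa} in its global form, applied to $\W_{\la_-}$ with the sequence of coroots $\overline\beta^\vee_\bullet({\mathbf t}_{\la_-})$ from \eqref{tbegining}: it identifies $\U(\cI)u_\la$ with a global generalized Weyl module with characteristics $\W_\la(r)$, $r=\ell(\sigma_\la^{-1})$, which is free over its highest weight algebra $\cA_\la$ and whose quotient by the augmentation ideal is the local module $D_\la^\varnothing=\U(\cI)u_\la\subset W_{\la_-}$ of the corollary following Corollary~\ref{StandardRestriction} (the global analogue of Proposition~4.14 of \cite{FKhMO}). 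Since $\W_\la$ is likewise free over $\cA_\la$ with the same quotient, the surjection $\W_\la\twoheadrightarrow\U(\cI)u_\la$ has source and target free of the same graded rank over $\cA_\la$ and becomes an isomorphism after reduction modulo the augmentation ideal, hence is an isomorphism. This yields the embedding $\W_\la\cong\U(\cI)u_\la\hookrightarrow\W_{\la_-}$.

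Second, assuming $\la\in P_J^-$, I would note that $\cP_J=\fp_J\oplus z\fg[z]\subset\fg[z]$, so $\W_{\la_-}$ is a $\cP_J$-module by restriction and it suffices to prove that $\W_\la=\U(\cI)u_\la$ is a $\cP_J$-submodule, i.e. $\U(\cP_J)u_\la=\U(\cI)u_\la$. Writing $\cP_J=\cI\oplus\fn_{J-}$ with $\fn_{J-}=\bigoplus_{\alpha\in\Phi_{J-}}\fg_\alpha$ the negative nilradical of the Levi $\fl_J$, the PBW theorem gives $\U(\cP_J)u_\la=\U(\cI)\,\U(\fn_{J-})u_\la$, so it is enough to show $\U(\fn_{J-})u_\la=\Bbbk u_\la$, equivalently $f_iu_\la=0$ for all $i\in J$. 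But $u_\la$ is an extremal vector of the integrable module $\W_{\la_-}$, so for each $i$ it lies at an end of the $\alpha_i$-string through it; since $\la\in P_J^-$ forces $\langle\la,\alpha_i^\vee\rangle\le0$ for $i\in J$, it must be the lowest vector of that string, whence $f_iu_\la=0$. Therefore $\U(\cP_J)u_\la=\U(\cI)u_\la$, and the $\cI$-action on $\W_\la$ extends to the action of $\cP_J$ induced from $\W_{\la_-}$.

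The hard part will be the global step in the first paragraph: establishing that $\U(\cI)u_\la$ is free over $\cA_\la$ with the expected fibre, i.e. that the local identification of Lemma~\ref{DRestriction} and the corollary following Corollary~\ref{StandardRestriction} lifts to the global Weyl module. This needs a genuine global version of the decomposition procedure (the $\W$-analogue of \cite{FeMa}, equivalently a lift of the local filtration along $\W_{\la_-}\twoheadrightarrow W_{\la_-}$) together with the freeness of global generalized Weyl modules with characteristics over their highest weight algebras; the argument runs parallel to \cite{FKhMO}, \S4. By contrast, the $\cP_J$-extension is short and is the only place where the $J$-antidominance of $\la$ is used, through the extremality of $u_\la$.
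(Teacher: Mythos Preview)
Your proposal is correct and follows essentially the same route as the paper: realise $\W_\la$ as the cyclic $\cI$-submodule $\U(\cI)u_\la\subset\W_{\la_-}$ generated by the extremal vector of weight $\la$, using the global decomposition procedure of \cite{FMO1} along the same sequence of coroots as in Lemma~\ref{DRestriction}, and then observe that for $\la\in P_J^-$ one has $\U(\cP_J)u_\la=\U(\cI)u_\la$. The only difference is the level of detail: where the paper simply asserts (with a reference to \cite{FMO1}) that the cyclic submodule is isomorphic to $\W_\la$, you supply an explicit argument via freeness over $\cA_\la$ and reduction modulo the augmentation ideal; and where the paper states $\U(\cI)u_\la=\U(\cP_J)u_\la$ in one line, you unpack it via PBW and the extremality argument $f_iu_\la=0$ for $i\in J$. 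Both steps are exactly what underlies the paper's shorter formulation.
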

	\begin{proof}
		It suffices to show that   $\W_\la$ sits inside $\W_{\la_-}$ as an $\cI$ submodule generated by the weight $\la$ (and $z$-degree $0$) extremal vector $u_\la$. 
		This would imply the desired claim since  $\la\in P_J^-$ and hence $\U(\cI)u_\la=\U(\cP_J)u_\la$ (note that $\cP_J$ acts on
		$\W_{\la_-}$ since this module is acted upon the current algebra $\fg[t]$).
		
		We consider the same decomposition procedure as in the proof of Lemma \ref{DRestriction}. 
		Recall that in the proof of Lemma  \ref{DRestriction} we constructed the set of coroots such that the corresponding path in the Bruhat graph corresponds to a cyclic submodule inside $W_{\la_-}$. 
		Now let us take the same set of roots and use it for the decomposition procedure of the global Weyl module 
		$\W_{\la_-}$ (see \cite{FMO1}). Then the corresponding cyclic $\cI$ submodule (generated by the extremal weight $\la$ vector $u_\la$) is isomorphic to the global Weyl module $\W_\la$.
	\end{proof}
	
	\subsection{Restriction to the Iwahori algebra: the modules U}
	
	In this subsection, we describe the restrictions to the Iwahori algebra of the modules
	$\mathbb U_\lambda^J$ and $U_\lambda^J$.

	\begin{lem}
		The restrictions 
		$\Res_{J}\mathbb U_\lambda^J, \Res_{J}U_\lambda^J$
		are cyclic $\cI$ modules with the defining relations coinciding with the defining relations of the initial modules
		$\mathbb U_\lambda^J, U_\lambda^J$
		with the second line  in \eqref{CoStandardParahoricRelations} omited.  
	\end{lem}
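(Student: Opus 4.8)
The plan is to follow the proof of Lemma~\ref{DRestriction}, only the realization step being genuinely new. The first two ingredients of that proof already treat the relations \eqref{CoStandardParahoricRelations} on an equal footing with \eqref{StandardParahoricRelations}: it suffices to check that the $\cI$-module defined by \eqref{CoStandardParahoricRelations} with its second line deleted --- supplemented by \eqref{localD} in the local case --- carries an action of $\cP_J$ extending its $\cI$-action, since for $\lambda\in P_J^-$ such an extension is forced to satisfy $e_\alpha v_\lambda=0$, $\alpha\in\Phi_{J-}$; and the local and global versions are equivalent because $[e_\alpha,R]v_\lambda=0$ for $\alpha\in\Phi_{J-}$ whenever $Rv_\lambda=0$ is a defining relation and $e_\beta v_\lambda=0$ for all $\beta\in\Phi_{J-}$, while $[e_\alpha,h_\beta z^k]$ is a scalar multiple of $e_\alpha z^k$. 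So it remains to exhibit the $\cP_J$-action on the local module $U_\lambda^J$, and for this one realizes it inside an ambient $\fg[t]$-module.

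The realization step is the new point. For $D_\lambda^J$ one took the submodule of $W_{\lambda_-}$ generated by the extremal weight-$\lambda$ vector, running the decomposition procedure of \cite{FeMa,FMO1} along the finite coroot sequence $\overline\beta^\vee_{1}({\mathbf t}_{\lambda_-}),\dots,\overline\beta^\vee_{r}({\mathbf t}_{\lambda_-})$ coming, via \eqref{reddecV} and \eqref{tbegining}, from a reduced word for $\sigma_\lambda^{-1}$ placed at the front of a reduced expression for ${\mathbf t}_{\lambda_-}$; by the length identity \eqref{l+1} the associated partial products form a path in the ordinary Bruhat graph ending at $\sigma_\lambda$, and the resulting submodule is a generalized Weyl module with characteristics whose defining relations, read off from \cite[Prop.~4.14]{FKhMO}, are the relations \eqref{StandardParahoricRelations} (second line omitted) together with \eqref{localD}. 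For $U_\lambda^J$ I would run the decomposition procedure along the coroot sequence attached to a different reduced decomposition of ${\mathbf t}_{\lambda_-}$ (with the reduced word for $\sigma_\lambda^{-1}$ inserted at the opposite end), so that the relevant partial products trace a path of the full quantum Bruhat graph --- now possibly using genuine quantum, length-decreasing, edges --- still ending at $\sigma_\lambda$; the cyclic $\cI$-submodule generated by the extremal weight-$\lambda$ vector is then again a generalized Weyl module with characteristics $W_\lambda(r)$ for the appropriate $r$, and comparing its defining relations with \cite[Prop.~4.14]{FKhMO} produces exactly \eqref{CoStandardParahoricRelations} with the second line deleted --- the essential difference from the $D$ case being that the exponent $-\langle\alpha^\vee,\lambda_-\rangle+1$ now attaches to the $\alpha\in\Phi_+$ with $\sigma_\lambda(\alpha)\in\Phi_-$ rather than to those with $\sigma_\lambda(\alpha)\in\Phi_+$. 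The global statement for $\mathbb U_\lambda^J$ then follows from the equivalence above and the global decomposition procedure of \cite{FMO1}, exactly as Lemma~\ref{GlobalWeyl} was obtained.

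The main obstacle is to pin down this correct reduced decomposition of ${\mathbf t}_{\lambda_-}$ (equivalently, the correct coroot sequence and the resulting value of the characteristic $r$) whose decomposition-procedure subquotient reproduces the exponents of \eqref{CoStandardParahoricRelations} rather than those of \eqref{StandardParahoricRelations}, and then to check that the associated partial products genuinely form a path in the quantum Bruhat graph; in contrast with the $D$ case, the ordinary Bruhat graph will no longer suffice. Once that combinatorial choice is made, verifying the path condition and matching the defining relations is routine, just as in Lemma~\ref{DRestriction}.
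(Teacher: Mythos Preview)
Your proposal has a genuine gap rooted in a misreading of the target relations. When the second line of \eqref{CoStandardParahoricRelations} is deleted, the exponent $-\langle\alpha^\vee,\lambda_-\rangle+1$ is still attached to those $\alpha\in\Phi_+$ with $\sigma_\lambda(\alpha)\in\Phi_-\cup\Phi_J$, not merely to those with $\sigma_\lambda(\alpha)\in\Phi_-$. Hence the resulting $\cI$-module genuinely depends on $J$: it is \emph{not} isomorphic to $U_\lambda^\varnothing$, and indeed Proposition~\ref{URestriction} later shows that $\Res_J\mathbb U_\lambda^J$ is filtered by the modules $\mathbb U_{\tau(\lambda)}^\varnothing$ for $\tau$ ranging over $W^J$, rather than coinciding with a single one. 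This breaks the symmetry with the $D$ case on which your plan relies. A decomposition procedure attached to a $J$-independent reduced word for ${\mathbf t}_{\lambda_-}$ cannot produce the correct module; and if you instead allow a $J$-dependent word, the subquotient you need involves genuine quotient steps whose $\cP_J$-stability does not follow from the extremal-vector argument $\U(\cI)u_\lambda=\U(\cP_J)u_\lambda$ that worked for $D$.

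The paper therefore takes a different route. After the same local/global reduction, it treats the global case and realises both the $\cI$-module $\overline{\Res}_J\mathbb U_\lambda^J$ defined by the truncated relations and $\mathbb U_\lambda^J$ itself as the quotient
\[
\mathbb W_\lambda \Big/ \sum_{\substack{\kappa>\sigma_\lambda\\ \kappa\lambda_-\in P_J^-}} \mathbb W_{\kappa\lambda_-},
\]
using Lemma~\ref{GlobalWeyl} to view each $\mathbb W_{\kappa\lambda_-}$ as a $\cP_J$-submodule of $\mathbb W_{\lambda_-}$; the $\cP_J$-action then descends to the quotient for free. The substance of the proof is a Bruhat-order and coset argument matching this quotient to the one dictated by the third and fourth lines of \eqref{CoStandardParahoricRelations}: one checks that for $\kappa>\sigma_\lambda$ the whole coset $W^J\kappa$ lies above $\sigma_\lambda$, and conversely that every relevant $\kappa$ with $\kappa\lambda_-\in P_J^-$ dominates some $s_\gamma\sigma_\lambda>\sigma_\lambda$ with $\gamma\notin\Phi_J$. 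The decomposition-procedure realisation you were aiming for does appear in the paper, but only as Proposition~\ref{URestriction}, \emph{after} the present lemma, and it requires a carefully engineered $J$-dependent reduced decomposition built from an auxiliary element $w_\lambda^J\in W^{J^\vee}$.
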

	\begin{proof}
		As in Lemma \ref{DRestriction} the equality $[e_\al,h_\beta z^k]={\rm const}.e_\al z^k$ implies that the questions about local and global modules are equivalent. We prove the statement for the global modules $\mathbb U_\lambda^J$.
		
		Let $\overline{\Res}_J\mathbb U_\lambda^J$ be the cyclic $\cI$-module defined by relations of
		$\mathbb U_\lambda^J$
		with the second line  in \eqref{CoStandardParahoricRelations} omitted.
		Let us show the following isomorphism of $\cI$-modules:
		\begin{equation}\label{Uint}
			\overline{\Res}_J\mathbb U_\lambda^J\simeq \mathbb W_\la \left/ 
			\sum_{\substack{W\ni \kappa> \sigma_\la \\ \kappa\la_-\in P_J^-}} \mathbb W_{\kappa\la_-} \right.
		\end{equation}
		(the right-hand side quotient is defined thanks to Lemma \ref{GlobalWeyl}).
		
		By definition of $\overline{\Res}_J\mathbb U_\lambda^J$ and the first claim of Lemma \ref{GlobalWeyl} we have:
		
		\begin{equation} 
			\overline{\Res}_J\mathbb U_\lambda^J\simeq \mathbb W_\la\left/
			\sum_{\substack{s_\gamma \sigma_{\lambda}>\sigma_{\lambda}\\
					\gamma \in \Phi\backslash \Phi^J}} \right. \mathbb W_{s_\gamma \sigma_{\lambda}(\lambda_-)}.
		\end{equation}

		The weight $\lambda = \sigma_\la (\la_-)$ belongs to $ P^-_J$ (as well as $\la_-$ itself). Therefore $\sigma_\lambda$ is the minimal length element in the coset $W^J\sigma_\lambda$. We use the following claim:
		\begin{equation}\label{CosetInequality}
			\text{ if } \kappa > \sigma_\lambda, \text{ then } W^J\kappa > \sigma_\lambda \text{ for the whole coset}.
		\end{equation}
		In fact, we note that the extremal vector $u_{\kappa(\lambda)}$ belongs to $ \mathbb{W}_\lambda$ if and only if $\kappa \geq \sigma_{\lambda}$. Therefore \eqref{CosetInequality} follows from the second claim of Lemma \ref{GlobalWeyl}.
		
		Assume that $s_\gamma \sigma_\lambda >\sigma_\lambda$. Then there exists $\tau \in W^J$ such that $\tau s_\gamma \sigma_\lambda(\lambda_-)\in P^-_J$.  By \eqref{CosetInequality} we have that $\tau s_\gamma \sigma_\lambda>\sigma_\lambda$. Thus 
		$\mathbb{W}_{s_\gamma \sigma_\lambda(\lambda_-)}\subset 
		\mathbb{W}_{\tau s_\gamma \sigma_\lambda(\lambda_-)}$ and hence one gets a surjection of $\cI$-modules
		\begin{equation}\label{FirstSurjection}
			\overline{\Res}_J\mathbb U_\lambda^J\simeq \mathbb W_\la\left/
			\sum_{\substack{s_\gamma \sigma_{\lambda}>\sigma_{\lambda}\\
					\gamma \in \Phi\backslash \Phi^J}} \mathbb W_{s_\gamma \sigma_{\lambda}(\lambda_-)}\right. \to 
			\mathbb W_\la\left/
			\sum_{\substack{W\ni \kappa> \sigma_\la\\ \kappa\la_-\in P_J^-}} \mathbb W_{\kappa\la_-}\right.    
		\end{equation}
		(the right hand side here is the right hand side of \eqref{Uint}).
		
		Now let us show that for any $\kappa >\sigma_\lambda$, $W^J\kappa\geq \kappa$ there exists $\gamma \in \Phi \backslash \Phi_J$ such that 
		$\kappa \geq s_\gamma \sigma_\lambda >\sigma_\lambda$. This would imply the opposite surjection in \eqref{FirstSurjection}  
		and hence would prove the isomorphism \eqref{Uint}.
		
		Note that $\kappa =  s_{\gamma_{l(\kappa)-l(\sigma_\lambda)}}\dots s_{\gamma_1}\sigma_\lambda$ for some $\gamma_1, \dots, \gamma_{l(\kappa)-l(\sigma_\lambda)}$ such that
		\[l(s_{\gamma_j}\dots s_{\gamma_1}\sigma_\lambda)=l(\sigma_\lambda)+j.\]
		Note that not all $\gamma_i$ belong to $W^J$ and it is possible to choose a decomposition such that $\gamma_1 \in W \backslash W^J$.
		This implies $\kappa> s_{\gamma_1}\sigma_\lambda>\sigma_\lambda$
		and thus  
		$\mathbb{W}_{\kappa}\subset 
		\mathbb{W}_{s_\gamma \sigma_\lambda(\lambda_-)}$. 
		
		At this point we know that the map \eqref{FirstSurjection} is an isomorphism. The same arguments prove the isomorphism of $\cP_J$ modules:
		\begin{equation}\label{UIso}
			\mathbb U_\lambda^J\simeq \mathbb W_\la\left/
			\sum_{\substack{W\ni \kappa> \sigma_\la\\ \kappa\la_-\in P_J^-}} \mathbb W_{\kappa\la_-}\right.,    
		\end{equation}
		where the right hand side is seen as $\cP_J$ module thanks to Lemma \ref{GlobalWeyl}.
		Hence we obtain the desired isomorphism of $\cI$-modules via
		\[
		\overline{\Res}_J\mathbb U_\lambda^J\simeq 
		\mathbb W_\la\left/
		\sum_{\substack{W\ni \kappa> \sigma_\la\\ \kappa\la_-\in P_J^-}} \mathbb W_{\kappa\la_-}  \right.  \simeq  
		{\Res}_J \mathbb U_\lambda^J.
		\]
	\end{proof}

	\begin{prop}\label{URestriction}
		The restriction
		$\Res_{J}\mathbb U_\lambda^J$
		has an $\cI$-modules  filtration  $\mathcal{F}_{\tau(\lambda)}$, $\tau(\la)\in W^J.\la$ such that  
		\[\mathcal{F}_{\tau(\la)}\left/ \sum_{\eta(\la) > \tau(\la)} \mathcal{F}_{\eta(\la)} \right.\simeq \mathbb{U}_{\tau(\la)}^\varnothing.\]
	\end{prop}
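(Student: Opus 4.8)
\section*{Proof plan for Proposition~\ref{URestriction}}

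The plan is to work entirely with $\cI$-submodules of the generalized global Weyl module $\W_{\lambda_-}$. The starting point is the isomorphism of $\cI$-modules
\[
\Res_{J}\mathbb U_\lambda^J\;\simeq\;\W_\lambda\Big/\sum_{\substack{\kappa>\sigma_\lambda\\ \kappa\lambda_-\in P_J^-}}\W_{\kappa\lambda_-}
\]
coming from \eqref{UIso}; its $J=\varnothing$ instance (together with \cite{FKhMO}) identifies $\mathbb U_{\mu}^\varnothing\simeq \W_\mu\big/\sum_{\kappa>\sigma_\mu}\W_{\kappa\lambda_-}$ for $\mu\in W\lambda_-$. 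By Lemma~\ref{GlobalWeyl} all the modules $\W_{\kappa\lambda_-}$ appearing here sit inside $\W_{\lambda_-}$ as the submodules $\U(\cI)u_{\kappa\lambda_-}$, and $\W_{\nu}\subseteq\W_{\mu}$ (for $\mu,\nu\in W\lambda_-$) exactly when $\sigma_\nu\geq\sigma_\mu$. Put $N:=\sum_{\kappa>\sigma_\lambda,\ \kappa\lambda_-\in P_J^-}\W_{\kappa\lambda_-}$. For $\tau(\lambda)\in W^J\lambda$ I identify $\tau$ with the shortest element of its coset modulo ${\rm Stab}_{W^J}(\lambda)$, so that $\sigma_{\tau(\lambda)}=\tau\sigma_\lambda$, $\ell(\tau\sigma_\lambda)=\ell(\tau)+\ell(\sigma_\lambda)$, hence $\sigma_{\tau(\lambda)}\geq\sigma_\lambda$; I order $W^J\lambda$ by Bruhat order on the $\sigma_{\tau(\lambda)}$.

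Next I would set $\mathcal F_{\tau(\lambda)}$ to be the image of $\W_{\tau(\lambda)}=\U(\cI)u_{\tau(\lambda)}\subseteq\W_\lambda$ under the quotient map above. Since $\W_{\eta(\lambda)}\subseteq\W_{\tau(\lambda)}$ whenever $\eta(\lambda)\geq\tau(\lambda)$, this is a genuine decreasing filtration with $\sum_{\eta(\lambda)>\tau(\lambda)}\W_{\eta(\lambda)}\subseteq\W_{\tau(\lambda)}$, and a short diagram chase gives
\[
\mathcal F_{\tau(\lambda)}\Big/\!\!\sum_{\eta(\lambda)>\tau(\lambda)}\!\!\mathcal F_{\eta(\lambda)}\;\simeq\;\W_{\tau(\lambda)}\Big/\Big(\W_{\tau(\lambda)}\cap N+\!\!\sum_{\eta(\lambda)>\tau(\lambda)}\!\!\W_{\eta(\lambda)}\Big).
\]
Comparing with the description of $\mathbb U_{\tau(\lambda)}^\varnothing$ above, the whole proposition thus reduces to the identity of submodules of $\W_{\lambda_-}$
\[
\W_{\tau(\lambda)}\cap N+\sum_{\eta(\lambda)>\tau(\lambda)}\W_{\eta(\lambda)}\;=\;\sum_{\kappa>\sigma_{\tau(\lambda)}}\W_{\kappa\lambda_-}.
\]
The inclusion "$\supseteq$" is purely combinatorial: for $\kappa>\sigma_{\tau(\lambda)}$, either $\kappa\lambda_-\in W^J\lambda$, so $\W_{\kappa\lambda_-}=\W_{\eta(\lambda)}$ with $\eta(\lambda)>\tau(\lambda)$, or $\kappa\lambda_-\notin W^J\lambda$, in which case the shortest element $\kappa'$ of $W^J\kappa$ has $\kappa'\lambda_-\in P_J^-\setminus\{\lambda\}$ and $\kappa'>\sigma_\lambda$ by \eqref{CosetInequality} (using $\kappa>\sigma_\lambda$), whence $\W_{\kappa\lambda_-}\subseteq\W_{\tau(\lambda)}\cap\W_{\kappa'\lambda_-}\subseteq\W_{\tau(\lambda)}\cap N$.

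The reverse inclusion "$\subseteq$" is where I expect the main difficulty, and it rests on the structural fact that the family $\{\W_{\mu}:\mu\in W\lambda_-\}$ is a \emph{distributive} sublattice of submodules of $\W_{\lambda_-}$, with $\W_{\mu}\cap\W_{\nu}=\sum_{\sigma_\rho\geq\sigma_\mu,\ \sigma_\rho\geq\sigma_\nu}\W_{\rho\lambda_-}$ and $\W_{\mu}\cap\sum_i\W_{\nu_i}=\sum_i\big(\W_{\mu}\cap\W_{\nu_i}\big)$. Granting this, $\W_{\tau(\lambda)}\cap N$ is a sum of $\W_{\rho\lambda_-}$ with $\sigma_\rho\geq\sigma_{\tau(\lambda)}$ and $\sigma_\rho\geq\kappa'>\sigma_\lambda$ for some admissible $\kappa'$, and $\rho\lambda_-\neq\tau(\lambda)$ forces $\sigma_\rho>\sigma_{\tau(\lambda)}$, so the left-hand side is contained in $\sum_{\kappa>\sigma_{\tau(\lambda)}}\W_{\kappa\lambda_-}$. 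I would establish the distributivity either via the decomposition procedure of \cite{FeMa} (realizing the relevant intersections as subquotients) or directly by a graded-character computation, using that the extremal vectors $u_{\mu}$ are linearly independent in $\W_{\lambda_-}$ and that each $\W_{\mu}$ is the $\U(\cI)$-span of the $u_{\rho}$ with $\sigma_\rho\geq\sigma_\mu$ modulo the radical. The only extra bookkeeping, when $\lambda_-$ is non-regular, is to replace elements of $W$ throughout by shortest representatives modulo ${\rm Stab}(\lambda_-)$ and ${\rm Stab}_{W^J}(\lambda)$, which is needed to keep the strict inequalities $\sigma_\rho>\sigma_{\tau(\lambda)}$ but does not alter the argument.
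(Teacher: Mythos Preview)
Your approach is genuinely different from the paper's. The paper proceeds by realizing $\Res_J\mathbb U_\lambda^J$ explicitly as a generalized global Weyl module with characteristics $\mathbb W_\lambda(r-b)$: it constructs a carefully chosen reduced expression for ${\mathbf t}_{\lambda_-}$ (via a factorization $w_0\sigma'_\lambda=w_\lambda^J\chi$ with $w_\lambda^J\in W^{J^\vee}$) so that the last $b=|\{\beta\in\Phi^J_+:\langle\beta,\lambda\rangle\neq 0\}|$ roots in the $\overline\beta$-sequence are exactly $(\sigma'_\lambda)^{-1}\{\beta\in\Phi_+^J:\langle\beta,\lambda\rangle\neq0\}$. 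The filtration then comes from running the decomposition procedure of \cite{FeMa,FMO1} along these $b$ roots, and each subquotient is identified with $\mathbb U^\varnothing_{\tau(\lambda)}$ by matching explicit defining relations. Your route, by contrast, tries to read the filtration directly off the lattice of submodules $\{\W_\mu:\mu\in W\lambda_-\}$ inside $\W_{\lambda_-}$, using only \eqref{UIso}.

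The substantive gap is the distributivity step. The identity
\[
\W_{\tau(\lambda)}\cap N=\sum_{\kappa'}\bigl(\W_{\tau(\lambda)}\cap\W_{\kappa'\lambda_-}\bigr),
\qquad
\W_\mu\cap\W_\nu=\sum_{\sigma_\rho\geq\sigma_\mu,\ \sigma_\rho\geq\sigma_\nu}\W_{\rho\lambda_-}
\]
is a serious structural claim about the global generalized Weyl modules, not available from anything established earlier in the paper. A character computation alone cannot prove it: equality of graded characters of two submodules of $\W_{\lambda_-}$ does not force equality of the submodules, and you offer no mechanism that would. Your alternative---invoking the decomposition procedure of \cite{FeMa}---is exactly the machinery the paper deploys, so that path does not shortcut the argument but reproduces it. In short, the reduction to distributivity is conceptually clean, but distributivity carries the full weight of the proposition, and neither of your proposed justifications discharges it independently of the paper's method. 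There is also a secondary imprecision in the non-regular case: when ${\rm Stab}_W(\lambda_-)$ is nontrivial, the sum $\sum_{\kappa>\sigma_{\tau(\lambda)}}\W_{\kappa\lambda_-}$ literally contains $\W_{\tau(\lambda)}$ itself (take $\kappa=\sigma_{\tau(\lambda)}s$ for $s$ a simple reflection in the stabilizer), so your description of $\mathbb U^\varnothing_{\tau(\lambda)}$ as that quotient needs the same care with coset representatives that the paper builds in by working with $\sigma'_\lambda$ rather than $\sigma_\lambda$.
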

	\begin{proof}
		We start with a realization of
		$\Res_{J}\mathbb U_\lambda^J$  
		as a generalized Weyl module with characteristics.
		
		Let ${\sigma'_\lambda} \in W$ be the maximal length element in the class $\sigma_\lambda\cdot {\rm stab}_W
		(\la_-)$. Then there exists a factorization $t _{\la_-} = ({\sigma'_\lambda}^{-1}w_0) u(\la)$ which
		refines to a reduced expression (see \cite{FKM} with the notation change $(\sigma'_\lambda)^{-1}w_0$ to  $m_\lambda$).
		More precisely, let us consider reduced expressions
		\begin{equation}\label{vu}
			(\sigma'_\lambda)^{-1}w_0=s_{i_1}\dots s_{i_r},\ u(\la)=\pi s_{i_{r+1}}\dots s_{i_M},
		\end{equation}
		and the reduced expression
		\begin{equation}
			{\mathbf t}_{\la_-}=\pi s_{\pi^{-1}i_1}\dots s_{\pi^{-1} i_r} s_{i_{r+1}}\dots s_{i_M}.
		\end{equation}
		In the same way as in the proof of Lemma \ref{DRestriction} (see formula \eqref{tbegining}) we get
		\begin{equation}\label{FirstBetas}
			\overline\beta^\vee_{j}({\mathbf t}_{\la_-})=s_{i_1}\dots s_{i_{j-1}}(-\alpha_{i_j}^\vee).
		\end{equation}
		Hence 
		\begin{equation}
			\{-\overline \beta_1^\vee, \dots, -\overline \beta_r^\vee\}=\{\beta \in \Phi_+, w_0{\sigma'_\lambda}(\beta)\in \Phi_-, \langle \beta, 
			\lambda\rangle\neq 0\}.
		\end{equation}
		
		Rewrite this equality as
		\[\{-\overline \beta_1^\vee, \dots, -\overline \beta_r^\vee\}=\{\beta \in \Phi_+, \sigma'_\lambda(\beta)\in \Phi_+, \langle \beta, 
		\lambda\rangle\neq 0\}.\]
		
		Our goal is to find reduced expression for $(\sigma'_\lambda)^{-1}w_0$ such that for $b=|\{\beta \in \Phi_+^J|\langle \beta,\lambda \rangle \neq 0\}|$ one has:
		\begin{equation}\label{needed}
			\{-\overline \beta_{r-b+1}^\vee,\dots,-\overline \beta_r^\vee\}=(\sigma'_\la)^{-1}\{\beta \in \Phi_+^J|\langle \beta,\lambda \rangle \neq 0\}.
		\end{equation}
		Thanks to $\sigma'_\la (\la_-) = \la\in P^J$, one has
		$(\sigma'_\la)^{-1}\{\beta \in \Phi_+^J|\langle \beta,\lambda \rangle \neq 0\}\subset \Phi_+$.
		
		Let $J^\vee=(-w_0).J$ (here $-w_0$ is understood as an automorphism of the Dynkin diagram.
		Note that there exists an element $w^J_\lambda\in W^{J^\vee}$ such that
		\begin{equation}\label{wlambdaJ}
			\Phi^{J^\vee+}\cap (w^J_\lambda)^{-1}(\Phi_{J^\vee-})=\{\beta \in w_0(\Phi_{J-})|\langle \beta,w_0(\lambda) \rangle \neq 0\}.
		\end{equation}
		In particular, $l(w_\la^J)=b$ and 
		\begin{equation}\label{w0wlaw0}
			w_0w_\lambda^Jw_0\{\beta \in \Phi_{J-}|\langle \beta,w_0(\lambda) \rangle \neq 0\}
			=\{\beta \in \Phi_{J+}|\langle \beta,\lambda \rangle \neq 0\}.
		\end{equation}
		
		By the construction $l(s_\beta w_0\sigma'_\lambda)<l(w_0\sigma'_\lambda)$ for any $\{\beta \in w_0\Phi_-^J|\langle \beta,w_0(\lambda) \rangle \neq 0\}$.
		Therefore $w_0\sigma'_\lambda=w_\lambda^J\chi$ for some $\chi \in W$ and this product refines a reduced decomposition.
		Let us  compute $\{-\overline \beta_{r-b+1}^\vee,\dots,-\overline \beta_r^\vee\}$ for such a reduces decomposition. By equation \eqref{FirstBetas} we have for $j= r-b+1, \dots, r$:
		
		\[-\overline\beta^\vee_{j}({\mathbf t}_{\la_-})=\chi^{-1}s_{i_{r-b+1}}\dots s_{i_{j-1}}(\alpha_{i_j}^\vee)=(\sigma'_\lambda)^{-1}w_0w_\lambda^J
		s_{i_{r-b+1}}\dots s_{i_{j-1}}(\alpha_{i_j}^\vee).\]
		Note that
		\[\{s_{i_{r-b+1}}\dots s_{i_{j-1}}(\alpha_{i_j}^\vee),\ j=r-b+1,\dots,r\}=\Phi_{J^\vee+}\cap (w^J_\lambda)^{-1}(\Phi_{J^\vee-}).\]
		which is the left hand side of \eqref{wlambdaJ}. Hence
		\begin{align*}
			\{-\overline\beta^\vee_{j}({\mathbf t}_{\la_-}),\ j=r-b+1,\dots,r\}=(\sigma'_\lambda)^{-1}w_0w_\lambda^J\{\beta \in w_0(\Phi_-^J)|\langle \beta,w_0(\lambda) \rangle \neq 0\}\\=
			(\sigma'_\lambda)^{-1}w_0w_\lambda^Jw_0\{\beta \in \Phi_-^J|\langle \beta,w_0(\lambda) \rangle \neq 0\}
			=(\sigma'_\la)^{-1}\{\beta \in \Phi_+^J|\langle \beta,\lambda \rangle \neq 0\}
		\end{align*}
		(see \eqref{w0wlaw0}). So we have constructed the decomposition \eqref{needed} with the desired properties.
		Therefore we have
		\begin{equation}\label{UWr}
			\Res_{J}\mathbb U_\lambda^J\simeq \mathbb W_\lambda(r-b).    
		\end{equation}
		
		Now let us apply the decomposition procedure to  
		$\mathbb W_\lambda(r-b)$ with respect to the roots $-\overline\beta^\vee_{j}({\mathbf t}_{\la_-}),\ j=r-b+1,\dots,r$ 
		(we are using the reduced decomposition of ${\mathbf t}_{\la_-}$ constructed above). Recall the equality
		$w_0\sigma'_\la = w_\la^J \chi$ with $l(w_\la^J)=b$, $l(\chi)=r-b$. We claim that
		\begin{itemize}
			\item the subquotients in the decomposition procedure are generated by the extremal vectors of weights $\tau(\la)\in W^J \la$,
			\item the subquotient corresponding to $\tau(\la)$ is isomorphic to $\mathbb U_{\tau\la}$.
		\end{itemize}
		
		By construction we have
		\[\sigma'_\la s_{\overline\beta_j}=s_{\gamma_j}\sigma'_\la,\ j=r-b+1,\dots,r\]
		for some $\gamma_j\in \Phi_J$ and
		\[s_{\gamma_{r-b+1}}\dots s_{\gamma_{r}}=w_0w_\la^Jw_0.\]
		
		Note that for $\tau \in W^J$, where $\tau$ is the minimal length element in the coset $\tau\cdot \rm{Stab}_{W^J}(\lambda)$ we have $l(\tau \sigma'_\lambda)=l(\tau)+l(\sigma'_\lambda)$.
		Therefore subquotients in our decomposition procedure correspond to sequences $\gamma_{j_1}, \dots, \gamma_{j_1}$, $r-b+1\le j_1<\dots <j_L\le r$ such that
		\[l(s_{\gamma_{j_1}}\dots s_{\gamma_{j_L}})=L.\]
		
		Note that there exists a one-to-one correspondence between such sequences and elements $\tau \in W^J$, $\tau\leq w_0w_\lambda^Jw_0$.
		Therefore these subquotients are generated by extremal vectors $u_{\kappa(\lambda_-)}=u_{\tau(\lambda)}$, $\tau\leq w_0w_\lambda^Jw_0$.
		
		Clearly the submodule $\mathcal{F}_{\tau (\la)}$ generated by $u_{\tau(\lambda)}$ contains the submodule $\mathcal{F}_{\tau' (\la)}$ generated  by $u_{\tau'(\lambda)}$ if and only if $\tau'\geq \tau$.
		By \cite{FMO1} the submodule  $\mathcal{F}_{\tau (\la)}$ of this decomposition procedure generated by $u_{\tau(\lambda)}$ is defined by the following relations:
		
		\begin{equation}
			\begin{cases}
				\widehat{\tau\sigma'_\la}(e_\alpha z^k)v_{\lambda}=0,~\text{if}~ \alpha \in \Phi_-, k=1, \dots\\
				\widehat{\tau\sigma'_\la}(e_\alpha)^{-\langle \alpha^\vee,\lambda_- \rangle+1}v_{\lambda}=0,~\text{if}~\alpha \in \Phi_+, \tau\sigma'_\lambda(\alpha) \in \Phi_-\cup \Phi_J\\
				\widehat{\tau\sigma'_\la}(e_\alpha)^{-\langle \alpha^\vee,\lambda_- \rangle}v_{\lambda}=0,~\text{if}~\alpha \in \Phi_+, \tau\sigma'_\lambda(\alpha) \in \Phi_+\cap (\Phi \backslash \Phi_J).
			\end{cases}      
		\end{equation}
		
		We are left to show that the quotient of 
		$\mathcal{F}_{\tau (\la)}$ 
		by the sum of $\mathcal{F}_{\tau' (\la)}$
		with $\tau'>\tau$ is isomorphic to $\mathbb{U}^\varnothing_{\tau(\la)}$.
		Recall the defining relations for $\mathbb{U}^\varnothing_{\tau(\la)}$:
		
		\begin{equation}
			\begin{cases}
				\widehat{\tau\sigma'_\la}(e_\alpha z^k)v_{\lambda}=0,~\text{if}~ \alpha \in \Phi_-, k=1, \dots\\
				\widehat{\tau\sigma'_\la}(e_\alpha)^{-\langle \alpha^\vee,\lambda_- \rangle+1}v_{\lambda}=0,~\text{if}~\alpha \in \Phi_+, \tau\sigma'_\lambda(\alpha) \in \Phi_-\\
				\widehat{\tau\sigma'_\la}(e_\alpha)^{-\langle \alpha^\vee,\lambda_- \rangle}v_{\lambda}=0,~\text{if}~\alpha \in \Phi_+, \tau\sigma'_\lambda(\alpha) \in \Phi_+.
			\end{cases}      
		\end{equation}
		Therefore  our claim is implied by the formula
		\[
		\U(\cI)\mathrm{span}\{\widehat{\tau\sigma'_\la}(e_\alpha)^{-\langle \alpha^\vee,\lambda_- \rangle+1}v_{\lambda},\  \alpha \in \Phi_+, \tau\sigma'_\lambda(\alpha) \in \Phi_J\cap \Phi_+\} = 
		\sum_{\tau'>\tau} 
		\mathcal{F}_{\tau' (\la)},
		\]
		which follows from the construction above.
	\end{proof}

	\begin{cor}\label{CostandardRestriction}
		The restriction
		$\Res_{J}\mathbb U_\lambda^J$
		has an $\cI$-modules  filtration  $\mathcal{F}_{\tau(\lambda_-)}$, $\tau\in W^J \sigma_\lambda$ such that  
		\[\mathcal{F}_{\tau(\lambda_-)}\left/ \sum_{\eta > \tau} \mathcal{F}_{\eta(\lambda_-)} \right.\simeq (\nabla^{\varnothing}_{-\tau(\lambda_-)})^\vee.\]
	\end{cor}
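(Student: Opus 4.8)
The plan is to deduce the corollary from Proposition~\ref{URestriction} by an essentially bookkeeping argument: re-index the filtration produced there and rewrite each subquotient using the known identification of the global Iwahori module $\mathbb U^\varnothing_\mu$ with the graded dual of a costandard object.

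By Proposition~\ref{URestriction} the restriction $\Res_J\mathbb U_\lambda^J$ carries an $\cI$-module filtration with pieces $\mathcal F_{w(\la)}$ indexed by the weights $w(\la)\in W^J.\la$, with $\mathcal F_{w(\la)}/\sum_{w'(\la)>w(\la)}\mathcal F_{w'(\la)}\simeq \mathbb U^\varnothing_{w(\la)}$, the order on $W^J.\la$ being inherited from the Bruhat order on the minimal representatives of $W^J/\mathrm{Stab}_{W^J}(\la)$. Since $\la=\sigma_\la(\la_-)$ and $\la\in P_J^-$, the element $\sigma_\la$ is the minimal-length representative of the coset $W^J\sigma_\la$ (as recorded in the proof preceding Proposition~\ref{URestriction}), so $\ell(w\sigma_\la)=\ell(w)+\ell(\sigma_\la)$ for every $w\in W^J$. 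First I would use this to verify that $w\mapsto w\sigma_\la$ is a poset isomorphism from the minimal representatives of $W^J/\mathrm{Stab}_{W^J}(\la)$ (Bruhat order) onto the corresponding subset of $W^J\sigma_\la$ (Bruhat order in $W$), compatibly with the bijection $w(\la)=(w\sigma_\la)(\la_-)$ of weights. Relabelling $\tau:=w\sigma_\la$ then turns the above into a filtration $\mathcal F_{\tau(\la_-)}$, $\tau\in W^J\sigma_\la$, whose index poset is exactly the Bruhat order in the statement and whose subquotients are $\mathbb U^\varnothing_{\tau(\la_-)}$.

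It then remains to invoke the identification, established for the Iwahori algebra in~\cite{FKhMO}, of the global cyclic module $\mathbb U^\varnothing_\mu$ with the graded dual $(\nabla^\varnothing_{-\mu})^\vee$ of the costandard object of $\fC(\cI)=\fC^\varnothing$; applied with $\mu=\tau(\la_-)$ this rewrites the subquotients in the asserted form. If one prefers not to quote this, it can be recovered directly by matching the presentation~\eqref{CoStandardParahoricRelations} of $\mathbb U^\varnothing_\mu$ (with $J=\varnothing$) against the dual of the presentation of the injective hull $\nabla^\varnothing_{-\mu}$ in the appropriate Serre subcategory of $\fC^\varnothing$: the two agree after negating all $\fh$-weights, and a character comparison pins down the isomorphism. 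I expect the single place that genuinely requires care to be the order-compatibility of the re-indexing $w\mapsto w\sigma_\la$: Bruhat order is not preserved under arbitrary right multiplication, and it is precisely the minimality of $\sigma_\la$ in its $W^J$-coset that makes it work. Everything else is a change of notation together with the cited Iwahori duality.
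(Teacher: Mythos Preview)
Your proposal is correct and follows exactly the route the paper intends: the corollary is stated without proof immediately after Proposition~\ref{URestriction}, and you have correctly supplied the two missing steps, namely the re-indexing $w\mapsto \tau=w\sigma_\lambda$ (using that $\sigma_\lambda$ is minimal in its $W^J$-coset, so Bruhat order is preserved) and the Iwahori identification $\mathbb U^\varnothing_\mu\simeq(\nabla^\varnothing_{-\mu})^\vee$ from~\cite{FKhMO}. Note that the latter is also recovered later in the paper as the $J=\varnothing$ case of Theorem~\ref{StandardCostandardCh}, whose proof is independent of this corollary, so there is no circularity either way.
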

	
	\begin{cor}
		If $\Phi^J=\Phi$, then $\fp^J=\fg[t]$, $\la=\la_-\in \Phi_-$ and $\mathbb U^J_\la$ coincides with the global Weyl module $\mathbb W_\la$. 
		Hence $\mathbb W_\la$ admits a filtration by $\cI$-modules such that the associated graded space is a direct sum of modules of the form $\mathbb U_{\tau(\la)}$.
	\end{cor}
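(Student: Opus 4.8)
The plan is to deduce this from Proposition~\ref{URestriction} after identifying $\mathbb U_\lambda^J$ with a global Weyl module in the extremal case $\Phi^J=\Phi$, i.e. $J=\{1,\dots,n\}$. First I would unwind the hypothesis: in this case $\Phi_J=\Phi$, $\Phi_{J-}=\Phi_-$ and $W^J=W$, so $\cP_J=\fp_J\oplus z\fg[z]=\fg[z]$ (the current algebra, written $\fg[t]$ in the statement) and a $J$-antidominant weight is antidominant; hence $\lambda=\lambda_-\in P^-$ and $\sigma_\lambda=\mathrm{id}$, so $\widehat{\sigma_\lambda}$ acts as the identity on all the relations of Definition~\ref{Udefrel}.

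Next I would specialize the defining relations~\eqref{CoStandardParahoricRelations}. With $\widehat{\sigma_\lambda}=\mathrm{id}$ the first two lines give $e_\alpha z^k v_\lambda=0$ and $e_\alpha v_\lambda=0$ for all $\alpha\in\Phi_-$ and $k\ge 1$, i.e. $\fn_-[z]v_\lambda=0$ with $\fn_-=\bigoplus_{\alpha\in\Phi_-}\fg_\alpha$; the fourth line is vacuous because $\Phi_+\cap(\Phi\setminus\Phi_J)=\varnothing$; and in the third line the condition $\sigma_\lambda(\alpha)\in\Phi_-\cup\Phi_J$ holds for every $\alpha\in\Phi_+$ (as $\Phi_J=\Phi$), so it reads $e_\alpha^{-\langle\alpha^\vee,\lambda_-\rangle+1}v_\lambda=0$ for all $\alpha\in\Phi_+$. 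Thus $\mathbb U_\lambda^J$ is the cyclic $\fg[z]$-module generated by a weight-$\lambda$ vector $v_\lambda$ subject to exactly the defining relations of the global Weyl module $\mathbb W_\lambda=\mathbb W_{\lambda_-}$ in the antidominant normalization (cf.~\cite{BrFi,CFK,FL,FMO1}); in particular there is no relation annihilating $\fh\otimes z\Bbbk[z]$, consistently with $\mathbb W_\lambda$ being a global object. Equivalently, $\mathbb U_\lambda^J\simeq\mathbb W_\lambda$ as $\cP_J$-modules can be read off the quotient presentation used in the proof of~\eqref{UIso} (the one indexed by roots $\gamma\in\Phi\setminus\Phi_J$, cf.~\eqref{FirstSurjection}), whose index set is empty when $\Phi_J=\Phi$; note $\mathbb W_\lambda$ is a $\cP_J=\fg[z]$-module by Lemma~\ref{GlobalWeyl}.

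It then remains to invoke Proposition~\ref{URestriction} with $J=\{1,\dots,n\}$: it furnishes an $\cI$-module filtration $\mathcal F_{\tau(\lambda)}$, $\tau(\lambda)\in W^J.\lambda=W.\lambda$, of $\Res_J\mathbb U_\lambda^J$ whose subquotients are $\mathcal F_{\tau(\lambda)}/\sum_{\eta(\lambda)>\tau(\lambda)}\mathcal F_{\eta(\lambda)}\simeq\mathbb U_{\tau(\lambda)}^\varnothing$. Transporting this along the isomorphism of the previous paragraph, $\mathbb W_\lambda$ regarded as an $\cI$-module carries this filtration, and since the associated graded of a filtration is the direct sum of its subquotients we obtain $\gr\,\mathbb W_\lambda\simeq\bigoplus_{\tau}\mathbb U_{\tau(\lambda)}$, as asserted.

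I expect the only step that is not pure bookkeeping to be the identification in the second paragraph, namely matching the literal specialization of~\eqref{CoStandardParahoricRelations} with a standard presentation of $\mathbb W_\lambda$; concretely one must observe that the relations $e_\alpha^{-\langle\alpha^\vee,\lambda_-\rangle+1}v_\lambda=0$ for non-simple $\alpha\in\Phi_+$ follow from the simple-root relations together with $\fn_-[z]v_\lambda=0$ (the $\fg$-submodule generated by $v_\lambda$ being the finite-dimensional irreducible of lowest weight $\lambda_-$). This is classical, so there should be no genuine obstacle; everything else is a direct appeal to Proposition~\ref{URestriction}.
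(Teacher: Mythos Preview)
Your argument is correct and is exactly the specialization the paper has in mind: the corollary carries no proof in the paper because it is immediate from the defining relations of $\mathbb U_\lambda^J$ (Definition~\ref{Udefrel}) together with Proposition~\ref{URestriction}, and you have unwound precisely that. The only nontrivial check---that the presentation obtained by specializing~\eqref{CoStandardParahoricRelations} at $J=\{1,\dots,n\}$, $\sigma_\lambda=\mathrm{id}$ matches the standard presentation of $\mathbb W_{\lambda_-}$---you handle correctly, including the remark that the redundant non-simple-root relations follow from the simple ones plus $\fn_-[z]v_\lambda=0$.
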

	
	We close this subsection with two lemmas, which are needed for the proof of the Peter-Weyl-van der Kallen theorem for parahoric groups.
	
	\begin{lem}\label{EndRestriction}
		One has the isomorphisms between the endomorphism algebras 
		\begin{gather*}
			{\rm End}_{\cP_J} (\mathbb D^J_\la)\simeq {\rm End}_{\cI}(\Res_J \mathbb D^J_\la)\simeq {\rm End}_\cI (\mathbb D^\varnothing_\la),\\
			{\rm End}_{\cP_J}(\mathbb U^J_{-w_0^J\la})\simeq {\rm End}_\cI(\Res_J \mathbb U^J_{-w_0^J\la}).
		\end{gather*}
	\end{lem}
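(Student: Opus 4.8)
The plan is to reduce both displayed lines to one statement and handle it by a single mechanism. The second isomorphism in the first line needs no work: by Corollary~\ref{StandardRestriction} the restriction $\Res_J\mathbb D^J_\la$ is isomorphic to $\mathbb D^\varnothing_\la$ already as an $\cI$-module, so the two endomorphism algebras literally coincide. For each remaining isomorphism $\End_{\cP_J}(M)\xrightarrow{\ \sim\ }\End_{\cI}(\Res_J M)$, with $M=\mathbb D^J_\la$ or $M=\mathbb U^J_{-w_0^J\la}$, the restriction map is visibly an algebra homomorphism, and it is injective because $M$ is cyclic, so an endomorphism is determined by its underlying linear map. Hence everything comes down to surjectivity of restriction for these two $M$.

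So fix such an $M$ with cyclic generator $v$ (of weight $\la$, resp.\ $-w_0^J\la$), and let $\varphi\in\End_{\cI}(\Res_J M)$; put $u:=\varphi(v)$, a vector of the same $\fh$-weight and degree as $v$. By Lemma~\ref{DRestriction} (resp.\ the unlabelled lemma immediately preceding Proposition~\ref{URestriction}) the $\cI$-module $\Res_J M$ is cyclic on $v$ with defining relations those of \eqref{StandardParahoricRelations} (resp.\ of \eqref{CoStandardParahoricRelations}) with the second line omitted; therefore $u$ is annihilated by the left-hand sides of all of those relations. If in addition $u$ satisfies the omitted second line, i.e.\ $e_\beta u=0$ for all $\beta\in\Phi_{J-}$, then $u$ is killed by the whole defining left ideal of $M$ as a $\cP_J$-module, so $v\mapsto u$ extends to a $\cP_J$-endomorphism $\widetilde\varphi$ of $M$; since $\widetilde\varphi$ and $\varphi$ are $\cI$-linear and agree on the $\cI$-cyclic vector $v$, we get $\widetilde\varphi|_{\cI}=\varphi$, which proves surjectivity. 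Thus the whole lemma reduces to the single claim that $e_\beta u=0$ for every $\beta\in\Phi_{J-}$.

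This is the one nontrivial point. Since $\Phi_{J-}$ is the set of negative roots of the reductive algebra $\fl_J$, whose positive part is generated by the simple roots $\alpha_i$, $i\in J$, it suffices to prove $f_iu=0$ for $i\in J$, where $f_i=e_{-\alpha_i}$ (if $f_iu=f_ju=0$ then $[f_i,f_j]u=0$, and one inducts on height). Fix $i\in J$ and work with the triple $(e_{\alpha_i},h_{\alpha_i},f_i)\subset\fl_J$. Because $\la$ (resp.\ $-w_0^J\la$) is $J$-antidominant, $\sigma:=\sigma_\la$ (resp.\ $\sigma_{-w_0^J\la}$) is the minimal-length element of its right coset $W^J\sigma$, so $\sigma^{-1}(\alpha_i)\in\Phi_+$ for $i\in J$ — the same fact already used in the proofs of Lemmas~\ref{DRestriction} and~\ref{GlobalWeyl}. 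Feeding $\alpha=\sigma^{-1}(\alpha_i)$ into the third relation of \eqref{StandardParahoricRelations} (in the $\mathbb U$-case one first checks that $\sigma(\alpha)=\alpha_i\in\Phi_J$ puts the relation in the ``$+1$'' branch of \eqref{CoStandardParahoricRelations}) yields, up to a nonzero scalar, the relation $e_{\alpha_i}^{\,m_i+1}v=0$ with $m_i:=-\langle\alpha_i^\vee,\la\rangle\ge0$ (resp.\ $m_i:=-\langle\alpha_i^\vee,-w_0^J\la\rangle$), which is one of the $\cI$-defining relations above; hence $e_{\alpha_i}^{\,m_i+1}u=0$. Now $M$ is $\fl_J$-integrable: $v$ generates over $\fl_J$ a copy of the finite-dimensional module $L_\la$ (it is killed by the $f_j$ and the $e_{\alpha_j}$ act nilpotently by the relations just used), and $M=\U(\fr_J)\U(\fl_J)v$ is a quotient of $\U(\fr_J)\otimes L_\la$, which is locally $\fl_J$-finite since each $\fr_J(m)$ is a finite-dimensional $\fl_J$-module. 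So $u$ lives in an integrable $\mathfrak{sl}_2$-module for $(e_{\alpha_i},h_{\alpha_i},f_i)$ and has $h_{\alpha_i}$-weight $\langle\la,\alpha_i^\vee\rangle=-m_i$; in such a module a weight vector of weight $-m_i$ killed by $e_{\alpha_i}^{\,m_i+1}$ must lie in the isotypic component of the $(m_i+1)$-dimensional irreducible, where it is a lowest weight vector, since a component in any larger irreducible summand would survive $e_{\alpha_i}^{\,m_i+1}$. Hence $f_iu=0$, and the argument is complete.

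The main obstacle is precisely this last step: matching the ``truncation'' relations that $\cI$-linearity forces on $u$ with the relation $f_iv=0$ that separates $\cP_J$ from $\cI$. It works because the relevant root $\sigma^{-1}(\alpha_i)$ stays positive and the global modules are $\fl_J$-integrable, both of which are built into the construction; injectivity, the extension of $v\mapsto u$ to a $\cP_J$-map, and the reduction to simple roots are all formal. Finally, the identical argument proves the analogous statements for the local modules $D^J_\la$ and $U^J_{-w_0^J\la}$, since the extra relations \eqref{localD} are $\cI$-relations and so are automatically inherited by $u$.
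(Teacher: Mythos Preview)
Your proof is correct and complete, but it takes a genuinely different route from the paper's.

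The paper's argument is a two-line observation: since each of these modules is cyclic with generator $v$ lying in an extremal weight component, that component equals $\U(\fh[z])\cdot v$; hence any endomorphism (over $\cP_J$ or over $\cI$) is determined by $\varphi(v)\in\U(\fh[z])\cdot v$, and conversely each element of $\U(\fh[z])$ acting on $v$ gives an endomorphism. Both endomorphism algebras are thus identified with the same quotient of $\U(\fh[z])$ (the effective action on $v$), and the isomorphisms follow. This is short and has the advantage of simultaneously naming the common algebra, which feeds directly into Lemma~\ref{HWA}.

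Your approach instead proves that the restriction map $\End_{\cP_J}(M)\to\End_{\cI}(\Res_J M)$ is an isomorphism by checking surjectivity directly: you show that $\varphi(v)$ automatically satisfies the missing relation $f_iu=0$ via the $\mathfrak{sl}_2$-integrability trick (weight $-m_i$ plus $e_{\alpha_i}^{m_i+1}u=0$ in an integrable module forces $f_iu=0$). This is more elementary and entirely self-contained --- it never appeals to the structure of the extremal weight space or to the highest-weight-algebra machinery --- but it is considerably longer and does not by itself identify the endomorphism ring with a quotient of $\U(\fh[z])$, which you would still need for the subsequent lemma.
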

	\begin{proof}
		The $\cP_J$ modules $\mathbb D_\la, \mathbb U^J_{-w_0^J\la}$ and their restrictions are generated by the extremal weight components and are cyclic. 
		Therefore the endomorphism algebras are equal to the quotients of $\U(\fh[z])$ acting effectively on the generators. Thus we obtain the desired isomorphisms.
	\end{proof}
	
	\begin{lem}\label{HWA}
		The endomorphism algebras of $\mathbb D^J_\la$ and $\mathbb U^J_{-w_0^J\la}$ are isomorphic to the polynomial algebra 
		$\mathcal{A}_\lambda^D={\rm End}_\cI (\mathbb D^\varnothing_\la)$ from ~\cite[Proposition 4.15]{FKhMO}.
	\end{lem}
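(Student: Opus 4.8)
The plan is to push everything down to the Iwahori algebra via Lemma~\ref{EndRestriction} and then invoke the structure theory of the (generalized global) Weyl modules. For $\mathbb D^J_\la$ there is nothing to do beyond what is already recorded: Lemma~\ref{EndRestriction} gives ${\rm End}_{\cP_J}(\mathbb D^J_\la)\simeq{\rm End}_{\cI}(\mathbb D^\varnothing_\la)=\mathcal{A}_\la^D$, and $\mathcal{A}_\la^D$ is a polynomial algebra by \cite[Proposition~4.15]{FKhMO} (recall that $\mathbb D^\varnothing_\la$ is the generalized global Weyl module $\W_\la$, so $\mathcal{A}_\la^D$ is the classical highest weight algebra $\mathcal{A}_{\la_-}\simeq\bigotimes_i\bC[x_{i,1},\dots,x_{i,a_i}]^{S_{a_i}}$ with $a_i=-\langle\la_-,\alpha_i^\vee\rangle$). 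So the real content is the statement for $\mathbb U^J_{-w_0^J\la}$.

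Write $\nu:=-w_0^J\la$; since $w_0^J\in W$ one checks that $\nu\in P_J^-$ and that $\nu_-=(-w_0^J\la)_-=-w_0\la_-$. By Lemma~\ref{EndRestriction}, ${\rm End}_{\cP_J}(\mathbb U^J_\nu)\simeq{\rm End}_\cI(\Res_J\mathbb U^J_\nu)$, so I must compute the endomorphism algebra of the $\cI$-module $\Res_J\mathbb U^J_\nu$. By \eqref{UIso} (and the identification $\W_\nu=\mathbb D^\varnothing_\nu$) this module is the quotient of $\mathbb D^\varnothing_\nu$ by the sum $N$ of the $\cI$-submodules $\W_{\kappa\nu_-}$ over $\kappa\in W$ with $\kappa>\sigma_\nu$ and $\kappa\nu_-\in P_J^-$; equivalently, by \eqref{UWr}, $\Res_J\mathbb U^J_\nu\simeq\W_\nu(s)$ for a suitable characteristic $s$. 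Each summand of $N$ is generated by an extremal weight vector of the classical global Weyl module $\W_{\nu_-}$ and is therefore stable under the ``multiplication'' endomorphisms of $\W_{\nu_-}$, so the action of $\mathcal{A}_\nu^D={\rm End}_\cI(\mathbb D^\varnothing_\nu)$ descends to the quotient $\mathbb D^\varnothing_\nu/N=\Res_J\mathbb U^J_\nu$. I would then show that this descended action is still faithful and exhausts ${\rm End}_\cI(\Res_J\mathbb U^J_\nu)$, which gives ${\rm End}_\cI(\Res_J\mathbb U^J_\nu)\simeq\mathcal{A}_\nu^D$.

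Granting this, it remains to identify $\mathcal{A}_\nu^D$ with $\mathcal{A}_\la^D$ as algebras. By \cite[Proposition~4.15]{FKhMO}, $\mathcal{A}_\mu^D$ depends only on $\mu_-$ and equals the polynomial algebra $\bigotimes_i\bC[x_{i,1},\dots,x_{i,a_i(\mu_-)}]^{S_{a_i(\mu_-)}}$, where $a_i(\xi)=-\langle\xi,\alpha_i^\vee\rangle$. Since $-w_0$ is an automorphism of the Dynkin diagram (sending $\alpha_i$ to $\alpha_{i^*}$ and $\omega_i$ to $\omega_{i^*}$), one has $a_i(-w_0\la_-)=a_{i^*}(\la_-)$, so the collection $(a_i(\nu_-))_i$ is a permutation of the collection $(a_i(\la_-))_i$; hence the two tensor products coincide after reindexing and $\mathcal{A}_\nu^D\simeq\mathcal{A}_\la^D$. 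Together with the previous paragraph this yields ${\rm End}_{\cP_J}(\mathbb U^J_{-w_0^J\la})\simeq\mathcal{A}_\la^D$.

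The main obstacle is the faithfulness-and-exhaustiveness claim of the second paragraph: that the $\U(\fh[z])$-annihilator of the cyclic generator $v_\nu$ in $\W_\nu(s)=\Res_J\mathbb U^J_\nu$ coincides with its annihilator in $\W_\nu=\mathbb D^\varnothing_\nu$, so that, by the proof of Lemma~\ref{EndRestriction}, the two cyclic $\cI$-modules have the same endomorphism algebra. I would deduce this from the explicit defining relations of the generalized Weyl modules with characteristics in \cite{FeMa,FMO1} and \cite[Proposition~4.14]{FKhMO}: the relations cutting $\W_\nu$ down to $\W_\nu(s)$ are built from the root vectors $\widehat{\sigma'_\nu}(e_\alpha)$ with $\sigma'_\nu(\alpha)\in\Phi_{J+}$ — they appear precisely because $\nu$ is $J$-antidominant — and they do not involve the $\fh[z]$-action on $v_\nu$. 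Alternatively, faithfulness can be extracted from Proposition~\ref{URestriction}: the $\cI$-filtration of $\Res_J\mathbb U^J_\nu$ has a subquotient isomorphic to $\mathbb U^\varnothing_\nu$ on which the image of $v_\nu$ is again a cyclic generator, and, combined with the freeness of $\W_\nu$ over $\mathcal{A}_\nu^D$, this forces the annihilator to be unchanged.
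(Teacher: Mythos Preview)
Your argument is correct but takes a different route from the paper for the $\mathbb U$-case. You work at the \emph{top} of the filtration of Proposition~\ref{URestriction}: with $\nu=-w_0^J\la$ you sandwich $\Res_J\mathbb U^J_\nu$ between $\W_\nu=\mathbb D^\varnothing_\nu$ and the top quotient $\mathbb U^\varnothing_\nu$, and then close the argument with the Dynkin-automorphism identification $\mathcal{A}^D_\nu\simeq\mathcal{A}^D_\la$. The paper instead works at the \emph{bottom} of that same filtration: it observes that $\mathbb U^\varnothing_{-\la}$ is an honest $\cI$-submodule of $\Res_\cI\mathbb U^J_{-w_0^J\la}$ and that the quotient has no $-\la$ weight component; hence the endomorphism algebra (which is a quotient of $\U(\fh[z])$ by Lemma~\ref{EndRestriction}) acts faithfully on the $-\la$ weight space, which is free of rank one over ${\rm End}_\cI(\mathbb U^\varnothing_{-\la})$. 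One then cites the known isomorphism ${\rm End}_\cI(\mathbb U^\varnothing_{-\la})\simeq{\rm End}_\cI(\mathbb D^\varnothing_\la)=\mathcal{A}^D_\la$ from \cite{FKhMO}, and no Dynkin-involution step is needed.

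Both approaches are valid; what they buy is slightly different. The paper's choice of the bottom submodule lands directly on $\mathcal{A}^D_\la$ (avoiding your extra combinatorial identification $\mathcal{A}^D_{-w_0^J\la}\simeq\mathcal{A}^D_\la$), and the faithfulness step is transparent because the relevant weight space sits entirely inside a known submodule. Your approach keeps everything on the cyclic weight, which is conceptually natural and makes the ``exhaustiveness'' half automatic; the remaining faithfulness is exactly the squeeze you outline, but note that your ``alternatively'' paragraph still relies on knowing ${\rm End}_\cI(\mathbb U^\varnothing_\nu)\simeq\mathcal{A}^D_\nu$ (freeness of $\W_\nu$ over $\mathcal{A}^D_\nu$ alone does not control the annihilator in the further quotient $\mathbb U^\varnothing_\nu$), so you should cite that fact from \cite{FKhMO} explicitly rather than deduce it from freeness.
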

	\begin{proof}
		The claim for $\mathbb D^J_\la$ is implied by Lemma \ref{EndRestriction}, since by definition  $\mathcal{A}_\lambda^D = {\rm End}_\cI (\mathbb D^\varnothing_\la)$. Let us work out the case of $\mathbb U^J_{-w_0^J\la}$.
		
		Using Proposition \ref{URestriction} we get
		$\Res_{\cI}\mathbb U_{-w_0^J\la}^J\supset \mathbb U_{-\la}^\varnothing$ and the quotient module 
		$\Res_{\cI}\mathbb U_{-w_0^J\la}^J\left/\mathbb U_{-\la}^\varnothing\right.$ has trivial $\lambda$ weight component. Therefore the algebra of endomorphisms of 
		$\Res_{\cI}\mathbb U_{-w_0^J\la}^J$ acts on the $-\lambda$ weight component of $\mathbb U_{-\la}^\varnothing$ as on the free module of rank one. Hence this algebra is isomorphic to the algebra of endomorphisms of $\mathbb U_{-\la}^\varnothing$, which is isomorphic to the algebra of endomorphisms of $\mathbb D^\varnothing_\la$.
		Now Lemma \ref{EndRestriction} completes the proof.
			\end{proof}

	\subsection{Comparison with standard and costandard modules}
	
	\begin{lem}
		\label{Categories}
		One has
		\begin{itemize}  
			\item $\mathbb D_\lambda^J, D_\lambda^J\in {\fC^{J}}^{\vee}_{\preceq \lambda}$;
			\item $(\mathbb D_\lambda^J)^\vee, (D_\lambda^J)^\vee\in {\fC^{J}}^{\vee}_{\preceq^\vee \lambda}$;
			\item $\mathbb U_\lambda^J, U_\lambda^J\in {\fC^{J}}^{\vee}_{\preceq^\vee \lambda}$; 
			\item $(\mathbb U_\lambda^J)^{\vee}, (U_\lambda^J)^{\vee}\in {\fC^{J}}
			_{\preceq \lambda}$.
		\end{itemize}
	\end{lem}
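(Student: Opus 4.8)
\emph{Proof plan.} For each of the eight modules the claim splits into two independent parts: membership in the ambient category (${\fC^{J}}^{\vee}$ for $\mathbb D_\lambda^J,D_\lambda^J,\mathbb U_\lambda^J,U_\lambda^J$, and $\fC^{J}$ for their graded duals), and the bound on the $J$-antidominant labels of the composition factors ($\preceq\lambda$, resp.\ $\preceq^{\vee}\lambda$). I would treat these in turn.

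The ambient membership is soft. By Definitions~\ref{Ddefrel} and~\ref{Udefrel} the modules $\mathbb D_\lambda^J,D_\lambda^J,\mathbb U_\lambda^J,U_\lambda^J$ are cyclic over $\cP_J$, generated by $v_\lambda$ in degree $0$; as $\cP_J$ sits in non-negative degrees they are supported in degrees $\ge 0$. The relation $e_\alpha v_\lambda=0$ for $\alpha\in\Phi_{J-}$ together with the bounded powers of raising operators in~\eqref{StandardParahoricRelations} and~\eqref{CoStandardParahoricRelations} makes $\U(\fl_J)v_\lambda$ a finite-dimensional integrable $\fl_J$-module, so every graded piece is a finite sum of such with finite multiplicities; alternatively one invokes the realisations of $\mathbb D_\lambda^J$ as a $\cP_J$-submodule of a generalised global Weyl module (Lemma~\ref{GlobalWeyl}) and of $\mathbb U_\lambda^J$ as a quotient of one (see~\eqref{UIso}). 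Hence all four belong to ${\fC^{J}}^{\vee}$, and since graded duality turns a finitely generated integrable module supported in degrees $\ge 0$ into a finitely cogenerated integrable module supported in degrees $\le 0$, the four duals belong to $\fC^{J}$.

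For the order bounds I would pass to the Iwahori algebra, using that $\Res_J$ changes neither the underlying space nor the $\fh$-action, that it commutes with graded duality, and that a composition factor $L_{\mu,k}$ contributes in particular the $\fh$-weight $\mu$. By Corollary~\ref{StandardRestriction}, $\Res_J\mathbb D_\lambda^J\simeq\mathbb D_\lambda^{\varnothing}$ and $\Res_JD_\lambda^J\simeq D_\lambda^{\varnothing}$ lie in $\fC^{\vee}_{\preceq\lambda}$ by~\cite{FKhMO}, so all their $\fh$-weights, hence all the labels $\mu$, are $\preceq\lambda$; dually $\Res_J(\mathbb D_\lambda^J)^{\vee}\simeq(\mathbb D_\lambda^{\varnothing})^{\vee}\in\fC_{\preceq^{\vee}\lambda}$, giving $(\mathbb D_\lambda^J)^{\vee},(D_\lambda^J)^{\vee}\in\fC^{J}_{\preceq^{\vee}\lambda}$. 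For the $U$-type modules I would use the $\cI$-module filtration of Proposition~\ref{URestriction}/Corollary~\ref{CostandardRestriction}, whose subquotients are $\mathbb U_{\tau(\lambda)}^{\varnothing}$, $\tau(\lambda)\in W^J.\lambda$ (resp.\ their duals). For $(\mathbb U_\lambda^J)^{\vee},(U_\lambda^J)^{\vee}$ this already finishes: by the $J=\varnothing$ case of~\cite{FKhMO} each $(\mathbb U_{\tau(\lambda)}^{\varnothing})^{\vee}\in\fC_{\preceq\tau(\lambda)}$, so an $\fh$-weight $\nu$ with $\nu_-=\lambda_-$ satisfies $\nu\ge\tau(\lambda)\ge\lambda$ in dominance (as $\lambda\in P^-_J$ is the minimum of $W^J.\lambda$), i.e.\ $\nu\preceq\lambda$, and lower strata are covered by the first line of Definition~\ref{def::Cherednik::ordering}.

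The only step beyond bookkeeping is $\mathbb U_\lambda^J,U_\lambda^J\in{\fC^{J}}^{\vee}_{\preceq^{\vee}\lambda}$. Restriction only gives that every composition-factor label $\mu\in P^-_J$ satisfies $\mu\preceq^{\vee}\tau(\lambda)$ for some $\tau\in W^J$, and one must still establish the implication
\[
\mu\in P^-_J,\quad \mu\preceq^{\vee}\tau(\lambda)\ \text{for some}\ \tau\in W^J\ \Longrightarrow\ \mu\preceq^{\vee}\lambda .
\]
The case $\mu_-\ne\lambda_-$ is trivial; for $\mu_-=\lambda_-$ this asserts that a $J$-antidominant $\mu$ with $\mu\le\tau(\lambda)$ in dominance is already $\le\lambda$, which I would prove by rewriting both sides via the elements $\sigma_\bullet$ of Definition~\ref{def::Cherednik::ordering} and comparing in the Bruhat order, using that for a $J$-antidominant weight the associated $\sigma_\bullet$ is a minimal coset representative in ${}^{J}W$ and that the product $\tau\sigma_\lambda$ is length-additive (cf.\ Lemmas~\ref{lem::Weyl::length} and~\ref{lem::subgroup}). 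This combinatorial implication --- equivalently, the statement that the $W^J$-orbit filtration of $\Res_J\mathbb U_\lambda^J$ produces no composition factor whose $J$-antidominant label escapes the $\preceq^{\vee}$-lower set of $\lambda$ --- is the main obstacle; everything else reduces to cyclicity, the grading-shift formalism of Subsection~\ref{gradedshift}, and the Iwahori-level results of~\cite{FKhMO}.
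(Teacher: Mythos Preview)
Your approach is essentially the same as the paper's: both reduce to the Iwahori case via Corollary~\ref{StandardRestriction} for the $D$-type modules and via the filtration of Proposition~\ref{URestriction}/Corollary~\ref{CostandardRestriction} for the $U$-type modules, then invoke the $J=\varnothing$ results of~\cite{FKhMO}; the second and fourth items are deduced from the first and third by duality, exactly as in the paper.

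The one place where you are more scrupulous than the paper is the third item. The paper simply asserts that ``all the weights of $(\nabla^{\varnothing}_{-\tau(\lambda_-)})^\vee$ are $\preceq^\vee$ than $\lambda$'', which, read literally, is false (e.g.\ the cyclic weight $\tau(\lambda)\ne\lambda$ itself is not $\preceq^\vee\lambda$); what is meant, and what suffices, is that every \emph{$J$-antidominant} weight occurring is $\preceq^\vee\lambda$. You correctly isolate this as the genuine content of the lemma and reduce it to the implication
\[
\mu\in P_J^-,\ \mu_-=\lambda_-,\ \mu\le\tau(\lambda)\ \text{for some}\ \tau\in W^J\ \Longrightarrow\ \mu\le\lambda.
\]
Your proposed route through minimal coset representatives is the right one: since $\mu,\lambda\in P_J^-$ one has $\sigma_\mu,\sigma_\lambda\in{}^J W$, the hypothesis gives $\sigma_\mu\le\sigma_{\tau(\lambda)}$ in Bruhat order, and the standard fact that the projection $W\to{}^J W$ onto minimal left-coset representatives is Bruhat-order preserving (see~\cite{BB}) yields $\sigma_\mu\le\sigma_\lambda$, hence $\mu\le\lambda$. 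So your plan is complete; the paper's proof simply skips this verification.
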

	\begin{proof}
		To prove the first and the third claims we have to show that all $J$-antidominant weights of the modules $\mathbb D_\lambda^J, D_\lambda^J$ are $\preceq$ than $\lambda$ and all $J$-antidominant weights of the modules   $\mathbb U_\lambda^J, U_\lambda^J$ are $\preceq^\vee$ than $\lambda$. 
		All the weights of $\Delta_\lambda^\varnothing$ are $\preceq$ than $\lambda$ and the first claim follows from Corollary \ref{StandardRestriction}.
		In the same way all the weights of $(\nabla^{\varnothing}_{-\tau(\lambda_-)})^\vee$ are $\preceq^\vee$ than $\lambda$. Therefore Corollary \ref{CostandardRestriction} implies the third claim.
		The second and the fourth claims are equivalent to the first and the third claims by definition of the orders. 
	\end{proof}

	\begin{thm}\label{StandardCostandardCh}
		The standard and costandard modules in the categories 
		$\mathcal{C}_{\preceq \lambda}^{J}$ and $\mathcal{C}_{\preceq \lambda}^{J\vee}$ admit the following explicit description as $\mathcal{P}_J$ modules:
		\[
		\Delta_\lambda^J \simeq \mathbb D_\lambda^J,\quad \overline{\Delta}_\lambda^J \simeq  D_\lambda^J,\quad   \nabla_{\lambda}^J \simeq (\mathbb{U}_{-w_0^J\lambda}^J)^\vee,\quad \overline{\nabla}_\lambda^J \simeq (U_{-w_0^J\lambda}^J)^\vee.
		\]
	\end{thm}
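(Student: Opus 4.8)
The plan is to establish first the two ``global'' identifications $\Delta_\lambda^J\simeq\mathbb D_\lambda^J$ and $\nabla_\lambda^J\simeq(\mathbb U_{-w_0^J\lambda}^J)^\vee$, and then to deduce the two ``local'' (proper) ones by running the same argument on the quotients by relations~\eqref{localD}. I will repeatedly use that $\Res_J\colon{\fC^J}^\vee\to\fC^\vee$ and $\Res_J\colon\fC^J\to\fC$ are exact, faithful, and leave the underlying vector spaces untouched, so a morphism upstairs is an isomorphism once it is after $\Res_J$; and the elementary remark that a module carrying surjections both from and onto a fixed module with finite dimensional weight spaces is isomorphic to it, with the dual statement for embeddings.

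For $\Delta_\lambda^J\simeq\mathbb D_\lambda^J$: by Lemma~\ref{Categories} the module $\mathbb D_\lambda^J$ lies in ${\fC^J}^\vee_{\preceq\lambda}$, and being cyclic on a weight-$\lambda$ vector it has $L_\lambda$ as its only simple quotient, so the projective cover surjects $\Delta_\lambda^J\twoheadrightarrow\mathbb D_\lambda^J$ and it suffices to see this is an isomorphism after $\Res_J$. The big projective $\mathbb P_\lambda^J=\Ind_{\fl_J}^{\cP_J}L_\lambda$ is generated over $\cP_J$ by the lowest weight vector of $L_\lambda$; using the semidirect decomposition $\cI=\fb_{\fl_J}\ltimes\fr_J$ with $\fb_{\fl_J}=\fh\oplus\fn_J^{+}$, and $\U(\fn_J^{+})\cdot(\text{lowest weight vector})=L_\lambda$, one checks that $\Res_J\mathbb P_\lambda^J$ is already cyclic over $\cI$ on the weight-$\lambda$ vector, hence a quotient of $\mathbb P_\lambda^\varnothing$. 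Granting the order-compatibility statement that every $\fh$-weight of an object of ${\fC^J}^\vee_{\preceq\lambda}$ is $\preceq\lambda$ (so $\Res_J$ carries ${\fC^J}^\vee_{\preceq\lambda}$ into $\fC^\vee_{\preceq\lambda}$), the module $\Res_J\Delta_\lambda^J=\Res_J(\imath_\lambda^{*}\mathbb P_\lambda^J)$ is a quotient of $\mathbb P_\lambda^\varnothing$ lying in $\fC^\vee_{\preceq\lambda}$, hence a quotient of $\Delta_\lambda^\varnothing=\mathbb D_\lambda^\varnothing$ (the $J=\varnothing$ case, \cite{FKhMO}); as it also surjects onto $\Res_J\mathbb D_\lambda^J\simeq\mathbb D_\lambda^\varnothing$ (Corollary~\ref{StandardRestriction}), the remark above forces $\Res_J\Delta_\lambda^J\simeq\mathbb D_\lambda^\varnothing$ and hence $\Delta_\lambda^J\simeq\mathbb D_\lambda^J$. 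The same scheme applied to $D_\lambda^J=\mathbb D_\lambda^J/(hz^kv_\lambda)$ and $\overline\Delta_\lambda^J$ gives $\overline\Delta_\lambda^J\simeq D_\lambda^J$: here $\Res_J D_\lambda^J\simeq D_\lambda^\varnothing\simeq\overline\Delta_\lambda^\varnothing$ by Corollary~\ref{StandardRestriction}, and since the weight $\lambda$ occurs in an object of ${\fC^J}^\vee_{\preceq\lambda}$ only inside its $L_\lambda$-constituents, and there with multiplicity one, one reads off $[D_\lambda^J:L_\lambda]=1$, so that $D_\lambda^J$ lies in the exact subcategory $\{M:[M:L_\lambda]\le1\}$ with head $L_\lambda$ and the previous surjection/injection comparison applies.

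On the costandard side the easy inclusion is again immediate: $(\mathbb U_{-w_0^J\lambda}^J)^\vee\in\fC^J_{\preceq\lambda}$ (Lemma~\ref{Categories}) is co-cyclic with socle $L_\lambda$, so it embeds into $\nabla_\lambda^J$. The reverse inclusion is the delicate point --- this is the phenomenon flagged in the introduction --- since the naive dualisation of the previous paragraph would need the order-compatibility statement for the \emph{dual} Cherednik order, where $\Res_J$ no longer preserves the relevant Serre subcategory. Instead I would use the explicit Iwahori structure of $\Res_J\mathbb U_{-w_0^J\lambda}^J$ from Proposition~\ref{URestriction}/Corollary~\ref{CostandardRestriction}: as an $\cI$-module it is filtered by generalized Weyl modules $\mathbb U_{\tau(\cdot)}^\varnothing$, so its graded dual is co-filtered by the Iwahori costandard modules $\nabla_{-\tau(\lambda_-)}^\varnothing$, one layer of which is the injective hull of the socle. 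Comparing this co-filtration with $\Res_J\nabla_\lambda^J\subseteq\Res_J\mathbb I_\lambda^J$, where $\Res_J\mathbb I_\lambda^J$ is co-cyclic over $\cI$ (dually to the cyclicity of $\Res_J\mathbb P_\lambda^J$), and invoking that $\fC^\varnothing$ is already a highest weight category for $\preceq$ \cite{FKhMO}, should give $\Res_J\nabla_\lambda^J\simeq\Res_J(\mathbb U_{-w_0^J\lambda}^J)^\vee$ and hence $\nabla_\lambda^J\simeq(\mathbb U_{-w_0^J\lambda}^J)^\vee$; the proper costandard statement $\overline\nabla_\lambda^J\simeq(U_{-w_0^J\lambda}^J)^\vee$ then follows by passing to the quotient by~\eqref{localD} exactly as above.

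The main obstacle I anticipate is precisely this costandard comparison: understanding, over the Iwahori algebra, how the several costandard layers of $\Res_J\mathbb U_{-w_0^J\lambda}^J$ reassemble into the $\cP_J$-module $\nabla_\lambda^J$, and ruling out any extra submodule in the truncated injective. The remaining ingredients are routine: Lemma~\ref{Categories} locates the modules in the right Serre subcategories; the cyclicity of $\Res_J\mathbb P_\lambda^J$ and the co-cyclicity of $\Res_J\mathbb I_\lambda^J$ over $\cI$ are short PBW computations; Corollaries~\ref{StandardRestriction},~\ref{CostandardRestriction} plus the $J=\varnothing$ results of~\cite{FKhMO} provide the Iwahori-level identifications; and the order-compatibility statement ($\mu\in P_J^-$, $\mu\preceq\lambda$ and $\nu$ a weight of $L_\mu$ imply $\nu\preceq\lambda$) reduces to the monotonicity of antidominant representatives under the dominance order together with the $\sigma_\lambda$-description of $\preceq$ and Lemmas~\ref{lem::Weyl::length}--\ref{lem::subgroup}.
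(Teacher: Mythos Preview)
Your argument for $\Delta_\lambda^J\simeq\mathbb D_\lambda^J$ and $\overline\Delta_\lambda^J\simeq D_\lambda^J$ is correct, but it is considerably more elaborate than what the paper does. The paper observes directly that every cyclic $\cP_J$-module $M\in{\fC^J}^\vee_{\preceq\lambda}$ with cyclic vector of weight $\lambda$ receives a surjection $\mathbb D_\lambda^J\twoheadrightarrow M$: the defining relations~\eqref{StandardParahoricRelations} are exactly the minimal ones forced by $\fl_J$-integrability of $v_\lambda$ together with the weight bound $\preceq\lambda$. Since $\Delta_\lambda^J$ is such an $M$, one gets $\mathbb D_\lambda^J\twoheadrightarrow\Delta_\lambda^J$; combined with the easy surjection $\Delta_\lambda^J\twoheadrightarrow\mathbb D_\lambda^J$ this gives the isomorphism without ever restricting to $\cI$. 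Your route via $\Res_J$, the $J=\varnothing$ case, and the order-compatibility lemma recovers the same conclusion, but at the cost of an extra ingredient (that every $\fh$-weight of $L_\mu$ with $\mu\in P_J^-$, $\mu\preceq\lambda$, is itself $\preceq\lambda$) which you only sketch.

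On the costandard side you have a genuine gap, and you recognise it yourself. The paper does \emph{not} attempt the direct comparison you outline; it sidesteps the obstacle entirely by a duality trick you missed. Namely, by Lemma~\ref{Categories} one has $\mathbb U_\mu^J\in{\fC^J}^\vee_{\preceq^\vee\mu}$, and the \emph{same} one-line argument as for $\mathbb D$ (any cyclic module in ${\fC^J}^\vee_{\preceq^\vee\mu}$ with cyclic vector of weight $\mu$ receives a surjection from $\mathbb U_\mu^J$) shows that $\mathbb U_\mu^J$ is the projective cover of $L_\mu$ in ${\fC^J}^\vee_{\preceq^\vee\mu}$, i.e.\ the standard module for the \emph{dual} Cherednik order. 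Taking graded duals and setting $\mu=-w_0^J\lambda$ then yields $\nabla_\lambda^J\simeq(\mathbb U_{-w_0^J\lambda}^J)^\vee$ and $\overline\nabla_\lambda^J\simeq(U_{-w_0^J\lambda}^J)^\vee$ immediately. No analysis of the Iwahori filtration from Proposition~\ref{URestriction} is needed at this stage; that filtration is used later, for the Ext-vanishing in Theorem~\ref{thm::fCJ::stratified}, not for identifying the costandard object. The moral is that the roles of $\mathbb D$ and $\mathbb U$ are completely symmetric once one is willing to switch between $\preceq$ and $\preceq^\vee$, and graded duality exchanges projective covers in one ordered category with injective hulls in the other.
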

	\begin{proof}
		Assume that $M$ is a cyclic $\mathcal{P}^J$ module generated from an element $v_\la$ of weight $\lambda$ such that all weights of $M$ are 
		$\preceq$ to $\lambda$. Then $M$ admits the surjective map $\mathbb D_\lambda^J\rightarrow M$, $v_\lambda \mapsto m_\lambda$. 
		Therefore $\mathbb D_\lambda^J$ and $D_\lambda^J$ are projective in the subcategories $\mathcal{C}_{\preceq \lambda}^{J\vee}$ 
		and $\overline{\mathcal{C}}_{\preceq \lambda}^{J\vee}$ respectively
		(recall that $\overline{\mathcal{C}}_{\preceq \lambda}^{J\vee}$ consists of modules $N$ such that $[N:L_\la]\le 1$).
		Hence $\mathbb D_\lambda^J \simeq \Delta_\lambda^J,\ D_\lambda^J \simeq \overline{\Delta}_\lambda^J$.
		
		Similarly by the same arguments for the order $\preceq^\vee$ we get that $\mathbb U_\lambda^J$ and $U_\lambda^J$ are projective in the subcategories $\mathcal{C}_{\preceq^\vee \lambda}^{J\vee}$ 
		and $\overline{\mathcal{C}}_{\preceq^\vee \lambda}^{J\vee}$ respectively.
		Hence by dualization, we get the second part of the theorem (we note that $-w_0^J\la\in P_J^-$ as soon as $\la\in P_J^-$).
	\end{proof}

	Now let us formulate a similar theorem (with a similar proof) describing the standard and costandard modules in the categories 
	$\mathcal{C}_{\preceq^\vee \lambda}^{J}$ and 
	$\mathcal{C}_{\preceq^\vee \lambda}^{J\vee}$. 
	
	\begin{thm}\label{StandardCostandardChDual}
		The standard and costandard modules in the categories 
		$\mathcal{C}_{\preceq^\vee \lambda}^{J}$ and 
		$\mathcal{C}_{\preceq^\vee \lambda}^{J\vee}$ admit the following explicit description as $\mathcal{P}_J$ modules:
		\[
		\Delta_\lambda^J \simeq \mathbb U_\lambda^J,\quad \overline{\Delta}_\lambda^J \simeq  U_\lambda^J,\quad   \nabla_{\lambda}^J \simeq (\mathbb{D}_{-w_0^J\lambda}^J)^\vee,\quad \overline{\nabla}_\lambda^J \simeq (D_{-w_0^J\lambda}^J)^\vee.
		\]
	\end{thm}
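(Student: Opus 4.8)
The plan is to reduce the statement to Theorem~\ref{StandardCostandardCh} and the graded duality, so that essentially no new computation is needed. Recall that, along the way in the proof of Theorem~\ref{StandardCostandardCh}, one already verifies the dual-order assertions: $\mathbb U_\lambda^J$ is projective in $\mathcal{C}_{\preceq^\vee\lambda}^{J\vee}$ and $U_\lambda^J$ is projective in the exact subcategory $\overline{\mathcal{C}}_{\preceq^\vee\lambda}^{J\vee}$ of modules $N$ with $[N:L_\lambda]\le 1$ (and $[U_\lambda^J:L_\lambda]=1$ by the relations~\eqref{localD}). First I would re-establish this: by Lemma~\ref{Categories} both modules lie in $\mathcal{C}_{\preceq^\vee\lambda}^{J\vee}$, while any cyclic $\cP_J$-module generated by a weight-$\lambda$ vector all of whose $\fh$-weights are $\preceq^\vee\lambda$ receives a surjection from $\mathbb U_\lambda^J$ carrying cyclic generator to cyclic generator. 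Since $\mathbb U_\lambda^J$ and $U_\lambda^J$ are cyclic with top $L_\lambda$, this makes them the projective covers of $L_\lambda$ in $\mathcal{C}_{\preceq^\vee\lambda}^{J\vee}$ and in $\overline{\mathcal{C}}_{\preceq^\vee\lambda}^{J\vee}$, i.e. $\Delta_\lambda^J\simeq\mathbb U_\lambda^J$ and $\overline{\Delta}_\lambda^J\simeq U_\lambda^J$.

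For the costandard modules I would dualize. The graded duality $(-)^\vee$ is an exact contravariant equivalence $\fC^{J\vee}\to\fC^J$ that carries $L_\mu$ to $L_{-w_0^J\mu}$, sends projective covers to injective hulls, and maps the Serre subcategory $\mathcal{C}_{\preceq\mu}^{J\vee}$ onto $\mathcal{C}_{\preceq^\vee(-w_0^J\mu)}^{J}$; the last point amounts to the compatibility $\nu\preceq\mu\iff -w_0^J\nu\preceq^\vee -w_0^J\mu$ for $\mu,\nu\in P_J^-$, which is the same identity already used to deduce the costandard half of Theorem~\ref{StandardCostandardCh}. Applying $(-)^\vee$ to the isomorphisms $\Delta_{-w_0^J\lambda}^J\simeq\mathbb D_{-w_0^J\lambda}^J$ and $\overline{\Delta}_{-w_0^J\lambda}^J\simeq D_{-w_0^J\lambda}^J$ of Theorem~\ref{StandardCostandardCh} (which hold in $\mathcal{C}_{\preceq(-w_0^J\lambda)}^{J\vee}$ and its $\overline{(\,\cdot\,)}$ version), and using $-w_0^J(-w_0^J\lambda)=\lambda$ with $-w_0^J\lambda\in P_J^-$, one obtains that $(\mathbb D_{-w_0^J\lambda}^J)^\vee$ is the injective hull of $L_\lambda$ in $\mathcal{C}_{\preceq^\vee\lambda}^{J}$ and $(D_{-w_0^J\lambda}^J)^\vee$ its injective hull in $\overline{\mathcal{C}}_{\preceq^\vee\lambda}^{J}$, that is, $\nabla_\lambda^J\simeq(\mathbb D_{-w_0^J\lambda}^J)^\vee$ and $\overline{\nabla}_\lambda^J\simeq(D_{-w_0^J\lambda}^J)^\vee$.

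The one step that is not purely formal is the claim, used in the first paragraph, that the relations~\eqref{CoStandardParahoricRelations} of $\mathbb U_\lambda^J$ are forced in an arbitrary cyclic object of $\mathcal{C}_{\preceq^\vee\lambda}^{J\vee}$. The relations $e_\alpha v_\lambda=0$, $\alpha\in\Phi_{J-}$, are automatic because a weight-$\lambda$ vector of $J$-antidominant weight in an $\fl_J$-integrable module is $\fl_J$-lowest, so the content is to extract the remaining relations from the weight bound. Here I expect the main work to sit: unlike the $\mathbb D$-case handled in Theorem~\ref{StandardCostandardCh}, the restriction $\Res_J\mathbb U_\lambda^J$ is not cyclic over $\cI$ (Proposition~\ref{URestriction} gives a proper $\cI$-filtration with subquotients $\mathbb U_{\tau\lambda}^\varnothing$), so the comparison with the Iwahori case from~\cite{FKM,FKhMO} must be run through that filtration rather than by a single surjection; nonetheless it introduces no new idea beyond what Theorem~\ref{StandardCostandardCh} already requires.
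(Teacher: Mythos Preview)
Your first two paragraphs are correct and coincide with the paper's approach: the paper's proof simply invokes Lemma~\ref{Categories} and says ``the proof proceeds along the lines of the proof of Theorem~\ref{StandardCostandardCh}'', which amounts to exactly the projectivity argument for $\mathbb U_\lambda^J$, $U_\lambda^J$ in $\mathcal{C}_{\preceq^\vee\lambda}^{J\vee}$ that you reproduce (indeed, this was already carried out in the second half of the proof of Theorem~\ref{StandardCostandardCh}), followed by dualization from the $\mathbb D$-projectivity to obtain the costandard identifications.

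Your third paragraph, however, misidentifies where the work lies. The claim that the relations~\eqref{CoStandardParahoricRelations} are forced in any cyclic object of $\mathcal{C}_{\preceq^\vee\lambda}^{J\vee}$ is a direct $\cP_J$-level weight argument, exactly parallel to the $\mathbb D$-case: each relation asserts the vanishing of a vector whose $\fh$-weight lies outside $\{\mu:\mu\preceq^\vee\lambda\}$, and that is all one needs. The Iwahori restriction plays no role here; the fact that $\Res_J\mathbb U_\lambda^J$ is not $\cI$-cyclic is irrelevant, since the surjection $\mathbb U_\lambda^J\twoheadrightarrow M$ is constructed over $\cP_J$, not over $\cI$. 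The Iwahori analysis (Corollary~\ref{StandardRestriction}, Proposition~\ref{URestriction}) enters only earlier, via Lemma~\ref{Categories}, to place the modules in the correct Serre subcategories. So your proposed detour through the $\cI$-filtration is unnecessary, and the argument is no harder than in Theorem~\ref{StandardCostandardCh}.
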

	\begin{proof}
		We note that by Lemma \ref{Categories} $\mathbb U_\lambda^J\in \mathcal{C}_{\preceq^\vee \lambda}^{J}$  and $(\mathbb{D}_{-w_0^J\lambda}^J)^\vee\in \mathcal{C}_{\preceq^\vee \lambda}^{J\vee}$. Now the proof proceeds along the lines of the proof of Theorem \ref{StandardCostandardCh}.
	\end{proof}

	\begin{thm}
		\begin{itemize}
			\item $\Res_{J}\Delta_\lambda^J\simeq \Delta_\lambda^\varnothing$, $\Res_{J}\overline{\Delta}_\lambda^J\simeq \overline{\Delta}_\lambda^\varnothing$.
			\item $\Res_{J}\nabla_\lambda^J$ has the filtration $\mathcal{F}_{\tau(\lambda)}$, $\tau \in W^J$, such that the associated graded module is
			isomorphic to $\bigoplus \nabla_{\tau (\lambda)}^\varnothing$.
		\end{itemize}
		
	\end{thm}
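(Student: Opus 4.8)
The plan is to deduce the statement entirely from the explicit descriptions of the (co)standard modules in Theorem~\ref{StandardCostandardCh} (and its $J=\varnothing$ specialization, which is the Iwahori statement of~\cite{FKhMO}), together with the restriction results already established for the modules $\mathbb D$, $D$ and $\mathbb U$. For the two $\Delta$-isomorphisms this is immediate: Theorem~\ref{StandardCostandardCh} gives $\Delta_\lambda^J\simeq\mathbb D_\lambda^J$ and $\overline\Delta_\lambda^J\simeq D_\lambda^J$, applying $\Res_J$ and invoking Corollary~\ref{StandardRestriction} yields $\Res_J\Delta_\lambda^J\simeq\mathbb D_\lambda^\varnothing$ and $\Res_J\overline\Delta_\lambda^J\simeq D_\lambda^\varnothing$ as $\cI$-modules, and the $J=\varnothing$ case of Theorem~\ref{StandardCostandardCh} identifies these with $\Delta_\lambda^\varnothing$ and $\overline\Delta_\lambda^\varnothing$.

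For the costandard statement I would first record that $\Res_J$ commutes with the graded-dual functor: both are the identity on the underlying graded vector spaces, and the involution defining $(-)^\vee$ for $\cP_J$-modules restricts to the one for $\cI$-modules. Using $\nabla_\lambda^J\simeq(\mathbb U_{-w_0^J\lambda}^J)^\vee$ from Theorem~\ref{StandardCostandardCh} this gives $\Res_J\nabla_\lambda^J\simeq\bigl(\Res_J\mathbb U_{-w_0^J\lambda}^J\bigr)^\vee$. Now apply Proposition~\ref{URestriction} to the $J$-antidominant weight $-w_0^J\lambda$: the $\cI$-module $\Res_J\mathbb U_{-w_0^J\lambda}^J$ carries a filtration, indexed by the orbit $W^J.(-w_0^J\lambda)$, with subquotients the $\cI$-modules $\mathbb U_{\sigma(-w_0^J\lambda)}^\varnothing$. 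Refining the indexing poset to a linear order and dualizing (which reverses the order and replaces each subquotient by its graded dual), one obtains a filtration of $\bigl(\Res_J\mathbb U_{-w_0^J\lambda}^J\bigr)^\vee$ with subquotients $(\mathbb U_{\sigma(-w_0^J\lambda)}^\varnothing)^\vee$. I would then invoke the $J=\varnothing$ case of Theorem~\ref{StandardCostandardCh}, which gives $(\mathbb U_\mu^\varnothing)^\vee\simeq\nabla_{-\mu}^\varnothing$ for every $\mu$, so these subquotients equal $\nabla_{\sigma(w_0^J\lambda)}^\varnothing$; since $w_0^J\in W^J$ the weights $\sigma(w_0^J\lambda)$, $\sigma\in W^J$, run precisely over $W^J\lambda$, and writing $\tau=\sigma w_0^J\in W^J$ and relabeling the pieces of the filtration as $\mathcal F_{\tau(\lambda)}$ gives exactly the asserted filtration with associated graded $\bigoplus\nabla_{\tau(\lambda)}^\varnothing$.

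I do not expect a genuine obstacle here, since essentially all the content is already in the earlier propositions. The points needing care are the commutation of $\Res_J$ with $(-)^\vee$, the order-reversal that occurs when one dualizes a poset-filtered module (so that the multiset of subquotients is preserved), and the weight bookkeeping through $w_0^J$ and the duality, so that the final index set comes out as $W^J.\lambda$ rather than $W^J.(-w_0^J\lambda)$; this last point is the most error-prone step.
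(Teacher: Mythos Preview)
Your proposal is correct and follows exactly the same route as the paper: the paper's own proof is the single sentence ``Immediate corollary from Corollary~\ref{StandardRestriction} and Proposition~\ref{URestriction}'', and you have simply unpacked what ``immediate'' means, supplying the identifications $\Delta_\lambda^J\simeq\mathbb D_\lambda^J$, $\overline\Delta_\lambda^J\simeq D_\lambda^J$, $\nabla_\lambda^J\simeq(\mathbb U_{-w_0^J\lambda}^J)^\vee$ from Theorem~\ref{StandardCostandardCh}, the commutation of $\Res_J$ with graded duality, and the weight bookkeeping through $w_0^J$. The only additional care you took---refining to a linear order before dualizing and relabeling $\tau=\sigma w_0^J$---is exactly the kind of detail the paper leaves implicit (cf.\ Corollary~\ref{CostandardRestriction}, which already records the dual form of Proposition~\ref{URestriction}).
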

	\begin{proof}
		Immediate corollary from  Corollary \ref{StandardRestriction} and Proposition \ref{URestriction}. 
	\end{proof}
	
	\begin{cor}\label{Ext::Vanishing::Iwahori}
		For $\lambda \neq \nu \in P_J^-$ one has 
		\[
		\Ext_{\cI}^{\udot}(\Res_{J}\overline{\Delta}_\lambda^J,\Res_{J}\nabla_\nu^J)=0.
		\]
	\end{cor}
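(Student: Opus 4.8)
The plan is to transport the statement to the Iwahori case $J=\varnothing$, where the required vanishing was established in \cite{FKhMO}. By the preceding theorem one has $\Res_{J}\overline{\Delta}_\lambda^J\simeq\overline{\Delta}_\lambda^\varnothing$, while $\Res_{J}\nabla_\nu^J$ carries a filtration $\mathcal{F}_{\tau(\nu)}$, $\tau\in W^J$, whose associated graded module is $\bigoplus_{\tau\in W^J}\nabla_{\tau(\nu)}^\varnothing$; this filtration is finite since $W^J$, and hence the orbit $W^J.\nu$, is finite. Applying the cohomological functor $\Ext_{\cI}^{\udot}(\overline{\Delta}_\lambda^\varnothing,-)$ to the short exact sequences realizing this filtration and inducting on its length, the corollary reduces to the vanishing
\[
\Ext_{\cI}^{\udot}(\overline{\Delta}_\lambda^\varnothing,\nabla_{\tau(\nu)}^\varnothing)=0\qquad\text{for all }\tau\in W^J .
\]

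To obtain this I would invoke the $J=\varnothing$ case of Theorem~\ref{thm::intro::stratified}, proved in \cite{FKhMO}: the category $\fC$ is stratified for the Cherednik order, with each weight forming its own stratum, so in the notation of Definition~\ref{dfn::stratified} condition (s3) reads $\Ext_{\cI}^{\udot}(\overline{\Delta}_\mu^\varnothing,\nabla_{\mu'}^\varnothing)=0$ for all $\mu\neq\mu'$ in $P$. It therefore suffices to check that $\lambda\neq\tau(\nu)$ for every $\tau\in W^J$. Suppose, on the contrary, $\lambda=\tau(\nu)$ for some $\tau\in W^J$; then $\lambda$ and $\nu$ lie in a common $W^J$-orbit. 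But the closure of the $J$-antidominant chamber is a fundamental domain for the action of $W^J$ on $P$, so a $W^J$-orbit meets $P_J^-$ in exactly one point; since $\lambda,\nu\in P_J^-$ this forces $\lambda=\nu$, contradicting the hypothesis. Hence the displayed $\Ext$-groups vanish and, unwinding the dévissage, so does $\Ext_{\cI}^{\udot}(\Res_{J}\overline{\Delta}_\lambda^J,\Res_{J}\nabla_\nu^J)$.

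I do not expect a real obstacle here: the corollary is pure dévissage built on the preceding theorem and the known $J=\varnothing$ stratification. The only points deserving a line of care are the finiteness of the filtration on $\Res_{J}\nabla_\nu^J$ (guaranteed by $|W^J.\nu|<\infty$), the interpretation of all $\Ext$-groups in the graded sense of Section~\ref{gradedshift}, so that the grading shifts are irrelevant to the vanishing, and the elementary observation that every $W^J$-orbit contains a unique $J$-antidominant weight.
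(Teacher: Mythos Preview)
Your proof is correct and follows essentially the same approach as the paper: reduce to the Iwahori case via the identification $\Res_J\overline{\Delta}_\lambda^J\simeq\overline{\Delta}_\lambda^\varnothing$ and the finite filtration on $\Res_J\nabla_\nu^J$ with subquotients $\nabla_{\tau(\nu)}^\varnothing$, then invoke the $J=\varnothing$ vanishing from \cite{FKhMO} together with the long exact sequence of $\Ext$'s. You are in fact slightly more careful than the paper's own proof, which simply cites the Iwahori vanishing and the long exact sequence without explicitly verifying that $\lambda\neq\tau(\nu)$ for all $\tau\in W^J$; your argument via the fundamental domain for $W^J$ on $P_J^-$ makes this step explicit.
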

	\begin{proof}
		It was shown in \cite{FKhMO} that 
		\[\Ext_{\cI}^{\udot}(\overline{\Delta}_\lambda^\varnothing,\nabla_\eta^\varnothing)=0,\qquad \lambda,\eta \in P.
		\]
		Now the claim follows from the long exact sequence of $\Ext$'s.
	\end{proof}

	\section{Main results}
	\label{Main}
	In this section, we prove the main theorems of our paper.
	\subsection{Parahoric categories}
	Recall that the irreducible modules $L_{\la,k}$ in the categories $\fC^{J}$ and ${\fC^{J}}^{\vee}$  are indexed by the $J$-antidominant weight $\lambda\in P_J^{-}$ and an integer $k$ ($L_{\lambda,k}$ consists of the modules $L_\la$ placed in degree $k$).
	Consider the projection on the first factor 
	$$\rho: Irreps(\fC^J) = P_J^{-}\times \bZ \twoheadrightarrow P_J^{-}. $$
	Following Definition~\ref{def::Cherednik::ordering} we have two standard partial orders on $P_J^{-}$ the Cherednik order $\preceq$ and the dual Cherednik order $\prec^{\vee}$. The goal of this section is to show that $(P_J^{-},\prec)$-ordered and $(P_J^{-},\prec^{\vee})$-ordered categories $\fC^{J}$ and ${\fC^{J}}^{\vee}$ are stratified in the sense of Definition~\ref{dfn::stratified}.
	The four cases (two categories and two orders) are divided into two conceptually different pairs. We start with $\fC^J$ and $\prec$.
	
	\begin{thm}
		\label{thm::fCJ::stratified}
		For all $J$ the $(P_J^{-},\prec)$-ordered category $\fC^{J}$ of modules over parahoric Lie algebra $\cP_{J}$ is stratified.
	\end{thm}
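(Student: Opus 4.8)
\emph{Proof proposal.} The plan is to verify condition~$(s3)$ of Definition~\ref{dfn::stratified} for the category $\cO=\fC^{J}$ (which has enough injectives) with the Cherednik order, i.e.
\[
\Ext^{\udot}_{\cD(\fC^{J})}\bigl(\overline{\Delta}^{J}_{\lambda},\nabla^{J}_{\mu}\bigr)=0
\qquad\text{whenever }\lambda\neq\mu\text{ in }P_J^{-}.
\]
First I would dispatch the standing hypotheses of that Definition, which are structural. The category $\fC^{J}$ has enough injectives and each module has an injective resolution with finite multiplicities by the Proposition-Definition of \S\ref{sec::parahoric}, so $\bD_{+}(\fC^{J})$ is generated by injectives. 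The proper standard modules $\overline{\Delta}^{J}_{\lambda}\cong D^{J}_{\lambda}$ and the costandard modules $\nabla^{J}_{\mu}\cong(\mathbb{U}^{J}_{-w_0^{J}\mu})^{\vee}$ are the explicit modules of Theorem~\ref{StandardCostandardCh}, and both lie in the truncations $\fC^{J\vee}_{\preceq\lambda}$, resp.\ $\fC^{J}_{\preceq\mu}$, by Lemma~\ref{Categories}; since an object injective in $\fC^{J}$ and lying in a Serre subcategory is injective there, $\nabla^{J}_{\mu}$ remains the injective hull of $L_{\mu}$ in every $\fC^{J}_{\leq S}$ with $\rho(\mu)$ maximal in $\rho(S)$. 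The grading shifts are handled by \S\ref{gradedshift}. Thus the theorem reduces to the displayed $\Ext$-vanishing.

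To prove it I would translate everything into relative Lie algebra cohomology. By Theorem~\ref{thm::Ext::pairing} applied to $\fa=\cP_{J}$, and using $\cP_{J}=\fl_{J}\ltimes\fr_{J}$ with $\fl_{J}$ reductive (so that the Chevalley–Eilenberg complex is $\Hom_{\fl_{J}}(\Lambda^{\udot}\fr_{J},V)$),
\[
\Ext^{\udot}_{\cD(\fC^{J})}\bigl(\overline{\Delta}^{J}_{\lambda},\nabla^{J}_{\mu}\bigr)\;\cong\;H^{\udot}(\fr_{J};V)^{\fl_{J}},\qquad V:=\Hom(\overline{\Delta}^{J}_{\lambda},\nabla^{J}_{\mu}).
\]
Applying the same Theorem to $\fa=\cI=\fh\ltimes\fr_{\cI}$ gives $\Ext^{\udot}_{\cD(\fC)}(\Res_{J}\overline{\Delta}^{J}_{\lambda},\Res_{J}\nabla^{J}_{\mu})\cong H^{\udot}(\fr_{\cI};V)^{\fh}$ (with $V$ regarded as an $\cI$-module by restriction), which vanishes for $\lambda\neq\mu$ by Corollary~\ref{Ext::Vanishing::Iwahori}. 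So the task is: deduce $H^{\udot}(\fr_{J};V)^{\fl_{J}}=0$ from $H^{\udot}(\fr_{\cI};V)^{\fh}=0$. For this I would use that $\fr_{\cI}=\fn_{J}\ltimes\fr_{J}$, where $\fn_{J}:=\bigoplus_{\alpha\in\Phi_{J+}}\fg_{\alpha}$ is the nilradical of the Borel of $\fl_{J}$ and $\fr_{J}$ is an ideal of $\fr_{\cI}$; the associated Hochschild–Serre spectral sequence $E_{2}^{p,q}=H^{p}(\fn_{J};H^{q}(\fr_{J};V))\Rightarrow H^{p+q}(\fr_{\cI};V)$ is $\fh$-equivariant, so taking $\fh$-invariants (exact, $\fh$ reductive) gives a spectral sequence with zero abutment. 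For an integrable $\fl_{J}$-module the $\fl_{J}$-invariants are exactly the weight-zero part of the $\fn_{J}$-invariants, so $H^{q}(\fr_{J};V)^{\fl_{J}}$ is precisely the weight-zero part $(E_{2}^{0,q})_{0}$ of the bottom-edge term.

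The hard part will be the last step: passing from vanishing of the abutment to $(E_{2}^{0,q})_{0}=0$, equivalently showing that the higher differentials $d_{r}\colon E_{r}^{0,q}\to E_{r}^{r,q-r+1}$ vanish on the weight-zero line, so that $(E_{\infty}^{0,q})_{0}=(E_{2}^{0,q})_{0}$. I expect this to follow from a weight/degeneration argument: $\fn_{J}$ has cohomological dimension $\ell(w_0^{J})$, so only finitely many $d_{r}$ are relevant, and the $\fh$-weights occurring in $V=(\overline{\Delta}^{J}_{\lambda})^{\vee}\otimes\nabla^{J}_{\mu}$ — constrained by $D^{J}_{\lambda}\in\fC^{J\vee}_{\preceq\lambda}$ and $\nabla^{J}_{\mu}\in\fC^{J}_{\preceq\mu}$ (Lemma~\ref{Categories}) — together with the weights supplied by $\Lambda^{\udot}\fr_{J}$ and by $H^{>0}(\fn_{J};-)$ cannot combine to a weight-zero class in the targets $E_{r}^{r,q-r+1}$. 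As a parallel route avoiding the spectral sequence, one can use the derived adjunction $\RHom_{\fC^{J}}\bigl(\overline{\Delta}^{J}_{\lambda},\sR\Coind_{J}N\bigr)\cong\RHom_{\fC}\bigl(\overline{\Delta}^{\varnothing}_{\lambda},N\bigr)$ for $N\in\bD_{+}(\fC)$ (valid since $\Res_{J}$ is exact with right adjoint $\Coind_{J}$, and $\Res_{J}\overline{\Delta}^{J}_{\lambda}\cong\overline{\Delta}^{\varnothing}_{\lambda}$), and then identify $\sR\Coind_{J}\nabla^{\varnothing}_{\mu}$ — via Theorem~\ref{StandardCostandardCh}, the isomorphism~\eqref{UIso}, and $\Coind_{J}$-acyclicity of $\nabla^{\varnothing}_{\mu}$ coming from the parabolic BGG resolution — with a complex whose cohomology is filtered by costandard $\cP_{J}$-modules $\nabla^{J}_{\nu}$ with all $\nu\neq\lambda$, concluding by the $J=\varnothing$ case of~\cite{FKhMO}. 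In either approach the single genuinely new input is the compatibility of the $W^{J}$-orbit combinatorics with the cohomology of $\fn_{J}$.
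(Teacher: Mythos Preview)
Your spectral-sequence route has a real gap that you yourself flag: from vanishing of the abutment of the Hochschild--Serre spectral sequence one cannot in general conclude that the weight-zero part of the edge term $(E_2^{0,q})_0$ vanishes, and the sketched ``weight/degeneration argument'' is not a proof. The differentials $d_r\colon E_r^{0,q}\to E_r^{r,q-r+1}$ are $\fh$-equivariant, so they do preserve the weight-zero line, but there is nothing preventing a nonzero class in $(E_2^{0,q})_0$ from being killed by a later differential; that is exactly what happens when the abutment vanishes but $E_2$ does not. Your alternative adjunction route is much closer to what works, but you apply it in the wrong direction: you try to analyze $\sR\Coind_J\nabla^{\varnothing}_{\mu}$, which is hard, whereas the object you actually need to control is $\nabla^J_\mu$.

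The paper's argument is a one-liner using the \emph{left} adjoint. Since $\overline{\Delta}^J_\lambda$ is $\fl_J$-integrable, one has $\LInd_J\circ\Res_J\,\overline{\Delta}^J_\lambda\simeq\overline{\Delta}^J_\lambda$ (this is the content of the cited result from~\cite{KKhM}: induction followed by restriction is the identity on integrable modules, and this persists derivedly). Then the derived adjunction $\LInd_J\dashv\Res_J$ gives
\[
\Ext^{\udot}_{\fC^J}\bigl(\overline{\Delta}^J_\lambda,\nabla^J_\mu\bigr)
=\Ext^{\udot}_{\fC^J}\bigl(\LInd_J\Res_J\overline{\Delta}^J_\lambda,\nabla^J_\mu\bigr)
=\Ext^{\udot}_{\fC}\bigl(\Res_J\overline{\Delta}^J_\lambda,\Res_J\nabla^J_\mu\bigr),
\]
and the right-hand side vanishes by Corollary~\ref{Ext::Vanishing::Iwahori}. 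Equivalently (and closer to your alternative), one could use $\sR\Coind_J\circ\Res_J\,\nabla^J_\mu\simeq\nabla^J_\mu$ and the adjunction $\Res_J\dashv\sR\Coind_J$; the point you missed is to feed $N=\Res_J\nabla^J_\mu$ into your displayed isomorphism rather than $N=\nabla^{\varnothing}_\mu$. No spectral sequence, no analysis of $\fn_J$-cohomology, and no computation of $\sR\Coind_J$ on Iwahori costandards is needed.
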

	\begin{proof}
		To show that the category $\fC^{J}$ is stratified for the Cherednik partial order (Definition~\ref{def::Cherednik::ordering}) it is enough to show the vanishing of the derived homomorphisms between the corresponding proper standard modules $\overline{\Delta_{\lambda,k}^{J}}$ and costandard modules $\nabla^{J}_{\mu,k}$.
		Note that the index $k$ does not play a role, since the (proper) (co)standard modules are isomorphic up to a grading shift:
		$$
		\forall \lambda\in P_J^{-} \ \ 
		\overline{\Delta}_{\lambda,0}^J\{k\} = \overline{\Delta}_{\lambda,k}^J, \ \ 
		\nabla_{\lambda,0}^{J}\{k\} = \nabla_{\lambda,k}^{J} \ \ 
		$$
		(the $n$-th graded component of the module $M\{k\}$ is equal to the $(n-k)$-th graded component of  $M$). We omit the second index $k$ if no confusion is possible.
		
		Recall  the  description of the proper standard modules $\overline{\Delta}^J_{\lambda}$ and costandard modules $\nabla^J_{\mu}$ in terms of 
		generators and relations and  their description as $\cI$-modules from Section~\ref{sec::standards}, (see Corollary \ref{StandardRestriction} and Corollary \ref{CostandardRestriction}).
		Below we use the main consequence (Corollary~\ref{Ext::Vanishing::Iwahori}) for the proof of Theorem~\ref{thm::fCJ::stratified}.
		
		For all pairs of different $P_J$-antidominant weights $\lambda$ and $\mu$ we have
		\begin{equation}
			\label{eq::Ext::vanishing}
			\Ext_{\fC^{J}}(\overline{\Delta}^{J}_{\lambda},\nabla^{J}_{\mu}) = 
			\Ext_{\fC^{J}}(\LInd_{J}\circ \Res_J\bar\Delta^{J}_{\lambda}, \nabla^{J}_{\mu}) =
			\Ext_{\fC}(\Res_J\overline{\Delta}^{J}_{\lambda},
			\Res_J\nabla^{J}_{\mu} ) = 0.
		\end{equation}
		The first equality follows from the fact that $\overline{\Delta}^{J}_{\lambda}$ is $\cP_J$-integrable module and the composition of induction and restriction functors on $\cP_J$-integrable modules coincides with the identity functor (see~\cite{KKhM}). 
		The second equality follows from the adjunction of restriction and induction functors. Finally, the vanishing of the derived homomorphisms in the category $\fC^J$ follows from Corollary~\ref{Ext::Vanishing::Iwahori}.
	\end{proof}

	Recall that thanks to Lemma~\ref{HWA} and Theorem \ref{StandardCostandardCh} 
	we know that the endomorphism algebras of  the standard module $\Delta_\la^{J}$ and of 
	(the linear dual to) the costandard module $\nabla_{\lambda}$ 
	are isomorphic to a polynomial algebra  by~$\mathcal{A}_{\la}^{D}$.
	Following the notation of \cite{FKhMO} we denote the character of $\mathcal{A}_{\la}^{D}$ by $a_\lambda(q)$.
	\begin{cor}
		\label{(co)stch}
		The characters of (proper) (co)standard modules in $\fC^J$ with respect to the Cherednik order are given by
		\[
		[\overline{\Delta}_\la^{J}] = E_{\lambda}^{J}(X;q,0), \qquad 
		[\overline{\nabla}_\la^{J}] = 
		E_{\lambda}^{J}(X;q,\infty).
		\]
		\[
		[{\Delta}_\la^{J}] ={a_{\lambda}(q)}{E_{\lambda}^{J}(X;q,0)}, \qquad 
		[{\nabla}_\la^{J}] = 
		{a_\lambda(q^{-1})}{E_{\lambda}^{J}(X;q,\infty)}.\]
	\end{cor}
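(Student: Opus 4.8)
The plan is to read the statement off from three already-established ingredients: the explicit realization of the (proper) standard and costandard modules in Theorem~\ref{StandardCostandardCh}, the character identities of Theorem~\ref{thm::intro::characters} (equivalently Corollary~\ref{cor::DU::characters}), and the highest weight algebra description of Lemma~\ref{HWA}. Beyond that, the only thing to do is bookkeeping with the graded dual $(-)^{\vee}$ and the involution $\star\colon X\mapsto X^{-1}$, $q\mapsto q^{-1}$, using that $[M^{\vee}]=[M]^{\star}$.

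For the proper modules I would argue as follows. By Theorem~\ref{StandardCostandardCh} one has $\overline{\Delta}_\lambda^J\simeq D_\lambda^J$ and $\overline{\nabla}_\lambda^J\simeq (U_{-w_0^J\lambda}^J)^{\vee}$, where $-w_0^J\lambda\in P_J^-$ whenever $\lambda\in P_J^-$. Theorem~\ref{thm::intro::characters} gives $[D_\lambda^J]=E_\lambda^J(X;q,0)$ and $[U_{-w_0^J\lambda}^J]=E_\lambda^J(X^{-1},q^{-1},\infty)$, the latter well defined by Proposition~\ref{prp::specialization}. Hence $[\overline{\Delta}_\lambda^J]=E_\lambda^J(X;q,0)$ directly, while
\[
[\overline{\nabla}_\lambda^J]=[(U_{-w_0^J\lambda}^J)^{\vee}]=[U_{-w_0^J\lambda}^J]^{\star}=E_\lambda^J(X;q,\infty),
\]
which is the first line of the Corollary. (If one prefers not to invoke Theorem~\ref{thm::intro::characters} here, the identity $[D_\lambda^J]=E_\lambda^J(X;q,0)$ can instead be deduced from the Cherednik‑triangularity of the proper standard characters together with the orthogonality supplied by Proposition~\ref{prop::pairing::Ext} and the stratification in Theorem~\ref{thm::fCJ::stratified}.)

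For the non-proper modules, Theorem~\ref{StandardCostandardCh} gives $\Delta_\lambda^J\simeq \mathbb D_\lambda^J$ and $\nabla_\lambda^J\simeq (\mathbb U_{-w_0^J\lambda}^J)^{\vee}$. By Lemma~\ref{HWA} both ${\rm End}_{\cP_J}(\mathbb D_\lambda^J)$ and ${\rm End}_{\cP_J}(\mathbb U_{-w_0^J\lambda}^J)$ are the polynomial algebra $\mathcal A_\lambda^D$ with graded character $a_\lambda(q)$; the extra input I need is that $\mathbb D_\lambda^J$ and $\mathbb U_{-w_0^J\lambda}^J$ are \emph{free} over $\mathcal A_\lambda^D$, with augmentation quotients $D_\lambda^J$ and $U_{-w_0^J\lambda}^J$ respectively. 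I would obtain this by transporting the corresponding statement for the Iwahori modules from~\cite{FKhMO} along the restriction results of Section~\ref{sec::standards}, namely $\Res_J\mathbb D_\lambda^J\simeq \mathbb D_\lambda^{\varnothing}$ and the $\mathbb U$-filtration of $\Res_J\mathbb U_{-w_0^J\lambda}^J$ (Corollary~\ref{StandardRestriction}, Proposition~\ref{URestriction}), combined with the endomorphism-algebra comparison of Lemma~\ref{EndRestriction}. Freeness makes the characters multiply, giving
\[
[\Delta_\lambda^J]=[\mathbb D_\lambda^J]=a_\lambda(q)\,E_\lambda^J(X;q,0)\qquad\text{and}\qquad [\mathbb U_{-w_0^J\lambda}^J]=a_\lambda(q)\,E_\lambda^J(X^{-1},q^{-1},\infty);
\]
applying $\star$ to the second identity to pass to $\nabla_\lambda^J=(\mathbb U_{-w_0^J\lambda}^J)^{\vee}$ replaces $a_\lambda(q)$ by $a_\lambda(q^{-1})$ and yields $[\nabla_\lambda^J]=a_\lambda(q^{-1})\,E_\lambda^J(X;q,\infty)$, as claimed.

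I expect the main obstacle to be organizational rather than conceptual: one must keep track precisely of how $(-)^{\vee}$ acts on characters (it is exactly $\star$, which is what switches $a_\lambda(q)$ to $a_\lambda(q^{-1})$ on the costandard side) and of the relabelling $\lambda\leftrightarrow -w_0^J\lambda$ so that it matches the indexing by $P_J^-$. The one genuinely non-formal point is the freeness of the global modules $\mathbb D_\lambda^J$, $\mathbb U_{-w_0^J\lambda}^J$ over $\mathcal A_\lambda^D$ together with the identification of their augmentation quotients with the local modules; every other step is immediate from results already in hand.
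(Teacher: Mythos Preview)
Your primary route is circular. You invoke Theorem~\ref{thm::intro::characters}, which in the body of the paper is nothing but the pair (Proposition~\ref{prp::specialization}, Corollary~\ref{cor::DU::characters}); and Corollary~\ref{cor::DU::characters} is placed \emph{after} Corollary~\ref{(co)stch} and is deduced from it together with Theorem~\ref{StandardCostandardCh}. There is no independent computation of $[D_\lambda^J]$ or $[U_{-w_0^J\lambda}^J]$ earlier in the paper that you could cite. So the sentence ``Theorem~\ref{thm::intro::characters} gives $[D_\lambda^J]=E_\lambda^J(X;q,0)$ \ldots'' is precisely assuming what you are asked to prove.

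Your parenthetical alternative is in fact the paper's actual argument, and you should promote it to the main line. The paper uses the stratification of $\fC^J$ (Theorem~\ref{thm::fCJ::stratified}) together with Proposition~\ref{prop::pairing::Ext} to obtain $\langle[\overline{\Delta}_\lambda^J],[\overline{\nabla}_\mu^J]\rangle^J_{t=0}=0$ for $\lambda\neq\mu$ (after noting that $[\nabla_\mu^J]$ and $[\overline{\nabla}_\mu^J]$ differ only by a factor in $q$). Then the upper-triangularity $[\overline{\Delta}_\lambda^J]=m_\lambda^J+\sum_{\mu\prec\lambda}c_\lambda^\mu m_\mu^J$ and $[\overline{\nabla}_\lambda^J]=m_\lambda^J+\sum_{\mu\prec\lambda}d_\lambda^\mu m_\mu^J$, which comes from Lemma~\ref{Categories} and the multiplicity-one condition, forces both families to coincide with the Gram--Schmidt orthogonalization of the monomial basis, hence with $E_\lambda^J(X;q,0)$ and $E_\lambda^J(X;q,\infty)$ by Definition~\ref{Parasymmetric}. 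Your treatment of the non-proper case via freeness over $\mathcal A_\lambda^D$ is fine and in fact spells out a step the paper leaves implicit.
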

	\begin{proof}
		Since category $\fC^{J}$ is stratified the characters of proper standard and costandard modules form an orthogonal basis for the Ext-pairing. Thanks to Proposition~\ref{prop::pairing::Ext} we know that this pairing coincides with the pairing $\langle  \cdot,\cdot\rangle^J_{t=0}$ ~\eqref{pairing::J}.
		Moreover, we know that the character of the proper costandard module differs from the character of the costandard module by a rational function in $q$,
		because their restrictions to $\cI$ are generalized Weyl modules with characteristic {(see \cite{FMO1})}.
		Hence the characters of proper standard and proper costandard modules are orthogonal with respect to the pairing~\eqref{pairing::J}:
		\begin{equation}
			\forall \lambda\neq \mu\in P_{J}^{-} \quad
			\langle[\overline{\Delta}_{\lambda}^{J}],[\overline{\nabla}_{\mu}^{J}]\rangle_{t=0} = 0.
		\end{equation}
		Moreover, from the definition of the proper standard module $\overline{\Delta}_{\lambda}^{J}$ we know that it belongs to $\fC^{J}_{\preceq\lambda}$ and the multiplicity of $L_{\lambda}$ in $\overline{\Delta}_{\lambda}^{J}$ is equal to $1$. We conclude that the characters of the proper standard module belong to 
		$\bZ(q)[P]^{W^J}$ and satisfy the following properties:
		$$[\overline{\Delta}_{\lambda}^{J}] = m_{\lambda}^{J} + \sum_{\mu\prec \lambda} c_{\lambda}^{\mu} m_{\mu}^{J}$$
		($m_{\lambda}^{J}$ is the monomial basis in $\bZ(q)[P]^{W^J}$).
		Similarly, the character of the proper costandard module
		also belongs to the same ring $\bZ(q)[P]^{W^J}$ and satisfies the same upper-triangular decomposition:
		$$[\overline{\nabla}_{\lambda}^{J}] = m_{\lambda}^{J} + \sum_{\mu\prec \lambda} d_{\lambda}^{\mu} m_{\mu}^{J}.$$
		The orthogonality implies the orthogonality with all monomials $m_{\mu}^{J}$ with $\mu\prec \lambda$
		$$
		\langle [\overline{\Delta}_{\lambda}^{J}], m_{\mu}^{J}\rangle_{t=0} = 
		\langle m_{\mu}^{J}, [\overline{\nabla}_{\lambda}^{J}] \rangle_{t=0}=0.
		$$
		Thus, the characters of proper (co)standard modules coincide with the result of the Gram-Shmidt orthogonalization process of the monomial basis for the Cherednik partial order. 
		Therefore they coincide with the parasymmetric Macdonald polynomials $E_{\lambda}^{J}$ after appropriate substitution $t=0$ or $t=\infty$ respectively (see Definition~\ref{Parasymmetric}).
	\end{proof}
	
	\begin{cor}
		\label{cor::DU::characters}
		One has the character formulas:
		\begin{gather*}
			[D_\lambda]= E_{\lambda}^{J}(X,q,0), \quad [U_{-w_0^J\lambda}]=E_{\lambda}^{J}(X^{-1},q^{-1},\infty),\\
			[\mathbb D_\lambda]= a_\la(q)E_{\lambda}^{J}(X,q,0), \quad [\mathbb U_{-w_0^J\lambda}]=a_\la(q) E_{\lambda}^{J}(X^{-1},q^{-1},\infty).
		\end{gather*}  
	\end{cor}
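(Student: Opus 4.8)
The plan is to obtain all four formulas by combining the module identifications of Theorem~\ref{StandardCostandardCh} with the character computation of Corollary~\ref{(co)stch}; the only point that needs a moment's care is the bookkeeping of the graded dual on the costandard side.

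First I would recall from Theorem~\ref{StandardCostandardCh} the isomorphisms of $\cP_J$-modules $\mathbb D_\la^J\simeq\Delta_\la^J$ and $D_\la^J\simeq\overline{\Delta}_\la^J$ for the Cherednik order. Applying Corollary~\ref{(co)stch} then gives at once $[D_\la^J]=[\overline{\Delta}_\la^J]=E_\la^J(X,q,0)$ and $[\mathbb D_\la^J]=[\Delta_\la^J]=a_\la(q)E_\la^J(X,q,0)$, which are the two identities on the $D$-side.

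For the $U$-side I would use the remaining isomorphisms of Theorem~\ref{StandardCostandardCh}, namely $(\mathbb U_{-w_0^J\la}^J)^\vee\simeq\nabla_\la^J$ and $(U_{-w_0^J\la}^J)^\vee\simeq\overline{\nabla}_\la^J$ (here $-w_0^J\la\in P_J^-$ whenever $\la\in P_J^-$). Since all the modules involved have finite-dimensional weight spaces, the graded dual is an involution, and on characters it acts precisely by the substitution $\star\colon f(X,q)\mapsto f(X^{-1},q^{-1})$ (the graded dual of an irreducible $L_{\nu,k}$ is $L_{-w_0^J\nu,-k}$, whose character is $[L_\nu](X^{-1})q^{-k}$). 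Hence $[\mathbb U_{-w_0^J\la}^J]=[\nabla_\la^J]^\star$ and $[U_{-w_0^J\la}^J]=[\overline{\nabla}_\la^J]^\star$. Plugging in Corollary~\ref{(co)stch} and using $a_\la(q^{-1})^\star=a_\la(q)$ yields $[\mathbb U_{-w_0^J\la}^J]=\bigl(a_\la(q^{-1})E_\la^J(X,q,\infty)\bigr)^\star=a_\la(q)E_\la^J(X^{-1},q^{-1},\infty)$ and $[U_{-w_0^J\la}^J]=E_\la^J(X,q,\infty)^\star=E_\la^J(X^{-1},q^{-1},\infty)$, completing the proof.

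Since the genuinely substantial work---the explicit presentations of the (proper) (co)standard modules, the Gram--Schmidt/orthogonality identification of their characters with the specialized parasymmetric Macdonald polynomials, and the computation of the highest-weight-algebra character $a_\la(q)$---is already in place, there is no real obstacle here. The only subtlety worth flagging is that one must specialize $t$ to $0$ or $\infty$ before dualizing, so that the $\star$-involution applied to $[\nabla_\la^J]$ and $[\overline{\nabla}_\la^J]$ acts only on $X$ and $q$ and does not reintroduce the variable $t$; this is legitimate precisely because of Proposition~\ref{prp::specialization}, which guarantees the $t=0,\infty$ specializations are well defined.
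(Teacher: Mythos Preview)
Your proposal is correct and follows exactly the route the paper intends: the corollary is stated without proof, as an immediate consequence of Theorem~\ref{StandardCostandardCh} and Corollary~\ref{(co)stch}, and you have simply spelled out the bookkeeping (including the effect of the graded dual on characters via $\star$) that the reader is expected to supply.
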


	\begin{cor}
		The polynomials $E_\la^J(X,q,0)$ and $E_\la^J(X,q^{-1},\infty)$    can be written as sums of characters of Levi subalgebra $\fl_J\subset \fp_J$  with coefficients in $\bZ_{\ge 0}[[q]]$ (the generalized Schur positivity).
	\end{cor}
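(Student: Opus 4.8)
The plan is to read off both statements directly from the identification of the specialized parasymmetric Macdonald polynomials with characters of honest $\cP_J$-modules, together with the elementary structure of such characters. By Corollary~\ref{(co)stch} (equivalently Corollary~\ref{cor::DU::characters}) we already know $E_\lambda^J(X,q,0) = [\overline{\Delta}_\lambda^J] = [D_\lambda^J]$ and $E_\lambda^J(X,q,\infty) = [\overline{\nabla}_\lambda^J]$. Every object $M$ of ${\fC^J}^\vee$ (or of $\fC^J$) has, by formula~\eqref{eq::character}, a character of the shape $[M] = \sum_{\nu\in P_J^-}\big(\sum_k [M:L_{\nu,k}]\,q^k\big)[L_\nu]$ with all multiplicities $[M:L_{\nu,k}]$ in $\bZ_{\ge 0}$, where $[L_\nu]\in\bZ[P]^{W^J}$ is precisely the character of the irreducible $\fl_J$-module $L_\nu$, i.e. a ``character of the Levi subalgebra $\fl_J$''. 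So the only thing left to check is that the relevant modules are concentrated in one sign of the grading, which turns the $q$-coefficient $\sum_k [M:L_{\nu,k}]q^k$ into an element of $\bZ_{\ge0}[[q]]$ after the appropriate normalization of $q$.

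For the $t=0$ case I would argue as follows. The proper standard module $\overline{\Delta}_\lambda^J\simeq D_\lambda^J$ is cyclic, generated in internal degree $0$, and $\cP_J = \fp_J\oplus z\fg[z]$ does not lower the degree; hence $D_\lambda^J$ is supported in degrees $\ge 0$, and $E_\lambda^J(X,q,0) = [D_\lambda^J] = \sum_{\nu\in P_J^-} m_\nu(q)[L_\nu]$ with $m_\nu(q) = \sum_{k\ge0}[D_\lambda^J:L_{\nu,k}]q^k \in \bZ_{\ge0}[[q]]$, which is exactly the asserted generalized Schur positivity. For the $t=\infty$ case, by Theorem~\ref{StandardCostandardCh} $\overline{\nabla}_\lambda^J\simeq (U_{-w_0^J\lambda}^J)^\vee$ is the graded dual of the module $U_{-w_0^J\lambda}^J$, which (as above) lives in degrees $\ge 0$; therefore $\overline{\nabla}_\lambda^J$ lives in degrees $\le 0$, so $E_\lambda^J(X,q,\infty) = [\overline{\nabla}_\lambda^J] = \sum_{\nu\in P_J^-} n_\nu(q^{-1})[L_\nu]$ with each $n_\nu \in \bZ_{\ge0}[[q^{-1}]]$. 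Since the substitution $q\mapsto q^{-1}$ leaves the Levi characters $[L_\nu]$ untouched, $E_\lambda^J(X,q^{-1},\infty) = \sum_{\nu\in P_J^-} n_\nu(q)[L_\nu]$ is again a $\bZ_{\ge0}[[q]]$-linear combination of characters of $\fl_J$, as required.

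I do not expect any genuine obstacle: the corollary is a formal consequence of the already-proven character identifications plus the very definition of the categories $\fC^J$ and ${\fC^J}^\vee$. The only points needing a line of care are bookkeeping ones --- that the variable change $q\mapsto q^{-1}$ inherent in passing from $E_\lambda^J(X,q,\infty)$ to $E_\lambda^J(X,q^{-1},\infty)$ is exactly what converts the downward-graded costandard character into a power series in $q$ with nonnegative coefficients, and that the $[L_\nu]$ occurring are literally the ``characters of the Levi subalgebra'' named in the statement. If one prefers to work through Corollary~\ref{cor::DU::characters} using $D_\lambda$ and $U_{-w_0^J\lambda}$ in place of $\overline{\Delta}_\lambda^J$ and $\overline{\nabla}_\lambda^J$, then for the $t=\infty$ part one additionally observes that $X\mapsto X^{-1}$ permutes the set $\{[L_\nu]:\nu\in P_J^-\}$ via $\nu\mapsto -w_0^J\nu$, since the dual of an irreducible $\fl_J$-module of lowest weight $\nu$ has lowest weight $-w_0^J\nu\in P_J^-$; this route is equally short.
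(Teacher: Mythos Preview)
Your proposal is correct and matches the paper's intent: the corollary is stated in the paper without proof, as an immediate consequence of the character identifications in Corollary~\ref{(co)stch}/Corollary~\ref{cor::DU::characters} together with the shape~\eqref{eq::character} of characters of objects in $\fC^J$ and ${\fC^J}^\vee$. Your write-up simply makes explicit the one-line argument the paper leaves implicit, including the grading bookkeeping for the $q\mapsto q^{-1}$ substitution in the $t=\infty$ case.
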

	
	\begin{cor}
		The parasymmetric Macdonald polynomials satisfy the relations
		\[
		E_{\lambda}^{J}(X,q,0) = E_{\lambda}(X,q,0),\qquad 
		E_{\lambda}^{J}(X,q^{-1},\infty) = \sum_{w\in W^J} \frac{a_{w\lambda}(q)}{a_\la(q)}E_{w\lambda}(X,q^{-1},\infty).
		\]   
	\end{cor}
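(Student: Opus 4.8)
The plan is to read off both identities from the character formulas for the standard and costandard modules together with the restriction results of Section~\ref{sec::standards}. Throughout one uses the elementary observation that the graded character of any $M\in\fC^J$ or $M\in{\fC^J}^\vee$, computed by~\eqref{eq::character} from its $\fl_J$-isotypic decomposition, refines to the $\fh$-weight decomposition and therefore agrees with the $\cI$-character of the restriction: $[M]=[\Res_J M]$ in $\bZ[q,q^{-1}][P]$.

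For the first equality, by Corollary~\ref{cor::DU::characters} we have $E_\lambda^J(X,q,0)=[D_\lambda^J]$, and by Corollary~\ref{StandardRestriction} we have $\Res_J D_\lambda^J\simeq D_\lambda^\varnothing$, whence $[D_\lambda^J]=[D_\lambda^\varnothing]$. Applying Corollary~\ref{cor::DU::characters} again with $J=\varnothing$ identifies $[D_\lambda^\varnothing]=E_\lambda^\varnothing(X,q,0)$, and $E_\lambda^\varnothing=E_\lambda$ because for $J=\varnothing$ one has $m_\lambda^\varnothing=X^\lambda$, $\mu_0^\varnothing=\mu_0$ and $\langle\cdot,\cdot\rangle^\varnothing=\langle\cdot,\cdot\rangle$, so that Definition~\ref{Parasymmetric} reduces to the defining characterization of the nonsymmetric Macdonald polynomial. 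Chaining these equalities gives $E_\lambda^J(X,q,0)=E_\lambda(X,q,0)$.

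For the second equality, start from Corollary~\ref{(co)stch} (whose $a_\lambda$ is, by Lemma~\ref{HWA} and Theorem~\ref{StandardCostandardCh}, the character of the polynomial algebra $\mathcal{A}_\lambda^D$), which gives $[\nabla_\lambda^J]=a_\lambda(q^{-1})\,E_\lambda^J(X;q,\infty)$. The restriction $\Res_J\nabla_\lambda^J$ carries an $\cI$-module filtration whose associated graded is $\bigoplus_{\mu}\nabla_\mu^\varnothing$, with $\mu$ running over the orbit $W^J\lambda$ (this is the dual form of Proposition~\ref{URestriction}, cf.\ Corollary~\ref{CostandardRestriction}). Taking characters, using $[\nabla_\lambda^J]=[\Res_J\nabla_\lambda^J]$ and Corollary~\ref{(co)stch} for $J=\varnothing$ (so $[\nabla_\mu^\varnothing]=a_\mu(q^{-1})\,E_\mu(X;q,\infty)$), one obtains
\[
a_\lambda(q^{-1})\,E_\lambda^J(X;q,\infty)=\sum_{\mu\in W^J\lambda} a_\mu(q^{-1})\,E_\mu(X;q,\infty).
\]
Substituting $q\mapsto q^{-1}$, dividing by the invertible series $a_\lambda(q)$, and writing the orbit sum as a sum over $w\in W^J$ (i.e.\ over $W^J/\mathrm{Stab}_{W^J}(\lambda)$) then yields $E_\lambda^J(X,q^{-1},\infty)=\sum_{w\in W^J}\frac{a_{w\lambda}(q)}{a_\lambda(q)}\,E_{w\lambda}(X,q^{-1},\infty)$.

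The only genuinely delicate point is the indexing in the second part: one must be sure that the subquotients of the filtration of $\Res_J\nabla_\lambda^J$ are labelled by the orbit $W^J\lambda$ (equivalently by $W^J/\mathrm{Stab}_{W^J}(\lambda)$), which is exactly how the filtration is constructed in Proposition~\ref{URestriction}; matching this enumeration with the $\sum_{w\in W^J}$ in the statement is then a formality. A secondary bookkeeping issue is the consistent handling of the $q\mapsto q^{-1}$ and $X\mapsto X^{-1}$ twists — Corollary~\ref{cor::DU::characters} is phrased for $\mathbb U^J_{-w_0^J\lambda}$ evaluated at $(X^{-1},q^{-1})$, so passing to the graded dual $\nabla_\lambda^J$, as above, is the cleanest way to avoid sign errors. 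Everything else is just assembling the already-established character identities.
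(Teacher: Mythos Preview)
Your argument is correct and follows the same route as the paper's one-line proof, which simply cites Corollary~\ref{cor::DU::characters}, Corollary~\ref{StandardRestriction}, and Proposition~\ref{URestriction}. The only cosmetic difference is that for the second identity you pass to the costandard side $\nabla_\lambda^J=(\mathbb U_{-w_0^J\lambda}^J)^\vee$ and then undo the $q\mapsto q^{-1}$ twist, whereas one can equally well work directly with $[\mathbb U_{-w_0^J\lambda}^J]=a_\lambda(q)\,E_\lambda^J(X^{-1},q^{-1},\infty)$ and Proposition~\ref{URestriction} applied to $\mathbb U_{-w_0^J\lambda}^J$ (whose subquotients $\mathbb U_{-\mu}^\varnothing$, $\mu\in W^J\lambda$, have characters $a_\mu(q)\,E_\mu(X^{-1},q^{-1},\infty)$); both computations yield the same identity after the obvious substitution. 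Your observation that the indexing set is really the orbit $W^J\lambda\cong W^J/\mathrm{Stab}_{W^J}(\lambda)$ rather than all of $W^J$ is accurate and matches how Proposition~\ref{URestriction} is stated.
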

	\begin{proof}
		It follows from Corollary \ref{cor::DU::characters}, Corollary \ref{StandardRestriction} and  Proposition \ref{URestriction}.
	\end{proof}

	\begin{thm}\label{catarestrat}
		The $P_J^{-}$-ordered categories $\fC^{J}$ and ${\fC^{J}}^{\vee}$ are stratified for each of the two orders on $P_J^{-}$: the Cherednik order $\prec$ and the dual Cherednik order $\prec^{\vee}$.
	\end{thm}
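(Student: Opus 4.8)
The plan is to deduce the four assertions — for $\fC^J$ and ${\fC^J}^\vee$, each equipped with the Cherednik order $\prec$ or its dual $\prec^\vee$ — from the single case $(\fC^J,\prec)$ already settled in Theorem~\ref{thm::fCJ::stratified}, using the explicit models of the (proper) (co)standard modules from Section~\ref{sec::standards} together with graded duality. In each case the structural hypotheses of Definition~\ref{dfn::stratified} (enough injectives resp.\ projectives, existence of proper (co)standard modules, and the (co)standard modules remaining (co)universal in the bounded subcategories $\fC^J_{\preceq S}$) are checked exactly as in Sections~\ref{GeneralFramework} and~\ref{sec::standards}, so the point is to verify the vanishing condition $(s3)$, respectively $(s3^\vee)$. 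These conditions only involve the \emph{proper} (co)standard modules, which by Theorems~\ref{StandardCostandardCh} and~\ref{StandardCostandardChDual} are the finite-dimensional modules $D^J_\bullet$, $U^J_\bullet$ and their graded duals; in particular they belong to both $\fC^J$ and ${\fC^J}^\vee$ and are interchanged by graded duality.

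For $(\fC^J,\prec^\vee)$ I would repeat the proof of Theorem~\ref{thm::fCJ::stratified} with the dual-order data of Theorem~\ref{StandardCostandardChDual}, i.e.\ $\overline\Delta^J_\lambda\simeq U^J_\lambda$ and $\nabla^J_\mu\simeq(\mathbb D^J_{-w_0^J\mu})^\vee$. Since $U^J_\lambda$ is cyclic, hence $\cP_J$-integrable, the functor $\LInd_J\circ\Res_J$ fixes it (\cite{KKhM}), and adjunction identifies $\Ext_{\fC^J}(\overline\Delta^J_\lambda,\nabla^J_\mu)$ with $\Ext_{\cI}(\Res_J U^J_\lambda,\Res_J(\mathbb D^J_{-w_0^J\mu})^\vee)$. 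Here $\Res_J(\mathbb D^J_{-w_0^J\mu})^\vee\simeq(\mathbb D^\varnothing_{-w_0^J\mu})^\vee$ by Corollary~\ref{StandardRestriction}, an Iwahori $\prec^\vee$-costandard module, while by (the local form of) Proposition~\ref{URestriction} $\Res_J U^J_\lambda$ carries a filtration whose subquotients are the Iwahori proper standard modules $U^\varnothing_{\tau\lambda}$ with $\tau\lambda$ running over the orbit $W^J\lambda$. A long-exact-sequence argument then reduces the claim for $\lambda\neq\mu$ to $\Ext_{\cI}(U^\varnothing_{\tau\lambda},(\mathbb D^\varnothing_{-w_0^J\mu})^\vee)=0$ for $\tau\lambda\neq w_0^J\mu$; and since distinct $J$-antidominant weights lie in disjoint $W^J$-orbits, the hypothesis $\lambda\neq\mu$ already forces $\tau\lambda\neq w_0^J\mu$. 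This last vanishing is the $\prec^\vee$-analogue of Corollary~\ref{Ext::Vanishing::Iwahori}, whose $J=\varnothing$ case is the Iwahori-level stratification for the dual order established in~\cite{FKhMO}.

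For the two categories ${\fC^J}^\vee$ I would invoke the graded-dual functor $M\mapsto M^\vee$, an exact anti-equivalence $\fC^J\leftrightarrow{\fC^J}^\vee$ sending $L_{\lambda,k}$ to $L_{-w_0^J\lambda,-k}$ and, by Theorems~\ref{StandardCostandardCh} and~\ref{StandardCostandardChDual}, interchanging (proper) standard and (proper) costandard objects: it carries the data of $(\fC^J,\prec)$ to that of $({\fC^J}^\vee,\prec^\vee)$ and the data of $(\fC^J,\prec^\vee)$ to that of $({\fC^J}^\vee,\prec)$, once one knows $\lambda\preceq\mu\Leftrightarrow -w_0^J\lambda\preceq^\vee -w_0^J\mu$. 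I would check this last equivalence straight from Definition~\ref{def::Cherednik::ordering}, using that $-w_0^J$ preserves the dominance order and that conjugation by $-w_0^J$ reverses the Bruhat comparison of $\sigma_\lambda$ and $\sigma_\mu$ inside a $W$-orbit. Under this dictionary the Serre subcategories $\fC^J_{\preceq\lambda}$ and ${\fC^J}^\vee_{\preceq^\vee(-w_0^J\lambda)}$ correspond, so condition $(s3)$ for $\fC^J$ becomes condition $(s3^\vee)$ for ${\fC^J}^\vee$, and the two $\fC^J$ results established above yield the two ${\fC^J}^\vee$ results.

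The main obstacle is the dual-order step: everything for $\prec^\vee$ ultimately rests on the $\prec^\vee$-version of Proposition~\ref{URestriction} and on the $J=\varnothing$ Iwahori stratification for $\prec^\vee$, whose proofs are governed by the \emph{dual} Cherednik/Bruhat combinatorics of the quantum alcove model; concretely, showing that the $\cI$-module filtration of $\Res_J U^J_\lambda$ is indexed precisely by the orbit $W^J\lambda$ with subquotients $U^\varnothing_{\tau\lambda}$ is the place where the standard-order arguments do not transfer verbatim. A secondary, purely bookkeeping difficulty is pinning down the order-matching $\lambda\preceq\mu\Leftrightarrow -w_0^J\lambda\preceq^\vee -w_0^J\mu$ precisely enough that graded duality carries $(s3)$ to $(s3^\vee)$ with the correct weight labels.
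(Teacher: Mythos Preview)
Your reduction to $(\fC^J,\prec)$ and your use of graded duality to pass between $\fC^J$ and ${\fC^J}^\vee$ match the paper. The genuine divergence is in the argument for $(\fC^J,\prec^\vee)$.

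The paper does \emph{not} repeat the restriction-to-Iwahori argument. Instead it bootstraps from the already-proved $\prec$-case as follows: Theorems~\ref{StandardCostandardCh} and~\ref{StandardCostandardChDual} say that the $\prec^\vee$-proper-(co)standard modules are, up to relabelling by $-w_0^J$, the same modules $U^J_\bullet$, $D^J_\bullet$ whose characters were computed in Corollary~\ref{(co)stch} to be specialized parasymmetric Macdonald polynomials. Their orthogonality with respect to $\langle\cdot,\cdot\rangle^J_{t=0}$ therefore gives vanishing of the \emph{Euler characteristic} of $\Ext$'s (Proposition~\ref{prop::pairing::Ext}), and then Theorem~1.22 of~\cite{FKhMO} upgrades Euler-characteristic vanishing to honest $\Ext$-vanishing. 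This sidesteps precisely the obstacle you flag: there is no need for a local analogue of Proposition~\ref{URestriction}, since Proposition~\ref{URestriction} enters only through the character formula (via Corollary~\ref{cor::DU::characters}), where the global version suffices.

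Your route is in principle workable, but the local filtration of $\Res_J U^J_\lambda$ is not immediate: the global filtration of Proposition~\ref{URestriction} has subquotients $\mathbb U^\varnothing_{\tau\lambda}$ whose highest-weight algebras $\mathcal A^D_{-\tau\lambda}$ vary with $\tau$ (this is visible in the ratios $a_{w\lambda}(q)/a_\lambda(q)$ in the character identity following Corollary~\ref{cor::DU::characters}), so tensoring the filtration down to the local module requires care. The paper's character argument buys you exactly the freedom to avoid this. Conversely, your approach, if completed, would give a direct module-theoretic proof independent of the Macdonald orthogonality and of the Euler-characteristic-to-$\Ext$ principle.

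One small simplification for your duality step: the condition $(s3)$ only requires $\rho(\lambda)\neq\rho(\mu)$, and $-w_0^J$ is a bijection of $P_J^-$, so you do not need the full order-matching $\lambda\preceq\mu\Leftrightarrow -w_0^J\lambda\preceq^\vee -w_0^J\mu$; the identification of (proper) (co)standard objects under duality (Theorems~\ref{StandardCostandardCh},~\ref{StandardCostandardChDual}) already transports the $\Ext$-vanishing.
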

	\begin{proof}
		We already proved that $\fC^{J}$ is stratified for the Cherednik order in Theorem~\ref{thm::fCJ::stratified}. 
		Taking the graded linear dual we see that the dual category ${\fC^{J}}^{\vee}$ is stratified for the dual Cherednik order.
		
		Let us show that the category $\fC^{J}$ for the dual Cherednik order $\prec^{\vee}$ is also stratified.
		For that we use the description of the proper standard and costandard modules from Theorem~\ref{StandardCostandardChDual}. 
		Theorems ~\ref{StandardCostandardCh} and  ~\ref{StandardCostandardChDual} and Corollary \ref{(co)stch} imply that   
		$$
		[\overline{\Delta}_{\lambda}^{J}] = E_{-w_0^{J}\lambda}^{J}(X^{-1};q^{-1},\infty),
		\quad [\overline{\nabla}_{\lambda}^{J}] = E_{-w_0^J\lambda}^J(X^{-1};q^{-1},0).
		$$
		Hence, for $\lambda\neq\mu$ we have
		$$
		\langle[\overline{\Delta}_{\lambda}^{J}], [\overline{\nabla}_{\mu}^{J}]\rangle_{t=0}^J = 
		\langle[\overline{\Delta}_{\lambda}^{J}], [{\nabla_{\mu}^{J}}]\rangle_{t=0}^J = 0.
		$$
		Finally, thanks to Theorem~1.22 from ~\cite{FKhMO} we know that 
		vanishing of the Euler characteristic of Ext's (see pairing~\eqref{eq::Ext::pairing::1} and Proposition~\ref{prop::pairing::Ext}) implies vanishing of all higher derived homomorphism between proper standards and costandards. The latter is sufficient for the category to be stratified.
		
		Finally, since $\fC^{J}$ is stratified with respect to $\prec^{\vee}$
		the remaining pair ${\fC^{J}}^{\vee}$, $\prec$ is also stratified (taking the linearly graded dual modules).
	\end{proof}
	
	It was shown in \cite{FKhMO}, Theorem 2.26 that if the categories $\fC(\fa)$ and $\fC^\vee(\fa)$ are stratified (for a Lie algebra $\fa$ as in Section \ref{sec::parabolic::setup}),
	then under certain assumptions, an analog of the  Peter-Weyl theorem holds, describing the bi-module of functions on the Lie group of $\fa$. 
	Below we prove such a theorem for parahoric groups $\mathbf{P}_J$.
	
	\begin{thm}\label{PWvdKparahoric}
		There exists a canonical increasing filtration $\mathcal{F}_\lambda$, $\lambda \in P_J^-$ of the bimodule of algebraic functions on the simple-connected Parahoric group  such that 
		\[\mathcal{F}_\lambda\left/ \sum_{\mu \prec \lambda}\mathcal{F}_\mu \right.=\nabla_\lambda^J \otimes_{\mathcal{A}^D_\lambda}((\Delta_\lambda^J)^{\vee})^o.\]
	\end{thm}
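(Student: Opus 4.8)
The plan is to deduce the statement from the general Peter--Weyl--van der Kallen theorem for stratified graded Lie algebras, \cite[Theorem~2.26]{FKhMO}, specialized to $\fa=\cP_J$. That theorem provides precisely a filtered description of the bimodule of functions on the group of $\fa$ with associated graded $\bigoplus_\lambda\nabla_\lambda\otimes_{\mathcal A_\lambda}((\Delta_\lambda)^{\vee})^o$, granted that the pair of categories $\bigl(\fC(\fa),\fC^{\vee}(\fa)\bigr)$ is stratified and the relevant highest weight algebras behave well. All the inputs are now available: $\cP_J$ satisfies the assumptions of Subsection~\ref{GeneralFramework} (verified for $\cP_J$ right after Theorem~\ref{thm::Ext::pairing}); the categories $\fC^{J}=\fC(\cP_J)$ and ${\fC^{J}}^{\vee}=\fC^{\vee}(\cP_J)$ are stratified for the Cherednik order $\prec$ by Theorem~\ref{catarestrat}; the (co)standard objects have the explicit cyclic presentations $\Delta_\lambda^{J}\simeq\mathbb D_\lambda^{J}$ and $\nabla_\lambda^{J}\simeq(\mathbb U_{-w_0^J\lambda}^{J})^{\vee}$ of Theorem~\ref{StandardCostandardCh}; and, by Lemma~\ref{HWA}, $\End_{\cP_J}(\mathbb D_\lambda^{J})\simeq\End_{\cP_J}(\mathbb U_{-w_0^J\lambda}^{J})$ is the polynomial highest weight algebra $\mathcal A_\lambda^{D}$, over which $\mathbb D_\lambda^{J}$ and $\mathbb U_{-w_0^J\lambda}^{J}$ (hence $\nabla_\lambda^{J}$ as well) are free, this being inherited from the analogous property of global Weyl modules with characteristics, cf.~\cite{FeMa,FMO1}.

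Concretely, I would first realize the object under study exactly as in the current-group and Iwahori cases~\cite{FKM,FMO1,FKhMO}: the bimodule of algebraic functions on the simply connected parahoric group $\mathbf P_J$ is the restricted graded dual $\U(\cP_J)^{\circ}$, a graded Hopf algebra with finite-dimensional graded components carrying the left and right regular $\cP_J$-actions; with respect to the left action it is an ind-object of $\fC^{J}$ and, with respect to the right action, after the $o$-twist identifying $\cP_J^{\mathrm{op}}$ with $\cP_J$, an ind-object of ${\fC^{J}}^{\vee}$. Each module together with a vector and a functional yields a matrix coefficient, and the span of matrix coefficients of the modules lying in a Serre subcategory is a sub-bimodule. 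I then set
\[
\mathcal F_\lambda:=\bigl(\text{span of matrix coefficients of modules belonging to }\fC^{J}_{\preceq\lambda}\bigr)\subset\bC[\mathbf P_J].
\]
Since for each $\lambda\in P_J^{-}$ there are only finitely many $\mu\preceq\lambda$ and each $\mathcal F_\lambda$ is finite-dimensional in every degree, this is a canonical exhaustive increasing filtration indexed by $(P_J^{-},\preceq)$, and it is canonical because its definition involves no choices.

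The heart of the argument is to identify $\mathcal F_\lambda\big/\sum_{\mu\prec\lambda}\mathcal F_\mu$ with the prescribed bimodule. This quotient consists of the matrix coefficients that are new at the stratum $\lambda$, and in a stratified category such a stratum is controlled by the injective hull $\mathbb I_\lambda$ in $\fC^{J}$ and the projective cover $\mathbb P_\lambda$ in ${\fC^{J}}^{\vee}$: the $\nabla$-filtration of $\mathbb I_\lambda$, the $\Delta$-filtration of $\mathbb P_\lambda$, together with the stratified BGG reciprocity of~\cite{FKhMO}, isolate the top piece, producing on the left the costandard module $\nabla_\lambda^{J}$ and on the right the graded dual $(\Delta_\lambda^{J})^{\vee}$ of the standard module, the two sides being glued over their common endomorphism ring $\End_{\cP_J}(\nabla_\lambda^{J})\simeq\End_{\cP_J}(\Delta_\lambda^{J})\simeq\mathcal A_\lambda^{D}$ (Lemma~\ref{HWA}). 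Commutativity of $\mathcal A_\lambda^{D}$ makes the various $\mathrm{op}$'s vanish, so the right $\mathcal A_\lambda^{D}$-action induced by right translation coincides with the left one; freeness of $\nabla_\lambda^{J}$ and $(\Delta_\lambda^{J})^{\vee}$ over $\mathcal A_\lambda^{D}$ then identifies the glued bimodule with $\nabla_\lambda^{J}\otimes_{\mathcal A^{D}_\lambda}\bigl((\Delta_\lambda^{J})^{\vee}\bigr)^{o}$, as required. The parabolic version, Corollary~\ref{PWvdKparabolic}, follows by running the same argument for the finite-dimensional algebra $\fp_J$, where $\fC_0^J$ and ${\fC_0^J}^{\vee}$ are highest weight categories and $\mathcal A_\lambda^{D}$ degenerates to the ground field.

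The main obstacle is not the homological algebra --- Theorem~\ref{catarestrat} takes care of that --- but checking that the abstract mechanism of~\cite[Theorem~2.26]{FKhMO} genuinely applies to $\mathbf P_J$: one must verify the two-sided finiteness that makes $\bC[\mathbf P_J]$ an honest ind-object of $\fC^{J}\boxtimes{\fC^{J}}^{\vee}$ (so that the matrix-coefficient sub-bimodules and the strata are meaningful and $\mathcal F_\bullet$ is a filtration of bimodules on the nose), the compatibility of the left- and right-module stratifications, and the agreement of the right $\mathcal A_\lambda^{D}$-module structure on $(\Delta_\lambda^{J})^{\vee}$ coming from right translation with the one used to form $\otimes_{\mathcal A_\lambda^{D}}$; the last point again rests on the commutativity and centrality of $\mathcal A_\lambda^{D}=\End_{\cP_J}(\mathbb D_\lambda^{J})$. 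These verifications go through as in~\cite{FKM,FKhMO} with only notational changes, since $\cP_J$ is an honest $\bZ_{\ge 0}$-graded Lie algebra with finite-dimensional graded pieces and the restriction functor $\Res_J$ is compatible with all the constructions involved.
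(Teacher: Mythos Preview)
Your proposal is correct and follows essentially the same route as the paper: invoke \cite[Theorem~2.26]{FKhMO}, which reduces the Peter--Weyl statement to the stratification of $\fC^{J}$ and ${\fC^{J}}^{\vee}$ (Theorem~\ref{catarestrat}) together with the isomorphism of the endomorphism algebras of $\Delta_\lambda^{J}$ and $(\nabla_\lambda^{J})^{\vee}$ (Lemma~\ref{HWA}). The paper's proof is in fact considerably terser than yours---it simply cites the general theorem and points to Lemma~\ref{HWA} for the one remaining hypothesis---so your expanded account of the matrix-coefficient filtration and the role of freeness over $\mathcal A_\lambda^{D}$ is supplementary detail rather than a different argument.
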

	\begin{proof}
		Thanks to \cite{FKhMO}, Theorem 2.26 it suffices to prove that the endomorphism algebras 
		of $\Delta_\la^J$ and of $(\nabla^J_\la)^\vee$ are isomorphic.
		These endomorphism algebras are the endomorphism algebras of the modules $\mathbb D^J_\la$ and $\mathbb U^J_{-w_0^J\la}$. The desired isomorphism is provided by Lemma \ref{HWA}. 
	\end{proof}

	\subsection{Parabolic categories}
	The main part of the paper deals with (infinite-dimensional)  parahoric subalgebras of the affine Kac-Moody Lie algebras.
	However, most of the results hold true for the finite-dimensional parabolic Lie algebras as well. To the best of our knowledge, these results are new in the finite-dimensional situation as well, so we discuss them in detail in this section.
	
	Let us adopt the setup from Section \ref{sec::parabolic::setup}.
	As before, $\fg$ is a simple finite-dimensional Lie algebra of rank $n$ with a fixed Borel subalgebra $\fb$. We fix a subset $J$ of the indexing set of simple roots. 
	Let $\fp_{J}\subset\fg$ be the corresponding parabolic subalgebra with the Levi subalgebra $\fl_{J}$.
	Let $\fC_{0}^J:=\fC(\fp_{J})$ and ${\fC_0^{J}}^{\vee}=\fC^{\vee}(\fp_{J})$ be the corresponding categories of $\fp_J$-modules defined in~\S\ref{GeneralFramework} for general Lie algebra $\fa$.
	Note that $\fp_J$ has trivial grading (all elements are of degree 0), so the categories $\fC_0^{J}$ (respectively ${\fC_0^{J}}^{\vee}$) consist of finitely cogenerated (resp. generated) $\fp_J$-modules $M$ such that $M$ is $\fl_J$-integrable, $\fh$-graded with finite-dimensional weight subspaces.
	The irreducible modules are indexed by $\fl_J$-antidominant weights $P_J^-$ and we can restrict Cherednik partial order $\prec$ and its dual $\prec^{\vee}$ to this set.
	The projective cover $\bP_{\lambda}$ of $L_\la$ in ${\fC_0^{J}}^{\vee}$ coincides with the restriction of the parabolic Verma module $M_{\lambda}:=\Ind_{\fp_{J}^{-}}^{\fg} L_{\lambda}$ and {the injective hull $\bI_{\lambda}$ is  the dual module}. In particular, the multiplicity of $L_\lambda$ in its projective cover $\bP_{\lambda}$ and in its injective hull $\bI_{\lambda}$ is equal to one.
	
	Thanks to the BGG-resolutions we conclude that $\fC_0^{J}$ has enough injectives (resp. ${\fC_0^{J}}^{\vee}$ has enough projectives) and each irreducible admits a resolution of length no more than $l(w_0)$. Hence each module has a finite homological dimension and we consider the bounded derived categories $\bD^b(\fC_0^{J})$ and $\bD^b({\fC_0^{J}}^{\vee})$.
	
	\begin{cor}
		The $Ext$-pairing in $\bD^b(\fC_0^{J})$ and in $\bD^b({\fC_0^{J}}^{\vee})$
		categorifies the constant term pairing on the ring of $W^J$-symmetric functions $\bZ[P]^{W^J}$. That is, for finite-dimensional $M,N\in\fC^{J}_0$ we have
		\begin{equation}
			\sum_{i}(-1)^{i} \dim\Ext^i_{\fC^{J}_0}(M,N) = \langle f, g \rangle^{J}_{t=q=0}, 
			\label{eq::pairing::finite}
		\end{equation}
		where   $\langle f, g \rangle^{J}_{q=t=0}= \left[f(x)g(x^{-1}) \prod_{\alpha\in\Phi^J_{-}\cup\Phi_{+}}(1-X^{\alpha}) \right]_{1}.$
	\end{cor}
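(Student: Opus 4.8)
The plan is to derive this as the specialization of Theorem~\ref{thm::Ext::pairing} to the graded Lie algebra $\fa=\fp_J$, where the grading is trivial (everything sits in degree $0$). First I would note that $\fp_J$ is an instance of the general framework of Section~\ref{GeneralFramework}, with $\fa_0=\fl_J$ (reductive, with Cartan $\fh$ and root system $\Phi_J$) and $\fr=\fr_J$, the nilradical $\bigoplus_{\alpha\in\Phi_+\setminus\Phi_{J+}}\fg_\alpha$, concentrated in degree $0$. In this finite-dimensional setting the structural axioms are immediate: $\fr_J$ is a nilpotent ideal, so $(\ad\fr_J)^i.\fp_J=0$ for $i\gg0$, whence $\bigcap_i(\ad\fr_J)^i.\fp_J=0$ and $\dim\bigl(\fp_J/(\ad\fr_J)^i.\fp_J\bigr)<\infty$, and the $\fh$-weight-zero subalgebra of $\fp_J$ is $\fh$, which is abelian; the parabolic BGG resolution supplies the injective resolutions in $\fC_0^J$ and the projective resolutions in ${\fC_0^J}^{\vee}$ with finite multiplicities of each irreducible, as recalled in the preceding discussion, so that $\Ext^\bullet_{\fC_0^J}$ coincides with the $\Ext^\bullet$ computed in $\bD^b(\fC_0^J)$. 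Finally, for finite-dimensional $M$ the character $[M]$ is $W(\fl_J)=W^J$-symmetric, hence lies in $\bZ[P]^{W^J}$.

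Granting this, I would apply formula~\eqref{eq::Ext::pairing} of Theorem~\ref{thm::Ext::pairing} to $M,N\in\fC_0^J$ with $M$ finite-dimensional:
\[
\sum_{i\ge0}(-1)^i\dim\Ext^i_{\fC_0^J}(M,N)=\sum_{i\ge0}(-1)^i\dim H^i(\fp_J,\fl_J;\Hom(M,N))=\bigl[\,[M]\,[N]^\star\,\mu^{\fp_J}\,\bigr]_1,
\]
all powers of $q$ collapsing to $1$ since the grading is trivial and the outer $\star$ acting trivially on a constant term. It then remains to identify the kernel $\mu^{\fp_J}$ of~\eqref{eq::kernel}: here $\fr_J(m)=0$ for $m>0$, $\dim\fr_J(0)_\alpha=1$ precisely for $\alpha\in\Phi_+\setminus\Phi_{J+}$, and the root system of $\fa_0=\fl_J$ is $\Phi_J$, so
\[
\mu^{\fp_J}=\prod_{\alpha\in\Phi_J}(1-X^\alpha)\prod_{\alpha\in\Phi_+\setminus\Phi_{J+}}(1-X^\alpha)=\prod_{\alpha\in\Phi_{J-}}(1-X^\alpha)\prod_{\alpha\in\Phi_+}(1-X^\alpha)=\prod_{\alpha\in\Phi^J_-\cup\Phi_+}(1-X^\alpha),
\]
using $\Phi_J=\Phi_{J+}\sqcup\Phi_{J-}$ and $\Phi_{J+}\sqcup(\Phi_+\setminus\Phi_{J+})=\Phi_+$. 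With $f=[M]$, $g=[N]$ and $[N]^\star=g(X^{-1})$ (no $q$ being present), this is exactly~\eqref{eq::pairing::finite}; the statement for ${\fC_0^J}^{\vee}$ follows in the same way from~\eqref{eq::Ext::pairing::1} (now requiring $N$ finite-dimensional), with the same kernel.

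I would close with a short remark: the kernel $\prod_{\alpha\in\Phi^J_-\cup\Phi_+}(1-X^\alpha)$ is $W^J$-invariant (indeed $W^J$ stabilizes both $\Phi_J$ and $\Phi_+\setminus\Phi_{J+}$), so $\langle\cdot,\cdot\rangle^J_{q=t=0}$ is a well-defined form on $\bZ[P]^{W^J}$, and it agrees up to the nonzero scalar $[\mu^{J}_{t=q=0}]_1$ with the $q\to0$ limit of the $t=0$ parabolic pairing~\eqref{pairing::J} (equivalently, of Definition~\ref{def::pairing::J}), by the argument of Proposition~\ref{prop::pairing::Ext}. There is no serious obstacle in this corollary; the only points deserving care are the kernel bookkeeping — that the root system attached to $\fa_0$ is $\Phi_J$ rather than the full $\Phi$, and that the degree-$0$ part of $\fr_J$ accounts for $\Phi_+\setminus\Phi_{J+}$ — and confirming that the parabolic BGG resolution really does yield the finite-multiplicity resolutions needed to place $\fp_J$ inside the framework of Section~\ref{GeneralFramework}.
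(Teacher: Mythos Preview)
Your proof is correct and in fact more detailed than the paper's own argument. The paper's proof is a single line: ``This is the $q=0$ specialization of Proposition~\ref{prop::pairing::Ext}.'' In other words, the paper first establishes the Ext--pairing identity for the parahoric algebra $\cP_J$ (Proposition~\ref{prop::pairing::Ext}, which in turn rests on Theorem~\ref{thm::Ext::pairing}) and then passes to degree zero. You instead apply Theorem~\ref{thm::Ext::pairing} directly to the finite-dimensional parabolic $\fa=\fp_J$ with trivial grading, and work out the kernel $\mu^{\fp_J}$ from scratch.

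Both routes are essentially the same, since Proposition~\ref{prop::pairing::Ext} is itself just Theorem~\ref{thm::Ext::pairing} specialized to $\cP_J$. Your direct approach has the minor advantage that the kernel comes out unnormalized and matches the stated formula $\prod_{\alpha\in\Phi^J_-\cup\Phi_+}(1-X^\alpha)$ on the nose, whereas specializing Proposition~\ref{prop::pairing::Ext} literally would carry along the normalization constant $c(q)$ at $q=0$, which one then has to observe equals $[\mu^J_{t=q=0}]_1$ and cancel against the normalized kernel $\mu_0^J$. Your explicit verification that $\fp_J$ sits inside the framework of Section~\ref{GeneralFramework} (nilpotent radical, abelian weight-zero part, BGG resolution) and your kernel bookkeeping are both accurate.
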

	\begin{proof}
		This is the  $q=0$ specialization of  Proposition~\ref{prop::pairing::Ext}.
	\end{proof}
	
	Note that the specializations  $\{E_{\lambda}(X,q,0)_{q=0} |\ \lambda\in P_{J}^{-}\}$ and 
	$\{E_{\nu}(X,q,\infty)_{q=\infty} |\ \nu\in P_{J}^{-}\}$ form a dual orthogonal basis with respect to the pairing~\eqref{eq::pairing::finite}.
	\begin{thm}\label{pararehwc}
		The category $\fC_0^{J}$ is the highest weight category for the Cherednik partial order $\prec$ on the set of antidominant weights. Standard and costandard modules are finite-dimensional and their characters are given by 
		\begin{gather*}
			[\Delta_{\lambda}^{J,fin}]= [\overline{\Delta}_{\lambda}^{J,fin}] = E_{\lambda}^{J}(X;q,0)_{q=0}, \\
			[\nabla_{\lambda}^{J,fin}]= [\overline{\nabla}_{\lambda}^{J,fin}] = E_{\lambda}^{J}(X;q,\infty)_{q=\infty}.    
		\end{gather*}
		
		The category ${\fC_0^{J}}$ is the highest weight category for the dual Cherednik order $\prec^{\vee}$ on the indexing set $P_J^{-}$ of the set of simple objects in $\fC_0^{J}$. 
		The characters of the standard and costandard modules are given by 
		\begin{gather*}
			[\Delta_{\lambda}^{J,fin}]= [\overline{\Delta}_{\lambda}^{J,fin}] = E_{-w_0^J\lambda}^{J}(X^{-1};q,\infty)_{q=\infty}, \\
			[\nabla_{\lambda}^{J,fin}]= [\overline{\nabla}_{\lambda}^{J,fin}] =E_{-w_0^J\lambda}^{J}(X^{-1};q,0)_{q=0}.    
		\end{gather*}
			The category ${\fC_0^{J}}^{\vee}$ is the highest weight category with respect to the Cherednik and dual Cherednik orders. The standard and costandard objects are finite-dimensional and coincide with the corresponding ones in $\fC^{J}$.
	\end{thm}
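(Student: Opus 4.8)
The plan is to transplant to the ungraded algebra $\fa=\fp_J$ the same machinery that proved Theorems~\ref{thm::fCJ::stratified} and~\ref{catarestrat} for the parahoric algebras; the only structural difference is that $\fp_J$ carries the trivial grading, so that every fibre of $\rho\colon P_J^-\times\bZ\to P_J^-$ is a single simple object and ``stratified'' specialises to ``highest weight''. The first step is routine: I would check that $\fp_J$ satisfies the standing hypotheses of \S\ref{GeneralFramework} — immediate, since everything is finite-dimensional, $\overline{\fp_J}=\fh$ is abelian and $\fr_J=\mathfrak{n}_J$ is the nilradical — so that the general formalism applies: $\fC_0^J=\fC(\fp_J)$ has enough injectives (hulls $\mathrm{coInd}_{\fl_J}^{\fp_J}L_\lambda$), ${\fC_0^J}^\vee$ has enough projectives (covers $\mathrm{Ind}_{\fl_J}^{\fp_J}L_\lambda$, the parabolic Verma modules), both of finite homological dimension by the parabolic BGG resolution, and Theorem~\ref{thm::Ext::pairing} applies with kernel $\mu_{\fp_J}=\prod_{\alpha\in\Phi_{J-}\sqcup\Phi_{+}}(1-X^{\alpha})$. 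Because $\overline{\fp_J}=\fh$ has no positive degrees, the highest weight algebra of every (co)standard module collapses to the ground field $\bC$; hence $\Delta^{J,fin}_\lambda=\overline{\Delta}^{J,fin}_\lambda$ — the $\fp_J$-module on a highest weight vector $v_\lambda$ cut out by the degree-zero part of the relations \eqref{StandardParahoricRelations} — and likewise $\nabla^{J,fin}_\lambda=\overline{\nabla}^{J,fin}_\lambda$, which already accounts for the equalities $[\Delta^{J,fin}_\lambda]=[\overline{\Delta}^{J,fin}_\lambda]$ and $[\nabla^{J,fin}_\lambda]=[\overline{\nabla}^{J,fin}_\lambda]$ in the statement; finite-dimensionality of all of these will come out of the Borel reduction below (a $\fb$-Demazure module is finite-dimensional).

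The core step is the reduction to the Borel subalgebra $\fb=\fp_\varnothing$. Restriction along $\fb\hookrightarrow\fp_J$ is an exact functor $\Res\colon\fC_0^J\to\fC_0^\varnothing=\fC(\fb)$ with the usual adjoints (the maximal $\fl_J$-integrable sub- resp. quotient- of coinduction resp. induction), exactly as in \S\ref{sec::parahoric}. I would record the degree-zero specialisations of Corollary~\ref{StandardRestriction}, Proposition~\ref{URestriction} and Corollary~\ref{CostandardRestriction}: $\Res\overline{\Delta}^{J,fin}_\lambda\simeq\overline{\Delta}^{\varnothing,fin}_\lambda$, a $\fb$-Demazure module, and $\Res\nabla^{J,fin}_\lambda$ carries a $\fb$-filtration $\mathcal{F}_{\tau(\lambda)}$, $\tau\in W^J$, with subquotients $\nabla^{\varnothing,fin}_{\tau(\lambda)}$; the proofs are the same as in Section~\ref{sec::standards}, with the ``decomposition procedure'' collapsing to the classical fact that an $\fl_J$-submodule of a finite-dimensional $\fg$-module generated by extremal vectors admits a Demazure filtration. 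Given this, plus van der Kallen's vanishing $\Ext^{\udot}_{\fb}(\overline{\Delta}^{\varnothing,fin}_\lambda,\nabla^{\varnothing,fin}_\mu)=0$ for $\lambda\ne\mu$ (\cite{vdK,TY}, the ungraded counterpart of the input behind Corollary~\ref{Ext::Vanishing::Iwahori}) and the fact that distinct $J$-antidominant weights have disjoint $W^J$-orbits, the adjunction computation of \eqref{eq::Ext::vanishing} carries over verbatim:
\[
\Ext_{\fC_0^J}(\overline{\Delta}^{J,fin}_\lambda,\nabla^{J,fin}_\mu)=\Ext_{\fC(\fb)}(\Res\overline{\Delta}^{J,fin}_\lambda,\Res\nabla^{J,fin}_\mu)=0\qquad(\lambda\ne\mu),
\]
using that $\overline{\Delta}^{J,fin}_\lambda$ is $\fl_J$-integrable so that $\Ind\circ\Res$ is the identity on it. By Definition~\ref{dfn::stratified}, condition (s3), with singleton strata, $\fC_0^J$ is a highest weight category for $\prec$; dualising, ${\fC_0^J}^\vee$ is one for $\prec^{\vee}$. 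For the remaining two pairs I would copy the proof of Theorem~\ref{catarestrat}: via Theorems~\ref{StandardCostandardCh} and~\ref{StandardCostandardChDual} identify the $\prec^{\vee}$-(co)standard characters with $E^J$-specialisations in the variables $X^{-1}$, deduce their orthogonality from the $\Ext$-pairing categorifying $\langle\cdot,\cdot\rangle^J_{q=t=0}$ (the $q=0$ specialisation of Proposition~\ref{prop::pairing::Ext}), and invoke \cite[Theorem~1.22]{FKhMO} to promote vanishing of Euler characteristics to vanishing of all higher $\Ext$'s; this makes $\fC_0^J$ highest weight for $\prec^{\vee}$ and ${\fC_0^J}^\vee$ highest weight for $\prec$. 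Since every (co)standard object occurring is finite-dimensional, the same modules serve in $\fC_0^J$ and in ${\fC_0^J}^\vee$ up to linear duality and the substitution $\lambda\leftrightarrow -w_0^J\lambda$, which gives the last assertion.

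The character formulas then follow as in the proof of Corollary~\ref{(co)stch}: once $\fC_0^J$ is stratified, $[\overline{\Delta}^{J,fin}_\lambda]$ and $[\overline{\nabla}^{J,fin}_\lambda]$ lie in $\bZ[P]^{W^J}$, are unitriangular with respect to the monomial basis $m^J_\lambda$, and are orthogonal for $\langle\cdot,\cdot\rangle^J_{q=t=0}$; by uniqueness of the Gram--Schmidt process they must equal $E^J_\lambda(X;q,0)_{q=0}$ and $E^J_\lambda(X;q,\infty)_{q=\infty}$ respectively, and the dual-order formulas come out by replacing $\lambda$ with $-w_0^J\lambda$ and $X$ with $X^{-1}$. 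The main obstacle I anticipate is precisely the first half of the core step — verifying that the constructions of Section~\ref{sec::standards} really do degenerate, at $z=0$, to the classical Demazure picture for $\Res\overline{\Delta}^{J,fin}_\lambda$ and to a $\fb$-Demazure filtration of $\Res\nabla^{J,fin}_\lambda$ — together with securing van der Kallen's $\Ext$-vanishing over $\fb$; once those are in hand everything downstream is formal.
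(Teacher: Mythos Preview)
Your approach is correct but takes a genuinely different route from the paper. You rebuild the entire stratification argument from scratch at the finite-dimensional level: restrict to $\fb$, identify the $\fb$-restrictions with Demazure modules and Demazure-filtered modules, invoke van der Kallen's classical $\Ext$-vanishing over $\fb$, and run the adjunction computation of \eqref{eq::Ext::vanishing}. The paper instead \emph{deduces} everything from the parahoric case already established in Theorems~\ref{thm::fCJ::stratified}, \ref{catarestrat} and Corollary~\ref{(co)stch}: since $\cP_J$ is nonnegatively graded, the parabolic (proper) (co)standard module is simply the degree-zero component of the corresponding parahoric module, so its character is the $q\to 0$ (resp.\ $q\to\infty$) specialization of the parahoric character already computed; these specializations are therefore a pair of dual bases for $\langle\cdot,\cdot\rangle^J_{q=t=0}$, and Theorem~1.22 of \cite{FKhMO} upgrades this orthogonality directly to stratification. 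What the paper's approach buys is economy --- no need to re-establish the Demazure filtration of $\Res\nabla^{J,fin}_\lambda$ or to cite van der Kallen separately --- while your approach is more self-contained and makes the link to the classical Borel picture explicit (which the paper only records afterwards, in the closing remark). The ``main obstacle'' you flag --- checking that Section~\ref{sec::standards} degenerates correctly at $z=0$ --- is precisely the observation the paper uses as its \emph{starting point}, rather than as a lemma to be verified en route.
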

	\begin{proof}
		Note that the Lie algebra $\cP^{J}$ is positively graded and therefore the standard module $\Delta_{\lambda,fin}^J$ in ${\fC^{J}_{0}}^{\vee}$ is the degree zero component of the $\cP^{J}$-module $\Delta_{\lambda}^J\in\fC^{J}$.
		The same happens with the (proper) costandard $\cP^{J}$ and $\fp^{J}$-modules.
		Consequently, we know that the characters of standard and costandard modules are finite and form a pair of dual bases. Now Theorem 1.22, ~\cite{FKhMO} implies that the categories are stratified.
	\end{proof}
	
	Let us also derive the parabolic analog of the Peter-Weyl theorem for simply connected parabolic group $\mathbf{P}_J\subset \mathbf{G}$ whose Lie algebra is $\fp_J$. For the  Borel subgroup 
	($J=\emptyset$)  the theorem below is due to van der Kallen~\cite{vdK}.
	
	\begin{cor}\label{PWvdKparabolic}
		For each $J\subset\{1,\dots,n\}$ there exists a canonical increasing filtration $\mathcal{F}_\lambda$, $\lambda \in P_J^-$ of the bimodule of algebraic functions on the simple-connected parabolic group   such that 
		\[\mathcal{F}_\lambda\left/ \sum_{\mu \prec \lambda}\mathcal{F}_\mu \right.=\nabla^{J,fin}_\lambda \otimes ((\Delta^{J,fin}_\lambda)^{\vee})^o.\]
	\end{cor}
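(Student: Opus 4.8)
The plan is to obtain Corollary~\ref{PWvdKparabolic} as the degree‑zero shadow of Theorem~\ref{PWvdKparahoric}. Recall that $\cP_J$ is $\bZ_{\ge 0}$‑graded with $(\cP_J)_0=\fp_J$, and that this grading integrates to a loop‑rotation action of $\Gm$ on the parahoric group $\mathbf{P}_J$ whose fixed subgroup is the finite‑dimensional parabolic group $\mathbf{P}_J^{fin}$ (the constant loops, i.e.\ the image of the section of the evaluation $z\mapsto 0$). The first step is to record this geometric input: pullback along the projection $\mathbf{P}_J\twoheadrightarrow\mathbf{P}_J^{fin}$ identifies the bimodule of algebraic functions $\Bbbk[\mathbf{P}_J^{fin}]$ with the $\Gm$‑weight zero component $(\Bbbk[\mathbf{P}_J])_0$, compatibly with the two commuting $\mathbf{P}_J^{fin}$‑actions.

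Next I would apply the degree‑zero functor $M\mapsto M_0$ to the canonical filtration $\mathcal{F}_\bullet$ produced in Theorem~\ref{PWvdKparahoric}. Since that construction is $\Gm$‑equivariant, each $\mathcal{F}_\lambda$ is a graded sub‑bimodule, and since $M\mapsto M_0$ is exact on graded vector spaces it commutes with passing to the subquotients of the filtration. Hence $\mathcal{F}_\lambda^{fin}:=(\mathcal{F}_\lambda)_0$ is again an increasing, canonical filtration of $\Bbbk[\mathbf{P}_J^{fin}]$ indexed by $\lambda\in P_J^-$ with the Cherednik order, and
\[
\mathcal{F}_\lambda^{fin}\Big/\sum_{\mu\prec\lambda}\mathcal{F}_\mu^{fin}\;=\;\Big(\nabla_\lambda^J\otimes_{\mathcal{A}_\lambda^D}((\Delta_\lambda^J)^\vee)^o\Big)_0 .
\]

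It remains to compute this degree‑zero component. The modules $\nabla_\lambda^J=(\mathbb{U}_{-w_0^J\lambda}^J)^\vee$ and $(\Delta_\lambda^J)^\vee=(\mathbb{D}_\lambda^J)^\vee$ are both concentrated in non‑positive degrees with one‑dimensional top placed in degree $0$ (the cyclic generators of $\mathbb{U}_{-w_0^J\lambda}^J$ and $\mathbb{D}_\lambda^J$ sit in degree $0$), while the highest weight algebra $\mathcal{A}_\lambda^D$ is $\bZ_{\ge 0}$‑graded with $(\mathcal{A}_\lambda^D)_0=\Bbbk$ — its character $a_\lambda(q)$ is a power series in $q$ with constant term $1$ (Lemma~\ref{HWA}). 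Thus a simple tensor lands in degree $0$ only when both factors lie in degree $0$, and the relations defining the relative tensor product are trivial there, so
\[
\Big(\nabla_\lambda^J\otimes_{\mathcal{A}_\lambda^D}((\Delta_\lambda^J)^\vee)^o\Big)_0\;=\;(\nabla_\lambda^J)_0\otimes_{\Bbbk}\big(((\Delta_\lambda^J)^\vee)_0\big)^o .
\]
Invoking the identifications established in (the proof of) Theorem~\ref{pararehwc}, namely $(\nabla_\lambda^J)_0\cong\nabla_\lambda^{J,fin}$ and $((\Delta_\lambda^J)^\vee)_0\cong\big((\Delta_\lambda^J)_0\big)^\vee\cong(\Delta_\lambda^{J,fin})^\vee$, yields exactly the asserted subquotient $\nabla_\lambda^{J,fin}\otimes((\Delta_\lambda^{J,fin})^\vee)^o$.

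An alternative route, which I would at least mention, is to apply the general Peter–Weyl theorem of~\cite{FKhMO} (Theorem~2.26) directly to the ungraded Lie algebra $\fa=\fp_J$: its hypotheses hold because $\fp_J$ satisfies the axioms of Section~\ref{GeneralFramework} and $\fC_0^J$, ${\fC_0^J}^\vee$ are highest weight categories by Theorem~\ref{pararehwc}, while the relevant highest weight algebra is $\End_{\fp_J}(\Delta_\lambda^{J,fin})\cong\End_{\fp_J}((\nabla_\lambda^{J,fin})^\vee)\cong\Bbbk$ — these modules are cyclic with one‑dimensional highest weight space in a genuine (not merely stratified) highest weight category — so the relative tensor product degenerates to $\otimes_\Bbbk$. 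I expect the only genuinely non‑formal point in either approach to be the very first one: identifying $\Bbbk[\mathbf{P}_J^{fin}]$ with $(\Bbbk[\mathbf{P}_J])_0$ as bimodules and checking that the degree‑zero functor is compatible with the relative tensor products appearing in Theorem~\ref{PWvdKparahoric}; everything after that is bookkeeping resting on Theorems~\ref{PWvdKparahoric} and~\ref{pararehwc}.
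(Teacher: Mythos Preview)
Your proposal is correct. The paper does not write out a proof of Corollary~\ref{PWvdKparabolic}; it is stated immediately after Theorem~\ref{pararehwc} as a direct consequence, and the intended argument is exactly your ``alternative route'': apply the general Peter--Weyl machinery of~\cite{FKhMO}, Theorem~2.26, to $\fa=\fp_J$, the hypotheses being supplied by Theorem~\ref{pararehwc} together with the observation that the endomorphism algebras of $\Delta_\lambda^{J,fin}$ and of $(\nabla_\lambda^{J,fin})^\vee$ reduce to $\Bbbk$ (so the relative tensor product degenerates to $\otimes_\Bbbk$). Your main approach --- taking the degree-zero component of the parahoric filtration from Theorem~\ref{PWvdKparahoric} --- is a genuinely different and more explicit route: it has the virtue of exhibiting the parabolic filtration as literally the restriction of the parahoric one, but it requires the extra verifications you flag (the bimodule identification $(\Bbbk[\mathbf{P}_J])_0\cong\Bbbk[\mathbf{P}_J^{fin}]$, $\Gm$-equivariance of the filtration produced by~\cite{FKhMO}, and the grading argument collapsing $\otimes_{\mathcal{A}_\lambda^D}$ to $\otimes_\Bbbk$ in degree zero). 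The paper's route avoids all of these and is a one-line invocation once Theorem~\ref{pararehwc} is in place.
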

	
	\begin{rem}
		One can write down explicitly the defining relations for the cyclic $\fp_J$ modules   $\Delta_\la^{J,fin}$ and $\nabla_\la^{J,fin}$. Namely, 
		each relation in  Definitions \ref{Ddefrel} and \ref{Udefrel} is a power of a root vector. Then the defining relations
		in the parabolic case are obtained from the relations in Definitions \ref{Ddefrel} and \ref{Udefrel} by keeping only the powers of elements from the Lie algebra $\fg$.   
		
		We note also that the restriction of 
		$\Delta_\la^{J,fin}$ to the Iwahori subalgebra is isomorphic to a Demazure module, and $\nabla_\la^{J,fin}$ is filtered by the Demazure atoms.
	\end{rem}

\end{document}